\documentclass[reqno,10pt]{amsart}
\usepackage{amsmath,mathrsfs}
\usepackage{amssymb}
\usepackage{graphicx}
\usepackage{pdfsync}
\usepackage{xcolor}
\usepackage[hidelinks]{hyperref}
\hypersetup{
pdftitle={Large-Box Transition in Nonlinear Landau Damping for the Vlasov--Yukawa Equation},
pdfauthor={Ling-Bing He and Yue Luo}
}
\usepackage{colonequals}

\usepackage{enumerate}

\usepackage[normalem]{ulem}

\newtheorem{defi}{Definition}[section]
\newtheorem{thm}{Theorem}[section]
\newtheorem{lem}{Lemma}[section]
\newtheorem{rmk}{Remark}[section]

\newtheorem{prop}{Proposition}[section]
\renewcommand{\theequation}{\thesection.\arabic{equation}}
\newcommand{\vv}[1]{\boldsymbol{#1}}

\numberwithin{equation}{section}

\newcommand{\beq}{\begin{equation}}
	\newcommand{\eeq}{\end{equation}}
\newcommand{\ben}{\begin{align}}
	\newcommand{\een}{\end{align}}
\newcommand{\beno}{\begin{align*}}
	\newcommand{\eeno}{\end{align*}}
\let\f=\frac

\renewcommand{\theequation}{\arabic{section}.\arabic{equation}}

\newcommand{\bit}{\begin{itemize}}
	\newcommand{\eit}{\end{itemize}}

\newcommand{\N}{{\mathbb N}}
\newcommand{\Z}{{\mathbb Z}}
\newcommand{\ZL}{{\mathbb Z}_L^d}
\newcommand{\ZLs}{{{\mathbb Z}_L^d}^*}
\newcommand{\R}{{\mathbb R}}

\newcommand{\TL}{{\mathbb T}_L^d}
\newcommand{\pa}{\partial}
\newcommand{\eps}{\varepsilon}
\newcommand{\kappaP}{\kappa_{\mathrm P}}
\newcommand{\kappaL}{\kappa_{\mathrm L}}

\newcommand{\ov}{\overline}

\newcommand{\mult}{\otimes}

\newcommand{\ba}{\begin{aligned}}
	\newcommand{\ea}{\end{aligned}}
\allowdisplaybreaks
\def\na{\nabla}
\let\wt=\widetilde
\let\wh=\widehat
\def\cI{{\mathcal I}}

\def\cE{{\mathcal E}}

\def\<{\langle}
\def\>{\rangle}

\def\gs{\gtrsim}
\def\ls{\lesssim}
\newcommand{\rr}[1]{\left( #1 \right)}
\newcommand{\nr}[1]{\left| #1 \right|}
\newcommand{\nnr}[1]{\left\| #1 \right\|}
\newcommand{\LLT}[1]{\left\| #1 \right\|_{L^2_{x,v}}}
\newcommand{\LLTX}[1]{\left\| #1 \right\|_{L^2_{x}}}
\newcommand{\LLTXT}[1]{\left\| #1 \right\|_{L^2_tL^2_{x}}}

\def\ffa{\mathfrak{a}}
\def\ffb{\mathfrak{b}}
\def\ffc{\mathfrak{c}}
\def\ffp{\mathfrak{p}}
\def\ffq{\mathfrak{q}}
\def\ffm{\mathfrak{m}}
\def\ffn{\mathfrak{n}}
\def\ffg{\mathfrak{g}}
\def\ffs{\mathfrak{s}}
\def\ffr{\mathfrak{r}}
\def\bba{\mathbf{a}}
\def\bbb{\mathbf{b}}
\def\bbc{\mathbf{c}}
\def\bbp{\mathbf{p}}
\def\bbq{\mathbf{q}}

\def\bbe{\mathbf{e}}
\begin{document}

\title[Large-Box Transition in Landau Damping]{Large-Box Transition in Nonlinear Landau Damping for the Vlasov--Yukawa Equation}

\author{Ling-Bing He}
\address[Ling-Bing He]{Department of Mathematical Sciences, Tsinghua University, Beijing, 100084, P.R. China.}
\email{hlb@tsinghua.edu.cn}

\author{Yue Luo}
\address[Yue Luo]{Department of Mathematical Sciences, Tsinghua University, Beijing, 100084, P.R. China.}
\email{luo-y21@mails.tsinghua.edu.cn}

\subjclass[2020]{Primary 35Q83; Secondary 35B35, 35B40, 82C40, 76X05}

\keywords{Landau damping, Vlasov--Yukawa equation, plasma echoes, large-box limit, Gevrey regularity, scattering, whole-space limit}

\begin{abstract}
		We study nonlinear Landau damping for the Vlasov--Yukawa equation in the large-box regime, as a periodic box of length $2\pi L$ expands to the whole space. First, for $d\ge 3$, we prove global stability and scattering in an $L$-dependent Gevrey class, with estimates uniform for $L\ge L_0$. The density obeys a two-regime upper bound: the whole-space decay rate $\langle t\rangle^{-d}$ before the critical time scale $T_{\rm disp}(L)\sim L$, and a periodic phase-mixing bound in the rescaled time $t/L$ thereafter.
		Second, to quantify the nonlinear resonance effect, we identify an echo-Volterra operator governing the nonlinear density memory, and provide sharp estimates for its non-collinear and collinear parts. The non-collinear time-frequency interactions remain uniformly bounded, whereas the collinear interactions do not become comparable to the background until the much later time scale $T_{\rm col}(L)\sim L^{d-1}$.
		Third, we investigate the stability and convergence of Sobolev initial data. For data of size $\varepsilon$, polynomial Sobolev smallness persists up to the time scale $\varepsilon^{-1} T_{\rm col}(L)$. Within the shorter time scale $T_{\rm disp}(L)\sim L$, compatible periodic solutions converge locally, as $L\to\infty$, to a global solution of the whole-space Vlasov--Yukawa equation.
\end{abstract}

\maketitle  
	
\setcounter{tocdepth}{1}
\tableofcontents
\section{Introduction}
We consider the Vlasov equation with a neutralizing background on a
$d$-dimensional torus of length $2\pi L$.  It models the evolution of a
particle distribution under a self-consistent electric field:
\beq\label{Vlasov}
\left\{\ba
&\pa_tF+v\cdot\na_xF+E\cdot\na_vF=0,\quad(x,v)\in \TL\times \R^d, \ \TL=\R^d/(2\pi L\Z)^d,\\
&E(t,x)=-\na_x W*_x\rr{\rho-\f{1}{(2\pi L)^d}\int_{\TL}\rho dx}, \ \rho(t,x)=\int_{\R^d} Fdv.
\ea\right.
\eeq
Here $F(t,x,v)\ge 0$ is the particle distribution function, $E(t,x)$ is the electric field, $W$ describes the mean-field interaction between particles, and $\rho(t,x)$ is the particle density.

We study the nonlinear stability and global dynamics of perturbations of
spatially homogeneous equilibria $\mu(v)$ for the Vlasov--Yukawa equation in
the large-box regime.  The screened interaction potential $W$ is defined by
\[\wh{W}(k)=\f{1}{1+|k|^2}.\]
Equivalently, the electric field $E$ satisfies
\[-\Delta\phi+\phi=\rho,\quad E=-\na\phi.\]

Let $h(t,x,v)$ be a mean-zero fluctuation around a spatially homogeneous
equilibrium $\mu(v)$, so that $F(t,x,v)=\mu(v)+h(t,x,v)$.  Then $h$ satisfies
\beq\label{1}
\left\{\ba
&\pa_th+v\cdot\na_xh+E\cdot\na_v\mu+E\cdot\na_vh=0,\\
&E(t,x)=-\na_xW*_x\rho,\ \rho(t,x)=\int_{\R^d} hdv.
\ea\right.
\eeq

\subsection{Background on Landau damping}
In 1946, Landau \cite{Landau} predicted the linear damping of electric fields
near stable spatially homogeneous equilibria of the Vlasov equation.  This
effect, now known as Landau damping, is a central topic in plasma physics and
kinetic theory.  It has since been studied for a wide range of interaction
potentials in both confined and unconfined geometries, where the dynamics can
differ markedly:
\begin{enumerate}
	\item \textbf{Free transport.} For the free transport equation $\pa_th+v\cdot\na_xh=0$, phase mixing in the confined setting transfers spatial information to fine velocity scales, leading to rapid homogenization of the density.  In the unconfined setting, by contrast, particles with different velocities spread spatial wave packets and produce $t^{-d}$ density decay.
	\item \textbf{Decay rates and interaction potentials.} In the unconfined setting, the decay rate of the electric field for free transport depends sensitively on the interaction potential.  For a slowly decaying interaction such as the Coulomb potential, with $\widehat{W}(k)=|k|^{-2}$ and $W(r)=\frac{1}{4\pi r}$ in three dimensions, the electric field decays at rate $t^{-d+1}$.  This slower decay reflects the low-frequency amplification caused by the singularity of $\widehat{W}$ at the origin.  In contrast, for a rapidly decaying interaction such as the Yukawa potential, with $\widehat{W}(k)=\rr{1+|k|^2}^{-1}$ and $W(r)=\frac{e^{-r}}{4\pi r}$ in three dimensions, the electric field decays at rate $t^{-d-1}$ because $\widehat{W}$ remains bounded at the origin.
	\item \textbf{Linearized dynamics and the Penrose condition.} For the linearized equation around $\mu(v)$,
	\begin{equation}\label{linVlasov}
		\partial_t h+v\cdot\nabla_x h+E\cdot\nabla_v\mu=0,
	\end{equation}
	the density satisfies a Volterra integral equation that can be analyzed by
	the Laplace transform.  Suppose that $\mu(v)$ satisfies the Penrose condition
	\begin{equation}\label{Penrose1}
		\inf_{k\in\mathbb{R}^d}\inf_{\Re z\ge 0}\left|1+\int_0^\infty e^{-zt}|k|^2\widehat{W}(k)t\widehat{\mu}(kt)\,dt\right|\ge\kappaP,
	\end{equation}
	the Volterra kernel is nondegenerate, ensuring that transport dominates the
	long-time dynamics.  The validity of \eqref{Penrose1} depends sensitively on
	the interaction potential and the geometry:
	\begin{itemize}
		\item In the confined setting, the Penrose condition holds for broad classes of stable homogeneous equilibria and yields mode-by-mode linear Landau damping on the torus.
		\item In the unconfined Coulomb problem, the singular low-frequency response is not described by the uniform screened condition \eqref{Penrose1}; depending on the equilibrium and on the component of the data, the linear dynamics may contain long-lived plasma-oscillatory terms in addition to phase mixing.  For screened interactions such as Yukawa, the low-frequency singularity is absent, and a uniform Penrose condition does hold for broad classes of equilibria, leading to a free-transport-type dispersive description.
	\end{itemize}
    \item \textbf{Nonlinear echoes and regularity thresholds.} The confined nonlinear problem presents a subtle difficulty: because the frequency set $\mathbb{Z}^d$ is discrete, modes can interact through strong resonances.  As illustrated in \cite{MV,review}, nonlinear effects can excite echo modes that partially reverse phase mixing, cause transient growth of the electric field, and trigger further oscillations in a resonant cascade.  This is the mechanism behind plasma echoes.  Controlling it requires high regularity.  Global stability for Gevrey-3 data, the critical regularity predicted in \cite{MV,review}, is established in \cite{waveop}, while \cite{JB} shows that Sobolev regularity alone does not control plasma echoes.
    
    In contrast, for unconfined systems in dimension $d\ge2$, resonances are much weaker because the frequency set $\mathbb{R}^d$ is continuous.  As observed in \cite{BMM1}, strong resonances occur only among collinear modes, a set of measure zero in $\mathbb{R}^d$.  Sobolev asymptotic stability near Penrose-stable screened equilibria is established in \cite{BMM1,HNR1,HNX2}; the distinct small-data theory near vacuum is developed in \cite{vacuum1,vacuum2,vacuum3,analytic}.
\end{enumerate}

\subsection{Main message and contributions}

The purpose of this paper is not only to prove estimates that are uniform as
$L\to\infty$, but to identify how the mechanism of nonlinear Landau damping
changes between a periodic box and the whole space.  Our results form a
coherent picture.

First, on every torus $\TL$ we construct a unique global solution in an
$L$-dependent Gevrey class adapted to the resonance geometry, with constants
uniform in $L\geq5$.  The solution scatters in free-transport coordinates.
Its density obeys a whole-space-type upper bound on the expanding interval
$0\leq t\lesssim L$, where dispersion gives $\<t\>^{-d}$ decay, and then a
periodic phase-mixing upper bound on the rescaled time $t/L$.  Thus the global
Gevrey theory is uniform in $L$, while its two-regime estimate records the
change of geometry at the box scale.  No lower bound or asymptotic equivalence
at $t\sim L$ is asserted.

Second, we isolate the nonlinear mechanism behind this transition.  Taking
the density trace of the profile equation produces a two-time memory term in
which a mode $\ell$ at time $\tau$ feeds a mode $k$ at time $t$ through the
echo mismatch $kt-\ell\tau$.  After distributing polynomial weights, this
term is governed by a positive echo Volterra operator.  The non-collinear part
is controlled by transverse dispersion and has norm comparable to one on
time windows bounded away from zero.  The collinear part is supported on
one-dimensional arithmetic fibers and satisfies
the sharp two-sided estimate
\[
\|\mathcal V^S_{L,T,\mathrm{col}}\|\asymp T L^{1-d}
\]
when $T$ and $L$ are bounded below.  The lower bound is realized on a single
primitive ray by a positive test function localized near its echo times.
On the same range, these estimates give
\[
\|\mathcal V^S_{L,T}\|\asymp1+T L^{1-d}.
\]
This reveals two distinct scales.  At $T_{\rm disp}(L)\sim L$, the continuum
density-decay regime ends.  At the later sharp operator crossover
$T_{\rm col}(L)\sim L^{d-1}$, the collinear comparison becomes comparable
to the order-one non-collinear background.  Above this scale it dominates the
full comparison operator.  Because this positive majorant discards signs and
cancellations, the matching operator lower bound does not assert a nonlinear
instability threshold for the Vlasov equation.  For each fixed $L$, a
time-decreasing Gevrey weight compensates for possible echo accumulation and
restores a uniform-in-time Volterra bound.

Third, the separation of these scales produces two nested polynomial Sobolev
windows.  For data of size $\eps$, the nonlinear factor multiplying the echo
operator permits propagation of Sobolev smallness up to
$T_{\rm Sob}(L,\eps)\sim\eps^{-1}L^{d-1}$.  On the shorter
dispersion-dominated window $T_{\rm disp}(L)\sim L$, the discrete collinear contribution is
$O(L^{2-d})$ and tends to zero when $d\geq3$.  The first statement realizes the
later time scale of the echo estimate at the level of the nonlinear equation;
the second retains the whole-space dispersive interpretation.  Compactness on
the shorter growing window then yields, from compatible periodic data, a
global whole-space solution with a uniform Sobolev bound.  No
Gevrey weight is used in either finite window.  This is consistent with the
finite-regularity theory for screened interactions in \cite{BMM1,HNR1,HNX2},
but supplies a large-box route to that dynamics and quantifies how the
whole-space Sobolev mechanism emerges from the periodic resonance geometry.

\subsection{Uniform Gevrey dynamics on expanding tori}

We first state the global result on the expanding tori.  Its $L$-dependent
Gevrey weight is designed to provide all-time control for every fixed box,
while degenerating in a way compatible with the Sobolev whole-space limit.
\begin{thm}\label{T1}
	Consider \eqref{1} with $\wh{W}(k)=\f{1}{1+|k|^2}$.  Let $d\geq3$, let $m=\llcorner\f d2\lrcorner+1$ be an integer, and let $\sigma_0\geq\f52d+11$ be an integer.  Assume that there exist $\lambda_1>0$ and $\kappaP>0$ such that
	\beq\label{mudecay}\sup_{\eta}\{\nr{\wh{\mu}(\eta)}+\nr{\eta\cdot\na_\eta\wh{\mu}(\eta)}\}e^{\lambda_1\<\eta\>^\f13}+\left\|\<v\>^me^{\lambda_1\<\na\>^\f13}\na_v\mu\right\|_{L^2_v}\leq\kappaP^{-1},\eeq
	\beq\label{muPenrose}\inf_{k\in\R^d}\inf_{\Re z\geq0}\nr{1+\int_0^\infty e^{-zt}\f{|k|^2t}{\<k\>^2}\wh{\mu}(kt)dt}\geq\kappaP.\eeq
	For any $0<\lambda_0<\f14\lambda_1$ and $0<\kappaL<0.001$, there exists $\eps_0>0$ such that, for every $L\geq5$ and every initial datum $h(0)\in L^2(\TL\times\R^d)$ satisfying
	\beq\label{Initial}\left\|\<\na\>^{\sigma_0} \exp\rr{\lambda_0\min\{\rr{\f{|\na|}{L^{d-1-\kappaL}}}^{\f12}, \rr{\f{|\na|}{L^{d-2-\kappaL}}}^\f13\}}\rr{v^\alpha h(0)}\right\|_{\ell^2\rr{|\alpha|\le m;L^2_{x,v}\cap L^1_xL^2_v}}=\eps<\eps_0,\eeq
	the system \eqref{1} admits a unique global classical solution. 
	Moreover, if $\bar\sigma=\sigma_0-\frac{3}{2}d-7$, then there exists a scattering profile $g^L_+$ with $\langle v\rangle^m\langle\nabla\rangle^{\bar\sigma}g^L_+\in L^2_{x,v}$ such that, for all $t\ge 0$,
	\beq\label{scatter}\LLT{\<v\>^m\<\na\>^{\bar{\sigma}}\rr{h(t,x+vt,v)-g^L_+}}\ls\eps\<t\>^{-\f d2},\eeq
	\beq\label{dispm}\left\|\rho(t)-\f{1}{(2\pi L)^d}\int_{\TL}\rho dx\right\|_{L^\infty}\ls\eps
	\left\{\ba&\<t\>^{-d},&0\leq t\leq L,\\&\f{1}{L^d}\<\f tL\>^{-\bar{\sigma}},&t\geq L.\ea\right.\eeq
\end{thm}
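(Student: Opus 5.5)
We will follow the standard bootstrap scheme for nonlinear Landau damping in profile coordinates, keeping every constant independent of $L\ge5$. Set $g(t,x,v)=h(t,x+vt,v)$. Its Fourier transform $\wh g(t,k,\eta)$, with $k\in\ZL=L^{-1}\Z^d$, satisfies
\[
\pa_t\wh g(t,k,\eta)=-\wh\rho(t,k)\wh W(k)\,k\cdot(\eta-kt)\wh\mu(\eta-kt)-\sum_{\ell\in\ZLs}\wh\rho(t,\ell)\wh W(\ell)\,\ell\cdot(\eta-kt)\,\wh g(t,k-\ell,\eta-\ell t),
\]
and the density is the trace $\wh\rho(t,k)=\wh g(t,k,kt)$.

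\textbf{Energy functional and bootstrap.} Introduce the time-dependent multiplier
\[
A(t,k,\eta)=\<k,\eta\>^{\sigma_0}\exp\bigl(\lambda(t)\,\Phi_L(k,\eta)\bigr),\qquad \lambda(t)=\lambda_0\bigl(\tfrac12+\tfrac12\<t\>^{-\delta}\bigr),
\]
where $\Phi_L$ is the symbol $\min\{(|\cdot|/L^{d-1-\kappaL})^{1/2},(|\cdot|/L^{d-2-\kappaL})^{1/3}\}$ from \eqref{Initial}. The decreasing radius $\lambda(t)$ produces a Cauchy--Kovalevskaya dissipation term $-\dot\lambda\,\|\Phi_L^{1/2}Ag\|^2$, which will absorb the echo losses. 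The bootstrap hypotheses are of three kinds:
\begin{itemize}
\item a bound $\|\<v\>^m A g(t)\|_{L^2}\le 2C\eps$ at high norm, together with the same bound with $L^1_xL^2_v$ replacing $L^2$;
\item a weighted density bound $\|\<k,kt\>^{\sigma}A\wh\rho\|_{L^2_t\ell^2_k}\le 2C\eps$;
\item a pointwise mode bound $\sup_k\<kt\>^{\sigma'}|\wh\rho(t,k)|\le 2C\eps$, which is what yields \eqref{dispm}.
\end{itemize}
Local well-posedness and continuity close the argument once each bound is improved with constant $C$.

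\textbf{Step 1: the linear Volterra equation.} Taking the trace of the profile equation gives
\[
\wh\rho(t,k)+\int_0^t K(t-\tau,k)\,\wh\rho(\tau,k)\,d\tau=\wh g_{\rm in}(k,kt)+\mathcal N(t,k),
\]
with kernel $K(s,k)=\wh W(k)|k|^2 s\,\wh\mu(ks)$. The Penrose condition \eqref{muPenrose} is uniform in $k\in\R^d$, hence on every lattice $\ZL$. Combined with the Gevrey decay \eqref{mudecay} and the Paley--Wiener method, it gives a resolvent whose weighted norms are uniform in $k$ and in $L$. So it suffices to bound the forcing in the weighted $L^2_t$ and $L^\infty_t$ norms.

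\textbf{Step 2: the nonlinear memory term.} This is the main obstacle. The term $\mathcal N(t,k)$ couples $\wh\rho(\tau,\ell)$ to $\wh g(\tau,k-\ell,kt-\ell\tau)$. After redistributing the weights $A$, its norm is bounded by $\eps$ times the echo Volterra operator $\mathcal V_{L,T}$, whose kernel is
\[
|\ell|\,\tau(t-\tau)\,\<k-\ell,kt-\ell\tau\>^{-N}.
\]
We split it into non-collinear and collinear parts.

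\emph{Non-collinear part} ($\ell\nparallel k$). The transverse component of $kt-\ell\tau$ is $\gtrsim|\ell_\perp|\tau$. Summing over $\ell\in\ZLs$ replaces the sum by a Riemann-sum integral in $\ell_\perp$ over $d-1\ge2$ dimensions; this uses $d\ge3$ and costs at most $O(1)$. Schur's test in $(t,\tau)$ then gives an $L$-uniform $O(1)$ bound.

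\emph{Collinear part} ($\ell=jk/|k|$ on a primitive ray). The lattice spacing $L^{-1}$ gives a fiber density $L^{1-d}$ relative to the continuum. A resonance occurs near the echo times $\tau\approx kt/\ell$, and the contribution accumulates like $TL^{1-d}$. For fixed $L$ and large $T$ this is not bounded.

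The accumulation is compensated by the dissipation from $\dot\lambda$. At a resonance the frequency satisfies $|\eta|\sim|k|t$, and the symbol $\Phi_L$ is calibrated so that
\[
\eps\, TL^{1-d}\lesssim |\dot\lambda|\,\Phi_L(\eta)
\]
at those frequencies. The $1/2$ power, normalized at scale $L^{d-1-\kappaL}$, handles the collinear accumulation. The $1/3$ power, normalized at scale $L^{d-2-\kappaL}$, handles the Gevrey-3 echo cascade at times $t\gtrsim L$, where the lattice is effectively discrete. The losses $\kappaL$ in the exponents absorb logarithmic sums. The reaction and transport terms in the energy estimate are handled by the standard paraproduct decomposition: commutator estimates for $A$, exploiting subadditivity of $\Phi_L$, in the transport regime, and the density bound from Step 1 in the reaction regime. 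The $v^\alpha$ moments with $|\alpha|\le m$ control $\na_\eta$ derivatives and give the $L^1_x$ bounds.

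\textbf{Step 3: scattering and the density bound.} Integrating the profile equation in time, the forcing is integrable with rate $\<t\>^{-d/2-1}$ at the lower regularity $\bar\sigma$. The loss of $\tfrac32d+7$ derivatives pays for the moments and the time weights. This yields the profile $g^L_+$ and the estimate \eqref{scatter}.

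For \eqref{dispm} we write $\rho(t,x)$ as a sum over $k\in\ZLs$ of $\wh\rho(t,k)\,e^{ik\cdot x}$, with a factor $(2\pi L)^{-d}$ from the torus normalization. The bootstrap gives $|\wh\rho(t,k)|\lesssim\eps\<kt\>^{-\bar\sigma}$. For $t\le L$ the Riemann sum over $\ZLs$ approximates $\int_{\R^d}\<kt\>^{-\bar\sigma}dk\sim t^{-d}$, with error controlled because the spacing $L^{-1}$ is at most $t^{-1}$. For $t\ge L$ the smallest mode has $|k|\ge L^{-1}$, so the sum is $\lesssim L^{-d}\<t/L\>^{-\bar\sigma}$.

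Uniqueness follows from an $L^2$ stability estimate at lower regularity.
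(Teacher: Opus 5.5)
\textbf{The gap is in Step 2.} A radius that depends only on time, $\lambda(t)=\lambda_0(\tfrac12+\tfrac12\langle t\rangle^{-\delta})$, cannot compensate the collinear echoes for fixed $L$ and all $t$. On the cube-root branch $\Phi_L(r)=(r/L^{d-2-\kappaL})^{1/3}$ the weight is exactly critical Gevrey-$3$, and the only slack is the fixed factor $L^{\kappaL/3}$.

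Here is a concrete failure. Take $k=\frac{a}{L}e_1$ and $\ell=\frac{a+1}{L}e_1$ with $a\geq L$.
\begin{itemize}
\item The echo window $|kt-\ell\tau|\lesssim1$ has length $\sim L/a$, and on it $t-\tau\approx t/a$.
\item This single pair contributes $\approx Y^3$, with $Y:=(t\,a^{-2}L^{1-d})^{1/3}$, to the row sum $L^{-d}\sum_\ell\int_0^tK^S_\sigma\,d\tau$. It is the term $\mathscr A^1_{2,2}$ in Proposition \ref{resonancek}.
\item The only compensation is the factor $e^{-(\lambda(\tau)-\lambda(t))\Phi_L(|k,kt|)}$. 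With your weight, $\lambda(\tau)-\lambda(t)\approx\frac{\lambda_0\delta}{2}t^{-\delta}a^{-1}$, while $\Phi_L(at/L)=(at)^{1/3}L^{-(d-1-\kappaL)/3}$. So the exponent is only $\approx c\,t^{-\delta}L^{\kappaL/3}Y$.
\item Choose $a$ so that $Y=t^{\delta}L^{-\kappaL/3}$. For large $t$ this gives $a\geq L$ and lies on the cube-root branch. The compensated contribution is then $\gtrsim t^{3\delta}L^{-\kappaL}$, which is unbounded as $t\to\infty$.
\end{itemize}
Testing on this single pair near its echo times, as in the proof of \eqref{sharpcol}, shows that the compensated echo operator itself grows in time. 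So the density bootstrap cannot close, for any $\delta>0$.

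Your calibration $\eps TL^{1-d}\lesssim|\dot\lambda|\Phi_L(\eta)$ fails for the same reason: at $\eta\sim|k|t$ its right side decays like $t^{-2/3-\delta}$. This is the usual obstruction for a time-only radius at the critical exponent.

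It is why the paper, following \cite{waveop}, uses the frequency-dependent radius $\lambda^L(t,r)$. Its main part decays like $(1+L^{-(d-1)/3}t\langle r\rangle^{-2/3})^{-\beta}$ with $\beta<\kappaL/3$. At $r\approx|k|t$ it loses an order-one fraction exactly across the cascade window. This gives a gain $\gtrsim(b-a)Y^{1-\beta}$ in the exponent against the loss $(b-a)Y^{3}$.

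The compensation also enters differently from how you describe it. In the paper it comes through the weight difference $e^{\lambda^L(t,|k,kt|)-\lambda^L(\tau,|k,kt|)}$ inside the Volterra kernel of the density equation, after transferring weights by subadditivity. The Cauchy--Kovalevskaya term $-\pa_t\lambda^L$ is used only to absorb low--high commutators in the profile energy estimates.

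\textbf{Two further steps would need repair even with the right weight.}

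First, the non-collinear bound is not a Riemann-sum statement uniform in the direction. Write $k=\frac{\ffa}{L}\bbp$ with $\bbp$ primitive. The transverse components of $\ell$ range over $\frac1L\Pi(\bbp)(\mathbb Z^d)$. This is a lattice of covolume $|\bbp|^{-1}L^{1-d}$ that may be very anisotropic below unit scale, and each of its points carries a fiber of spacing $|\bbp|/L$. The uniform count at small angles is exactly Proposition \ref{Lattice}, which uses unimodular completion and Korkin--Zolotarev reduction. With it the non-collinear bound holds already for $d\geq2$, so this is not where $d\geq3$ is used.

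Second, the bootstrap cannot carry a uniform-in-time top-order profile bound. In the energy identity, the linear and high--low terms are controlled by powers of $\langle t\rangle$ times $\||\na_x|^{1/2}A\rho(t)\|_{L^2_x}$ times the energy. But $\rho$ is known only in $L^2_t$, and the low modes $|k|\sim L^{-1}$ do not decay for $t\lesssim L$. So the paper:
\begin{itemize}
\item lets the top norm grow like $\langle t\rangle^{5/2+\delta}$ and the next one like $\langle t\rangle^{\delta}$;
\item propagates separately, via Duhamel, a pointwise bound on $\wh{A^{\sigma_4}_Lg}$ at lower regularity.
\end{itemize}
It is this $L^\infty_{k,\eta}$ bound that produces the factor $\eps\langle k-\ell\rangle^{-\sigma}\langle kt-\ell\tau\rangle^{-\sigma}$ in the echo kernel.
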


\begin{rmk}[The Gevrey--2 bridge and solution-level continuity]
\label{rmk:G2bridge}
The exponent in \eqref{Initial} is
\[
\Phi_L(r):=
\min\left\{
\left(\frac{r}{L^{d-1-\kappaL}}\right)^{1/2},
\left(\frac{r}{L^{d-2-\kappaL}}\right)^{1/3}
\right\}.
\]
The two branches meet at
\[
r_*(L)=L^{d+1-\kappaL}.
\]
Below this frequency the square-root branch is active; it controls the
mesoscopic, near-diagonal collinear accumulation before the asymptotic
Gevrey--3 echo geometry becomes dominant.  Above $r_*(L)$ the cube-root
branch gives the usual Gevrey--3 high-frequency reserve.  The square-root
segment should therefore be viewed as an $L$-dependent Gevrey--2 bridge, not
as an $L$-uniform Gevrey--2 assumption or as a new sharp regularity threshold:
its radius is of size $L^{-(d-1-\kappaL)/2}$ and vanishes as the box expands.
\end{rmk}

$\bullet$ The restriction $L\geq5$ is technical and enters only in the construction of
the multiplier in Lemma \ref{multiplier}; bounded values of $L$ belong to the
usual torus regime.  The two upper bounds in \eqref{dispm} meet at $t=L$.
They identify the box scale in the estimates: before it one has the
whole-space dispersive rate, whereas after it the bound records periodic
phase mixing on the rescaled time $t/L$.  This statement concerns upper
bounds only.

$\bullet$ The restriction $d\geq3$ comes from the time-integrability budget needed to
close the nonlinear dispersive estimates, not from the resonance geometry.
The Volterra analysis of Section \ref{sec:transition} is valid for $d\geq2$.
In dimension two the decay $\<t\>^{-2}$ is borderline in the present energy
framework and would require a genuinely two-dimensional refinement such as
the mechanisms developed in \cite{HNX1}; in dimension one there is no
transverse direction.  The Gevrey-$2$ mechanism of \cite{Wei}, proved near
vacuum, gives a related one-dimensional comparison but does not directly
address the Penrose-stable equilibrium problem considered here.

\subsection{The echo Volterra operator and the two transition scales}\label{ss:introecho}

We now define the object that makes the resonance transition quantitative.
In free-transport coordinates $g(t,x,v)=h(t,x+vt,v)$ one has the density trace
$\wh\rho(t,k)=\wh g(t,k,kt)$.  Substituting the Duhamel formula for $g$ into
this trace produces the nonlinear density memory
\beq\label{densityN}
\wh{\mathscr N}(t,k)=-\f1{L^d}\sum_{\ell\in\ZLs}\int_0^t
\wh\rho(\tau,\ell)\f{\ell\cdot k(t-\tau)}{\<\ell\>^2}
\wh g(\tau,k-\ell,kt-\ell\tau)\,d\tau.
\eeq
An electric-field mode $\ell$ acting at the earlier time $\tau$ therefore
samples the profile at $(k-\ell,kt-\ell\tau)$.  The second component is the
mismatch between the output density trace $kt$ and the earlier trace
$\ell\tau$; hence $|kt-\ell\tau|\ll1$ is precisely the echo condition.  This
two-time condition cannot be detected by an instantaneous Sobolev energy.

\begin{defi}[Polynomially weighted echo--Volterra operator]\label{def:echoVolterra}
Write $\Lambda_L:=\rr{\Z/L}^d\backslash\{0\}$,
$d\nu_L:=L^{-d}\sum_{k\in\Lambda_L}\delta_k$, and
\[
Y_L(T):=L^2\bigl([0,T]\times\Lambda_L,dt\,d\nu_L(k)\bigr).
\]
For $\sigma\geq d+3$ define the polynomially weighted echo kernel
\beq\label{Vkernel}
K^S_\sigma(t,\tau;k,\ell):=
\f{|k|^\f12|\ell|^\f12}{\<\ell\>^2}|k(t-\tau)|
\<k-\ell\>^{-\sigma}\<kt-\ell\tau\>^{-\sigma},\qquad 0\leq\tau\leq t,
\eeq
and the Volterra operator
\beq\label{Vop}
(\mathcal V^S_{L,T}F)(t,k):=\f1{L^d}\sum_{\ell\in\Lambda_L}
\int_0^tK^S_\sigma(t,\tau;k,\ell)F(\tau,\ell)\,d\tau.
\eeq
We call $\mathcal V^S_{L,T}$ the polynomially weighted echo--Volterra
operator on the time window $[0,T]$.
\end{defi}

\begin{rmk}[Echo, Volterra, and majorant structure]\label{rmk:echoVolterraMeaning}
The name records three separate features of Definition
\ref{def:echoVolterra}.  It is a \emph{Volterra} operator because causality
restricts the memory integral to $0\leq\tau\leq t$.  It is an \emph{echo}
operator because the factor $\<kt-\ell\tau\>^{-\sigma}$ concentrates the
interaction near the echo relation $kt\approx\ell\tau$.  Finally, the
superscript $S$ indicates that the comparison uses only Sobolev, equivalently
polynomial, weights.

All signs and cancellations have been discarded.  Thus
$\mathcal V^S_{L,T}$ is a positive majorant whose norm measures the gain
of the nonlinear high--low density feedback loop.  This distinguishes it from
the linear Volterra kernel in \eqref{Volterra}, which is diagonal in $k$ and
is inverted by the Penrose resolvent.  The echo--Volterra norm is therefore a
robust upper bound for the nonlinear memory, rather than a spectral formula
or a lower bound for echo growth.
\end{rmk}

\begin{rmk}[Symmetric half derivative and Sobolev regularity]
The remaining factors in \eqref{Vkernel} come directly from \eqref{densityN}:
$|k|^{1/2}|\ell|^{1/2}$ distributes the half-derivative density weight
symmetrically between the output and input, $\<\ell\>^{-2}$ is the Yukawa multiplier, $|k(t-\tau)|$ is the
transport lever arm, and $\<k-\ell\>^{-\sigma}$ is the spatial-frequency
decay of the profile.  More precisely, this is not a pointwise replacement of
the factor $|\ell|$ in \eqref{densityN}.  After using
$|\ell\cdot k(t-\tau)|\leq|\ell|\,|k(t-\tau)|$, multiply the output density by
$|k|^{1/2}$ and set
\[
F_\sigma(\tau,\ell):=
|\ell|^{1/2}\<\ell,\ell\tau\>^\sigma
|\wh\rho(\tau,\ell)|.
\]
Then the exact identity
\[
|k|^{1/2}|\ell|\,|\wh\rho(\tau,\ell)|
=|k|^{1/2}|\ell|^{1/2}
\<\ell,\ell\tau\>^{-\sigma}F_\sigma(\tau,\ell)
\]
leaves precisely the symmetric factor in \eqref{Vkernel}; one half of
$|\ell|$ has simply been absorbed into the input density norm.

This redistribution causes no additional Sobolev loss.  The half derivative
is measured in the spacetime trace norm, for which the free-transport change
of variables gives, for $k\neq0$,
\[
\int_0^\infty |k|\,|\wh h_{\rm in}(k,kt)|^2dt
=\int_0^\infty
|\wh h_{\rm in}(k,s k/|k|)|^2ds.
\]
Thus $|D_x|^{1/2}$ is the natural trace normalization, rather than an extra
fixed-time half derivative of the phase-space datum.  Moreover, the zero mode
is absent from $\Lambda_L$ because it generates no field, so the splitting
introduces no low-frequency division.  Hence the symmetric redistribution is
harmless at Sobolev regularity.
\end{rmk}

Split $K^S_\sigma=K^S_{\sigma,\mathrm{nc}}+K^S_{\sigma,\mathrm{col}}$
according to $k\wedge\ell\neq0$ and $k\wedge\ell=0$.  For
$\mathrm a\in\{\mathrm{nc},\mathrm{col}\}$ set
\[
\begin{split}
\mathfrak S^+_{\mathrm a}(T)&:=\sup_{k\in\Lambda_L,\,0<t\leq T}
\f1{L^d}\sum_{\ell\in\Lambda_L}\int_0^t
K^S_{\sigma,\mathrm a}(t,\tau;k,\ell)\,d\tau,\\
\mathfrak S^-_{\mathrm a}(T)&:=\sup_{\ell\in\Lambda_L,\,0\leq\tau<T}
\f1{L^d}\sum_{k\in\Lambda_L}\int_\tau^T
K^S_{\sigma,\mathrm a}(t,\tau;k,\ell)\,dt,
\end{split}
\]
and
\[
\mathfrak R_{L,\mathrm{nc}}(T):=
\max\{\mathfrak S^+_{\mathrm{nc}}(T),\mathfrak S^-_{\mathrm{nc}}(T)\},
\qquad
\mathfrak R_{L,\mathrm{col}}(T):=
\max\{\mathfrak S^+_{\mathrm{col}}(T),\mathfrak S^-_{\mathrm{col}}(T)\}.
\]

\begin{thm}[Sharp time-resolved echo Volterra estimate]\label{thm:volterra}
Let $d\geq2$, $L\geq5$, $T>0$, and $\sigma\geq d+3$.  Then, uniformly in
$L$ and $T$,
\beq\label{ncbound}
\mathfrak R_{L,\mathrm{nc}}(T)\leq C_{d,\sigma},
\eeq
whereas
\beq\label{colbound}
\mathfrak R_{L,\mathrm{col}}(T)
\leq C_{d,\sigma}(1+T)L^{1-d}.
\eeq
Thus for arbitrary $T>0$ and $L\geq5$, the Schur test gives
\beq\label{Vbound}
\|\mathcal V^S_{L,T}\|_{Y_L(T)\to Y_L(T)}
\leq C_{d,\sigma}\bigl[1+(1+T)L^{1-d}\bigr].
\eeq

Moreover, there are constants $c_{d,\sigma}>0$, $T_0\geq1$, and $L_0\geq5$
such that
\beq\label{sharpcol}
c_{d,\sigma}T L^{1-d}
\leq\|\mathcal V^S_{L,T,\mathrm{col}}\|_{Y_L(T)\to Y_L(T)}
\leq C_{d,\sigma}T L^{1-d},
\qquad T\geq T_0,\quad L\geq L_0.
\eeq
On the same range, the full operator satisfies the sharp two-sided estimate
\beq\label{sharpfull}
c_{d,\sigma}\bigl(1+T L^{1-d}\bigr)
\leq\|\mathcal V^S_{L,T}\|_{Y_L(T)\to Y_L(T)}
\leq C_{d,\sigma}\bigl(1+T L^{1-d}\bigr).
\eeq
\end{thm}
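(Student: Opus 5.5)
Both upper bounds are Schur-test bounds, so I would estimate the row sums $\mathfrak S^+$ and the column sums $\mathfrak S^-$ directly. The Schur test then gives \eqref{Vbound}, since $\|\mathcal V\|\le\sqrt{\mathfrak S^+\mathfrak S^-}\le\max$, applied to each of the two pieces of the kernel. The lower bounds will come from explicit test functions. The basic reduction is a change of variables in the time integral. Fix $k,\ell$ and $t$, and integrate over $\tau$. The echo factor $\<kt-\ell\tau\>^{-\sigma}$ forces $\tau$ to lie near the point where $\ell\tau$ is closest to $kt$. Decompose $kt=\alpha\ell+\beta\ell^\perp$. Then $|kt-\ell\tau|\gtrsim|\beta|=|k\wedge\ell|t/|\ell|$, and in the direction of $\ell$ the factor is integrable with $\int\<|\ell|(\tau-\tau_*)\>^{-\sigma}d\tau\lesssim|\ell|^{-1}$. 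On the support one also has $|k(t-\tau)|\le|kt-\ell\tau|+|\ell-k|\tau$. With $\<k-\ell\>^{-\sigma}$ this lever arm is absorbed, at the cost of a factor $\tau\lesssim t$ only when $k\ne\ell$.

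\textbf{Non-collinear part.} When $k\wedge\ell\neq0$, the transverse component $|\beta|\ge|k\wedge\ell|t/|\ell|$ gives an extra factor $\<|k\wedge\ell|t/|\ell|\>^{-\sigma+2}$. On the lattice $(\Z/L)^d$, $|k\wedge\ell|$ is at least of order $L^{-2}$, and the difficulty is to make this uniform in $L$. The $\tau$-integral is of size $|k|^{1/2}|\ell|^{-1/2}\<\ell\>^{-2}\<k-\ell\>^{-\sigma+1}\min\{t,\ |\ell|/|k\wedge\ell|\}$. Summing $L^{-d}\sum_\ell$ is then a Riemann sum. The first step is to write $\ell=k+m$, which gives $k\wedge\ell=k\wedge m$. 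For fixed $k$, the sum over $m$ of $\<m\>^{-\sigma+1}$ times $\min\{t,|\ell|/|k\wedge m|\}$ is handled in two regions. Far from the line $\R k$, the transverse dispersion $\int_0^\infty\<|k\wedge m|t/|\ell|\>^{-\sigma+2}dt$ is finite. Near the line, the measure in the $d-1$ transverse directions of $\{|k\wedge m|\le r\}$ is $\lesssim (r/|k|)^{d-1}$, so the contribution is bounded by $\int (r/|k|)^{d-2}\,dr/r$-type integrals. These converge for $d\ge2$, with the singular lattice points handled by the fact that a lattice point off the line has distance $\gtrsim L^{-1}$ from it. This bounds $\mathfrak S^+_{\rm nc}$. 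The column sum $\mathfrak S^-_{\rm nc}$ is symmetric: integrate in $t$ instead, using $\int\<|k|(t-t_*)\>^{-\sigma}dt\lesssim|k|^{-1}$, which balances the $|k|^{1/2}$ factors. The small-frequency regimes ($|k|$ or $|\ell|\ll1$), where the powers $|k|^{1/2}|\ell|^{1/2}$ must compensate the lever arm $|k|t$, need separate care. I expect this $L$-uniform non-collinear count to be the main obstacle. It is the Riemann sum near the collinear fiber where the lattice could in principle beat the continuum, and I would check it by a dyadic decomposition in $|k\wedge m|$.

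\textbf{Collinear part.} For $\ell\parallel k$, write $k=p\omega/L$ and $\ell=q\omega/L$ with $\omega$ primitive and $p,q\in\Z\setminus\{0\}$. The sum over $\ell$ becomes a one-dimensional sum over $q$ with weight $L^{-d}$. That weight equals $L^{1-d}$ times a one-dimensional Riemann weight $L^{-1}$. With the $\tau$-integral bounded by $|\ell|^{-1}\cdot(1+\tau|k-\ell|)\<k-\ell\>^{-\sigma}$, summing over $q$ yields $\lesssim L^{1-d}(1+T)$, because no transverse decay is available and the lever arm is only controlled by $T$. This gives \eqref{colbound}. For the lower bound in \eqref{sharpcol}, restrict to a single primitive ray and take $F$ a positive bump localized at $|k|\sim1$ near the echo times $\tau\approx kt/\ell$. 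Pairing $\mathcal V F$ with a similar bump $G$ gives $\langle\mathcal V F,G\rangle\gtrsim TL^{1-d}\|F\|\|G\|$. Here the factor $T$ comes from the lever arm $|k|(t-\tau)\sim t$ at order-one $|k-\ell|$, accumulated over $t\in[T/2,T]$, and the factor $L^{1-d}$ comes from the counting normalization after $\|F\|$ is computed in $Y_L$. Finally, \eqref{sharpfull} follows by combining the upper bounds with positivity: since both kernels are nonnegative, the norm of the full operator is at least the norm of each piece. It therefore suffices to add an order-one lower bound for $\mathcal V_{\rm nc}$, obtained by testing with a bump at $|k|,|\ell|\sim1$, non-collinear, on $t\in[1,2]$, which is available because $T\ge T_0$.
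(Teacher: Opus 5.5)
Your collinear fiber computation for \eqref{colbound}, the single-ray positive test function for the lower bound in \eqref{sharpcol}, and the positivity argument with an order-one non-collinear bump for \eqref{sharpfull} are essentially the paper's. One caveat on the ray test: the input must cover a time window of length $\sim T$ containing all echo times $\frac{\ffa}{\ffb}t$ of the output rectangle, as in the paper's choice $\tau\in[T/4,2T/3]$. An $O(1)$ time bump does not produce the rate $TL^{1-d}$. The genuine gap is \eqref{ncbound}. You correctly call it the main obstacle, but the mechanism you sketch is wrong and does not give an $L$-uniform bound.

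\textbf{What fails.} First, the lattice input is false. A point of $(\Z/L)^d$ off the line $\R k$ need not be at distance $\gtrsim L^{-1}$ from it. Write $k=\ffa\bbp/L$ with $\bbp$ primitive and $\ell=(\ffb\bbp+\bbc)/L$. Then this distance is $|\bbc|/L$, where $\bbc\in\Pi(\bbp)(\Z^d)\setminus\{0\}$ lies in a rank-$(d-1)$ lattice of covolume $|\bbp|^{-1}$. For $d=2$ and $\bbp=(1,N)$, the point $\ell=k+\bbe_2/L$ is at distance $L^{-1}(1+N^2)^{-1/2}$. Likewise, the continuum tube measure $(r/|k|)^{d-1}$ is not the lattice count below the lattice scale. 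There $\Pi(\bbp)(\Z^d)$ is effectively one-dimensional: $\#\{|\bbc|\le R\}\ls|\bbp|R$ for $R\le1$, versus $|\bbp|R^{d-1}$ for $R\geq1$.

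Second, replacing the transverse decay $\<|k|t\sin\theta\>^{-\sigma+2}$ by $\min\{t,|\ell|/|k\wedge\ell|\}$ discards exactly what is needed. In the one-dimensional regime, $\sum_{\bbc}\min\{t,L/|\bbc|\}$ is logarithmic. For $d=2$ and $k\in\R\bbe_1$ with $|k|\sim1$, your majorant of the row sum is $\approx L^{-1}\sum_{0<|c|\ls L}\min\{t,L/|c|\}\approx1+\log\min\{t,L\}$, whose supremum in $t$ grows like $\log L$. Consistently, your integral $\int(r/|k|)^{d-2}\,dr/r$ is $\int dr/r$ when $d=2$ and diverges, contrary to ``these converge for $d\ge2$''.

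Third, for $|k|\ll1$ the bound $\tau\le t$ behind your formula yields a contribution $\gtrsim|k|^{-1/2}$ as $t\to\infty$. The paper instead uses that the echo time is $\approx\frac{|k|}{|\ell|}t$, and splits at $\tau=|k|^{1/2}t/|\ell|^{1/2}$ (the terms $\mathscr B^2_{1,2}$, $\mathscr B^2_{3,2}$).

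\textbf{The missing ingredient} is the projected-lattice counting of Proposition \ref{Lattice}. The steps in the paper are:
\begin{enumerate}
\item In the fiber coordinates $(\ffb,\bbc)$, keep the full decay $\<\frac{\ffa t}{L}\frac{|\bbc|}{\ffb}\>^{-\sigma+2}$.
\item Sum the fiber index $\ffb$ first, directly or via Proposition \ref{integralest} when $\ffb\gg\ffa$. This reduces matters to $\sum_{\bbc}\frac{t}{L^{d-1}|\bbp|}(1+\frac tL|\bbc|)^{-d}$.
\item Apply \eqref{lattice1} with $R=L/t$.
\end{enumerate}
The density factor $|\bbp|$ comes from the covolume and from a reduced basis of vectors of length $\ls1$. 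It cancels the $1/|\bbp|$ produced by the fiber spacing $|\bbp|/L$, giving $L^{2-d}\vv{1}_{t\geq L}+t^{2-d}\vv{1}_{t\le L}\ls1$ for every $d\ge2$. The column sum is also not a mere mirror image. Near the fiber at small $|k|$, the singular weight $|k|^{-1/2}$ must be summed using the fractional count \eqref{lattice2} with $\gamma=\frac12$ and the tail count \eqref{lattice3}.
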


\begin{rmk}[Meaning of the decomposition]
The non-collinear estimate is the continuum contribution.  Angular
transversality restricts the near-echo interval, while the projected-lattice
count controls modes with small angle.  On a collinear fiber the transverse
direction is lost.  The fiber has normalized primitive-ray weight $L^{1-d}$,
which may accumulate over $[0,T]$ and produces the second estimate.  Thus the
transition is controlled by the weight of one resonant ray followed by time
accumulation, not by the global number of collinear pairs.
\end{rmk}

\begin{rmk}[Post-crossover growth and the two transition scales]
Theorem
\ref{thm:volterra} has the following immediate consequences.
\begin{enumerate}
\item At the dispersive box scale $T_L=L$,
$\|\mathcal V^S_{L,L,\mathrm{col}}\|\ls L^{2-d}$; hence the discrete
collinear contribution vanishes there for $d\geq3$.  This is a transition
of the density-decay mechanism, not a crossover of the full operator norm.
\item If $T_L=o(L^{d-1})$, then
\[
\|\mathcal V^S_{L,T_L,\mathrm{col}}\|_{Y_L(T_L)\to Y_L(T_L)}
\longrightarrow0.
\]
If instead $T_L\sim cL^{d-1}$ with $c>0$, then the collinear norm is
comparable to one; if $T_L/L^{d-1}\to\infty$, then it diverges.  Thus
$L^{d-1}$ is the sharp crossover scale for the positive collinear operator.
The full operator remains comparable to one in the subcritical regime because
of its non-collinear continuum background.  At the crossover the two pieces
become comparable, and above it the collinear contribution dominates.
\item If the profile has size $\varepsilon$, the high-low reaction can be absorbed provided $C\varepsilon[1+(1+T)L^{1-d}]\le\f12$, giving the controlled window
\beq\label{window}T\ls\varepsilon^{-1}L^{d-1}.\eeq
\end{enumerate}
Accordingly, $T_{\rm disp}(L)\sim L$ is the time at which continuum density
dispersion ends, whereas $T_{\rm col}(L)\sim L^{d-1}$ is the later sharp
crossover of the positive collinear majorant.  Between these scales the
continuum decay branch has ended, but the collinear Volterra contribution
still tends to zero as $L\to\infty$.
\end{rmk}

\begin{rmk}[Operator sharpness, nonlinear dynamics, and Gevrey compensation]
The lower bound in \eqref{sharpcol} concerns the positive comparison operator,
not the signed nonlinear density memory in \eqref{densityN}.  It therefore
does not preclude cancellations or produce a Vlasov solution that grows at
$T\sim L^{d-1}$. Likewise, the sufficient Sobolev window \eqref{window} is not claimed to be a
nonlinear instability threshold.  What is sharp is the large-box gain
available from the polynomial collinear majorant.  For fixed $L$, the
Gevrey-compensated operator introduced in Section \ref{sec:transition}
satisfies
\beq\label{introGbound}
\|\mathcal V^G_{L,\infty}\|\ls1.
\eeq
Thus a time-decreasing Gevrey radius supplies all-time compensation after the
Sobolev large-box gain has been exhausted.  This proves that the purely
polynomial comparison mechanism cannot give an all-time bound at fixed $L$;
it does not assert that the precise $L$-dependent multiplier used below is
the unique possible compensation mechanism.
\end{rmk}

\subsection{The two Sobolev windows and the whole-space limit}\label{ss:introlimit}

Theorem \ref{thm:volterra} produces two related, but distinct, polynomial
Sobolev conclusions.  For a profile of size $\eps$, the high--low reaction is
perturbative as long as
$\eps[1+(1+T)L^{1-d}]\ll1$; hence Sobolev smallness propagates up to
$T\sim\eps^{-1}L^{d-1}$.  Inside this longer interval, the subwindow
$0\leq t\leq L$ is the overlap between the whole-space dispersive regime and
the periodic problem.  There the discrete collinear contribution is $O(L^{2-d})$ for
$d\geq3$, and this growing interval is sufficient to pass to the
whole-space limit.  Before stating the result, we define the compatibility notion.

\begin{defi}
Let $Q_L=(-\pi L,\pi L]^d$.  A family $\{h_{L}\in L^2(\TL\times\R^d):L\ge5\}$ is compatible with a
whole-space datum $h_\infty$ if, for every compact $K\Subset\R^d_x$, the
restrictions to $Q_L\times\R^d$ satisfy
\beq\label{datacompat}
h_{L}\longrightarrow h_\infty
\quad\text{in }H^{\sigma_0}(K\times\R^d_v;\<v\>^{m}dvdx).
\eeq
\end{defi}

\begin{thm}[Two Sobolev time scales and the whole-space limit]\label{thm:limit}
Let $d\geq3$, adopt the notations in Theorem \ref{T1},
 and suppose that $\mu$ satisfies
\eqref{mudecay}--\eqref{muPenrose}.  There exist constants
$c_*,\eps_0>0$, independent of $L$, with the following property.
Define the two times
\beq\label{twosobtimes}
T_{\rm disp}(L):=L,
\qquad
T_{\rm Sob}(L,\eps):=c_*\eps^{-1}L^{d-1}
\eeq
for $\eps<\eps_0$.
Then $T_{\rm disp}(L)\leq T_{\rm Sob}(L,\eps)$.  For brevity write
$T_{\rm disp}=T_{\rm disp}(L)$ and
$T_{\rm Sob}=T_{\rm Sob}(L,\eps)$ below.

\smallskip
\noindent\emph{(i) The long Sobolev window.} For any initial datum $h_L(0)\in L^2(\TL\times\R^d)$ satisfying
\[\sum_{|\alpha|+|\beta|\leq\sigma_0}\nnr{\<v\>^m\pa_x^\alpha\pa_v^\beta h_L(0)}_{L^1_xL^2_v\cap L^2_{x,v}}=\eps<\eps_0,\]
the Sobolev solution on $\TL\times\R^d$ exists on
$[0,T_{\rm Sob}(L,\eps)]$ with
\beq\label{finitewindowbound}
\nnr{\<v\>^m\<\na\>^{\bar\sigma}\rr{h_L(t,x+vt,v)}}_{ L^2_{x,v}}\ls\eps.
\eeq

\smallskip
\noindent\emph{(ii) The dispersive window and the whole-space limit.}
The same estimate holds on the dispersion-dominated subwindow
$[0,T_{\rm disp}]$.
If, in addition, the data are compatible with $h_\infty(0)$ in the sense of
\eqref{datacompat}, then the family converges locally on every compact time
interval to the unique global solution $h_\infty(t)$ of the Vlasov--Yukawa equation on
$\R^d_x\times\R^d_v$.  $h_\infty(t)$ satisfies
\beq\label{Einfbound}
\sup_{t\geq0}\nnr{\<v\>^m\<\na\>^{\bar\sigma}\rr{h_\infty(t,x+vt,v)}}_{L^2_{x,v}}\ls\eps.
\eeq
\end{thm}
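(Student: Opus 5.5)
The plan is to run a standard bootstrap in free-transport coordinates $g_L(t,x,v)=h_L(t,x+vt,v)$ on $[0,T]$ with $T\le T_{\rm Sob}(L,\eps)$. The argument uses polynomial weights only, with the Volterra control supplied by Theorem \ref{thm:volterra} in place of the Gevrey multiplier of Theorem \ref{T1}. First, since $d\ge3$ gives $L\le c_*\eps^{-1}L^{d-1}$ as soon as $\eps\le c_*L^{d-2}$, the inequality $T_{\rm disp}\le T_{\rm Sob}$ is immediate once $\eps_0\le c_*5^{d-2}$.

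The bootstrap quantities are two.
\begin{itemize}
\item The energy $E(t)=\|\<v\>^m\<\na\>^{\bar\sigma}g_L(t)\|_{L^2_{x,v}}$.
\item The density trace norm $D(T)=\|F_\sigma\|_{Y_L(T)}$, where $F_\sigma(\tau,\ell)=|\ell|^{1/2}\<\ell,\ell\tau\>^\sigma|\wh\rho(\tau,\ell)|$ for an intermediate $\sigma\ge d+3$ between $\bar\sigma$ and $\sigma_0$.
\end{itemize}
The bootstrap hypothesis is $E\le 2C\eps$ and $D\le 2C\eps$. For the density, take the trace $\wh\rho(t,k)=\wh g(t,k,kt)$ of the Duhamel formula. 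This yields a linear Volterra equation with the Penrose kernel of \eqref{muPenrose}, forced by the free-transport trace of the data plus the nonlinear memory \eqref{densityN}. Inverting the Penrose resolvent (mode by mode, uniformly in $k\in\Lambda_L$, by \eqref{muPenrose}) reduces the problem to bounding the forcing in $Y_L(T)$. The free part is bounded by $C\eps$ via the trace identity $\int_0^\infty|k||\wh h_{\rm in}(k,kt)|^2dt=\int_0^\infty|\wh h_{\rm in}(k,sk/|k|)|^2ds$, using the $L^1_x$ part of the norm and $\<v\>^m$ with $m>d/2$ to control the trace in $\eta$.

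For the memory term, split into the reaction piece (field at low frequency, profile at high $\eta$) and the transport piece. The reaction piece is dominated pointwise by $\sup_{\tau}\|\<\cdot\>^\sigma\wh g(\tau)\|_{L^\infty_\eta}$ times $\mathcal V^S_{L,T}F_\sigma$. By \eqref{Vbound} and Sobolev embedding in $\eta$ (again from $m$), it is at most $C\eps[1+(1+T)L^{1-d}]D(T)$. On $T\le c_*\eps^{-1}L^{d-1}$ this is at most $C(\eps+c_*+\eps L^{1-d})D\le\frac14 D$ once $c_*,\eps_0$ are small. The transport piece, in which derivatives fall on $\rho$, is handled by the usual paraproduct with the roles exchanged and gains from $\<kt-\ell\tau\>^{-\sigma}$. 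This loses the $\frac32 d+7$ derivatives accounted for in $\bar\sigma=\sigma_0-\frac32d-7$.

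For the energy estimate one differentiates $\|\<v\>^m\<\na\>^{\bar\sigma}g\|^2$. The linear term $E\cdot\na_v\mu$ is bounded using $D(T)$ and \eqref{mudecay}. The nonlinear term is bounded by $\int_0^T\|\<\na\>^{\bar\sigma+1}\rho\|\,E\,dt$ plus a commutator in $\na_v-t\na_x$. Here lies the main obstacle: absent dispersive decay one cannot simply integrate $\|\rho\|_{L^\infty}$ in time up to $T\gg L$. My plan is to avoid pointwise decay and close entirely in the spacetime norm. Cauchy--Schwarz in time against $D(T)$ costs the same kernel sums $\mathfrak S^\pm$ that Theorem \ref{thm:volterra} bounds by $1+(1+T)L^{1-d}$, and $\eps$ times this quantity is again small on the window. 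This closes (i). Local existence and continuation in this Sobolev class are standard on each fixed torus.

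For (ii), the estimate on $[0,T_{\rm disp}]$ is the special case $T=L$ of (i), where moreover the collinear part is $O(L^{2-d})\to0$. For the limit, fix $T_*$ and take $L\ge T_*$. The uniform bound \eqref{finitewindowbound} together with the equation gives uniform bounds on $\pa_tg_L$ in a weaker norm. Restricting to $Q_L\supset K$, which is legitimate because the finite propagation of $x$-support is only up to velocity tails handled by $\<v\>^m$, Aubin--Lions gives a locally convergent subsequence. Passing to the limit in the nonlinearity works because $W*\rho$ on $\TL$ converges locally to the whole-space Yukawa field: $W$ decays exponentially, so periodization errors are $O(e^{-cL})$. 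The limit solves Vlasov--Yukawa on $\R^d\times\R^d$ with data $h_\infty(0)$ by \eqref{datacompat}. Uniqueness of the Sobolev whole-space solution (a Gronwall estimate in a lower norm) upgrades this to convergence of the whole family. Since $T_*$ is arbitrary and $T_{\rm disp}=L\to\infty$, the bound \eqref{Einfbound} holds for all $t\ge0$ by lower semicontinuity.
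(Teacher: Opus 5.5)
Your handling of $T_{\rm disp}\le T_{\rm Sob}$ and of part (ii) essentially matches the paper: compactness, the $O(e^{-cL})$ periodization error of the Yukawa kernel, Gronwall uniqueness, and lower semicontinuity. The proof of (i), however, has a genuine gap. A bootstrap on only $E$ (an $L^2_{x,v}$ norm at level $\bar\sigma$) and $D$ (an $L^2_{t,k}$ density norm at a higher level $\sigma$) cannot close, for two reasons.

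\emph{Reaction piece.} You need $|\widehat g(\tau,k-\ell,kt-\ell\tau)|\lesssim\eps\langle k-\ell\rangle^{-\sigma}\langle kt-\ell\tau\rangle^{-\sigma}$ uniformly in $L$, i.e.\ a weighted $L^\infty_{k,\eta}$ bound on $\widehat g$ with about $2\sigma$ derivatives. Sobolev embedding in $v$ handles only $\eta$. By \eqref{normalizedplancherel}, a supremum over $k\in\mathbb Z^d_L$ costs $L^{d/2}$ relative to $E$, and $E$ carries only $\bar\sigma<2\sigma$ derivatives. The paper carries this as a separate unknown, $\mathcal E_{g3}=\|\widehat{\langle\nabla\rangle^{\sigma_4}g_L}\|_{L^\infty_{k,\eta}}$ with $\sigma_4\ge2d+6$. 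It is propagated through Duhamel by the fixed-frequency density norm $\mathcal E_{\rho2}$, an $L^\infty_kL^2_t$ norm. This is where the $L^1_xL^2_v$ part of the data is genuinely needed.

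\emph{Transport piece.} Here the derivatives fall on $g$, not on $\rho$: $k(t-\tau)$ is the symbol of $\nabla_v-\tau\nabla_x$ at $(k-\ell,kt-\ell\tau)$. So a density norm at level $\sigma$ consumes $g$ at level $\sigma+1$, with an extra factor $\langle\tau\rangle$. Your $E$ sits below $D$ and cannot feed it. The paper instead places the top energy $\mathcal E_{g1}$ at level $\sigma_1+1$, above $\mathcal E_{\rho1}$, and lets it grow like $\langle t\rangle^{5/2+\delta}$. It pays for this growth with the decay of $\langle\ell,\ell\tau\rangle^{-\sigma_4}$ supplied by $\mathcal E_{g3}$.

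Your energy step also fails as stated. The energy identity for $g$ is single-time, so no echo kernel and no Schur sums $\mathfrak S^\pm$ arise there. With only $L^2_t$ control of $\rho$, Cauchy--Schwarz in time costs $T^{1/2}$, plus the factor $\langle t\rangle$ coming from $\eta-kt$. That is fatal for $T\sim\eps^{-1}L^{d-1}$.

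The obstacle you name is not real. Proposition~\ref{dispersivepm} gives $\sum_\ell L^{-d}\langle\ell,\ell t\rangle^{-\sigma}|\ell|^\alpha\lesssim\langle t\rangle^{-d-\alpha}$ for \emph{all} $t\ge0$. For $t\ge L$ the mean-zero periodic density phase-mixes faster, not slower. So once $\mathcal E_{g3}$ is bounded, the Sobolev low--high commutators decay like $\langle t\rangle^{-d}$, and the Duhamel integrands in \eqref{94}--\eqref{95} are integrable in time. The only $L$-dependent loss is the collinear factor $1+(1+t)L^{1-d}$ in the high--low density memory \eqref{RHO1}; that is the sole use of Theorem~\ref{thm:volterra} in (i).

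The uniform bound \eqref{finitewindowbound} at level $\bar\sigma$ is then read off from \eqref{93}--\eqref{95}, not obtained from a direct energy estimate. Repairing (i) requires the full five-norm hierarchy \eqref{RHO1}--\eqref{GG3}.
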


\begin{rmk}[Meaning of the limit]
The shorter growing interval is already sufficient: every fixed $[0,T]$ is
contained in $[0,T_{\rm disp}(L)]$ for all sufficiently large $L$.
Truncation and periodization provide compatible data, while weighted local
compactness and convergence of the periodic Yukawa kernels produce a
whole-space limit.  Uniqueness in the resulting high-Sobolev class identifies
all subsequential limits and therefore yields convergence of the full family.
This provides a large-box route consistent with the finite-regularity
screened-interaction theory of \cite{BMM1,HNR1,HNX2}.
\end{rmk}

\subsection{Relation to previous work and novelty}

Nonlinear Landau damping on $\mathbb T^d$ was established by Mouhot and
Villani \cite{MV} for analytic perturbations and subsequently developed in
\cite{BMM,GNR,waveop}, culminating in the critical Gevrey-$3$ result of
\cite{waveop}.  The existence and injectivity of wave operators were studied
in \cite{waveop,waveop2}; see also \cite{JBc,BZZ,BZZ2} for collisionless-limit
questions in collisional models.

On the whole space, Bedrossian, Masmoudi, and Mouhot \cite{BMM2} identified
the linearized wave-damping structure for Vlasov--Poisson, while
\cite{HNR2,HNR3,Nguyen2,Nguyen3,Poissoneq} developed linear and nonlinear
whole-space theories for unscreened models.  For screened interactions,
finite-regularity Landau damping and asymptotic stability were proved in
\cite{BMM1,HNR1}; sharp estimates in two and higher dimensions appear in
\cite{HNX1,HNX2,Nguyen}.  Vacuum stability was studied in \cite{analytic};
the one-dimensional Gevrey-$2$ result of \cite{Wei} is likewise a near-vacuum
theory and is cited here as a related, rather than directly parallel, result.

The new point here is the quantitative bridge between these two geometries.
Rather than treating the torus and the whole space as separate problems, we
track the normalized resonance geometry as the lattice spacing $L^{-1}$ tends
to zero.  The echo Volterra operator turns this geometry into an operator
estimate.  The collinear/non-collinear decomposition produces the two time
scales, and a matching one-ray lower bound proves that $L^{d-1}$ is the sharp
crossover scale of the positive collinear operator.  A fixed transverse
frequency test supplies the order-one non-collinear lower bound, yielding the
full law $\|\mathcal V^S_{L,T}\|\asymp1+T L^{1-d}$.  The nonlinear Sobolev
theory then realizes the corresponding perturbative window: polynomial
smallness persists to $\eps^{-1}L^{d-1}$, while the dispersion-dominated
subwindow $[0,L]$ supplies the compactness window used in the whole-space
limit.

\subsection{Proof strategy and organization}

We first pass to free-transport coordinates and take the density trace.  The
linear part is inverted using the Penrose resolvent, while the nonlinear part
is divided into low--high transport and high--low reaction.  The latter is the
source of the echo Volterra operator.  A time-decreasing Gevrey multiplier,
together with $L$-uniform Fourier calculus, closes the all-time density and
profile estimates on each expanding torus.

The quantitative transition requires a separate geometric analysis.  For
non-collinear modes we project the frequency lattice onto the hyperplane
orthogonal to one mode and use a uniform lattice count.  For collinear modes
we parameterize primitive rays and estimate both Schur directions.  A test
function supported near the echo times on one fixed ray gives
the matching lower bound.  Finally, we close polynomial Sobolev estimates up to
$c_*\eps^{-1}L^{d-1}$; on the shorter interval $0\leq t\leq L$ we pass to the
whole-space limit by compactness.

Section \ref{sec:prelim} introduces the normalized Fourier framework, the
Gevrey multiplier, and the bootstrap norms.  Sections \ref{sec:density} and
\ref{sec:g} prove the density and profile estimates, and Section
\ref{sec:proof} closes the global Gevrey theorem.  Section
\ref{sec:transition} proves the time-resolved echo estimates, while Section
\ref{sec:limit} establishes the Sobolev window and the whole-space limit.

\section{\texorpdfstring{Preliminaries and the $L$-uniform Gevrey framework}{Preliminaries and the L-uniform Gevrey framework}}\label{sec:prelim}

This section collects the normalized Fourier conventions and the analytic
tools used in the global argument.  In particular, every discrete convolution
is normalized so that its constants remain stable as the lattice
$\mathbb Z_L^d$ approaches $\mathbb R^d$.

\subsection{Notation and conventions}
\subsubsection{Fourier transform} We denote $\TL:=\R^d/(2\pi L\Z)^d$ and $\ZL:=(\Z/L)^d$. For any $\Lambda\subset\R^d$ we write $\Lambda^*=\Lambda\setminus\{0\}$ for the punctured set. The Fourier transform of a function $f(x)$ on $\TL$ is defined by
\[\wh{f}(k)=\f{1}{(2\pi )^d}\int_{\TL}f(x)e^{-ik\cdot x}dx\ \text{for}\ k\in\ZL,\quad \text{so that}\ f(x)=\f{1}{ L^d}\sum_{k\in\ZL}\wh{f}(k)e^{ik\cdot x}.\]
Similarly, for a spatial function $\R_x^d$ we use
\[\wh{f}(k)=\f{1}{(2\pi )^d}\int_{\R^d}f(x)e^{-ik\cdot x}dx\ \text{for}\ k\in \R^d,\quad \text{so that}\ f(x)=\int_{\R^d}\wh{f}(k)e^{ik\cdot x}dk.\]
As $L\to\infty$, the frequency lattice $\mathbb{Z}_L^d$ becomes finer, approximating the continuum $\mathbb{R}^d$. Consequently, the inverse Fourier transform on $\mathbb{T}_L^d$ converges to the Fourier integral on $\mathbb{R}^d$ as a Riemann sum. For a function $q(v)$ of velocity alone we instead use the unnormalized transform
\[\wh{q}(\xi)=\int_{\R^d}q(v)e^{-i\xi\cdot v}dv,\quad q(v)=\f{1}{(2\pi )^d}\int_{\R^d}\wh{q}(\xi)e^{i\xi\cdot v}d\xi.\]
The Fourier transform of a function $f(x,v)$ on $\TL\times\R^d$ is defined by
\[\wh{f}(k,\eta)=\f{1}{(2\pi)^d}\int_{\TL\times\R^d}f(x,v)e^{-ik\cdot x}e^{-i\eta\cdot v}dxdv\quad (k,\eta)\in\ZL\times\R^d,\]
so that
\[f(x,v)=\f{1}{(2\pi L)^d}\sum_{k\in\ZL}\int_{\R^d}\wh{f}(k,\eta)e^{ik\cdot x}e^{i\eta\cdot v}d\eta.\]
With these conventions Plancherel's identity takes the exact form
\beq\label{normalizedplancherel}
\|f\|_{L^2_{x,v}}^2=\f1{L^d}\sum_{k\in\ZL}\int_{\R^d}
|\wh f(k,\eta)|^2d\eta.
\eeq

\subsubsection{Function spaces} We use the following notation for mixed Lebesgue norms, with the usual modifications when $p$ or $q$ is infinite:
\[\nnr{f}_{L^p_aL^q_b}=\rr{\int\rr{\int\nr{f(a,b)}^qdb}^{p/q}da}^{1/p}.\]

\subsubsection{Miscellaneous}
\begin{itemize}
	\item We denote $\<x\>=\rr{1+|x|^2}^\f12$. For $(k,\eta)\in\ZL\times\R^d$, we write $|k,\eta|=\rr{|k|^2+|\eta|^2}^\f12$ and $\<k,\eta\>=\<|k,\eta|\>$.
	\item We use $\bba, \bbb, \bbc, \bbp, \dots$ for vectors in $\R^d$ and $\ffa, \ffb, \ffc, \ffp, \dots$ for integers. For $\bbp\in\R^d\setminus\{0\}$, let
	\[
	\widehat{\bbp}:=\f{\bbp}{|\bbp|},\qquad
	\Pi(\bbp):=I-\widehat{\bbp}\mult\widehat{\bbp}.
	\]
	Thus $\Pi(\bbp)$ is the orthogonal projection onto $\bbp^\perp$. Finally, $\N=\{0,1,2,\ldots\}$.
\end{itemize}

\subsubsection{Parameters}
For the reader's convenience we collect the fixed parameters used throughout,
introduced respectively in Theorem \ref{T1}, in the multiplier construction of
\S\ref{ss:multiplier}, and in the bootstrap of \S\ref{ss:bootstrap}.
\begin{center}
\renewcommand{\arraystretch}{1.3}
\begin{tabular}{c l l}
\hline
Symbol & Meaning & Constraint / value \\
\hline
$L$ & box size, torus $\TL=\R^d/(2\pi L\Z)^d$ & $L\geq5$ \\
$d$ & spatial dimension & $d\geq3$ (Thm \ref{T1}); $d\geq2$ in \S\ref{sec:transition} \\
$m$ & velocity weight order & $m=\llcorner\f d2\lrcorner+1$ \\
$\lambda_1$ & Gevrey-$3$ radius of the equilibrium $\mu$ & \eqref{mudecay} \\
$\lambda_0$ & Gevrey radius of the data & $0<\lambda_0<\f14\lambda_1$ \\
$\kappaL$ & large-box loss exponent & $0<\kappaL<0.001$ \\
$\beta$ & time-decay exponent of $\lambda^L$ & $0<\beta<\f\kappaL3$ \\
$\sigma_0$ & base regularity of the data & $\sigma_0\geq\f52 d+11$, $\sigma\in\Z$ \\
$\sigma_1,\dots,\sigma_4$ & energy regularities & $\sigma_1=\sigma_0-1,\ \sigma_2=\sigma_1-\f d2-1,$ \\
 & & $\sigma_3=\sigma_2-1,\ \sigma_4=\sigma_3-2\geq2d+6$ \\
$\bar\sigma$ & regularity of the limit profile & $\bar\sigma=\sigma_0-\f32 d-7$ \\
$\eps,\delta$ & data size; time-growth loss (bootstrap) & $\eps<\eps_0$, $0<\delta<\min\{\f14,\f d2-1\}$ \\
$K_{\rho1},K_{\rho2},K_{g1},K_{g2},K_{g3}$ & bootstrap constants & large, fixed in the sequel \\
\hline
\end{tabular}
\end{center}
Here $\delta$ is fixed once and for all in the displayed range; in particular,
$\int_0^\infty\<t\>^{-d/2+\delta}dt<\infty$. The Penrose/decay constant $\kappaP$ of
\eqref{mudecay}--\eqref{muPenrose} and the large-box loss $\kappaL$ of
\eqref{Initial} are likewise distinct.

\subsection{Free-transport coordinates and the density equation}
Following \cite{BMM1}, we employ the Fourier energy method, combining Fourier-space analysis with weighted energy estimates. Let $g(t,x,v)=h(t,x+vt,v)$. Then $g$ satisfies
\begin{align}\label{gdifferential}
\left\{\ba
&\pa_tg+E(t,x+vt)\cdot\na_v\mu+E(t,x+vt)\cdot(\na_v-t\na_x)g=0,\\
&E(t,x)=-\na_xW*_x\rr{\rho-\f{1}{(2\pi L)^d}\int_{\TL}\rho dx},\ \rho(t,x)=\int_{\R^d} g(t,x-vt,v)dv.
\ea\right.
\end{align}
We perform the Fourier transform to obtain
\[
\left\{\ba
&\pa_t\wh{g}(t,k,\eta)+\wh{\rho}(t,k)\wh{W}(k)k\cdot(\eta-kt)\wh{\mu}(\eta-kt)\\
&\qquad+\sum_{\ell\in\ZL}\f{1}{L^d}\wh{\rho}(t,\ell)\wh{W}(\ell)\ell\cdot(\eta-kt)\wh{g}(t,k-\ell,\eta-\ell t)=0,\\[4pt]
&\wh{\rho}(t,k)=\wh{g}(t,k,kt).
\ea\right.
\]
Integrating on $(0,t)$ yields
\begin{align}\label{gintegral}
\wh{g}(t,k,\eta)=\wh{g}(0,k,\eta)-\int_0^t\wh{\rho}(\tau,k)\wh{W}(k)k\cdot(\eta-k\tau)\wh{\mu}(\eta-k\tau)d\tau\\\notag-\sum_{\ell\in\ZL}\f{1}{L^d}\int_0^t\wh{\rho}(\tau,\ell)\wh{W}(\ell)\ell\cdot(\eta-k\tau)\wh{g}(\tau,k-\ell,\eta-\ell\tau)d\tau.
\end{align}
Setting $\eta=kt$, we obtain
\begin{align}\label{rho}
\wh{\rho}(t,k)=\wh{h}(0,k,kt)-\int_0^t\wh{\rho}(\tau,k)\wh{W}(k)|k|^2(t-\tau)\wh{\mu}(k(t-\tau))d\tau\\\notag-\sum_{\ell\in\ZL}\f{1}{L^d}\int_0^t\wh{\rho}(\tau,\ell)\wh{W}(\ell)\ell\cdot k(t-\tau)\wh{g}(\tau,k-\ell,kt-\ell\tau)d\tau.
\end{align}

\subsection{Gevrey multipliers}\label{ss:multiplier}
The delicate part of our proof is to control the interaction between different frequencies. For the interaction between non-collinear frequencies, a refined analysis of the geometric structure of the frequency lattice $\mathbb{Z}_L^d$ reduces the problem to an algebraic lattice summation problem; see Proposition \ref{Lattice}. For the interaction between collinear frequencies, we use the gliding Gevrey regularity to control the resonance. Motivated by \cite{waveop}, we define below the time-decaying Gevrey multiplier to be used in the sequel. First we set
\[\tilde{a}_L(x)=\int_0^x\f12\<\f{t}{L}\>^{-\f32}\f{t}{L^2}\psi(t-L^3)+\f13\<t\>^{-\f53}t\rr{1-\psi(t-L^3)}dt+1,\quad x\geq0,\]
where
\[
\chi_0(s):=e^{-1/s}\vv1_{s>0},
\qquad
\psi(s):=\f{\chi_0(1-s)}{\chi_0(s)+\chi_0(1-s)}.
\]
Thus $\psi\in C^\infty(\R)$, $0\leq\psi\leq1$,
$\psi'\leq0$, $\psi=1$ on $(-\infty,0]$, and $\psi=0$ on
$[1,\infty)$.  All derivatives of $\psi$ vanish at the two endpoints.
A direct computation shows that $\tilde{a}_L(x)$ is a smooth monotone increasing function satisfying the following properties:
\begin{enumerate}
	\item $\tilde{a}_L=\<\f{x}{L}\>^{\f12}, \ x\leq L^3$; $\tilde{a}_L(x)=\tilde{a}_L(L^3+1)+\<x\>^\f13-\<L^3+1\>^\f13, \ x\geq L^3+1$.
	\item $\f12\min\{\<\f{x}{L}\>^{\f12}, \<x\>^\f13\}\leq\tilde{a}_L(x)\leq2\min\{\<\f{x}{L}\>^{\f12}, \<x\>^\f13\}$.
	\item $\tilde{a}_L(x+y)\leq \tilde{a}_L(x)+\tilde{a}_L(y)$, for all $x,y\ge0$.
\end{enumerate}

Then we set
\[\lambda^L(t,r)=\f{\lambda_0}{100}\rr{1+L^{-\f{d-1}{3}}t\<r\>^{-\f23}}^{-\beta}\tilde{a}_L(\f{r}{L^{d-2-\kappaL}})+\f{\lambda_0}{100}(1+t)^{-\beta}\tilde{a}_L(\f{r}{L^{d-2-\kappaL}}),\]
where we choose $0<\beta<\f\kappaL3$.  The construction is motivated by
\cite[Lemma~2.4]{waveop}, but all estimates needed here, including their
uniformity in $L$, are verified below.
\begin{lem}\label{multiplier} For $L\geq5$, the following properties hold:
	\begin{enumerate}
		\item The derivatives are given by
		\[\ba
		-\pa_t\lambda^L(t,r)&=\f{\lambda_0}{100}\Bigl[
			\beta(1+t)^{-\beta-1}
			+\beta\Bigl(1+L^{-\f{d-1}{3}}t\<r\>^{-\f23}\Bigr)^{-\beta-1}
			L^{-\f{d-1}{3}}\<r\>^{-\f23}\Bigr]
			\tilde{a}_L\Bigl(\f{r}{L^{d-2-\kappaL}}\Bigr).
		\ea\]
		\[\ba
		\pa_r\lambda^L(t,r)&=\f{\lambda_0}{100}\Biggl(
			\f{\tilde{a}'_L\bigl(\f{r}{L^{d-2-\kappaL}}\bigr)}
			  {\tilde{a}_L\bigl(\f{r}{L^{d-2-\kappaL}}\bigr)}
			\f{1}{L^{d-2-\kappaL}}
			+\f{2\beta}{3}\Bigl(1+L^{-\f{d-1}{3}}t\<r\>^{-\f23}\Bigr)^{-1}
			\<r\>^{-\f23}L^{-\f{d-1}{3}}t\<r\>^{-2}r\Biggr)\\
		&\qquad\times\Bigl(1+L^{-\f{d-1}{3}}t\<r\>^{-\f23}\Bigr)^{-\beta}
			\tilde{a}_L\Bigl(\f{r}{L^{d-2-\kappaL}}\Bigr)
			+\f{\lambda_0}{100}(1+t)^{-\beta}
			\tilde{a}'_L\Bigl(\f{r}{L^{d-2-\kappaL}}\Bigr)
			\f{1}{L^{d-2-\kappaL}}.
		\ea\]
		\item For all $x, y\geq0$, $\lambda^L(t,x+y)\leq\lambda^L(t,x)+\lambda^L(t,y)+\lambda_0$.
		\item For any $x\geq y\geq2\<x-y\>$, 
		\beq\label{lossd}e^{\lambda^L(t,x)}-e^{\lambda^L(t,y)}\ls\lambda_0\f1x\tilde{a}_L(\f{x}{L^{d-2-\kappaL}})e^{\lambda^L(t,x)}(x-y).\eeq
\end{enumerate}
\end{lem}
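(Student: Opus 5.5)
Part (1) is a direct differentiation of the two summands of $\lambda^L$, and I would verify it first. Set $A(t,r):=1+L^{-\frac{d-1}{3}}t\<r\>^{-\frac23}$ and $a(r):=\tilde a_L(r/L^{d-2-\kappaL})$, so that
\[
\lambda^L=\frac{\lambda_0}{100}\bigl(A^{-\beta}+(1+t)^{-\beta}\bigr)a .
\]
Differentiating in $t$ uses only $\partial_tA=L^{-\frac{d-1}{3}}\<r\>^{-\frac23}$, and gives the stated formula for $-\partial_t\lambda^L$; its sign is visibly nonnegative. Differentiating in $r$ uses $\partial_r\<r\>^{-\frac23}=-\frac23\<r\>^{-\frac83}r$, so that $\partial_rA^{-\beta}=\frac{2\beta}{3}A^{-\beta-1}L^{-\frac{d-1}{3}}t\<r\>^{-\frac23}\<r\>^{-2}r$. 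Factoring $A^{-\beta}a$ out of the first summand produces the bracket with $\tilde a_L'/\tilde a_L$, and the second summand contributes $(1+t)^{-\beta}\tilde a_L'L^{-(d-2-\kappaL)}$. This part is routine bookkeeping.

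For (2), subadditivity, I would use property (3) of $\tilde a_L$, namely $a(x+y)\le a(x)+a(y)$. The only obstruction is that the prefactor $A(t,r)^{-\beta}$ \emph{increases} in $r$, so $A(t,x+y)^{-\beta}\ge A(t,x)^{-\beta}$. To handle this, suppose without loss of generality that $x\ge y$. Then $x+y\le 2x$, which gives $\<x+y\>^{-2/3}\ge 2^{-2/3}\<x\>^{-2/3}$ and hence $A(t,x+y)\ge 2^{-2/3}A(t,x)$. Consequently
\[
A(t,x+y)^{-\beta}\le 2^{2\beta/3}A(t,x)^{-\beta}\le\bigl(1+\tfrac{2\beta}{3}\bigr)A(t,x)^{-\beta}.
\]
Writing $a(x+y)\le a(x)+a(y)$ and bounding $A(t,x+y)^{-\beta}a(y)\le a(y)$, this yields
\[
\lambda^L(t,x+y)\le \lambda^L(t,x)+\lambda^L(t,y)+\frac{\lambda_0}{100}\Bigl[\frac{2\beta}{3}a(x)+a(y)\bigl(1-A(t,y)^{-\beta}\bigr)\Bigr].
\]
The bracket is not bounded by a constant for large $x$, so this crude route fails, and a more careful comparison is needed. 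The goal is to bound $\frac{\lambda_0}{100}a(x)\bigl(A(t,x+y)^{-\beta}-A(t,x)^{-\beta}\bigr)+\frac{\lambda_0}{100}a(y)\bigl(A(t,x+y)^{-\beta}-A(t,y)^{-\beta}\bigr)$ by $\lambda_0$.

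\textbf{The second difference.} Since $y\le x$, we have $y\le x+y$. If $x\le 3y$, the same comparison argument as above gives $A(t,x+y)\gtrsim A(t,y)$. If $x>3y$, I simply drop the difference: $a(y)\le a(x)$, and the resulting term is absorbed into the first case below.

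\textbf{The first difference.} By the mean value theorem, $A^{-\beta}(t,x+y)-A^{-\beta}(t,x)\le \beta y\sup|\partial_rA^{-\beta}|/A^{-\beta}\cdot A^{-\beta}$, and $\partial_r\log A^{-\beta}\lesssim\beta\<r\>^{-1}$. This bounds the first difference by $\beta\,a(x)\,y/\<x\>$ in the regime $y\ll x$, while if $y\sim x$ it is at most $\beta\,a(x)$. Here lies the main obstacle: $a(x)$ is unbounded. To deal with it I would exploit the extra decay $A^{-\beta-1}$. The factor $\frac{A-1}{A}\le1$ together with $\beta A^{-\beta}a(x)$ must be shown to be $\lesssim 1$ in the regime where $A$ is large. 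I would attempt this by using $a(x)\lesssim\<x\>^{1/3}$ and $\beta<\kappaL/3$, and I expect that closing it requires splitting into $t\lesssim L^{\frac{d-1}{3}}\<x\>^{2/3}$ (where the difference is tiny, being of relative size $t/L^{\frac{d-1}{3}}\<x\>^{2/3}$) versus the complementary regime (where $A^{-\beta}a$ must be compared with $(1+t)^{-\beta}a$, which controls it). This step is the one I anticipate to be the real difficulty.

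For (3), the plan is to write
\[
e^{\lambda^L(x)}-e^{\lambda^L(y)}\le (x-y)\sup_{z\in[y,x]}\partial_r\lambda^L(t,z)\,e^{\lambda^L(t,x)},
\]
using monotonicity of $\lambda^L$ in $r$ (immediate from (1)). From (1), $\partial_r\lambda^L\lesssim\lambda_0\bigl(\tilde a_L'(z/L^{d-2-\kappaL})L^{-(d-2-\kappaL)}+\beta\<z\>^{-1}a(z)\bigr)$. Concavity-type behaviour of $\tilde a_L$ (it behaves like a power $\le\frac12$ of its argument, by property (1) of $\tilde a_L$) gives $s\,\tilde a_L'(s)\lesssim\tilde a_L(s)$, hence $\partial_r\lambda^L(z)\lesssim\lambda_0 a(z)/z$. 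Finally, $y\ge 2\<x-y\>$ forces $z\sim x$, and $a(z)/z\lesssim a(x)/x$, which yields \eqref{lossd}.
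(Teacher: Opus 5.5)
Your parts (1) and (3) are fine and match the paper. For (3) the paper uses the same mean-value argument: $s\tilde a_L'(s)\le\frac12\tilde a_L(s)$, monotonicity of $\lambda^L$ in $r$, and $z\sim x$ on $[y,x]$.

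\textbf{The gap is in part (2).} You do not prove it, and the route you set up cannot be completed. After applying $\tilde a_L(x+y)\le\tilde a_L(x)+\tilde a_L(y)$, your goal is to bound
\[
a(x)\bigl(A(t,x+y)^{-\beta}-A(t,x)^{-\beta}\bigr)+a(y)\bigl(A(t,x+y)^{-\beta}-A(t,y)^{-\beta}\bigr)
\]
by a constant. This quantity is unbounded. Take $x=y=r$ large and $t=L^{\frac{d-1}{3}}\<r\>^{\frac23}$. Then $A(t,r)=2$ and $A(t,2r)\to1+2^{-2/3}$, so each difference is about $\beta\log\frac{2}{1+2^{-2/3}}\,a(r)\approx0.2\,\beta\,a(r)\to\infty$.

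This point is exactly the boundary between your two proposed regimes, so neither regime closes. Comparing with the $(1+t)^{-\beta}a$ summand cannot help either: there $(1+t)^{-\beta}\to0$ while $A^{-\beta}=2^{-\beta}$. Inequality (2) holds only because of the slack $a(x)+a(y)-a(x+y)$ coming from concavity of $\tilde a_L$ (for instance $a(2r)\approx2^{1/3}a(r)$). Your first step discards precisely this slack.

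\textbf{The missing idea, as in the paper.} Treat $\lambda_1^L:=A^{-\beta}a$ as a single function of $r$; the $(1+t)^{-\beta}a$ summand is subadditive directly. Then prove two facts.
\begin{enumerate}
\item $r\partial_r\lambda_1^L\le\lambda_1^L$. This follows from $s\tilde a_L'(s)\le\frac12\tilde a_L(s)$ and the smallness of the $\frac{2\beta}{3}$ term.
\item $\partial_r^2\lambda_1^L<0$ for $r\ge10L^{d-1-\kappaL}$. Here the term $A^{-\beta}\tilde a_L''$, with $-s^2\tilde a_L''(s)\ge\frac15\tilde a_L(s)$ for $s\ge10L$, dominates the $O(\beta^2)$ contributions from differentiating $A^{-\beta}$ twice.
\end{enumerate}

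If $x+y\le20L^{d-1-\kappaL}$, the claim is trivial because $\lambda_1^L\le100$ there. Otherwise assume $y\ge x$, so $y\ge10L^{d-1-\kappaL}$. The mean value theorem, $\partial_r\lambda_1^L\ge0$, and the monotone decrease of $\partial_r\lambda_1^L$ on $[10L^{d-1-\kappaL},\infty)$ give
\[
\lambda_1^L(x+y)-\lambda_1^L(y)\le x\,\partial_r\lambda_1^L\bigl(\max\{x,10L^{d-1-\kappaL}\}\bigr).
\]
By fact 1 the right-hand side is at most $\lambda_1^L(x)$ when $x\ge10L^{d-1-\kappaL}$, and at most $\lambda_1^L(10L^{d-1-\kappaL})\le100$ otherwise. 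This yields (2).
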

\begin{proof}
The identities in part (1) follow by differentiation. Now we prove (2). Since we have $\tilde{a}_L(x+y)\leq \tilde{a}_L(x)+\tilde{a}_L(y)$, we only need to prove 
\[\lambda_1^L(t,r)=\rr{1+L^{-\f{d-1}{3}}t\<r\>^{-\f23}}^{-\beta}\tilde{a}_L(\f{r}{L^{d-2-\kappaL}})\]
satisfies $\lambda_1^L(t,x+y)\le\lambda_1^L(t,x)+\lambda_1^L(t,y)+100$. We only need to consider the case $x+y\ge20L^{d-1-\kappaL}$, since $\lambda_1^L(t,x+y)\le100$ for $x+y\le20L^{d-1-\kappaL}$.

Differentiation gives
\[\ba
&\pa_r\lambda_1^L(t,r)=\rr{1+L^{-\f{d-1}{3}}t\<r\>^{-\f23}}^{-\beta}\tilde{a}'_L(\f{r}{L^{d-2-\kappaL}})\f{1}{L^{d-2-\kappaL}}\\
&+\f{2\beta}{3}\rr{1+L^{-\f{d-1}{3}}t\<r\>^{-\f23}}^{-\beta-1}L^{-\f{d-1}{3}}t\<r\>^{-\f83}r\tilde{a}_L(\f{r}{L^{d-2-\kappaL}}).\ea\]
Since we have $r\tilde{a}'_L(r)\le\f12\tilde{a}_L(r)$, it holds that $r\pa_r\lambda_1^L(t,r)\le\lambda_1^L(t,r)$. Differentiation gives
\[\ba
&\pa^2_r\lambda_1^L(t,r)=\rr{1+L^{-\f{d-1}{3}}t\<r\>^{-\f23}}^{-\beta}\tilde{a}''_L(\f{r}{L^{d-2-\kappaL}})\f{1}{L^{2(d-2-\kappaL)}}+\f{2\beta}{3}\rr{1+L^{-\f{d-1}{3}}t\<r\>^{-\f23}}^{-\beta-1}\\
&\times\left\{2L^{-\f{d-1}{3}}t\<r\>^{-\f83}\tilde{a}'_L(\f{r}{L^{d-2-\kappaL}})\f{r}{L^{d-2-\kappaL}}+\f23(1+\beta)\f{L^{-\f{2(d-1)}{3}}t^2\<r\>^{-\f{16}{3}}}{1+L^{-\f{d-1}{3}}t\<r\>^{-\f23}}r^2\tilde{a}_L(\f{r}{L^{d-2-\kappaL}})\right.\\
&\left.+L^{-\f{d-1}{3}}t\<r\>^{-\f83}\rr{1-\f83\f{r^2}{\<r\>^2}}\tilde{a}_L(\f{r}{L^{d-2-\kappaL}})\right\}.\ea\]
By the fact that $-r^2\tilde{a}''_L(r)\ge\f15\tilde{a}_L(r)$ for $r\ge10L$, it holds that $\pa^2_r\lambda_1^L(t,r)<0$ for $r\ge10 L^{d-1-\kappaL}$. Thus $\lambda_1(t,r)$ is monotone decreasing for $r\ge10 L^{d-1-\kappaL}$.

Now we prove $\lambda_1^L(t,x+y)\le\lambda_1^L(t,x)+\lambda_1^L(t,y)+100$ for $x+y\ge20L^{d-1-\kappaL}$. Without loss of generality we assume $y\ge x$. Then $y\ge10L^{d-1-\kappaL}$. If $x\ge10L^{d-1-\kappaL}$, then
\[\lambda_1^L(t,x+y)-\lambda_1^L(t,y)\le\pa_r\lambda_1^L(t,y)x\le\pa_r\lambda_1^L(t,x)x\le\lambda_1^L(t,x).\]
If $x\le10L^{d-1-\kappaL}$, then
\[\lambda_1^L(t,x+y)-\lambda_1^L(t,y)\le\pa_r\lambda_1^L(t,y)x\le\pa_r\lambda_1^L(t,10L^{d-1-\kappaL})10L^{d-1-\kappaL}\le\lambda_1^L(t,10L^{d-1-\kappaL})\le100.\]

Then we prove (3). By the mean-value theorem and $r\tilde{a}'_L(r)\le\f12\tilde{a}_L(r)$ we have
\[e^{\lambda^L(t,x)}-e^{\lambda^L(t,y)}\ls\pa_r\lambda(t,y)e^{\lambda^L(t,x)}(x-y)\ls\lambda_0\f1x\tilde{a}_L(\f{x}{L^{d-2-\kappaL}})e^{\lambda^L(t,x)}(x-y).\]
\end{proof}

Now we are ready to introduce the Gevrey multipliers. We set
\[
A^\sigma_L(t,k,\eta)=\<k,\eta\>^\sigma
e^{\lambda^L(t,|k,\eta|)},
\]
and define
\[\wh{A^\sigma_Lg}(t,k,\eta)=A^\sigma_L(t,k,\eta)\wh{g}(t,k,\eta),\quad\wh{A^\sigma_L\rho}(t,k)=A^\sigma_L(t,k,kt)\wh{\rho}(t,k).\]
We denote $\pa_v^t=\na_v-t\na_x$.

\subsection{Energy functionals and bootstrap hypotheses}\label{ss:bootstrap}
We prove Theorem \ref{T1} by a standard continuity argument.  Set
$\sigma_1=\sigma_0-1$, $\sigma_2=\sigma_1-\f d2-1$,
$\sigma_3=\sigma_2-1$, and $\sigma_4=\sigma_3-2$, so that
$\sigma_4\geq2d+6$.  Let
$K_{\rho1},K_{\rho2},K_{g1},K_{g2},K_{g3}$ be large constants to be fixed
during the closure argument.  For $T>0$, the bootstrap hypotheses on
$[0,T]$ are
\beq\label{g1}\LLT{\<v\>^m\<\na\>A^{\sigma_1}_Lg}+\<t\>\LLT{\<v\>^m\na_xA^{\sigma_1}_Lg}\leq 2K_{g1}\eps\<t\>^{\f52+\delta},\eeq
\beq\label{rho1}\LLTXT{|\na_x|^\f12A^{\sigma_1}_L\rho}\leq 2K_{\rho1}\eps,\eeq
\beq\label{g2}\LLT{\<v\>^mA^{\sigma_2}_Lg}\leq 2K_{g2}\eps\<t\>^\delta,\eeq
\beq\label{rho2}\left\||k|^{\f12}\wh{A^{\sigma_3}_L\rho}(t,k)\right\|_{L^\infty_kL^2_t}\leq 2K_{\rho2}\eps,\eeq
\beq\label{g3}\left\|\wh{A^{\sigma_4}_Lg}\right\|_{L^\infty_{k,\eta}}\leq 2K_{g3}\eps.\eeq

We use $L^2$ energy functionals to control high-order derivatives, while $L^\infty_k$ norms for lower-order derivatives capture the dispersive decay; this combination is standard for dispersive equations.

By employing a standard argument, one can establish the local well-posedness of equation \eqref{1} and demonstrate that the estimates \eqref{g1}--\eqref{g3} hold for a very short time interval. The following sections use a standard continuity argument to propagate
\eqref{g1}--\eqref{g3} globally for suitably chosen constants and sufficiently
small $\eps$.  Section \ref{sec:density} improves \eqref{rho1} and
\eqref{rho2}, Section \ref{sec:g} improves \eqref{g1}--\eqref{g3}, and
Section \ref{sec:proof} proves Theorem \ref{T1}.

\subsection{A basic uniform dispersive estimate}
We record the lattice estimate that captures dispersive decay and phase mixing
and is used throughout the sequel.
\begin{prop}\label{dispersivepm}
	Let $\alpha\geq0$ and $\sigma>d+\alpha$.  Then
	\[\sum_{\ell\in\ZLs}\f{1}{L^d}\<\ell,\ell t\>^{-\sigma}|\ell|^\alpha\ls_{\sigma,\alpha}
	\left\{\ba
	&\<t\>^{-d-\alpha}, &0\leq t\leq L,\\
	&\f{1}{L^{d+\alpha}}\<\f{t}{L}\>^{-\sigma}, &t\geq L.
	\ea\right.\]
	Equivalently, the bound is $\ls_{\sigma,\alpha}\<t\>^{-d-\alpha}$ for all $t\geq0$, improving to
	$\f{1}{L^{d+\alpha}}\<\f{t}{L}\>^{-\sigma}$ once $t\geq L$. (The two branches match at $t=L$;
	note that for $1<t<L$ the first branch $\<t\>^{-d-\alpha}$ is the correct size and is
	\emph{larger} than $L^{-(d+\alpha)}$, so the estimate is not a plain minimum of the two.)
\end{prop}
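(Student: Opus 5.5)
Since $\<\ell,\ell t\>=\<|\ell|\<t\>\>$ up to harmless constants ($|\ell,\ell t|=|\ell|(1+t^2)^{1/2}$), the summand is a radial function of $|\ell|$. The plan is to reduce the estimate to a one-dimensional comparison between the normalized lattice sum and a Riemann integral, and to isolate the lowest lattice shell. Write $\ell=j/L$ with $j\in\Z^d\setminus\{0\}$. Then
\[
S(t):=\frac1{L^d}\sum_{\ell\in\ZLs}\<\ell,\ell t\>^{-\sigma}|\ell|^\alpha
\approx \frac1{L^{d+\alpha}}\sum_{j\neq0}|j|^\alpha\Bigl\<\frac{|j|\<t\>}{L}\Bigr\>^{-\sigma}.
\]
We group the lattice points into dyadic shells $2^{n}\le|j|<2^{n+1}$, $n\ge0$. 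Each shell contains $\ls 2^{nd}$ points, and on it the summand is comparable to $2^{n\alpha}\<2^n\<t\>/L\>^{-\sigma}$.

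\emph{Case $t\ge L$.} Here $\<t\>/L\gs1$, so already the first shell gives $\<2^n\<t\>/L\>^{-\sigma}\ls (2^n t/L)^{-\sigma}$. Summing gives
\[
S(t)\ls L^{-d-\alpha}\sum_n 2^{n(d+\alpha-\sigma)}(t/L)^{-\sigma}\ls L^{-d-\alpha}\<t/L\>^{-\sigma},
\]
since $\sigma>d+\alpha$ makes the geometric series converge. This is precisely the phase-mixing branch: the smallest nonzero frequency $|\ell|=1/L$ dominates.

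\emph{Case $0\le t\le L$.} Set $R:=L/\<t\>\ge 1/\sqrt2$. We split the shells at $2^n\sim R$. For shells with $2^n\le R$ the Japanese bracket is $\sim1$, and their contribution is
\[
\ls L^{-d-\alpha}\sum_{2^n\le R}2^{n(d+\alpha)}\ls L^{-d-\alpha}R^{d+\alpha}=\<t\>^{-d-\alpha}.
\]
For shells with $2^n>R$ we use the bracket bound $(2^n/R)^{-\sigma}$ and obtain
\[
L^{-d-\alpha}R^{\sigma}\sum_{2^n>R}2^{n(d+\alpha-\sigma)}\ls L^{-d-\alpha}R^{d+\alpha}=\<t\>^{-d-\alpha}.
\]
Equivalently, this is the Riemann-sum comparison with $\int_{\R^d}\<k\<t\>\>^{-\sigma}|k|^\alpha dk=\<t\>^{-d-\alpha}\cdot C$, which is valid because the lattice spacing $1/L$ is finer than the decay scale $1/\<t\>$.

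The two branches agree at $t\sim L$, where $R\sim1$, so the stated "equivalent" form follows: for $t\ge L$ one has $L^{-d-\alpha}\<t/L\>^{-\sigma}\le \<t\>^{-d-\alpha}$ since $\sigma\ge d+\alpha$. All constants depend only on $d,\sigma,\alpha$ through the shell count $\#\{2^n\le|j|<2^{n+1}\}\ls 2^{nd}$ and the geometric sums.

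The only point needing care is that the constants be uniform in $L$. This holds because the argument uses integer-lattice shell counting alone, with no dependence on $L$ beyond the explicit scaling.
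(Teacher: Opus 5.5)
Your proof is correct and follows essentially the same route as the paper. For $t\le L$ both arguments split the rescaled lattice sum at $|\ell|\sim 1/\<t\>$, count lattice points below that threshold, and sum the polynomial tail above it. For $t\ge L$ both drop the $1$ in the bracket and sum $|j|^{\alpha-\sigma}$ over $\Z^d\setminus\{0\}$. The differences are cosmetic: you use dyadic shells and the exact identity $\<\ell,\ell t\>=\<|\ell|\<t\>\>$ to treat $t\le 1$ and $1<t\le L$ together, whereas the paper handles $t\le1$ separately with a ball/tail split.
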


\begin{rmk}
Theorem \ref{T1} is a global stability result for each fixed $L$.  The
large-box limit should be interpreted as a quantitative change
in the resonance mechanism rather than as the disappearance of all
resonant interactions. In particular, arithmetic collinear resonances
remain present on finite lattices.
\end{rmk}

\begin{proof}
	For $t\leq1$, 
	\[\sum_{\ell\in\ZLs}\f{1}{L^d}\<\ell,\ell t\>^{-\sigma}|\ell|^\alpha\leq\sum_{\ell\in\ZLs}\f{1}{L^d}\<\ell\>^{-\sigma+\alpha}\ls1.\]
	For $1<t\leq L$, using the elementary estimates
	\[\#\{\ell\in{\Z^d}^*:|\ell|\leq R\}\ls R^d,\quad\sum_{\substack{\ell\in\Z^d,\\|\ell|\geq R}}|\ell|^{-\gamma}\ls_\gamma R^{d-\gamma}\  (\gamma>d),\]
	we have
	\[\ba&\sum_{\ell\in\ZLs}\f{1}{L^d}\<\ell,\ell t\>^{-\sigma}|\ell|^\alpha\leq\sum_{\substack{\ell\in\ZLs\\|\ell|\leq\f1t}}\f{1}{L^d}|\ell|^\alpha+\sum_{\substack{\ell\in\ZLs\\|\ell|\geq\f1t}}\f{1}{L^d}|\ell t|^{-\sigma}|\ell|^\alpha\\
	\leq&\ \f{1}{L^d}\#\{\ell\in\ZL:|\ell|\leq\f1t\}t^{-\alpha}+t^{-\sigma}\sum_{\substack{\ell\in\ZLs\\|\ell|\geq\f1t}}\f{1}{L^d}|\ell|^{-\sigma+\alpha}\ls_{\alpha,\sigma}t^{-d-\alpha}.\ea\]
	For $t\geq L$,
	\[\sum_{\ell\in\ZLs}\f{1}{L^d}\<\ell,\ell t\>^{-\sigma}|\ell|^\alpha\leq\sum_{\ell\in{\Z^d}^*}\f{1}{L^d}\nr{\f\ell L}^{-\sigma+\alpha}t^{-\sigma}\ls_{\alpha,\sigma}\f{1}{L^{d+\alpha}}\<\f{t}{L}\>^{-\sigma}.\]
\end{proof}

\section{Estimates of the density}\label{sec:density}

In this section we establish improved versions of the density bootstrap
bounds \eqref{rho1} and \eqref{rho2}.  We begin with several preliminary
estimates.

\subsection{Estimate of the Volterra equation}
We rewrite \eqref{rho} as the Volterra equation
\begin{align}\label{Volterra}\wh{\rho}(t,k)=H(t,k)-\int_0^t\wh{\rho}(\tau,k)K(t-\tau,k)d\tau,\end{align}
where
\beq\label{volt}\ba
&K(t,k)=\f{|k|^2t}{\<k\>^2}\wh{\mu}(kt);\\
&H(t,k)=\wh{h}(0,k,kt)-\sum_{\ell\in\ZL}\f{1}{L^d}\int_0^t\wh{\rho}(\tau,\ell)\f{\ell\cdot k(t-\tau)}{\<\ell\>^2}\wh{g}(\tau,k-\ell,kt-\ell\tau)d\tau=\wh{\mathscr{I}}(t,k)+\wh{\mathscr{N}}(t,k).
\ea\eeq
Following the standard resolvent formulation, define $R(t,k)$ by
\beq\label{resolvent}R(t,k)=K(t,k)-\int_0^tR(\tau,k)K(t-\tau,k)d\tau.\eeq
Then the solution of \eqref{Volterra} can be written as
\begin{align}\label{newforumladens}\wh{\rho}(t,k)=H(t,k)-\int_0^t H(\tau,k)R(t-\tau,k)d\tau.\end{align}

Recall the Laplace transform and its inverse,
\begin{align}\wt{f}(z)=\int_0^\infty f(t)e^{-zt}dt,\quad f(t)=\f{1}{2\pi }\int_\R \wt{f}(\varrho+i\varsigma)e^{(\varrho+i\varsigma)t}d\varsigma,\end{align}
which give
\[\wt{R}(z,k)=\f{\wt{K}(z,k)}{\wt{K}(z,k)+1}.\]

The following proposition provides an estimate of the resolvent.

\begin{prop}\label{resolventestimate}
	One has
	\begin{align}\nr{R(t,k)}\ls\f{|k|}{\<k\>^2}e^{-\f{\lambda_0}{2}\rr{|k|t}^\f13}.\end{align}
\end{prop}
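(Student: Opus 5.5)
The plan is to follow the standard Penrose resolvent argument, as in \cite{BMM1} and \cite{waveop}, working mode by mode in $k$. Since $K(t,k)$ depends on $k$ only as a continuous parameter, the estimate is uniform in $L$. Write $K(t,k)=\f{|k|^2t}{\<k\>^2}\wh\mu(kt)$ and set $s=|k|t$, so that $K(t,k)=\f{|k|}{\<k\>^2}\,s\,\wh\mu(s\hat k)$. By \eqref{mudecay} this satisfies $|K(t,k)|\ls\f{|k|}{\<k\>^2}\<s\>e^{-\lambda_1\<s\>^{1/3}}$. The Gevrey-$3$ decay of $\wh\mu$ transfers to $K$ in the variable $|k|t$.

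\textbf{Step 1: Analytic continuation of $\wt K$.} Because $e^{-\lambda_1\<s\>^{1/3}}$ is sub-exponential, $\wt K(z,k)$ does not continue to a strip. I will therefore not try to deform contours, and will instead use a weighted Volterra argument in time. Fix $\lambda'\in(\f{\lambda_0}2,\lambda_1)$; this is possible since $\lambda_0<\lambda_1/4$. Define the weight $w(t)=e^{\lambda'(|k|t)^{1/3}}$. This weight is submultiplicative: by concavity of $x\mapsto x^{1/3}$,
\[
w(t)\le w(\tau)\,w(t-\tau).
\]
Consequently the weighted space $L^1_w(\R_+)$, with norm $\int|f|w\,dt$, is a Banach algebra under convolution.

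\textbf{Step 2: Inversion via a Gelfand/Wiener--L\'evy argument.} We have $\wt R=\wt K/(1+\wt K)$, and \eqref{muPenrose} says $|1+\wt K(z,k)|\ge\kappaP$ on $\Re z\ge0$. The weight $w$ is a GRS (Beurling--Domar) weight, since $\log w(t)/t\to0$. Hence the Gelfand spectrum of the unitized algebra $\mathbb C\delta\oplus L^1_w$ is exactly the closed right half-plane, together with infinity. The Wiener--L\'evy theorem then gives $R\in L^1_w$. Uniformity in $k$ is the delicate point, because the Penrose bound is quantitative only through $\kappaP$. To obtain it, rescale time by $s=|k|t$. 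In the rescaled variable the kernel becomes
\[
\f{|k|^2}{\<k\>^2}\,s\,\wh\mu(s\hat k),
\]
whose $L^1_{w}$ norm with $w=e^{\lambda'\<s\>^{1/3}}$ is bounded uniformly in $k$. One then splits into cases.

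For small $|k|^2/\<k\>^2$ the kernel is small in $L^1_w$, so a Neumann series converges directly.

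For the remaining $k$, one argues by compactness in $(\hat k,|k|)$ together with continuity of the Gelfand transform. The large-$|k|$ limit is controlled because $|k|^2/\<k\>^2\to1$.

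\textbf{Step 3: From $L^1_w$ to a pointwise bound.} Use the resolvent identity
\[
R=K-R*K.
\]
Since $w$ is submultiplicative,
\[
\sup_t|R*K|\,w\le\|R\|_{L^1_w}\,\sup_t|K|\,w.
\]
Moreover $\sup_t|K|w\ls\f{|k|}{\<k\>^2}$, since $\lambda'<\lambda_1$ absorbs the polynomial factor $\<s\>$. Undoing the rescaling yields
\[
|R(t,k)|\ls\f{|k|}{\<k\>^2}e^{-\lambda'(|k|t)^{1/3}},
\]
which is stronger than the claim because $\lambda'\ge\lambda_0/2$.

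\textbf{Main obstacle.} The hardest step is Step 2's uniformity in $k$. An alternative is to replace the abstract Wiener--L\'evy step with the quantitative approach of \cite{waveop}. There one inverts on $\Re z=0$ using the Penrose lower bound together with $\partial_z$-bounds on $\wt K$, coming from the $\eta\cdot\nabla_\eta\wh\mu$ part of \eqref{mudecay}. One then smoothly truncates in frequency and iterates in Gevrey classes of decreasing radius. This is where the loss from $\lambda_1$ down to $\lambda_0/2$ is spent.
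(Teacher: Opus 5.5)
Your route differs from the paper's and is viable, but Step 2 contains a scaling error that inverts your case split. With $s=|k|t$ and $d\tau=d\sigma/|k|$ in the convolution, one gets $K(t,k)=|k|\,\kappa_k(|k|t)$ and $R(t,k)=|k|\,r_k(|k|t)$, where
\[
\kappa_k(s)=\f{1}{\<k\>^2}\,s\,\wh\mu(s\hat k),\qquad r_k=\kappa_k-r_k*\kappa_k,\qquad \|R(\cdot,k)\|_{L^1_w(dt)}=\|r_k\|_{L^1(e^{\lambda's^{1/3}}ds)}.
\]
So the rescaled coefficient is $c=\<k\>^{-2}$, not $|k|^2/\<k\>^2$.

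Consequently the Neumann series is available for \emph{large} $|k|$, where $\|\kappa_k\|_{L^1_w}\ls\<k\>^{-2}$. For small $|k|$, the weighted norm of $\kappa_k$ tends to $\int_0^\infty s|\wh\mu(s\hat k)|e^{\lambda's^{1/3}}ds$. This is of order one, and larger than one for a Maxwellian, so your smallness claim fails there. This low-frequency Yukawa regime is exactly where the Penrose condition cannot be bypassed.

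The compactness step must therefore be run over $(c,\hat k)\in[c_0,1]\times S^{d-1}$. On this set $\kappa$ depends continuously in $L^1_w$ by dominated convergence. The endpoint $c=1$ is the limit $|k|\to0$ and corresponds to no actual mode. You must therefore note that \eqref{muPenrose} persists at $c=1$ by continuity. In rescaled form it reads $|1+c\int_0^\infty e^{-\zeta s}s\wh\mu(s\hat k)\,ds|\ge\kappaP$ for $\Re\zeta\ge0$ and $c=\<k\>^{-2}$. Then $\delta+\kappa$ is invertible in $\mathbb C\delta\oplus L^1_w$ at every point of a compact parameter set. Continuity of inversion bounds $\|r\|_{L^1_w}$ uniformly, and your Step 3 goes through as written.

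With this fix, the comparison is as follows. The paper stays on the imaginary axis. It writes $R(s/|k|,k)$ as the inverse Fourier transform of $\Phi_k=\vartheta\bigl(\f2{\<k\>^2}\wt K(i\varsigma,k/|k|)\bigr)$. It then proves Gevrey-$3$ bounds, uniform in $k$, on all $\varsigma$-derivatives of $\Phi_k$ (Lemma \ref{Kestimate}). That lemma uses the $\eta\cdot\na_\eta\wh\mu$ bound in \eqref{mudecay} for the $\<\varsigma\>$ weight, and the Penrose bound to control $\vartheta$. Finally it cuts off at $|\varsigma|=e^s$, integrates by parts $n\approx\lambda_0s^{1/3}/3$ times, and optimizes $n$ by Stirling's formula, which produces the rate $\lambda_0/2$.

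That proof is quantitative, with constants depending only on $\kappaP$ and $\lambda_0$, and it needs no compactness. Your weighted-algebra argument uses Gelfand theory for the GRS weight $e^{\lambda's^{1/3}}$ (a weighted Paley--Wiener theorem), then the resolvent identity with submultiplicativity. It gives a non-explicit constant. In exchange it is shorter and requires of $\wh\mu$ only continuity and the Gevrey decay in \eqref{mudecay}, not the derivative bound. It also yields the stronger rate $e^{-\lambda'(|k|t)^{1/3}}$ for any $\lambda'<\lambda_1$.
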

Following \cite{waveop}, we first observe that
\[
\ba
R(t,k)&=\f{1}{2\pi}\int_\R e^{i\varsigma t}
	\f{\wt{K}(i\varsigma,k)}{1+\wt{K}(i\varsigma,k)}\,d\varsigma\\
&=\f{|k|}{2\pi}\int_\R e^{i|k|\varsigma t}
	\f{\f{2}{\<k\>^2}\wt{K}(i\varsigma,\f{k}{|k|})}
	  {1+\f{2}{\<k\>^2}\wt{K}(i\varsigma,\f{k}{|k|})}\,d\varsigma.
\ea
\]
Let
\[\vartheta(y)=\f{y}{1+y}.\]
We first estimate $\wt{K}$.
\begin{lem}\label{Kestimate}
	There exists $C>0$ such that, for every $\alpha\in\N$,
	\[\ba
	&\left\|\f{2}{\<k\>^2}\wt{K}(i\varsigma,\f{k}{|k|})\<\varsigma\>\right\|_{L^2_\varsigma}+\left\|\vartheta\rr{\f{2}{\<k\>^2}\wt{K}(i\varsigma,\f{k}{|k|})}\<\varsigma\>\right\|_{L^2_\varsigma}\leq\f{C}{\<k\>^2}.\\
	&\left\|\f{d^\alpha}{d\varsigma^{\alpha}}\left\{\f{2}{\<k\>^2}\wt{K}(i\varsigma,\f{k}{|k|})\right\}\<\varsigma\>\right\|_{L^\infty_\varsigma}+\left\|\f{d^\alpha}{d\varsigma^{\alpha}}\left\{\vartheta\rr{\f{2}{\<k\>^2}\wt{K}(i\varsigma,\f{k}{|k|})}\right\}\<\varsigma\>\right\|_{L^\infty_\varsigma}\leq\f{C}{\<k\>^2}\f{(3\alpha)!1.01^\alpha}{\lambda_0^{3\alpha}}.
	\ea\]
\end{lem}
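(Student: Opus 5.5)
The plan is to write the quantity explicitly and reduce both estimates to properties of $\wh\mu$ on the unit sphere direction. With $\omega=k/|k|$ and $K(t,\omega)=\f{t}{2}\wh\mu(\omega t)$ (since $|\omega|^2/\<\omega\>^2=1/2$), we have
\[
\f{2}{\<k\>^2}\wt K\Bigl(i\varsigma,\f{k}{|k|}\Bigr)=\f{1}{\<k\>^2}\int_0^\infty e^{-i\varsigma t}\,t\,\wh\mu(\omega t)\,dt .
\]
The prefactor $\<k\>^{-2}$ is already explicit, so it suffices to bound the $\omega$-dependent transform $G(\varsigma):=\int_0^\infty e^{-i\varsigma t}t\wh\mu(\omega t)\,dt$ uniformly in $\omega\in S^{d-1}$.

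\textbf{The $L^2$ bound.} By Plancherel, $\|G\<\varsigma\>\|_{L^2_\varsigma}\ls\|t\wh\mu(\omega t)\mathbf 1_{t>0}\|_{H^1_t}$. I will interpret $\<\varsigma\>G$ as $G$ plus the transform of $\pa_t$, picking up the boundary term at $t=0$, which vanishes because of the factor $t$. The required quantities are $t\wh\mu(\omega t)$ and $\pa_t(t\wh\mu(\omega t))=\wh\mu(\omega t)+(\eta\cdot\na_\eta\wh\mu)(\omega t)$. Both are bounded by $\kappaP^{-1}e^{-\lambda_1\<t\>^{1/3}}$ by \eqref{mudecay}, so they lie in $L^2_t$ uniformly.

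\textbf{The derivative bounds.} Next I compute
\[
\f{d^\alpha}{d\varsigma^\alpha}G=\int_0^\infty(-it)^\alpha e^{-i\varsigma t}t\wh\mu(\omega t)\,dt .
\]
The $\<\varsigma\>$ weight is again handled by one integration by parts, with no boundary term. Each term is then bounded by
\[
\int_0^\infty (t^{\alpha+1}+\alpha t^{\alpha})\,\kappaP^{-1}e^{-\lambda_1\<t\>^{1/3}}\,dt .
\]
Substituting $s=t^{1/3}$ gives $\ls\Gamma(3\alpha+6)\lambda_1^{-3\alpha-6}$. Since $\lambda_0<\lambda_1/4$, the factor $(\lambda_0/\lambda_1)^{3\alpha}\le 4^{-3\alpha}$ absorbs the polynomial losses $(3\alpha+6)^5$ and $\alpha$, yielding $\le C(3\alpha)!\,1.01^\alpha\lambda_0^{-3\alpha}$.

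\textbf{The term with $\vartheta$.} Write $y=\<k\>^{-2}G$. The Penrose condition \eqref{muPenrose} at $z=i\varsigma$ gives exactly $|1+y|\ge\kappaP$. Hence $|\vartheta(y)|\le\kappaP^{-1}|y|$, which gives the $L^2$ bound directly.

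For derivatives I will use Fa\`a di Bruno, or equivalently the Gevrey-class stability under composition. Since $\vartheta(y)=1-(1+y)^{-1}$, the $\alpha$-th derivative is a sum over partitions of products of derivatives of $y$ times $(1+y)^{-1-j}$. Each factor $(1+y)^{-1}$ costs $\kappaP^{-1}$, and each derivative of $y$ satisfies a Gevrey-3 bound with radius $\lambda_1$, which is stronger than the target radius $\lambda_0$. The standard Gevrey algebra lemma then shows that the composition with the analytic function $(1+\cdot)^{-1}$, which is bounded on $\{|1+y|\ge\kappaP\}$, remains Gevrey-3 with any radius strictly smaller than $\lambda_1$.

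\textbf{Main obstacle.} The hardest step is this combinatorial composition estimate: the constant $(3\alpha)!\,1.01^\alpha\lambda_0^{-3\alpha}$ must come out with a $C$ independent of $\alpha$. My approach is to run Fa\`a di Bruno with the generating-function majorant $\sum_\alpha \f{M_\alpha}{(3\alpha)!}\,r^\alpha$, exploiting the ratio gap $\lambda_0/\lambda_1<1/4$. The geometric slack is $4^{-3}$ per derivative against $1.01$, which easily controls the partition count $\kappaP^{-j}C^j$ after choosing $r$ small.

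One factor $y$ always remains in each term, either as $y$ itself or as one of its derivatives, and it carries the prefactor $\<k\>^{-2}$. The $\<\varsigma\>$ weight is placed on that same factor, using the $L^\infty$ bound of $\<\varsigma\>\,d^jG$.
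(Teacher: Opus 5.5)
There is a genuine gap, and it sits at the step you yourself flag as the main obstacle. Your treatment of $y:=\f{2}{\<k\>^2}\wt K(i\varsigma,k/|k|)$ itself agrees with the paper: the explicit transform, Plancherel after one integration by parts (no boundary term thanks to the factor $t$), the moment bounds from \eqref{mudecay}, and $|1+y|\geq\kappaP$ from \eqref{muPenrose} after rescaling $z=i|k|\varsigma$. For the derivatives of $\vartheta(y)$ the paper only cites \cite[Section~6]{waveop}. You try to supply that step, but the mechanism you propose fails.

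With the $(3\alpha)!$-normalized majorant $Z(r)=\sum_{m\geq1}\f{\sup_\varsigma|y^{(m)}|}{(3m)!}r^m$, Fa\`a di Bruno and $\f{\alpha!}{\prod_i m_i!}\leq\f{(3\alpha)!}{\prod_i(3m_i)!}$ give, for $\alpha\geq1$, that $|(\vartheta\circ y)^{(\alpha)}|$ is at most $(3\alpha)!$ times the $r^\alpha$-coefficient of $(\kappaP-Z(r))^{-1}$. The radius you obtain is therefore set by the smallest positive root of $Z(r)=\kappaP$. Since $Z(r)\geq\f r6\|y'\|_\infty$, that root is at most $6\kappaP/\|y'\|_\infty$, a quantity unrelated to $\lambda_0$. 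Equivalently, the fixed slack $(\lambda_0/\lambda_1)^3<4^{-3}$ per derivative beats the growth $(C/\kappaP)^j$ only when $C/\kappaP$ is below about $63$. Nothing in \eqref{mudecay}--\eqref{muPenrose} provides this: the Penrose constant can be small while the moments of $\wh\mu$ are not. You also cannot ``choose $r$ small'', because reaching the target requires $r\geq\lambda_0^3/1.01$.

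The missing idea is that for Gevrey index $3>1$, composition costs only a constant, never radius, because terms with many factors are superfactorially small. Keep the Taylor normalization,
\[
\f{(\vartheta\circ y)^{(\alpha)}}{\alpha!}=\sum_{j\geq1}\f{\vartheta^{(j)}(y)}{j!}\sum_{\substack{m_1+\cdots+m_j=\alpha\\ m_i\geq1}}\prod_{i=1}^j\f{y^{(m_i)}}{m_i!},\qquad \f{|\vartheta^{(j)}(y)|}{j!}=|1+y|^{-1-j}\leq\kappaP^{-1-j},
\]
and use $|y^{(m)}|\leq C_0(3m)!\lambda^{-3m}$ for a fixed $\lambda\in(\lambda_0,\lambda_1)$, which your moment computation already gives. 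The argument then runs as follows.

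\begin{itemize}
\item The sequence $a_m=(3m)!/m!$ is log-convex, since $a_{m+1}/a_m=3(3m+1)(3m+2)$ increases. Hence every product $\prod_i a_{m_i}$ is at most $a_1^{j-1}a_{\alpha-j+1}$.
\item The number of compositions is controlled by
$\binom{\alpha-1}{j-1}a_{\alpha-j+1}=\f{a_\alpha}{(j-1)!}\prod_{m=\alpha-j+1}^{\alpha-1}\f{m}{3(3m+1)(3m+2)}\leq\f{a_\alpha}{(j-1)!\,27^{j-1}}$.
\item Summing over $j$ yields $|(\vartheta\circ y)^{(\alpha)}|\leq\kappaP^{-2}C_0\,e^{2C_0/(9\kappaP)}(3\alpha)!\,\lambda^{-3\alpha}$, with the same $\lambda$.
\end{itemize}

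Putting $\<\varsigma\>$ and $\<k\>^{-2}$ on one factor, as you planned, then gives the lemma. Your assertion that composition loses no radius below $\lambda_1$ is true. The reason is this factorial gain, not the gap $\lambda_0<\lambda_1/4$, which is not needed beyond $\lambda_0<\lambda_1$.
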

\begin{proof}
	Direct calculation gives
		\[\ba
		\f{2}{\<k\>^2}\wt{K}(i\varsigma,\f{k}{|k|})
		&=\f{2}{\<k\>^2}\int_0^\infty e^{-i\varsigma t}t\wh{\mu}(\f{k}{|k|}t)\,dt,\\
		\f{2}{\<k\>^2}i\varsigma\wt{K}(i\varsigma,\f{k}{|k|})
		&=\f{2}{\<k\>^2}\int_0^\infty e^{-i\varsigma t}
		\rr{\wh{\mu}(\f{k}{|k|}t)+\f{k}{|k|}t\cdot\na_\eta\wh{\mu}(\f{k}{|k|}t)}\,dt.
		\ea\]
	Plancherel's theorem gives
	\[\left\|\f{2}{\<k\>^2}\wt{K}(i\varsigma,\f{k}{|k|})\<\varsigma\>\right\|_{L^2_\varsigma}\leq\f{C}{\<k\>^2}.\]
	The Penrose condition \eqref{muPenrose} gives
	\[\left\|\vartheta\rr{\f{2}{\<k\>^2}\wt{K}(i\varsigma,\f{k}{|k|})}\<\varsigma\>\right\|_{L^2_\varsigma}\leq\f{C}{\<k\>^2}.\]
	By \eqref{mudecay},
	\[\f{d^\alpha}{d\varsigma^{\alpha}}\left\{\f{2}{\<k\>^2}\wt{K}(i\varsigma,\f{k}{|k|})\right\}=\f{2}{\<k\>^2}\int_0^\infty(-it)^\alpha t\wh{\mu}(\f{k}{|k|}t)e^{-i\varsigma t}dt\ls\f{2}{\<k\>^2}\int_0^\infty t^{\alpha+1}e^{-\lambda_0t^\f13}dt\leq\f{C}{\<k\>^2}\f{(3\alpha)!1.01^\alpha}{\lambda_0^{3\alpha}}.\]
	\[\ba
	&\f{d^\alpha}{d\varsigma^{\alpha}}\left\{\f{2}{\<k\>^2}i\varsigma\wt{K}(i\varsigma,\f{k}{|k|})\right\}=\f{2}{\<k\>^2}\int_0^\infty(-it)^\alpha \rr{\wh{\mu}(\f{k}{|k|}t)+\f{k}{|k|}t\cdot\na_\eta\wh{\mu}(\f{k}{|k|}t) }e^{-i\varsigma t}dt\\
	\ls&\f{2}{\<k\>^2}\int_0^\infty t^{\alpha}e^{-\lambda_0t^\f13}dt\leq\f{C}{\<k\>^2}\f{(3\alpha)!1.01^\alpha}{\lambda_0^{3\alpha}}.
	\ea\]
	Hence
	\[\left\|\f{d^\alpha}{d\varsigma^{\alpha}}\left\{\f{2}{\<k\>^2}\wt{K}(i\varsigma,\f{k}{|k|})\right\}\<\varsigma\>\right\|_{L^\infty_\varsigma}\leq\f{C}{\<k\>^2}\f{(3\alpha)!1.01^\alpha}{\lambda_0^{3\alpha}}.\]
	The same argument as in \cite[Section~6]{waveop} gives
	\[\left\|\f{d^\alpha}{d\varsigma^{\alpha}}\left\{\vartheta\rr{\f{2}{\<k\>^2}\wt{K}(i\varsigma,\f{k}{|k|})}\right\}\<\varsigma\>\right\|_{L^\infty_\varsigma}\leq\f{C}{\<k\>^2}\f{(3\alpha)!1.01^\alpha}{\lambda_0^{3\alpha}}.\]
\end{proof}

Now we prove Proposition \ref{resolventestimate}.
\begin{proof}[Proof of Proposition \ref{resolventestimate}]
Set
\[
\Phi_k(\varsigma):=
\vartheta\Bigl(\f{2}{\<k\>^2}
\wt K\bigl(i\varsigma,k/|k|\bigr)\Bigr).
\]
Then
\beq\label{rescaledresolvent}
R\Bigl(\f s{|k|},k\Bigr)
=\f{|k|}{2\pi}\int_\R e^{i\varsigma s}\Phi_k(\varsigma)d\varsigma.
\eeq
For $s\geq1$ put $M=e^s$ and split the integral into
$|\varsigma|>M$ and $|\varsigma|\leq M$.  The weighted $L^2$ estimate in
Lemma \ref{Kestimate} gives
\[
\int_{|\varsigma|>M}|\Phi_k(\varsigma)|d\varsigma
\leq\|\<\varsigma\>\Phi_k\|_2
\|\<\varsigma\>^{-1}\vv1_{|\varsigma|>M}\|_2
\ls\<k\>^{-2}e^{-s/2}.
\]
Integrating the central integral by parts $n$ times, with the correct
endpoints $-M$ and $M$, yields
\[
\begin{aligned}
\int_{-M}^{M}e^{i\varsigma s}\Phi_k(\varsigma)d\varsigma
={}&\sum_{j=0}^{n-1}\f{(-1)^j}{(is)^{j+1}}
\left[e^{i\varsigma s}\Phi_k^{(j)}(\varsigma)\right]_{-M}^{M}\\
&+\f{(-1)^n}{(is)^n}\int_{-M}^{M}
e^{i\varsigma s}\Phi_k^{(n)}(\varsigma)d\varsigma.
\end{aligned}
\]
Since $\|\<\varsigma\>\Phi_k^{(j)}\|_\infty$ is bounded by Lemma
\ref{Kestimate}, the boundary terms gain $M^{-1}=e^{-s}$, whereas
\[
\int_{-M}^{M}|\Phi_k^{(n)}(\varsigma)|d\varsigma
\ls\<k\>^{-2}\log(2+M)
\f{(3n)!\,1.01^n}{\lambda_0^{3n}}.
\]
Consequently
\beq\label{resolventparts}
\left|R\Bigl(\f s{|k|},k\Bigr)\right|
\ls\f{|k|}{\<k\>^2}\left[
e^{-s/2}+e^{-s}\sum_{j=0}^{n-1}s^{-j-1}
\f{(3j)!\,1.01^j}{\lambda_0^{3j}}
+s^{1-n}\f{(3n)!\,1.01^n}{\lambda_0^{3n}}
\right].
\eeq
If $\lambda_0s^{1/3}\geq100$, choose $n$ to be the nearest integer to
$\lambda_0s^{1/3}/3$.  Stirling's formula in \eqref{resolventparts} gives
\[
\left|R\Bigl(\f s{|k|},k\Bigr)\right|
\leq C\f{|k|}{\<k\>^2}e^{-\lambda_0s^{1/3}/2}.
\]
If $\lambda_0s^{1/3}<100$ (including $0\leq s<1$), then
$\|\Phi_k\|_{L^1}\ls\|\<\varsigma\>\Phi_k\|_2\ls\<k\>^{-2}$;
after enlarging the constant, the same estimate follows because
$e^{-\lambda_0s^{1/3}/2}$ is bounded below on this range.  Substituting
$s=|k|t$ proves the proposition.
\end{proof}
\subsection{Auxiliary propositions}
We record several estimates used to improve \eqref{rho1} and control the
interaction among different frequencies.

\begin{prop}\label{integralest}
	Let $d\geq2$, $u\geq1$, and $t\geq0$.  Then
	\begin{align}\label{ut}\int_{u+1}^{\infty}\f{dx}{(x-u)^{d+1}(1+\f{ut}{x})^{d+1}}\ls_d\<t\>^{-d}.\end{align}
\end{prop}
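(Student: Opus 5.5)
Substitute $x=u+y$ with $y\ge1$, so that the integrand becomes
\[
y^{-d-1}\Bigl(1+\f{ut}{u+y}\Bigr)^{-d-1},
\]
and treat $t\le1$ and $t>1$ separately.

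\emph{The case $t\le1$.} Here it suffices to drop the second factor, since it is at most $1$. What remains is $\int_1^\infty y^{-d-1}\,dy=1/d$, which is bounded by a constant multiple of $\<t\>^{-d}$.

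\emph{The case $t>1$.} Split the $y$-integral at $y=u$.

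For $1\le y\le u$ we have $u+y\le 2u$, hence $\f{ut}{u+y}\ge t/2$. The second factor is therefore at most $(1+t/2)^{-d-1}\ls t^{-d-1}$. The first factor integrates to $\int_1^\infty y^{-d-1}\,dy\ls1$, so this region contributes $\ls t^{-d-1}\le\<t\>^{-d}$.

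For $y\ge u$ we have $u+y\le 2y$, hence $\f{ut}{u+y}\ge\f{ut}{2y}$, and the integrand is bounded by
\[
y^{-d-1}\Bigl(1+\f{ut}{2y}\Bigr)^{-d-1}=\Bigl(y+\f{ut}{2}\Bigr)^{-d-1}.
\]
Integrating over $y\ge u$ gives
\[
\ls_d\Bigl(u+\f{ut}{2}\Bigr)^{-d}\ls u^{-d}t^{-d}\le t^{-d},
\]
where the last step uses $u\ge1$.

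\emph{Conclusion.} For $t>1$, $t^{-d}\ls\<t\>^{-d}$. Combining both regions with the case $t\le1$ gives the bound $\ls_d\<t\>^{-d}$ uniformly in $u\ge1$.

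The only step needing care is the identity $y(1+\f{ut}{2y})=y+\f{ut}{2}$ in the region $y\ge u$. It converts the product into a single power that is integrable in $y$, and its integral is bounded by the value at $y=u$, which carries the factor $(ut)^{-d}$. The rest of the argument is elementary. The hypothesis $d\ge2$ is not actually used; only $d\ge1$ is needed for integrability.
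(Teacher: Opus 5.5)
Your proof is correct, and it is organized around the same split as the paper's. The paper rescales $x=uy$ and separates $y\in[1+\frac1u,2]$ from $y\ge2$, which is exactly your split of $x=u+y$ at $x=2u$. On the near piece both arguments bound the second factor by $(1+t/2)^{-d-1}$ and integrate the first factor, giving $\langle t\rangle^{-d-1}$ uniformly in $u$.

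The difference is confined to the far piece $x\ge 2u$. The paper decomposes dyadically in $y=x/u$ around the crossover $y\sim t$. Below the crossover it uses $(1+t/y)^{-d-1}\lesssim (y/t)^{d+1}$; above it, it uses the trivial bound $1$. Summing the two geometric series gives $u^{-d}\langle t\rangle^{-d}$. You instead use $u+y\le 2y$ to collapse the product into the single power $(y+ut/2)^{-d-1}$ and integrate it exactly. This yields the same $\frac1d u^{-d}(1+t/2)^{-d}$ with no dyadic bookkeeping, and it makes visible that the factor $u^{-d}$ comes from the lower endpoint $y=u$.

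Your route is shorter. The paper's dyadic scheme is the more portable tool when the factors do not combine algebraically, but nothing is lost here. Your two regional bounds, $\frac1d(1+t/2)^{-d-1}$ and $\frac1d(1+t/2)^{-d}$, hold for every $t\ge0$, so the separate case $t\le1$ is unnecessary. As you note, neither argument uses $d\ge2$.
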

\begin{proof}
	It suffices to consider $t\geq2$.  Choose
	$\ffn_0\in\N$, $\ffn_0\geq1$, so that
	$2^{\ffn_0}\leq t<2^{\ffn_0+1}$.
	\[\ba
	&\int_{u+1}^{\infty}\f{dx}{(x-u)^{d+1}(1+\f{ut}{x})^{d+1}}=\int_{1+\f1u}^{\infty}\f{dy}{u^d(y-1)^{d+1}(1+\f ty)^{d+1}}=\int_{1+\f1u}^2+\sum_{\ffn=1}^{\ffn_0}\int_{2^\ffn}^{2^{\ffn+1}}+\sum_{\ffn=\ffn_0+1}^{\infty}\int_{2^\ffn}^{2^{\ffn+1}}\\
	\ls&\int_{\f1u}^{1}\f{dy}{y^{d+1}}\f{1}{u^d}\<t\>^{-d-1}+\sum_{\ffn=1}^{\ffn_0}\f{1}{u^d}\f{2^\ffn}{2^{\ffn_0(d+1)}}+\sum_{\ffn=\ffn_0+1}^{\infty}\f{1}{u^d}\f{2^\ffn}{2^{\ffn(d+1)}}\ls\<t\>^{-d-1}+\f{1}{u^d}\<t\>^{-d}\ls\<t\>^{-d}.
	\ea\]
\end{proof}

\begin{prop}\label{resonancek}
	Let $d\geq2$ and $\sigma\geq d+3$.  Then
	\begin{align}\sup_{\substack{k\in \ZLs\\t>0}}\sum_{\ell\in\ZLs}\f{1}{L^d}\int_0^t\f{|k|^\f12|\ell|^\f12}{\<\ell\>^2}|k(t-\tau)|\<k-\ell\>^{-\sigma}\<kt-\ell\tau\>^{-\sigma}e^{\lambda^L(t,|k,kt|)-\lambda^L(\tau,|k,kt|)}d\tau\ls 1.\end{align}
\end{prop}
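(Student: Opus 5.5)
Since $\lambda^L(t,r)$ is nonincreasing in $t$ (Lemma \ref{multiplier}(1)), the factor $e^{\lambda^L(t,|k,kt|)-\lambda^L(\tau,|k,kt|)}$ is at most $1$ for $0\le\tau\le t$. I would therefore split the $\ell$-sum into $k\wedge\ell\neq0$ and $k\wedge\ell=0$, exactly as in Theorem \ref{thm:volterra}.

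\emph{Non-collinear part.} Discard the exponential. What remains is bounded by $\mathfrak S^+_{\mathrm{nc}}(t)$, which is $\le C_{d,\sigma}$ uniformly in $L,t$ by \eqref{ncbound}. The mechanism behind that bound is as follows. Write $k(t-\tau)=(kt-\ell\tau)+(\ell-k)\tau$, so that
\[
|k(t-\tau)|\le\<kt-\ell\tau\>\<k-\ell\>\<\tau\>.
\]
The transverse component of $kt-\ell\tau$ is $-\tau\Pi(k)\ell$. The factor $\<\tau\Pi(k)\ell\>^{-\sigma/2}$ then absorbs $\<\tau\>$ after the $\tau$-integration over the near-echo window. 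The remaining $\ell$-sum is handled by the projected-lattice count.

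\emph{Collinear part.} Here the Gevrey factor is genuinely needed. Fix $k=\mathfrak a\,\bbp_0$ with $\bbp_0\in\Lambda_L$ primitive, so $|\bbp_0|\ge L^{-1}$, and let $\ell=\mathfrak j\,\bbp_0$ with $\mathfrak j\in\Z^*$. The echo factor $\<(\mathfrak a t-\mathfrak j\tau)|\bbp_0|\>^{-\sigma}$ localizes $\tau$ to a window of length $\ls(|\mathfrak j||\bbp_0|)^{-1}$ around $\tau_{\mathfrak j}=\mathfrak a t/\mathfrak j\in[0,t]$. On that window $t-\tau\approx t|\mathfrak j-\mathfrak a|/\mathfrak j$, and $\<k-\ell\>^{-\sigma}$ forces $|\mathfrak j-\mathfrak a||\bbp_0|\ls1$ up to summable tails.

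\begin{itemize}
\item The diagonal term $\mathfrak j=\mathfrak a$ has its echo at $\tau=t$ and is $O(1)$ without any Gevrey gain.
\item Summing the off-diagonal terms without the Gevrey factor reproduces the collinear bound \eqref{colbound}. Per fiber the contribution is $\ls L^{1-d}\,|k|t\,\<k\>^{-1}$ times harmless factors.
\item Hence if $|k|t\le L^{d-1}$, the collinear part is already $O(1)$.
\end{itemize}

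The main step, and the expected obstacle, is the regime $r:=|k,kt|\sim|k|t\gg L^{d-1}$. There I would lower-bound the Gevrey loss
\[
\lambda^L(\tau,r)-\lambda^L(t,r)
\]
at $\tau=\tau_{\mathfrak j}\le t\,\mathfrak a/(\mathfrak a+1)$. Using the explicit derivative in Lemma \ref{multiplier}(1), the $(1+t)^{-\beta}$ term alone gives a loss
\[
\gtrsim\lambda_0\beta\,\tilde a_L\!\left(r/L^{d-2-\kappaL}\right)\<t\>^{-\beta}\,\frac{t-\tau}{t}.
\]
The first term gives a comparable gain when $L^{-(d-1)/3}t\<r\>^{-2/3}\sim1$.

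Property (2) of $\tilde a_L$ gives
\[
\tilde a_L\!\left(r/L^{d-2-\kappaL}\right)\gtrsim\min\Bigl\{\bigl(r/L^{d-1-\kappaL}\bigr)^{1/2},\ \bigl(r/L^{d-2-\kappaL}\bigr)^{1/3}\Bigr\},
\]
which is at least a positive power of $L^{\kappaL}|k|t/L^{d-1}$. Since $\beta<\kappaL/3$ and $r\gtrsim t/L$, the factor $\<t\>^{-\beta}$ costs less than this power gains. This should yield
\[
e^{-(\lambda^L(\tau,r)-\lambda^L(t,r))}\ls_N\bigl(L^{1-d}|k|t\bigr)^{-N}.
\]
That in turn cancels the accumulated factor $L^{1-d}|k|t$ from the fiber sum.

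The delicate sub-case is $\mathfrak j$ much larger than $\mathfrak a$, where $(t-\tau)/t\approx1$ is favorable, versus $\mathfrak j$ close to $\mathfrak a$, where $(t-\tau)/t\sim|\mathfrak j-\mathfrak a|/\mathfrak j$ is small. In the second case I would use $|\mathfrak j-\mathfrak a|\ge1$ together with $\mathfrak j\ls\mathfrak a+L$ to bound $(t-\tau)/t\gtrsim(\mathfrak a+L)^{-1}$. The extra factor $L^{-1}$, or $|k|^{-1}L^{-1}$, must then be absorbed by the polynomial growth of $\tilde a_L$ and by the $\<k-\ell\>^{-\sigma}$ decay. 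This bookkeeping is the part I expect to require the most care, possibly a dyadic split in $|\mathfrak j-\mathfrak a|$.
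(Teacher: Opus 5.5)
\emph{Gap 1: the wrong Gevrey term.} Your collinear argument has a genuine gap at its key step. The $(1+t)^{-\beta}$ part of $\lambda^L$ cannot supply the gain under the stated constraint $\beta<\kappaL/3$.

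\begin{itemize}
\item Its loss over $[\tau,t]$ is only $\simeq\lambda_0\beta\,\tilde a_L\,t^{-\beta}(t-\tau)/t$. Since $|k|\geq1/L$ gives only $t\leq L^{d}\,(|k|tL^{1-d})$, the factor $t^{-\beta}$ costs a power $L^{-d\beta}$. The $L^{\kappaL}$ margin inside $\tilde a_L$ does not cover this, so your claim that $\<t\>^{-\beta}$ ``costs less than this power gains'' is false.
\item A concrete counterexample: let $d\geq3$ and $\beta(d-1)>\kappaL/2$ (allowed), $k=\bbe_1$, $t=L^{d-1+\epsilon}$ with $\epsilon>0$ small, and $\ell=k+(m/L)\bbe_1$ for $1\leq m\leq L$. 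Each such $\ell$ has an echo window of length $\sim1$ on which $|k(t-\tau)|\sim tm/L$. So the unweighted collinear sum is $\gs\sum_m tmL^{-d-1}\sim L^{\epsilon}$.
\item In that example $\tilde a_L\sim L^{(\epsilon+\kappaL)/2}$, so the second-term loss is $\ls(m/L)L^{(\epsilon+\kappaL)/2-\beta(d-1+\epsilon)}\ls1$. With this term alone the weighted sum stays $\gs L^{\epsilon}$, which is unbounded.
\item The paper uses the gliding first term instead. Set $X=(t|k|^{-2}L^{1-d})^{1/3}\approx L^{-\frac{d-1}3}t\<r\>^{-2/3}$. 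For $X\geq1$ this term loses $\gs\frac{t-\tau}{t}X^{-\beta}\tilde a_L$.
\item Since $X$ is scale invariant and $\tilde a_L\gs(|k|tL^{1-d+\kappaL})^{1/3}=L^{\kappaL/3}|k|X$, what remains is $\frac{t-\tau}{t}(|k|tL^{1-d})^{\frac{1-\beta}3}|k|^{\beta}L^{\kappaL/3}$. The bound $|k|^\beta L^{\kappaL/3}\geq L^{\kappaL/3-\beta}\geq1$ is exactly where $|k|\geq1/L$ and $\beta<\kappaL/3$ are used. In the example above this gives a sum $\ls L^{-\kappaL+2\beta\epsilon/3}$.
\end{itemize}

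\emph{Gap 2: the threshold and the near-diagonal terms.} Your threshold $|k|t\lessgtr L^{d-1}$ and the hoped-for uniform decay $(L^{1-d}|k|t)^{-N}$ break down near the diagonal. Write $k=\frac aL\bbp$, $\ell=\frac bL\bbp$; the loss then carries the factor $(t-\tau)/t\approx(b-a)/b$.

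\begin{itemize}
\item For $|k|=L^{10}$, $t=L^{d}$, $b=a+1$, one has $(t-\tau)/t\sim L^{-11}$ against $\tilde a_L\sim L^{4+\kappaL/3}$. There is no gain at all, while your per-fiber bound $L^{1-d}|k|t\<k\>^{-1}\approx L$ is unbounded. So the factor $|k|^{-1}L^{-1}$ cannot be absorbed by the growth of $\tilde a_L$, as you hoped.
\item What rescues the estimate is that for $|k|\geq1$ the unweighted collinear sum is really $\ls t|k|^{-2}L^{1-d}=X^3$.
\item The paper therefore splits on $X^3\leq1$, where no Gevrey gain is needed, versus $X^3\geq1$.
\item In the latter case it writes the loss as $c|\bbp|(b-a)W^{\gamma}$. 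Here $(W,\gamma)=(t|k|^{-1}L^{-d-1},\f12-\f\beta3)$ on the square-root branch and $(W,\gamma)=(t|k|^{-2}L^{-d-1},\f{1-\beta}3)$ on the cube-root branch.
\item It then uses $e^{-y}\ls y^{-1/\gamma}$ to cancel the factor $t$. The remaining sum $\sum_b(|\bbp|(b-a))^{1-1/\gamma}$ converges since $1/\gamma>2$.
\end{itemize}

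\emph{Non-collinear part.} Invoking \eqref{ncbound} here is circular: the paper proves \eqref{ncbound} by pointing back to the $\mathscr A^2,\mathscr B^2$ estimates of this very proof. Your transverse-projection sketch would have to be carried out in full, including the $|k|<1$ cases.
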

\begin{proof}
\emph{Structure of the proof.} The sum is split into a collinear part
$\mathscr{A}^1$ (where $\ell\parallel k$), in which the echo is a
one-dimensional cascade tamed by the gliding Gevrey factor
$e^{\lambda^L(t,\cdot)-\lambda^L(\tau,\cdot)}$, and a non-collinear part
$\mathscr{A}^2$, controlled by the transverse lattice geometry of Appendix
\ref{sec:appA} (Proposition \ref{Lattice}). Within each part we distinguish
$|k|\gtrless1$ and, for the collinear chains, the width of the resonant time
window; the collinear case is the hardest and is where the Gevrey weight is
indispensable.

By the triangle inequality,
\[
|k(t-\tau)|\le |k-\ell|\tau+|kt-\ell\tau|.
\]
For the contribution generated by the first term on the right, set
\[\ba
\mathcal Q_{k,\ell}(t):={}&\f{1}{L^d}\int_1^t
\f{|k|^\f12|\ell|^\f12}{\<\ell\>^2}|k-\ell|\tau
\<k-\ell\>^{-\sigma}\<kt-\ell\tau\>^{-\sigma}\\
&\times e^{\lambda^L(t,|k,kt|)-\lambda^L(\tau,|k,kt|)}\,d\tau.
\ea\]
We remark that the $t\le1$ case, and the integral on $\tau\le1$ for $t\ge1$ can be estimated in the same way as \eqref{zzz}. By invariance under coordinate reflections, we may assume $k_i\geq0$ for
$i=1,2,\ldots,d$. Write $k=\f\ffa L\bbp$, where
$\bbp=(\ffp_1,\ffp_2,\dots,\ffp_d)\in\N^d$ is primitive and $\ffa\in\N$.
	
	We first consider the case $|k|\geq1$, namely $\ffa\geq\f{L}{|\bbp|}$. We split the summation into two parts.
	\[\ba
	\sum_{\ell\in\ZLs}\mathcal Q_{k,\ell}(t)
	&=\sum_{\substack{\ell\in\ZLs\\\angle(\ell,k)=0\\\text{and}\ |\ell|>|k|}}\mathcal Q_{k,\ell}(t)
	 +\sum_{\substack{\ell\in\ZLs\\\angle(\ell,k)\neq0\\\text{or}\ |\ell|<|k|}}\mathcal Q_{k,\ell}(t)\\
	&=: \mathscr{A}^{1}+\mathscr{A}^{2}.
	\ea\]
	In what follows, we estimate $\mathscr{A}^{1}$ and $\mathscr{A}^{2}$ separately.
	
	\underline{\emph{Estimate of $\mathscr{A}^{1}$}.} Let $\ell=\f\ffb L\bbp$. $\ffb\in \Z, \ffb>\ffa$. Since $|\ell|\geq|k|\geq1$, we have
	\[\ba
	\mathscr{A}^{1}&\ls\sum_{\substack{\ffb>\ffa\\\ffb\in\Z}}\f{1}{L^d}\int_1^t\nr{\bbp\f\ffa L}^\f12\nr{\bbp\f\ffb L}^{-\f32}|\bbp|\f{\ffb-\ffa}{L}\tau\<|\bbp|\f{\ffb-\ffa}{L}\>^{-\sigma}\<|\bbp|\f{\ffa t-\ffb\tau}{L}\>^{-\sigma}e^{\lambda^L(t,|\bbp\f{\ffa}L,\bbp\f{\ffa}Lt|)-\lambda^L(\tau,|\bbp\f{\ffa}L,\bbp\f{\ffa}Lt|)}d\tau\\
	&=\sum_{\substack{\ffb\geq\ffa+\f{L}{|\bbp|}\\\ffb\in\Z}}+\sum_{\substack{\ffa<\ffb<\ffa+\f{L}{|\bbp|}\\\ffb\in\Z}}:=\mathscr{A}^1_{1}+\mathscr{A}^1_{2}.
	\ea\]
	
	$\bullet$ Estimate of $\mathscr{A}^1_{1}$.
	\[\ba
	\mathscr{A}^1_1&\ls\sum_{\substack{\ffb\geq\ffa+\f{L}{|\bbp|}\\\ffb\in\Z}}\f{1}{L^d}\int_1^t\nr{\bbp\f\ffa L}^\f12\nr{\bbp\f\ffb L}^{-\f32}\tau\<|\bbp|\f{\ffb-\ffa}{L}\>^{-\sigma+1}\<|\bbp|\f{\ffa t-\ffb\tau}{L}\>^{-\sigma}e^{\lambda^L(t,|\bbp\f{\ffa}L,\bbp\f{\ffa}Lt|)-\lambda^L(\tau,|\bbp\f{\ffa}L,\bbp\f{\ffa}Lt|)}d\tau\\
	&=\sum_{\substack{\ffb\geq\ffa+\f{L}{|\bbp|}\\\ffb\in\Z}}\f{1}{L^d}\int_{|\bbp||\f{\ffa t-\ffb\tau}{L}|\geq\f t2}+\sum_{\substack{\ffb\geq\ffa+\f{L}{|\bbp|}\\\ffb\in\Z}}\f{1}{L^d}\int_{|\bbp||\f{\ffa t-\ffb\tau}{L}|\leq\f t2}:=\mathscr{A}^1_{1,1}+\mathscr{A}^1_{1,2}.
	\ea\]
	For $\mathscr{A}^1_{1,1}$,
	\[\ba
	\mathscr{A}^1_{1,1}&\ls\sum_{\substack{\ffb\geq\ffa+\f{L}{|\bbp|}\\\ffb\in\Z}}\f{2}{L^d}\nr{\bbp\f\ffa L}^\f12\nr{\bbp\f\ffb L}^{-\f32}\<|\bbp|\f{\ffb-\ffa}{L}\>^{-\sigma+1}\int_0^t\<|\bbp|\f{\ffa t-\ffb\tau}{L}\>^{-\sigma+1}d\tau\\
	&\ls\sum_{\substack{\ffb\geq\ffa+\f{L}{|\bbp|}\\\ffb\in\Z}}\f{1}{L^d}\nr{\bbp\f\ffa L}^\f12\nr{\bbp\f\ffb L}^{-\f52}\<|\bbp|\f{\ffb-\ffa}{L}\>^{-\sigma+1}\ls1.
	\ea\]
	For $\mathscr{A}^1_{1,2}$, we note that $\tau\leq\f{|\bbp|\f\ffa L+\f12 }{|\bbp|\f\ffb L}t\ls\f\ffa\ffb t$ on $\{\tau:|\bbp||\f{\ffa t-\ffb\tau}{L}|\leq\f t2\}$, which implies $\f{t-\tau}{t}\geq\f{\ffb-\ffa-\f{L}{2|\bbp|}}{\ffb}\gs\f{\ffb-\ffa}{\ffb}$. We consider the following two different cases:
	\begin{itemize}
		\item[(i)] $t|\bbp\f{\ffa}{L}|^{-2}L^{1-d}\leq1$.
		\[\ba
		\mathscr{A}^1_{1,2}&\ls\sum_{\substack{\ffb\geq\ffa+\f{L}{|\bbp|}\\\ffb\in\Z}}\f{1}{L^d}\nr{\bbp\f\ffa L}^\f12\nr{\bbp\f\ffb L}^{-\f32}\f{\ffa t}{\ffb}\<|\bbp|\f{\ffb-\ffa}{L}\>^{-\sigma+1}\int_0^t\<|\bbp|\f{\ffa t-\ffb\tau}{L}\>^{-\sigma}d\tau\\
		&\ls\sum_{\substack{\ffb\geq\ffa+\f{L}{|\bbp|}\\\ffb\in\Z}}\f{1}{L^d}\nr{\bbp\f\ffa L}^\f32\nr{\bbp\f\ffb L}^{-\f72}t\<|\bbp|\f{\ffb-\ffa}{L}\>^{-\sigma+1}\ls\sum_{\substack{\ffb\geq\ffa+\f{L}{|\bbp|}\\\ffb\in\Z}}\f{1}{L}\<|\bbp|\f{\ffb-\ffa}{L}\>^{-\sigma+1}\ls1.
		\ea\]
		\item[(ii)] $t|\bbp\f{\ffa}{L}|^{-2}L^{1-d}\geq1$. We note that $\tilde{a}_L(\f{1}{L^{d-2-\kappaL}}|\bbp\f\ffa L,\bbp\f\ffa Lt|)\gs\rr{\f{|\bbp|\ffa t}{L^{d-\kappaL}}}^\f13$. Hence, by the mean value theorem and the bound $\f{t-\tau}{t}\gs\f{\ffb-\ffa}{\ffb}$,
		\[\ba
		\lambda^L(\tau,|\bbp\f\ffa L,\bbp\f\ffa Lt|)-\lambda^L(t,|\bbp\f\ffa L,\bbp\f\ffa Lt|)\geq\f{\lambda_0}{4}\beta(1+L^{-\f{d-1}{3}}t\<|\bbp\f\ffa L,\bbp\f\ffa Lt|\>^{-\f23})^{-\beta-1}L^{-\f{d-1}{3}}\<|\bbp\f\ffa L,\bbp\f\ffa Lt|\>^{-\f23}(t-\tau)\\
		\times\tilde{a}_L(\f{1}{L^{d-2-\kappaL}}|\bbp\f\ffa L,\bbp\f\ffa Lt|)\gs\f{\ffb-\ffa}{\ffb}\rr{t|\bbp\f{\ffa}{L}|^{-2}L^{1-d}}^{-\f \beta3}\rr{\f{|\bbp|\ffa t}{L^{d-\kappaL}}}^\f13=\f{|\bbp|(\ffb-\ffa)}{L}\f{\ffa}{\ffb}\rr{t|\bbp\f{\ffa}{L}|^{-2}L^{1-d}}^{\f{1-\beta}{3}}L^{\f\kappaL3}.
		\ea\]
		Combining this with the bound $\tau\ls\f{\ffa}{\ffb}t$ and the estimate $\int_1^t\<|\bbp|\f{\ffa t-\ffb\tau}{L}\>^{-\sigma}d\tau\ls\f{L}{|\bbp|\ffb},$ we obtain
		\[\ba
		&\mathscr{A}^1_{1,2}\ls\sum_{\substack{\ffb\geq\ffa+\f{L}{|\bbp|}\\\ffb\in\Z}}\f{1}{L^d}\nr{\bbp\f\ffa L}^\f12\nr{\bbp\f\ffb L}^{-\f52}\f{\ffa t}{\ffb}\<|\bbp|\f{\ffb-\ffa}{L}\>^{-\sigma+1}e^{-c\f{\ffa}{\ffb}\rr{t|\bbp\f{\ffa}{L}|^{-2}L^{1-d}}^\f{1-\beta}{3}}\\
		&\ls\sum_{\substack{\ffb\geq\ffa+\f{L}{|\bbp|}\\\ffb\in\Z}}\f{1}{L^d}\nr{\bbp\f\ffa L}^\f32\nr{\bbp\f\ffb L}^{-\f72}t\<|\bbp|\f{\ffb-\ffa}{L}\>^{-\sigma+1}\rr{\f\ffb\ffa}^{\f{3}{1-\beta}}\rr{t|\bbp\f{\ffa}{L}|^{-2}L^{1-d}}^{-1}\ls\sum_{\substack{\ffb\geq\ffa+\f{L}{|\bbp|}\\\ffb\in\Z}}\f1L\<|\bbp|\f{\ffb-\ffa}{L}\>^{-\sigma+1}\ls1.
		\ea\]
	\end{itemize}

	$\bullet$ Estimate of $\mathscr{A}^1_{2}$. We note that $\ffa\sim\ffb$ on $\{\ffb:\ffa<\ffb<\ffa+\f{L}{|\bbp|}\}$ since $\ffa\geq\f{L}{|\bbp|}$.
	\[\ba
	\mathscr{A}^1_{2}&\ls\sum_{\substack{\ffa<\ffb<\ffa+\f{L}{|\bbp|}\\\ffb\in\Z}}\f{1}{L^d}\int_1^t\nr{\bbp\f\ffa L}^{-1}|\bbp|\f{\ffb-\ffa}{L}\tau\<|\bbp|\f{\ffa t-\ffb\tau}{L}\>^{-\sigma}e^{\lambda^L(t,|\bbp\f{\ffa}L,\bbp\f{\ffa}Lt|)-\lambda^L(\tau,|\bbp\f{\ffa}L,\bbp\f{\ffa}Lt|)}d\tau\\
	&:=\sum_{\substack{\ffa<\ffb<\ffa+\f{L}{|\bbp|}\\\ffb\in\Z}}\f{1}{L^d}\int_{|\f{\ffa t-\ffb\tau}{L}|\geq\f{\ffb-\ffa}{2L}t}+\sum_{\substack{\ffa<\ffb<\ffa+\f{L}{|\bbp|}\\\ffb\in\Z}}\f{1}{L^d}\int_{|\f{\ffa t-\ffb\tau}{L}|\leq\f{\ffb-\ffa}{2L}t}:=\mathscr{A}^1_{2,1}+\mathscr{A}^1_{2,2}.
	\ea\]
	For $\mathscr{A}^1_{2,1}$,
	\[\ba
	\mathscr{A}^1_{2,1}\ls\sum_{\substack{\ffa<\ffb<\ffa+\f{L}{|\bbp|}\\\ffb\in\Z}}\f{2}{L^d}\nr{\bbp\f\ffa L}^{-1}\int_0^t\<|\bbp|\f{\ffa t-\ffb\tau}{L}\>^{-\sigma+1}d\tau\ls\sum_{\substack{\ffa<\ffb<\ffa+\f{L}{|\bbp|}\\\ffb\in\Z}}\f{1}{L^d}\nr{\bbp\f\ffa L}^{-2}\ls\f{1}{L^{d-1}|\bbp|}\ls1.
	\ea\]
	For $\mathscr{A}^1_{2,2}$, we note that $\tau\leq\f{\ffa+\ffb}{2\ffb}t$ on $\{\tau:|\f{\ffa t-\ffb\tau}{L}|\leq\f{\ffb-\ffa}{2L}t\}$, which implies that $\f{t-\tau}{t}\geq \f{\ffb-\ffa}{2\ffb}$. We consider the following three different cases:
	\begin{itemize}
		\item[(i)] $t|\bbp\f{\ffa}{L}|^{-2}L^{1-d}\leq1$.
		\[\mathscr{A}^1_{2,2}\ls\sum_{\substack{\ffa<\ffb<\ffa+\f{L}{|\bbp|}\\\ffb\in\Z}}\f{1}{L^d}\nr{\bbp\f\ffa L}^{-2}t\nr{\bbp}\f{\ffb-\ffa}{L}\leq\sum_{\substack{\ffa<\ffb<\ffa+\f{L}{|\bbp|}\\\ffb\in\Z}}\f{1}{L}\nr{\bbp}\f{\ffb-\ffa}{L}\ls1.\]
		\item[(ii)] $t|\bbp\f{\ffa}{L}|^{-2}L^{1-d}\geq1$, $|\bbp\f\ffa Lt|\leq L^{d+1-\kappaL}$. On this region we have $\tilde{a}_L(\f{1}{L^{d-2-\kappaL}}|\bbp\f\ffa L,\bbp\f\ffa Lt|)\gs\rr{\f{|\bbp|\ffa t}{L^{d-\kappaL}}}^\f12$. Hence, by the mean value theorem and the bound $\frac{t-\tau}{t}\ge\frac{\ffb-\ffa}{2\ffb}$,
		\beq\label{ttaugap}\ba
		&\lambda^L(\tau,|\bbp\f\ffa L,\bbp\f\ffa Lt|)-\lambda^L(t,|\bbp\f\ffa L,\bbp\f\ffa Lt|)\gs\f{t-\tau}{t}\rr{t|\bbp\f{\ffa}{L}|^{-2}L^{1-d}}^{-\f \beta3}\rr{\f{|\bbp|\ffa t}{L^{d-\kappaL}}}^\f12\\
		\gs&|\bbp|(\ffb-\ffa)\rr{t|\bbp\f\ffa L|^{-1}L^{-d-1}}^{\f12-\f \beta3}|\bbp\f\ffa L|^{\f \beta3}L^{\f\kappaL2-\f{2\beta}{3}}\geq|\bbp|(\ffb-\ffa)\rr{t|\bbp\f\ffa L|^{-1}L^{-d-1}}^{\f12-\f \beta3}.
		\ea\eeq
        Thus we have
		\[\ba
		&\mathscr{A}^1_{2,2}\ls\sum_{\substack{\ffa<\ffb<\ffa+\f{L}{|\bbp|}\\\ffb\in\Z}}\f{1}{L^d}\nr{\bbp\f\ffa L}^{-2}t\nr{\bbp}\f{\ffb-\ffa}{L}e^{-c|\bbp|(\ffb-\ffa)\rr{t|\bbp\f\ffa L|^{-1}L^{-d-1}}^{\f12-\f \beta3}}\\
		&\ls\sum_{\substack{\ffa<\ffb<\ffa+\f{L}{|\bbp|}\\\ffb\in\Z}}\f{1}{L^d}\nr{\bbp\f\ffa L}^{-2}t\nr{\bbp}\f{\ffb-\ffa}{L}\rr{|\bbp|(\ffb-\ffa)}^{-\f{1}{\f12-\f \beta3}}t^{-1}|\bbp\f\ffa L|L^{d+1}\ls\sum_{\substack{\ffa<\ffb<\ffa+\f{L}{|\bbp|}\\\ffb\in\Z}}\rr{|\bbp|(\ffb-\ffa)}^{1-\f{1}{\f12-\f \beta3}}\ls1.
		\ea\]
		\item[(iii)] $t|\bbp\f{\ffa}{L}|^{-2}L^{1-d}\geq1$, $|\bbp\f\ffa Lt|\geq L^{d+1-\kappaL}$. On this region we have $\tilde{a}_L(\f{1}{L^{d-2-\kappaL}}|\bbp\f\ffa L,\bbp\f\ffa Lt|)\gs\rr{\f{|\bbp|\ffa t}{L^{d-1-\kappaL}}}^\f13$.  Hence, by the mean value theorem and the bound $\frac{t-\tau}{t}\ge\frac{\ffb-\ffa}{2\ffb}$,
		\beq\label{ttaugap2}\ba
		&\lambda^L(\tau,|\bbp\f\ffa L,\bbp\f\ffa Lt|)-\lambda^L(t,|\bbp\f\ffa L,\bbp\f\ffa Lt|)\gs\f{t-\tau}{t}\rr{t|\bbp\f{\ffa}{L}|^{-2}L^{1-d}}^{-\f \beta3}\rr{\f{|\bbp|\ffa t}{L^{d-1-\kappaL}}}^\f13\\
		\gs&|\bbp|(\ffb-\ffa)\rr{t|\bbp\f\ffa L|^{-2}L^{-d-1}}^{\f{1-\beta}{3}}L^{\f{\kappaL-2\beta}{3}}\geq|\bbp|(\ffb-\ffa)\rr{t|\bbp\f\ffa L|^{-2}L^{-d-1}}^{\f{1-\beta}{3}}.
		\ea\eeq
		Thus we have
		\[\ba
		&\mathscr{A}^1_{2,2}\ls\sum_{\substack{\ffa<\ffb<\ffa+\f{L}{|\bbp|}\\\ffb\in\Z}}\f{1}{L^d}\nr{\bbp\f\ffa L}^{-2}t\nr{\bbp}\f{\ffb-\ffa}{L}e^{-c|\bbp|(\ffb-\ffa)\rr{t|\bbp\f\ffa L|^{-2}L^{-d-1}}^{\f{1-\beta}{3}}}\\
		\ls&\sum_{\substack{\ffa<\ffb<\ffa+\f{L}{|\bbp|}\\\ffb\in\Z}}\f{1}{L^d}\nr{\bbp\f\ffa L}^{-2}t\nr{\bbp}\f{\ffb-\ffa}{L}\rr{|\bbp|(\ffb-\ffa)}^{-\f3{1-\beta}}t^{-1}|\bbp\f\ffa L|^{2}L^{d+1}\ls\sum_{\substack{\ffa<\ffb<\ffa+\f{L}{|\bbp|}\\\ffb\in\Z}}\rr{|\bbp|(\ffb-\ffa)}^{1-\f{3}{1-\beta}}\ls1.
		\ea\]
	\end{itemize}
	
	\underline{\emph{Estimate of $\mathscr{A}^{2}$}.} Let $\theta=\angle(\ell,k)$. By elementary geometry, $|kt-\ell\tau|\geq|k|t\sin\theta$ for any $\tau\in\R$. We decompose $\mathscr{A}^{2}$ into three parts according to the size of $\theta$ and the projection $|\ell|\cos\theta$.
	\[\ba
	\mathscr{A}^2
	&=\sum_{\substack{\ell\in\ZLs\\\theta\geq\f\pi4}}
	 +\sum_{\substack{\ell\in\ZLs\\\theta<\f\pi4\ \text{and}\\|\ell|\cos\theta\leq|k|}}
	 +\sum_{\substack{\ell\in\ZLs\\0<\theta<\f\pi4\ \text{and}\\|\ell|\cos\theta>|k|}}\\
	&=: \mathscr{A}^2_{1}+\mathscr{A}^2_{2}+\mathscr{A}^2_{3}.
	\ea\]
	For $\mathscr{A}^2_{1}$, we note that $\theta\ge\f\pi4$ implies that $|kt-\ell\tau|\ge\f1{\sqrt{2}}|kt|$. Hence we have
	\[\ba
	\mathscr{A}^2_{1}\ls\sum_{\substack{\ell\in\ZLs\\\theta\geq\f\pi4}}\f{1}{L^d}\f{|k|^\f12|\ell|^{\f12}}{\<\ell\>^2}\<k-\ell\>^{-\sigma+1}t\<|k|t/\sqrt{2}\>^{-\sigma+2}\int_0^t\<kt-\ell\tau\>^{-2}d\tau\\\ls\sum_{\substack{\ell\in\ZLs\\\theta\geq\f\pi4}}\f{1}{L^d}\f{\<\ell\>^\f12|\ell|^{-\f12}}{\<\ell\>^2}\<k-\ell\>^{-\sigma+\f32}\leq\sum_{\ell\in\ZLs}\f{1}{L^d}|\ell|^{-\f12}\<k-\ell\>^{-\sigma+\f32}\ls1.
	\ea\]
	For $\mathscr{A}^2_2$, since $\tau\le t$, we have $|kt-\ell\tau|\ge|k-\ell|t$ when $|\ell|\le|k|\cos\theta$, whereas for $|k|\cos\theta\leq|\ell|\le|k|/\cos\theta$, $\theta\leq\f\pi4$, from the geometric configuration we have $|kt-\ell\tau|\ge|k|t\sin\theta\ge\frac{\sqrt{2}}{2}|\ell-k|t$. Hence
	\[\ba
	\mathscr{A}^2_{2}
	&\ls\sum_{\substack{\ell\in\ZLs\\\theta<\f\pi4\ \text{and}\\|\ell|\cos\theta\leq|k|}}
	\f{1}{L^d}\f{|k|^\f12|\ell|^{\f12}}{\<\ell\>^2}\<k-\ell\>^{-\sigma}
	|k-\ell|t\<|k-\ell|t\>^{-\sigma+2}
	\int_0^t\<kt-\ell\tau\>^{-2}d\tau\\
	&\ls\sum_{\ell\in\ZLs}\f{1}{L^d}|\ell|^{-\f12}\<k-\ell\>^{-\sigma+\f12}\ls1.
    \ea\]
    Now we consider $\mathscr{A}^2_3$. On the region $\{\ell: 0<\theta<\pi/4,\ |\ell|\cos\theta>|k|\}$, 
    each $\ell$ can be parametrized as
    \[
    \ell = \frac{\mathfrak{b}}{L}\mathbf{p} + \frac{\mathbf{c}}{L},
    \qquad \mathbf{c}\in\Pi(\mathbf{p})(\mathbb{Z}^d)^*,\ \mathfrak{b}\in\mathfrak{b}_0(\mathbf{c})+\mathbb{Z},\ \mathfrak{b}>\mathfrak{a}.
    \] 
    Here $\mathfrak{b}_0(\mathbf{c})\in\mathbb{R}$ is chosen so that $\mathfrak{b}_0(\mathbf{c})\mathbf{p}+\mathbf{c}\in\mathbb{Z}^d$. Using this parametrization, from the geometric configuration we have \[|k-\ell|\geq|\bbp|\f{\ffb-\ffa}{L},\quad |k|t\sin\theta\sim|k|t\tan\theta=|k|t\f{|\bbc|}{|\bbp|\ffb}=\f{\ffa t}{L}\f{|\bbc|}{\ffb},\quad\text{and}\quad |\ell|\sim|\ell|\cos\theta=\f{\ffb}{L}|\bbp|.\]
    Hence we have
    \[\ba
    &\mathscr{A}^2_3\ls\sum_{\substack{\ell\in\ZLs\\0<\theta<\f\pi4\ \text{and}\\|\ell|\cos\theta>|k|}}\f{1}{L^d}\f{t}{|\ell|}\<k-\ell\>^{-\sigma+1}\<|k|t\sin\theta\>^{-\sigma+2}\int_0^t\<kt-\ell\tau\>^{-2}d\tau\\
    \ls&\sum_{\substack{\ell\in\ZLs\\0<\theta<\f\pi4\ \text{and}\\|\ell|\cos\theta>|k|}}
    \f{1}{L^d}\f{t}{|\ell|^2}\<k-\ell\>^{-\sigma+1}
    \<|k|t\sin\theta\>^{-\sigma+2}\\
    \ls&\sum_{\substack{\bbc\in{\Pi(\bbp)(\Z^d)}^*\\\ffb\in \ffb_{0}(\bbc)+\Z\\\ffb>\ffa}}
    \f{1}{L^d}\f{t}{|\bbp\f\ffb L|^2}
    \rr{1+|\bbp|\f{\ffb-\ffa}{L}}^{-\sigma+1}
    \rr{1+\f{\ffa t}{L}\f{|\bbc|}{\ffb}}^{-\sigma+2}.
    \ea\]
    For brevity, denote the summand on the last line by
    \[
    Q_{\bbc,\ffb}:=\f{1}{L^d}\f{t}{|\bbp\f\ffb L|^2}
    \rr{1+|\bbp|\f{\ffb-\ffa}{L}}^{-\sigma+1}
    \rr{1+\f{\ffa t}{L}\f{|\bbc|}{\ffb}}^{-\sigma+2}.
    \]
    We first consider the case $\f{|\ffp|}{L}>1$.
    \[\ba
    &\sum_{\bbc\in{\Pi(\bbp)(\Z^d)}^*}
    \sum_{\substack{\ffb\in \ffb_{0}(\bbc)+\Z\\\ffb>\ffa}}Q_{\bbc,\ffb}\\
    =&\sum_{\bbc\in{\Pi(\bbp)(\Z^d)}^*}
    \sum_{\substack{\ffb\in \ffb_{0}(\bbc)+\Z\\\ffa<\ffb\leq\ffa+2}}Q_{\bbc,\ffb}
    +\sum_{\bbc\in{\Pi(\bbp)(\Z^d)}^*}
    \sum_{\substack{\ffb\in \ffb_{0}(\bbc)+\Z\\\ffb>\ffa+2}}Q_{\bbc,\ffb}\\
    :=&\ \mathscr{A}^2_{3,1}+\mathscr{A}^2_{3,2}.
    \ea\]
    For $\mathscr{A}^2_{3,1}$, by \eqref{lattice1} we have
    \[\ba
    \mathscr{A}^2_{3,1}
    &\ls\sum_{\bbc\in{\Pi(\bbp)(\Z^d)}^*}\f{t}{L^{d-1}|\bbp|}
    \rr{1+\f{t}{L}|\bbc|}^{-d}\\
    &\ls\f{t}{L^{d-1}|\bbp|}
    \left\{|\bbp|\f{L}{t}\vv{1}_{L\leq t}
    +|\bbp|\f{L^{d-1}}{t^{d-1}}\vv{1}_{L\geq t}\right\}\\
    &\ls L^{2-d}\vv{1}_{L\leq t}+t^{2-d}\vv{1}_{L\geq t}\ls1.
    \ea\]
    For $\mathscr{A}^2_{3,2}$,
    \[\ba
    &\mathscr{A}^2_{3,2}\ls\sum_{\bbc\in{\Pi(\bbp)(\Z^d)}^*}\f{t}{L^{d-1}|\bbp|}\sum_{\substack{\ffb\in \ffb_{0}(\bbc)+\Z\\\ffb>\ffa+2}}\f{|\bbp|}{L}\rr{|\bbp|\f{\ffb}{L}-|\bbp|\f\ffa {L}}^{-d-1}\rr{1+|\bbp|\f{\ffa t}{L}\f{\f{|\bbc|}{L}}{\f{|\bbp|\ffb}{L}}}^{-d-1}\\
    \ls&\sum_{\bbc\in{\Pi(\bbp)(\Z^d)}^*}\f{t}{L^{d-1}|\bbp|}\int_{|\bbp|\f\ffa L+1}^\infty\rr{x-|\bbp|\f\ffa L}^{-d-1}\rr{1+\f{\f{|\bbp|\ffa t}{L}\f{|\bbc|}{L}}{x}}^{-d-1}dx,
    \ea\]     
    where the second line follows by comparing the Riemann sum with the corresponding integral, since the summand is monotone decreasing in $\mathfrak{b}$. Evaluating the integral via \eqref{ut} and applying \eqref{lattice1} to the resulting $\mathbf{c}$-sum, we obtain
    \[
    \mathscr{A}^2_{3,2}    \ls\sum_{\bbc\in{\Pi(\bbp)(\Z^d)}^*}\f{t}{L^{d-1}|\bbp|}\rr{1+\f{t}{L}|\bbc|}^{-d}\ls1.
    \]
    Then we consider the case $\f{|\ffp|}{L}\leq1$.
    \[\ba
    &\sum_{\bbc\in{\Pi(\bbp)(\Z^d)}^*}
    \sum_{\substack{\ffb\in \ffb_{0}(\bbc)+\Z\\\ffb>\ffa}}Q_{\bbc,\ffb}\\
    =&\sum_{\bbc\in{\Pi(\bbp)(\Z^d)}^*}
    \sum_{\substack{\ffb\in \ffb_{0}(\bbc)+\Z\\\ffa<\ffb\leq\ffa+\f{L}{|\bbp|}}}Q_{\bbc,\ffb}
    +\sum_{\bbc\in{\Pi(\bbp)(\Z^d)}^*}
    \sum_{\substack{\ffb\in \ffb_{0}(\bbc)+\Z\\\ffb>\ffa+\f{L}{|\bbp|}}}Q_{\bbc,\ffb}\\
    :=&\ \mathscr{A}^2_{3,1}+\mathscr{A}^2_{3,2}.
    \ea\]
    For $\mathscr{A}^2_{3,1}$, by \eqref{lattice1}
    \[\mathscr{A}^2_{3,1}\ls\sum_{\bbc\in{\Pi(\bbp)(\Z^d)}^*}\f{t}{L^{d}}\f{L}{|\bbp|}\rr{1+\f{t}{L}|\bbc|}^{-d}=\sum_{\bbc\in{\Pi(\bbp)(\Z^d)}^*}\f{t}{L^{d-1}|\bbp|}\rr{1+\f{t}{L}|\bbc|}^{-d}\ls1.\]
    For $\mathscr{A}^2_{3,2}$,
    \[\ba
    &\mathscr{A}^2_{3,2}\ls\sum_{\bbc\in{\Pi(\bbp)(\Z^d)}^*}\f{t}{L^{d-1}|\bbp|}\sum_{\substack{\ffb\in \ffb_{0}(\bbc)+\Z\\\ffb>\ffa+\f{L}{|\bbp|}}}\f{|\bbp|}{L}\rr{|\bbp|\f{\ffb}{L}-|\bbp|\f\ffa {L}}^{-d-1}\rr{1+|\bbp|\f{\ffa t}{L}\f{\f{|\bbc|}{L}}{\f{|\bbp|\ffb}{L}}}^{-d-1}\\
    \ls&\sum_{\bbc\in{\Pi(\bbp)(\Z^d)}^*}\f{t}{L^{d-1}|\bbp|}
    \int_{|\bbp|\f\ffa L+1}^\infty\rr{x-|\bbp|\f\ffa L}^{-d-1}
    \rr{1+\f{\f{|\bbp|\ffa t}{L}\f{|\bbc|}{L}}{x}}^{-d-1}dx\\
    \ls&\sum_{\bbc\in{\Pi(\bbp)(\Z^d)}^*}\f{t}{L^{d-1}|\bbp|}
    \rr{1+\f{t}{L}|\bbc|}^{-d}\ls1.
    \ea\]
    
    Next we consider the case $|k|<1$, namely $\ffa<\f{L}{|\bbp|}$. We split the summation into two parts:
    \[\ba
    \sum_{\ell\in\ZLs}\mathcal Q_{k,\ell}(t)
    &=\sum_{\substack{\ell\in\ZLs\\\angle(\ell,k)=0\\\text{and}\ |\ell|>|k|}}\mathcal Q_{k,\ell}(t)
    +\sum_{\substack{\ell\in\ZLs\\\angle(\ell,k)\neq0\\\text{or}\ |\ell|<|k|}}\mathcal Q_{k,\ell}(t)\\
    &=:\mathscr{B}^{1}+\mathscr{B}^{2}.
    \ea\]
    In what follows, we estimate $\mathscr{B}^{1}$ and $\mathscr{B}^{2}$ separately.
    
    \underline{\emph{Estimate of $\mathscr{B}^{1}$}.} Let $\ell=\f\ffb L\bbp$. $\ffb\in \Z, \ffb>\ffa$.
    \[\ba
    \mathscr{B}^{1}&\ls\sum_{\substack{\ffb>\ffa\\\ffb\in\Z}}\f{1}{L^d}\int_1^t
    \nr{\bbp\f\ffa L}^\f12\nr{\bbp\f\ffb L}^{\f12}\<\bbp\f\ffb L\>^{-2}
    |\bbp|\f{\ffb-\ffa}{L}\tau\<|\bbp|\f{\ffb-\ffa}{L}\>^{-\sigma}\\
    &\qquad\times\<|\bbp|\f{\ffa t-\ffb\tau}{L}\>^{-\sigma}
    e^{\lambda^L(t,|\bbp\f{\ffa}L,\bbp\f{\ffa}Lt|)
    -\lambda^L(\tau,|\bbp\f{\ffa}L,\bbp\f{\ffa}Lt|)}d\tau\\
    &=\sum_{\substack{\ffb\geq2\ffa\\\ffb\in\Z}}+\sum_{\substack{\ffa<\ffb<2\ffa\\\ffb\in\Z}}:=\mathscr{B}^1_{1}+\mathscr{B}^1_{2}.
    \ea\]
    
    $\bullet$ Estimate of $\mathscr{B}^1_{1}$. We note that $|\bbp|\f{\ffb-\ffa}{L}\sim|\bbp|\f{\ffb}{L}$ on $\{\ffb:\ffb\geq2\ffa\}$.
    \[\ba
    \mathscr{B}^1_1&\ls\sum_{\substack{\ffb\geq2\ffa\\\ffb\in\Z}}\f{1}{L^d}\int_1^t\nr{\bbp\f\ffa L}^\f12\nr{\bbp\f\ffb L}^{\f32}\<\bbp\f\ffb L\>^{-\sigma-2}\tau\<|\bbp|\f{\ffa t-\ffb\tau}{L}\>^{-\sigma}e^{\lambda^L(t,|\bbp\f{\ffa}L,\bbp\f{\ffa}Lt|)-\lambda^L(\tau,|\bbp\f{\ffa}L,\bbp\f{\ffa}Lt|)}d\tau\\
    &=\sum_{\substack{\ffb\geq2\ffa\\\ffb\in\Z}}\f{1}{L^d}\int_{|\f{\ffa t-\ffb\tau}{L}|\geq\f{\ffa}{2L}t}+\sum_{\substack{\ffb\geq2\ffa\\\ffb\in\Z}}\f{1}{L^d}\int_{|\f{\ffa t-\ffb\tau}{L}|\leq\f{\ffa}{2L}t}:=\mathscr{B}^1_{1,1}+\mathscr{B}^1_{1,2}.
    \ea\]
    For $\mathscr{B}^1_{1,1}$,
    \[\ba
    \mathscr{B}^1_{1,1}&\ls\sum_{\substack{\ffb\geq2\ffa\\\ffb\in\Z}}\f{2}{L^d}\nr{\bbp\f\ffa L}^{-\f12}\nr{\bbp\f\ffb L}^{\f32}\<\bbp\f\ffb L\>^{-\sigma-2}\int_0^t\<|\bbp|\f{\ffa t-\ffb\tau}{L}\>^{-\sigma+1}d\tau\\
    &\ls\sum_{\substack{\ffb\geq2\ffa\\\ffb\in\Z}}\f{1}{L^d}\nr{\bbp\f\ffa L}^{-\f12}\nr{\bbp\f\ffb L}^{\f12}\<\bbp\f\ffb L\>^{-\sigma-2}\ls\sum_{\substack{\ffb\geq2\ffa\\\ffb\in\Z}}\f{|\bbp|^{-\f12}}{L^{d-\f12}}\<|\bbp|\f{\ffb}{L}\>^{-\sigma}\ls1.
    \ea\]
    For $\mathscr{B}^1_{1,2}$, we note that $\tau\leq\f{3\ffa t}{2\ffb}$ on $\{\tau:\nr{\f{\ffa t-\ffb\tau}{L}}\leq\f{\ffa}{2L}t\}$, which implies that $\f{t-\tau}{t}\geq\f{1}{4}$. We consider the following two different cases:
    \begin{itemize}
    	\item[(i)] $t|\bbp\f\ffa L|^{-2}L^{-d+1}\leq1$ or $|\bbp\f\ffa Lt|\leq1$.
    	\[\ba
    	&\mathscr{B}^1_{1,2}\ls\sum_{\substack{\ffb\geq2\ffa\\\ffb\in\Z}}\f{1}{L^d}\nr{\bbp\f\ffa L}^\f32\nr{\bbp\f\ffb L}^{-\f12}\<\bbp\f\ffb L\>^{-\sigma-2}t\ls\sum_{\substack{\ffb\geq2\ffa\\\ffb\in\Z}}\f{1}{L^d}\nr{\bbp\f\ffa L}^\f32\nr{\bbp\f\ffb L}^{-\f12}\<\bbp\f\ffb L\>^{-\sigma-2}\rr{\nr{\bbp\f\ffa L}^2L^{d-1}+\nr{\bbp\f\ffa L}^{-1}}\\
    	&\ls\sum_{\substack{\ffb\geq2\ffa\\\ffb\in\Z}}\f{1}{L}\nr{\bbp\f\ffa L}^\f72\nr{\bbp\f\ffb L}^{-\f12}\<\bbp\f\ffb L\>^{-\sigma-2}+\sum_{\substack{\ffb\geq2\ffa\\\ffb\in\Z}}\f{1}{L^d}\nr{\bbp\f\ffa L}^\f12\nr{\bbp\f\ffb L}^{-\f12}\<\bbp\f\ffb L\>^{-\sigma-2}\ls1.
    	\ea\]
    	\item[(ii)] $t|\bbp\f\ffa L|^{-2}L^{-d+1}\geq1$ and $|\bbp\f\ffa Lt|\geq1$. We note that $\tilde{a}_L(\f{1}{L^{d-2-\kappaL}}|\bbp\f\ffa L,\bbp\f\ffa Lt|)\gs\rr{\f{|\bbp|\ffa t}{L^{d-\kappaL}}}^\f13$. Hence by the estimate $\f{t-\tau}{t}\geq\f{1}{4}$ we have
    	\[\ba
    	&\lambda^L(\tau,|\bbp\f\ffa L,\bbp\f\ffa Lt|)-\lambda^L(t,|\bbp\f\ffa L,\bbp\f\ffa Lt|)\gs\f{t-\tau}{t}\rr{t|\bbp\f{\ffa}{L}|^{-2}L^{1-d}}^{-\f \beta3}\rr{\f{|\bbp|\ffa t}{L^{d-\kappaL}}}^\f13\\
    	\gs&\rr{\f{|\bbp|\ffa t}{L^{d}}}^{\f{1-\beta}{3}}L^{\f\kappaL3}|\bbp\f\ffa L|^{\beta}\gs\rr{\f{|\bbp|\ffa t}{L^d}}^{\f{1-\beta}{3}}.
    	\ea\]
    	The last inequality follows from $\beta<\f\kappaL3$. Thus
    	\[\ba
    	&\mathscr{B}^1_{1,2}\ls\sum_{\substack{\ffb\geq2\ffa\\\ffb\in\Z}}\f{1}{L^d}\nr{\bbp\f\ffa L}^\f32\nr{\bbp\f\ffb L}^{-\f12}\<\bbp\f\ffb L\>^{-\sigma-2}te^{-c\rr{\f{|\bbp|\ffa t}{L^d}}^{\f{1-\beta}{3}}}\ls\sum_{\substack{\ffb\geq2\ffa\\\ffb\in\Z}}\f{1}{L^d}\nr{\bbp\f\ffa L}^\f32\nr{\bbp\f\ffb L}^{-\f12}\<\bbp\f\ffb L\>^{-\sigma-2}t\f{L^{d}}{|\bbp\ffa|t}\\
    	&\ls\sum_{\substack{\ffb\geq2\ffa\\\ffb\in\Z}}\f{1}{L}\left|\bbp\f\ffa L\right|^\f12\nr{\bbp\f\ffb L}^{-\f12}\<\bbp\f\ffb L\>^{-\sigma-2}\ls1.
    	\ea\]
    \end{itemize}
    
    $\bullet$ Estimate of $\mathscr{B}^1_{2}$.
    \[\ba
    \mathscr{B}^1_2&\ls\sum_{\substack{\ffa<\ffb<2\ffa\\\ffb\in\Z}}\f{1}{L^d}\int_1^t|\bbp|\f\ffa L|\bbp|\f{\ffb-\ffa}{L}\tau\<|\bbp|\f{\ffa t-\ffb\tau}{L}\>^{-\sigma}e^{\lambda^L(t,|\bbp\f{\ffa}L,\bbp\f{\ffa}Lt|)-\lambda^L(\tau,|\bbp\f{\ffa}L,\bbp\f{\ffa}Lt|)}d\tau\\
    &=\sum_{\substack{\ffa<\ffb<2\ffa\\\ffb\in\Z}}\f{1}{L^d}\int_{|\f{\ffa t-\ffb\tau}{L}|\geq\f{\ffb-\ffa}{2L}t}+\sum_{\substack{\ffa<\ffb<2\ffa\\\ffb\in\Z}}\f{1}{L^d}\int_{|\f{\ffa t-\ffb\tau}{L}|\leq\f{\ffb-\ffa}{2L}t}:=\mathscr{B}^1_{2,1}+\mathscr{B}^1_{2,2}.
    \ea\]
    For $\mathscr{B}^1_{2,1}$,
    \[\mathscr{B}^1_{2,1}\ls\sum_{\substack{\ffa<\ffb<2\ffa\\\ffb\in\Z}}\f{1}{L^d}|\bbp|\f\ffa L|\bbp|\f{\ffb-\ffa}{L}t\<|\bbp|\f{\ffb-\ffa}{L}t\>^{-1}\int_0^t\<|\bbp|\f{\ffa t-\ffb\tau}{L}\>^{-2}d\tau\ls\sum_{\substack{\ffa<\ffb<2\ffa\\\ffb\in\Z}}\f{1}{L^d}\ls1.\]
    For $\mathscr{B}^1_{2,2}$, we note that $\tau\leq\f{\ffa+\ffb}{2\ffb}t$ on $\{\tau:|\f{\ffa t-\ffb\tau}{L}|\leq\f{\ffb-\ffa}{2L}t\}$, which implies that $\f{t-\tau}{t}\geq\f{\ffb-\ffa}{2\ffb }$. We consider the following three different cases:
    \begin{itemize}
    	\item[(i)] $t|\bbp\f\ffa L|^{-2}L^{-d+1}\leq1$ or $|\bbp\f\ffa Lt|\leq1$.
    	\[\mathscr{B}^1_{2,2}\ls\sum_{\substack{\ffa<\ffb<2\ffa\\\ffb\in\Z}}\f{1}{L^d}|\bbp|\f{\ffb-\ffa}{L}t\leq\f{1}{L^d}|\bbp|\f{\ffa^2}{L}\rr{|\bbp\f\ffa L|^{-1}+|\bbp\f\ffa L|^2L^{d-1}}\leq\f{\ffa}{L^d}+\f\ffa L|\bbp\f\ffa L|^3\ls1.\]
    	\item[(ii)] $t|\bbp\f\ffa L|^{-2}L^{-d+1}\geq1$ and $1\leq|\bbp\f\ffa Lt|\leq L^{d+1-\kappaL}$. Using the fact that $\beta<\f\kappaL3$, \eqref{ttaugap} holds in this case.
    	\[\ba
    	&\mathscr{B}^1_{2,2}\ls\sum_{\substack{\ffa<\ffb<2\ffa\\\ffb\in\Z}}\f{1}{L^d}|\bbp|\f{\ffb-\ffa}{L}te^{-c|\bbp|(\ffb-\ffa)\rr{t|\bbp\f\ffa L|^{-1}L^{-d-1}}^{\f12-\f \beta3}}\\    	&\ls\sum_{\substack{\ffa<\ffb<2\ffa\\\ffb\in\Z}}\f{1}{L^d}|\bbp|\f{\ffb-\ffa}{L}t\rr{|\bbp|(\ffb-\ffa)}^{-\f{1}{\f12-\f \beta3}}t^{-1}|\bbp\f\ffa L|L^{d+1}\ls\sum_{\substack{\ffa<\ffb<2\ffa\\\ffb\in\Z}}|\bbp\f\ffa L|\rr{|\bbp|(\ffb-\ffa)}^{1-\f{1}{\f12-\f \beta3}}\ls1.
    	\ea\]
    	\item[(iii)] $t|\bbp\f\ffa L|^{-2}L^{-d+1}\geq1$ and $|\bbp\f\ffa Lt|\geq L^{d+1-\kappaL}$. \eqref{ttaugap2} holds in this case.
    	\[\ba
    	&\mathscr{B}^1_{2,2}\ls\sum_{\substack{\ffa<\ffb<2\ffa\\\ffb\in\Z}}\f{1}{L^d}|\bbp|\f{\ffb-\ffa}{L}te^{-c|\bbp|(\ffb-\ffa)\rr{t|\bbp\f\ffa L|^{-2}L^{-d-1}}^{\f{1-\beta}{3}}}\\    	&\ls\sum_{\substack{\ffa<\ffb<2\ffa\\\ffb\in\Z}}\f{1}{L^d}|\bbp|\f{\ffb-\ffa}{L}t\rr{|\bbp|(\ffb-\ffa)}^{-\f{3}{1-\beta}}t^{-1}|\bbp\f\ffa L|^2L^{d+1}\ls\sum_{\substack{\ffa<\ffb<2\ffa\\\ffb\in\Z}}\rr{|\bbp|(\ffb-\ffa)}^{1-\f{3}{1-\beta}}\ls1.
    	\ea\]
    \end{itemize}
    
    \underline{\emph{Estimate of $\mathscr{B}^{2}$}.} Let $\theta=\angle(\ell,k)$. We note that $\<\ell-k\>^{-\sigma}\ls\<\ell\>^{-\sigma}\<k\>^{\sigma}\ls\<\ell\>^{-\sigma}$ since $|k|\leq1$. Consequently,
    \[\mathscr{B}^{2}\ls\rr{\sum_{\substack{\ell\in\ZLs\\\theta\geq\f\pi4}}+\sum_{\substack{\ell\in\ZLs\\\theta<\f\pi4\ \text{and}\\|\ell|\cos\theta\leq|k|}}+\sum_{\substack{\ell\in\ZLs\\0<\theta<\f\pi4\ \text{and}\\|\ell|\cos\theta>|k|}}}\f{1}{L^d}|k|^\f12|\ell|^\f12\<\ell\>^{-\sigma-2}|k-\ell|\int_0^t\tau\<kt-\ell\tau\>^{-\sigma}d\tau:=\mathscr{B}^2_1+\mathscr{B}^2_2+\mathscr{B}^2_3.\]
    
    $\bullet$ Estimate of $\mathscr{B}^2_1$ and $\mathscr{B}^2_2$.
    \[\mathscr{B}^2_1=\rr{\sum_{\substack{\theta\geq\f\pi4\\|\ell|<5|k|}}+\sum_{\substack{\theta\geq\f\pi4\\|\ell|\geq5|k|}}}\f{1}{L^d}|k|^\f12|\ell|^\f12\<\ell\>^{-\sigma-2}|k-\ell|\int_0^t\tau\<kt-\ell\tau\>^{-\sigma}d\tau:=\mathscr{B}^2_{1,1}+\mathscr{B}^2_{1,2}.\]
    For $\mathscr{B}^2_{1,1}$, from the geometry configuration we have $\f{\sqrt{2}}{2}|k|\leq|\ell-k|\leq6|k|$, $|kt-\ell\tau|\geq\f{\sqrt{2}}{2}|k|t$ on $\{\ell: \theta\geq\f\pi4, |\ell|<5|k|\}$. Thus we have
    \[\mathscr{B}^2_{1,1}\ls\sum_{\substack{\theta\geq\f\pi4\\|\ell|<5|k|}}\f{1}{L^d}|k|^\f32|\ell|^\f12t\<|k|t\sin\theta\>^{-1}\int_0^t\<kt-\ell\tau\>^{-\sigma+1}d\tau\ls\sum_{\substack{\theta\geq\f\pi4\\|\ell|<5|k|}}\f{1}{L^d}|k|^\f12|\ell|^{-\f12}\ls1.\]
    For $\mathscr{B}^2_{1,2}$, we note that $|\ell-k|\sim|\ell|$ on $\{\ell: \theta\geq\f\pi4, |\ell|\geq5|k|\}$.
    \[\mathscr{B}^2_{1,2}\ls\sum_{\substack{\theta\geq\f\pi4\\|\ell|\geq5|k|}}\f{1}{L^d}|k|^\f12|\ell|^\f32\<\ell\>^{-\sigma-2}\int_0^t\tau\<kt-\ell\tau\>^{-\sigma}d\tau=\sum_{\substack{\theta\geq\f\pi4\\|\ell|\geq5|k|}}\int_0^{\f{|k|^\f12t}{|\ell|^\f12}}+\sum_{\substack{\theta\geq\f\pi4\\|\ell|\geq5|k|}}\int_{\f{|k|^\f12t}{|\ell|^\f12}}^t:=\mathscr{B}^2_{1,2,1}+\mathscr{B}^2_{1,2,2}.\]
    For $\mathscr{B}^2_{1,2,1}$, since $\theta\ge\f\pi4$ implies $|kt-\ell\tau|\ge\frac{\sqrt{2}}{2}|k|t$, we have
    \[\mathscr{B}^2_{1,2,1}\ls\sum_{\substack{\theta\geq\f\pi4\\|\ell|\geq5|k|}}\f{1}{L^d}|k|^\f12|\ell|^\f32\<\ell\>^{-\sigma-2}\f{|k|^\f12t}{|\ell|^\f12}\<|k|t\>^{-1}\int_0^t\<kt-\ell\tau\>^{-\sigma+1}d\tau\ls\sum_{\substack{\theta\geq\f\pi4\\|\ell|\geq5|k|}}\f{1}{L^d}\<\ell\>^{-\sigma-2}\ls1.\]
    For $\mathscr{B}^2_{1,2,2}$, on the region $\{(\ell,\tau):|\ell|\ge5|k|,\ \tau\ge\f {|k|^{\f12}t}{|\ell|^{\f12}}\}$ we have  $|kt-\ell\tau| \ge |k|t\left|\frac{k}{|k|}-\frac{\ell}{|k|^\f12|\ell|^\f12 }\right| \gtrsim |k|^{\f12}|\ell|^{\f12}t$.
    Hence
    \[\mathscr{B}^2_{1,2,2}\ls\sum_{\substack{\theta\geq\f\pi4\\|\ell|\geq5|k|}}\f{1}{L^d}|k|^\f12|\ell|^\f32\<\ell\>^{-\sigma-2}t\<|k|^\f12|\ell|^\f12t\>^{-1}|\ell|^{-1}\ls\sum_{\substack{\theta\geq\f\pi4\\|\ell|\geq5|k|}}\f{1}{L^d}\<\ell\>^{-\sigma-2}\ls1.\]
    For $\mathscr{B}^2_2$, since $\tau\le t$, we have $|kt-\ell\tau|\ge|k-\ell|t$ when $|\ell|\le|k|\cos\theta$, whereas for $|k|\cos\theta\leq|\ell|\le|k|/\cos\theta$, $\theta\leq\f\pi4$, from the geometric configuration we have $|kt-\ell\tau|\ge|k|t\sin\theta\ge\frac{\sqrt{2}}{2}|\ell-k|t$. Hence
    \[\mathscr{B}^2_2\ls\sum_{\substack{\theta<\f\pi4\\|\ell|\cos\theta\leq|k|}}\f{1}{L^d}|k|^\f12|\ell|^\f12|\ell-k|t\<|\ell-k|t\>^{-1}\int_0^t\<kt-\ell\tau\>^{-\sigma+1}d\tau\ls\sum_{\substack{\theta<\f\pi4\\|\ell|\cos\theta\leq|k|}}\f{1}{L^d}|k|^\f12|\ell|^{-\f12}\ls1.\]
    
    $\bullet$ Estimate of $\mathscr{B}^2_3$.
    \[\mathscr{B}^2_3=\sum_{\substack{0<\theta<\f\pi4\\\f{|k|}{\cos\theta}<|\ell|\leq5|k|}}+\sum_{\substack{0<\theta<\f\pi4\\|\ell|>5|k|}}:=\mathscr{B}^2_{3,1}+\mathscr{B}^2_{3,2}.\]
    For $\mathscr{B}^2_{3,1}$, since $|\ell|\sim |k|$ and $|kt-\ell\tau|\geq|k|t\sin\theta$, we have
    \[\mathscr{B}^2_{3,1}\ls\sum_{\substack{0<\theta<\f\pi4\\\f{|k|}{\cos\theta}<|\ell|\leq5|k|}}\f{1}{L^d}|k|^2t\<|k|t\sin\theta\>^{-\sigma+2}\int_0^t\<kt-\ell\tau\>^{-2}d\tau\ls\sum_{\substack{0<\theta<\f\pi4\\\f{|k|}{\cos\theta}<|\ell|\leq5|k|}}\f{1}{L^d}|k|t\<|k|t\sin\theta\>^{-\sigma+2}.\]
    For $\mathscr{B}^2_{3,2}$, since $|k-\ell|\sim|\ell|$ we have
    \[\mathscr{B}^2_{3,2}\ls\sum_{\substack{0<\theta<\f\pi4\\|\ell|>5|k|}}\f{1}{L^d}|k|^\f12|\ell|^{\f32}\<\ell\>^{-\sigma-2}\rr{\int_0^{\f{|k|^\f12t}{|\ell|^\f12}}+\int^t_{\f{|k|^\f12t}{|\ell|^\f12}}}\tau\<kt-\ell\tau\>^{-\sigma}d\tau:=\mathscr{B}^2_{3,2,1}+\mathscr{B}^2_{3,2,2}.\]
    For $\mathscr{B}^2_{3,2,1}$, since $|kt-\ell\tau|\ge|k|t\sin\theta$, we have
    \[\mathscr{B}^2_{3,2,1}\ls\sum_{\substack{0<\theta<\f\pi4\\|\ell|>5|k|}}\f{1}{L^d}|k|^\f12|\ell|^{\f32}\<\ell\>^{-\sigma-2}\f{|k|^\f12t}{|\ell|^\f12}\<|k|t\sin\theta\>^{-\sigma+2}|\ell|^{-1}\ls\sum_{\substack{0<\theta<\f\pi4\\|\ell|>5|k|}}\f{1}{L^d}|k|t\<\ell\>^{-\sigma-2}\<|k|t\sin\theta\>^{-\sigma+2}.\]
    For $\mathscr{B}^2_{3,2,2}$, on the region $\{(\ell,\tau): |\ell|\geq5|k|, \tau\geq\f{|k|^\f12t}{|\ell|^\f12}\}$ we have $|kt-\ell\tau|\geq|k|t\nr{\f{k}{|k|}-\f{\ell}{|k|^\f12|\ell|^{\f12}}}\gs|k|^\f12|\ell|^\f12t$. Hence
    \[\mathscr{B}^2_{3,2,2}\ls\sum_{\substack{0<\theta<\f\pi4\\|\ell|>5|k|}}\f{1}{L^d}|k|^\f12|\ell|^{\f32}\<\ell\>^{-\sigma-2}t\<|k|^\f12|\ell|^\f12t\>^{-1}|\ell|^{-1}\ls\sum_{\substack{0<\theta<\f\pi4\\|\ell|>5|k|}}\f{1}{L^d}\<\ell\>^{-\sigma-2}\ls1.\]
    Combining the preceding estimates yields
    \[\mathscr{B}^2_3\ls1+\sum_{\substack{0<\theta<\f\pi4\\|\ell|\cos\theta>|k|}}\f{1}{L^d}|k|t\<\ell\>^{-\sigma-2}\<|k|t\sin\theta\>^{-\sigma+2}.\]
    We focus on $|k|t\geq1$, since the case $|k|t\leq1$ follows by a simpler variant of the same argument. On the region $\{\ell: 0<\theta<\pi/4,\ |\ell|\cos\theta>|k|\}$, 
    each $\ell$ can be parametrized as
    \[
    \ell = \frac{\mathfrak{b}}{L}\mathbf{p} + \frac{\mathbf{c}}{L},
    \qquad \mathbf{c}\in\Pi(\mathbf{p})(\mathbb{Z}^d)^*,\ \mathfrak{b}\in\mathfrak{b}_0(\mathbf{c})+\mathbb{Z},\ \mathfrak{b}>\mathfrak{a}.
    \] 
    Here $\mathfrak{b}_0(\mathbf{c})\in\mathbb{R}$ is chosen so that $\mathfrak{b}_0(\mathbf{c})\mathbf{p}+\mathbf{c}\in\mathbb{Z}^d$. Using this parametrization, from the geometric configuration we have 
    \[|k|t\sin\theta\sim|k|t\tan\theta=|k|t\f{|\bbc|}{|\bbp|\ffb}=\f{\ffa t}{L}\f{|\bbc|}{\ffb},\quad|\ell|\sim|\ell|\cos\theta=\f{\ffb}{L}|\bbp|.\]
    Hence we have
    \[\ba
    &\sum_{\substack{0<\theta<\f\pi4\\|\ell|\cos\theta>|k|}}
    \f{1}{L^d}|k|t\<\ell\>^{-\sigma-2}\<|k|t\sin\theta\>^{-\sigma+2}\\
    &\ls\sum_{\bbc\in{\Pi(\bbp)(\Z^d)}^*}
    \sum_{\substack{\ffb\in \ffb_{0}(\bbc)+\Z\\\ffb>\ffa}}
    \f{1}{L^d}|\bbp|\f{\ffa t}{L}\rr{1+|\bbp|\f\ffb L}^{-d-1}
    \rr{1+|\bbp|\f{\ffa t}{L}\f{\f{|\bbc|}{L}}{\f{|\bbp|\ffb}{L}}}^{-d-1}\\
    &\ls\sum_{\bbc\in{\Pi(\bbp)(\Z^d)}^*}
    \sum_{\substack{\ffb\in \ffb_{0}(\bbc)+\Z\\ \ffa<\ffb\leq\f{2L}{|\bbp|}}}
    \f{1}{L^d}|\bbp|\f{\ffa t}{L}
    \rr{1+|\bbp|\f{\ffa t}{L}\f{\f{|\bbc|}{L}}{\f{|\bbp|\ffb}{L}}}^{-d-1}\\
    &\quad+\sum_{\bbc\in{\Pi(\bbp)(\Z^d)}^*}
    \sum_{\substack{\ffb\in \ffb_{0}(\bbc)+\Z\\\ffb>\f{2L}{|\bbp|}}}
    \f{1}{L^d}|\bbp|\f{\ffa t}{L}
    \rr{|\bbp|\f\ffb L+|\bbp|\f{\ffa t}{L}\f{|\bbc|}{L}}^{-d-1}.
    \ea\]
    For the first term, since $\ffb\leq\f{2L}{|\bbp|}$ implies $\rr{1+|\bbp|\f{\ffa t}{L}\f{\f{|\bbc|}{L}}{\f{|\bbp|\ffb}{L}}}^{-d-1}\ls\rr{1+|\bbp|\f{\ffa t}{L}\f{|\bbc|}{L}}^{-d-1}$, we have
    \[\ba
    &\sum_{\bbc\in{\Pi(\bbp)(\Z^d)}^*}\sum_{\substack{\ffb\in \ffb_{0}(\bbc)+\Z\\ \ffa<\ffb\leq\f{2L}{|\bbp|}}}\f{1}{L^d}|\bbp|\f{\ffa t}{L}\rr{1+|\bbp|\f{\ffa t}{L}\f{\f{|\bbc|}{L}}{\f{|\bbp|\ffb}{L}}}^{-d-1}\ls\sum_{\bbc\in{\Pi(\bbp)(\Z^d)}^*}\f{1}{L^{d-1}|\bbp|}|\bbp|\f{\ffa t}{L}\rr{1+|\bbp|\f{\ffa t}{L}\f{|\bbc|}{L}}^{-d-1}\\
    &\ls\f{|\bbp|}{L^{d-1}|\bbp|}|\bbp|\f{\ffa t}{L}\rr{\f{L}{|\bbp|\f{\ffa t}{L}}\vv{1}_{L\leq|\bbp|\f{\ffa t}{L}} +\f{L^{d-1}}{\rr{|\bbp|\f{\ffa t}{L}}^{d-1}}\vv{1}_{L\geq|\bbp|\f{\ffa t}{L}}}\qquad\text{(by \eqref{lattice1})}\\
    &\ls L^{2-d}\vv{1}_{L\leq|\bbp|\f{\ffa t}{L}}+\rr{|\bbp|\f{\ffa t}{L}}^{2-d}\vv{1}_{L\geq|\bbp|\f{\ffa t}{L}}\ls1.
    \ea\]
    Similarly by comparing the Riemann sum with the corresponding integral, we have
    \[\ba
    &\sum_{\bbc\in{\Pi(\bbp)(\Z^d)}^*}
    \sum_{\substack{\ffb\in \ffb_{0}(\bbc)+\Z\\\ffb>\f{2L}{|\bbp|}}}
    \f{1}{L^d}|\bbp|\f{\ffa t}{L}
    \rr{|\bbp|\f\ffb L+|\bbp|\f{\ffa t}{L}\f{|\bbc|}{L}}^{-d-1}\\
    &\ls\sum_{\bbc\in{\Pi(\bbp)(\Z^d)}^*}\f{1}{L^{d-1}|\bbp|}
    |\bbp|\f{\ffa t}{L}\int_1^{\infty}
    \rr{x+|\bbp|\f{\ffa t}{L}\f{|\bbc|}{L}}^{-d-1}dx\\
    &\ls\sum_{\bbc\in{\Pi(\bbp)(\Z^d)}^*}\f{1}{L^{d-1}|\bbp|}|\bbp|\f{\ffa t}{L}\rr{1+|\bbp|\f{\ffa t}{L}\f{|\bbc|}{L}}^{-d}
    \ls L^{2-d}\vv{1}_{L\leq|\bbp|\f{\ffa t}{L}}+\rr{|\bbp|\f{\ffa t}{L}}^{2-d}\vv{1}_{L\geq|\bbp|\f{\ffa t}{L}}\ls1.
    \ea\]
    In the last step we used the restriction $|k|t\ge1$ (recall that $|k|=\frac{\mathfrak{a}}{L}|\mathbf{p}|$).

It remains to record the contribution of the second term in
$|k(t-\tau)|\leq|k-\ell|\tau+|kt-\ell\tau|$. By $|k|^\f12\ls\<\ell\>^\f12\<k-\ell\>^\f12$ and $\int_0^t\<kt-\ell\tau\>^{-\sigma+1}d\tau\ls|\ell|^{-1}$ we have 
\beq\label{zzz}
\sum_{\ell}\f{1}{L^d}\int_0^t\f{|k|^{1/2}|\ell|^{1/2}}{\<\ell\>^2}
\<k-\ell\>^{-\sigma}\<kt-\ell\tau\>^{-\sigma+1}d\tau\ls\sum_{\ell}\f{1}{L^d}|\ell|^{-\f12}\<k-\ell\>^{-\sigma+\f12}\ls1.
\eeq
This treats the omitted half of the triangle-inequality
split and completes the forward Schur estimate.
\end{proof}

\begin{prop}\label{resonancel}
	Let $d\geq2$ and $\sigma\geq d+3$.  Then
	\begin{align}\sup_{\substack{\ell\in \ZLs\\\tau>0}}\sum_{k\in\ZLs}\f{1}{L^d}\int_\tau^\infty \f{|k|^\f12|\ell|^\f12}{\<\ell\>^2}|k(t-\tau)|\<k-\ell\>^{-\sigma}\<kt-\ell\tau\>^{-\sigma}e^{\lambda^L(t,|k,kt|)-\lambda^L(\tau,|k,kt|)}dt\ls 1.\end{align}
\end{prop}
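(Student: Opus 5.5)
The plan mirrors the proof of Proposition \ref{resonancek}, with the roles of the variables exchanged: fix $\ell\in\ZLs$ and $\tau>0$, and sum over the output frequency $k$ while integrating over the later time $t\ge\tau$. First, the Gevrey factor is harmless for this bound: $\lambda^L(\cdot,r)$ is nonincreasing in time, so $e^{\lambda^L(t,|k,kt|)-\lambda^L(\tau,|k,kt|)}\le1$. It is discarded wherever it is not needed, and the exponential gain it provides is kept only on the near-diagonal collinear windows. As before, the lever arm is split by the triangle inequality,
\[
|k(t-\tau)|\le|k-\ell|\,\tau+|kt-\ell\tau|,
\]
or equivalently $|k(t-\tau)|\le|k|\,|t-\tau|$ with $t-\tau$ controlled through $kt-\ell\tau=k(t-\tau)+(k-\ell)\tau$.

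\textbf{Step 1 (the $|kt-\ell\tau|$ part).} Substitute $s=kt$ along the ray $\hat k$. Then $\int_\tau^\infty\<kt-\ell\tau\>^{-\sigma+1}dt\ls|k|^{-1}$. Combined with $|k|^{1/2}\ls\<\ell\>^{1/2}\<k-\ell\>^{1/2}$ and $|\ell|^{1/2}\<\ell\>^{-3/2}\le1$, this part is bounded by
\[
\sum_k L^{-d}|k|^{-1/2}\<k-\ell\>^{-\sigma+1/2}\ls1,
\]
uniformly in $\ell$, in analogy with \eqref{zzz}.

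\textbf{Step 2 (non-collinear $k$).} Set $\theta=\angle(k,\ell)$ and use $|kt-\ell\tau|\ge|\ell|\tau\sin\theta$, together with $|kt-\ell\tau|\ge|k|t\sin\theta$ when $t$ is large. In the region $\theta\ge\pi/4$, or when $|k|$ is comparable to $|\ell|$ in projection, there is a lower bound $|kt-\ell\tau|\gs\max(|k-\ell|\tau,|k|t)$. This makes the $t$-integral converge and absorbs the factor $\tau$, reproducing the computations for $\mathscr A^2_1,\mathscr A^2_2,\mathscr B^2$. For small angles, parametrize $k=\frac{\ffb}{L}\bbp+\frac{\bbc}{L}$ with $\ell=\frac{\ffa}{L}\bbp$, $\bbc\in\Pi(\bbp)(\Z^d)^*$. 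The transverse decay $\<\ell\tau|\bbc|/(|\bbp|\ffa)\>^{-\sigma+2}$ is then summed by the projected lattice count \eqref{lattice1} of Proposition \ref{Lattice}. The longitudinal $\ffb$-sum is compared with an integral and bounded by Proposition \ref{integralest}, with the roles of $u$ and $t$ played by $|\bbp|\ffa/L$ and $\tau$. The resulting quantity is
\[
\frac{\tau}{L^{d-1}|\bbp|}\sum_{\bbc}\Bigl(1+\frac{\tau}{L}|\bbc|\Bigr)^{-d}\ls L^{2-d}+\tau^{2-d}\ls1.
\]

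\textbf{Step 3 (collinear $k=\frac{\ffb}{L}\bbp$).} This is the main obstacle. The echo condition $\ffb t\approx\ffa\tau$ with $t\ge\tau$ forces $\ffb<\ffa$, so the output is the lower mode. The resonant window in $t$ has length $\sim L/(|\bbp|\ffb)$, and the factor $\tau|k-\ell|$ is not small. Split the sum into $\ffb\le\ffa-L/|\bbp|$ and $\ffa-L/|\bbp|<\ffb<\ffa$, and further according to whether $|\frac{\ffa\tau-\ffb t}{L}|$ is large or small relative to $\frac{\ffa-\ffb}{L}t$. Off the resonance window, integrability in $t$ suffices. On the window, $t\ge\tau$ and $\frac{t-\tau}{t}\gs\frac{\ffa-\ffb}{\ffa}$, so the mean value theorem for $-\pa_t\lambda^L$ from Lemma \ref{multiplier}(1) applies at frequency $|k,kt|\sim|\ell\tau|$. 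It yields the gains \eqref{ttaugap} and \eqref{ttaugap2}, in the Gevrey-2 and Gevrey-3 branches respectively, with $(\ffb-\ffa)$ replaced by $(\ffa-\ffb)$. The thresholds are $\tau|\bbp\frac{\ffa}{L}|^{-2}L^{1-d}\lessgtr1$ and $|\ell\tau|\lessgtr L^{d+1-\kappaL}$. The exponential factor then pays for the factor $\tau$ and yields the summable series $\sum(|\bbp|(\ffa-\ffb))^{1-3/(1-\beta)}$ and $\sum(|\bbp|(\ffa-\ffb))^{1-1/(1/2-\beta/3)}$.

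The delicate point is that the multiplier is evaluated at $|k,kt|$, not at $|\ell,\ell\tau|$. I expect to compare the two using $|kt-\ell\tau|\ls L/|\bbp|$ on the window and the subadditivity in Lemma \ref{multiplier}(2). The small-$|\ell|$ case $\ffa<L/|\bbp|$ is handled as for $\mathscr B^1$, splitting at $\ffb\le\ffa/2$.
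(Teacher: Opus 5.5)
\textbf{Overall.} Your architecture is the one the paper uses: the triangle split of the lever arm, the collinear/non-collinear decomposition, the Gevrey gain applied only on collinear resonant windows with the Gevrey-2/Gevrey-3 branches, and the split $|\ell|\gtrless1$. Step~1 is fine, and Step~3 can be completed (see the last paragraph). The genuine gap is Step~2. It is transcribed from the forward estimate and misses the feature that is new in the dual direction.

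\textbf{Step 2: the singular output weight.} After the $t$-integration over a window of length $\sim|k|^{-1}$, the output weight becomes $|k|^{1/2}|k|^{-1}=|k|^{-1/2}$, which is singular as $k\to0$. For $|\ell|\ge1$ the small-angle summand is
\[
\frac{\tau}{L^d}\,|k|^{-1/2}|\ell|^{-3/2}\langle k-\ell\rangle^{-\sigma+1}\langle|\ell|\tau\sin\theta\rangle^{-\sigma+2}.
\]
Since $t\ge\tau$, the region $|k|\cos\theta\ge|\ell|$ is easy, because there $|kt-\ell\tau|\ge|k-\ell|\tau$. The hard region is $0<\theta<\pi/4$ with $0<\ffb<\ffa$ in your labels, where the output is longitudinally \emph{shorter} than $\ell$.

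Proposition \ref{integralest} integrates over $x\ge u+1$, i.e.\ $\ffb\ge\ffa+L/|\bbp|$. It therefore only addresses the easy side.

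Moreover, $\tan\theta=|\bbc|/(|\bbp|\ffb)$ involves the longitudinal coordinate of $k$, not of $\ell$. The correct decay is $\langle \ffa\tau|\bbc|/(L\ffb)\rangle^{-\sigma+2}$. Your $\langle\tau|\bbc|/L\rangle^{-\sigma+2}$ throws away the factor $\ffa/\ffb$, which is exactly what tames $|k|^{-1/2}$.

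This loss is not harmless. Take $d=2$, integer $L$, $\bbp=(L,1)$, $\ell=\bbp/L$, $k=(1,0)/L$ and $\tau=L^2$. Then $|\bbc|\approx L^{-1}$, so your decay factor is $\approx1$ and your majorant of this single term is $\approx \tau L^{-2}|k|^{-1/2}\approx L^{1/2}$. The true factor is $\langle|\ell|\tau\sin\theta\rangle\approx\langle L\rangle$, which makes the term negligible.

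For the same reason, the claimed reduction to $\frac{\tau}{L^{d-1}|\bbp|}\sum_{\bbc}(1+\frac{\tau}{L}|\bbc|)^{-d}$ is not an upper bound. It presumes the longitudinal sum is $\lesssim L/|\bbp|$ uniformly in $\bbc$. That fails at the smallest admissible $\ffb\in\ffb_0(\bbc)+\mathbb Z$.

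\textbf{What is needed.} You need the paper's treatment of $\mathscr{C}^2_{2}$ and $\mathscr{D}^2_2$.
\begin{itemize}
\item For $|k|\ge1$ your weaker decay suffices, and \eqref{lattice1} closes the estimate.
\item For $|k|<1$, keep the full decay and split according to $|\bbp|\ffb\gtrless\tau|\bbc|$.
\begin{itemize}
\item On the piece $|\bbp|\ffb\le\tau|\bbc|$, the decay supplies a factor $(|\bbp|\ffb/L)^{\sigma-2}$, which absorbs $|k|^{-1/2}$.
\item On the other piece, $|k|^{-1/2}\le(\tau|\bbc|/L)^{-1/2}$. The boundary term of the Riemann sum leaves
\[
\sum_{|\bbc|<L/\tau}\frac{\tau}{L^{d-1}|\bbp|}\Big(\frac{\tau|\bbc|}{L}\Big)^{-1/2}.
\]
This requires \eqref{lattice2} with $\gamma=\frac12$, including the regime $R=L/\tau<1$ where the projected lattice is effectively one-dimensional. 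For $|\bbc|\ge L/\tau$ it also requires \eqref{lattice3}.
\end{itemize}
Together these give $L^{2-d}\mathbf 1_{\tau\ge L}+\tau^{2-d}\mathbf 1_{\tau\le L}$.
\item The case $|\ell|<1$ has the same structure.
\end{itemize}

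\textbf{Step 3.} The ``delicate point'' you flag is not one. The exponent is a time difference $\lambda^L(t,r)-\lambda^L(\tau,r)$ at the single frequency $r=|k,kt|$. You only need $|kt|\sim|\ell\tau|$ on the window to convert the thresholds, and Lemma \ref{multiplier}(2) plays no role.

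To get $|kt|\sim|\ell\tau|$ in the far part $\ffb\le\ffa-L/|\bbp|$, localize by an absolute threshold such as $|kt-\ell\tau|\le\tau/4$, as the paper does. A window defined relative to $c\frac{\ffa-\ffb}{L}t$ degenerates when $\ffb<c(\ffa-\ffb)$. It becomes the half-line $t\ge\ffa\tau/(\ffb+c(\ffa-\ffb))$, on which $|kt|\sim|\ell\tau|$ fails.
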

\begin{proof}
\emph{Structure of the proof.} This is the time-dual version of Proposition
\ref{resonancek}, with the roles of $k$ and $\ell$ interchanged and the
$t$-integral taken from $\tau$ to $\infty$. The same collinear/non-collinear
splitting is used, and the hardest part is again the collinear chain controlled
by the gliding Gevrey factor.

	By the triangle inequality,
	\[
	|k(t-\tau)|\le |k-\ell|\tau+|kt-\ell\tau|.
	\]
	We decompose the integrand accordingly. The second term is handled analogously (and more easily), so we focus on the first. The case $\tau\le 1$ is direct, and we henceforth restrict to $\tau\ge 1$. By invariance under coordinate reflections, we may assume $\ell_i\geq0$, $i=1,2,\cdots,d$. Let $\ell=\f\ffb L\bbp$. $\bbp=(\ffp_1,\ffp_2,\dots,\ffp_d)\in\N^d, \gcd(\ffp_1,\ffp_2,\dots,\ffp_d)=1, \ffb\in\N$.
	
	We first consider the case $|\ell|\geq1$, namely $\ffb\geq\f{L}{|\bbp|}$.
	\[\sum_{k\in\ZLs}\f{1}{L^d}\int_\tau^\infty |k|^\f12|\ell|^{-\f32}|k-\ell|\tau\<k-\ell\>^{-\sigma}\<kt-\ell\tau\>^{-\sigma}e^{\lambda^L(t,|k,kt|)-\lambda^L(\tau,|k,kt|)}dt=\sum_{\substack{k\in\ZLs\\\angle(\ell,k)=0\\\text{and}\ |k|<|\ell|}}+\sum_{\substack{k\in\ZLs\\\angle(\ell,k)\neq0\\\text{or}\ |k|>|\ell|}}:=\mathscr{C}^{1}+\mathscr{C}^{2}.\]
	In what follows, we estimate $\mathscr{C}^{1}$ and $\mathscr{C}^{2}$ separately.
	
	\underline{\emph{Estimate of $\mathscr{C}^{1}$}.} Let $k=\bbp\f\ffa L, \ffa\in\N, 0<\ffa<\ffb$. Since $|\ell|\geq1$, we have
	\[\ba
	\mathscr{C}^{1}&\ls\sum_{\substack{0<\ffa<\ffb\\\ffa\in\Z}}\f{1}{L^d}\nr{\bbp\f\ffa L}^\f12\nr{\bbp\f\ffb L}^{-\f32}|\bbp|\f{\ffb-\ffa}{L}\tau\<|\bbp|\f{\ffb-\ffa}{L}\>^{-\sigma}\int_\tau^\infty\<|\bbp|\f{\ffa t-\ffb\tau}{L}\>^{-\sigma}e^{\lambda^L(t,|\bbp\f{\ffa}L,\bbp\f{\ffa}Lt|)-\lambda^L(\tau,|\bbp\f{\ffa}L,\bbp\f{\ffa}Lt|)}dt\\
	&=\sum_{\substack{0<\ffa<\ffb-\f{L}{2|\bbp|}\\\ffa\in\Z}}+\sum_{\substack{\ffb-\f{L}{2|\bbp|}\leq\ffa<\ffb\\\ffa\in\Z}}:=\mathscr{C}^1_{1}+\mathscr{C}^1_{2}.
	\ea\]
	
	$\bullet$ Estimate of $\mathscr{C}^1_{1}$.
	\[\ba
	\mathscr{C}_1^1&\ls\sum_{\substack{0<\ffa<\ffb-\f{L}{2|\bbp|}\\\ffa\in\Z}}\f{1}{L^d}\nr{\bbp\f\ffa L}^\f12\nr{\bbp\f\ffb L}^{-\f32}\tau\<|\bbp|\f{\ffb-\ffa}{L}\>^{-\sigma+1}\int_\tau^\infty\<|\bbp|\f{\ffa t-\ffb\tau}{L}\>^{-\sigma}e^{\lambda^L(t,|\bbp\f{\ffa}L,\bbp\f{\ffa}Lt|)-\lambda^L(\tau,|\bbp\f{\ffa}L,\bbp\f{\ffa}Lt|)}dt\\
	&=\sum_{\substack{0<\ffa<\ffb-\f{L}{2|\bbp|}\\\ffa\in\Z}}\int_{|\bbp||\f{\ffa t-\ffb\tau}{L}|\geq\f{\tau}{4}}+\sum_{\substack{0<\ffa<\ffb-\f{L}{2|\bbp|}\\\ffa\in\Z}}\int_{|\bbp||\f{\ffa t-\ffb\tau}{L}|\leq\f{\tau}{4}}:=\mathscr{C}^1_{1,1}+\mathscr{C}^1_{1,2}.
	\ea\]
	For $\mathscr{C}^1_{1,1}$,
	\[\mathscr{C}^1_{1,1}\ls\sum_{\substack{\ffa<\ffb-\f{L}{2|\bbp|}\\\ffa\in\N}}\f{1}{L^d}\nr{\bbp\f\ffa L}^\f12\nr{\bbp\f\ffb L}^{-\f32}\<|\bbp|\f{\ffb-\ffa}{L}\>^{-\sigma+1}\rr{|\bbp|\f\ffa L}^{-1}\ls\sum_{\substack{\ffa<\ffb-\f{L}{2|\bbp|}\\\ffa\in\N}}\f{1}{L^d}\nr{\bbp\f\ffa L}^{-\f12}\<|\bbp|\f{\ffb-\ffa}{L}\>^{-\sigma+1}\ls1.\]
	For $\mathscr{C}^1_{1,2}$, we note that $\f{|\bbp|\f\ffb L-\f14}{|\bbp|\f\ffa L}\tau\leq t\leq\f{|\bbp|\f{\ffb}{L}+\f14}{|\bbp|\f\ffa L}\tau$ on $\{\tau:|\bbp||\f{\ffa t-\ffb\tau}{L}|\leq\f{\tau}{4}\}$, which implies that $\ffa t\sim\ffb\tau$ and  $\f{t-\tau}{t}\gs\f{\ffb-\ffa-\f{L}{4|\bbp|}}{\ffb}\gs\f{\ffb-\ffa}{\ffb}$. We consider the following two different cases:
	\begin{itemize}
		\item[(i)] $\tau\nr{\bbp\f\ffb L}^{-2}L^{1-d}\leq1$. By the estimate $\int_\tau^\infty\<|\bbp|\f{\ffa t-\ffb\tau}{L}\>^{-\sigma}dt\ls\f{L}{|\bbp|\ffa}$ we have
		\[\ba
		&\mathscr{C}^1_{1,2}\ls\sum_{\substack{\ffa<\ffb-\f{L}{2|\bbp|}\\\ffa\in\N}}\f{1}{L^d}\nr{\bbp\f\ffa L}^{-\f12}\nr{\bbp\f\ffb L}^{-\f32}\tau\<|\bbp|\f{\ffb-\ffa}{L}\>^{-\sigma+1}\ls\sum_{\substack{\ffa<\ffb-\f{L}{2|\bbp|}\\\ffa\in\N}}\f{1}{L}\nr{\bbp\f\ffa L}^{-\f12}\nr{\bbp\f\ffb L}^{\f12}\<|\bbp|\f{\ffb-\ffa}{L}\>^{-\sigma+1}\\
		&\ls\sum_{\substack{\ffa\leq\f{L}{|\bbp|}\\\ffa\in\N}}\f{1}{L}\nr{\bbp\f\ffa L}^{-\f12}\nr{\bbp\f\ffb L}^{\f12}\<|\bbp|\f{\ffb}{L}\>^{-\sigma+1}+\sum_{\substack{\f{L}{|\bbp|}<\ffa<\ffb-\f{L}{2|\bbp|}\\\ffa\in\N}}\f{1}{L}\nr{\bbp\f\ffa L}^{-\f12}\nr{\bbp\f\ffb L}^{\f12}\<|\bbp|\f{\ffb-\ffa}{L}\>^{-\sigma+1}\\
		&\ls\sum_{\substack{\ffa\leq\f{L}{|\bbp|}\\\ffa\in\N}}\f{1}{L}\nr{\bbp\f\ffa L}^{-\f12}+\sum_{\substack{\f{L}{|\bbp|}<\ffa<\ffb-\f{L}{2|\bbp|}\\\ffa\in\N}}\f{1}{L}\<|\bbp|\f{\ffb-\ffa}{L}\>^{-\sigma+\f32}\ls1.
		\ea\]
		To estimate the second term in the second line we use the bound $|\bbp|\f\ffb L\leq\<|\bbp|\f{\ffb-\ffa}{L}\>\<|\bbp|\f\ffa L\>$.
		\item[(ii)] $\tau\nr{\bbp\f\ffb L}^{-2}L^{1-d}\geq1$. By $\ffa t\sim\ffb\tau$ we have $t|\bbp\f{\ffa}{L}|^{-2}L^{1-d}\sim\rr{\f\ffb\ffa}^3\tau|\bbp\f{\ffb}{L}|^{-2}L^{1-d}\gs1$. Hence
		\[\ba
		&\lambda^L(\tau,|\bbp\f\ffa L,\bbp\f\ffa Lt|)-\lambda^L(t,|\bbp\f\ffa L,\bbp\f\ffa Lt|)\gs\f{t-\tau}{t}\rr{t|\bbp\f{\ffa}{L}|^{-2}L^{1-d}}^{-\f \beta3}\rr{\f{|\bbp|\ffa t}{L^{d-\kappaL}}}^\f13\\
		\gs&\f{\ffb-\ffa}{\ffb}\rr{\rr{\f\ffb\ffa}^3\tau|\bbp\f{\ffb}{L}|^{-2}L^{1-d}}^{-\f \beta3}\rr{\f{|\bbp|\ffb \tau}{L^{d-\kappaL}}}^\f13\gs|\bbp|\f{\ffb-\ffa}{L}\rr{\tau\nr{\bbp\f\ffb L}^{-2}L^{1-d}}^{\f{1-\beta}{3}}\rr{\f\ffa\ffb}^{\beta}.
		\ea\]
		Thus
		\[\ba
		&\mathscr{C}^1_{1,2}\ls\sum_{\substack{\ffa<\ffb-\f{L}{2|\bbp|}\\\ffa\in\N}}\f{1}{L^d}\nr{\bbp\f\ffa L}^{-\f12}\nr{\bbp\f\ffb L}^{-\f32}\tau\<|\bbp|\f{\ffb-\ffa}{L}\>^{-\sigma+1}e^{-c\rr{\tau\nr{\bbp\f\ffb L}^{-2}L^{1-d}}^{\f{1-\beta}{3}}\rr{\f\ffa\ffb}^{\beta}}\\
		\ls&\sum_{\substack{\ffa<\ffb-\f{L}{2|\bbp|}\\\ffa\in\N}}\f{1}{L^d}\nr{\bbp\f\ffa L}^{-\f12}\nr{\bbp\f\ffb L}^{-\f32}\tau\<|\bbp|\f{\ffb-\ffa}{L}\>^{-\sigma+1}\tau^{-1}\nr{\bbp\f\ffb L}^2L^{d-1}\rr{\f\ffb\ffa}^{\f{3\beta}{1-\beta}}\\
		\ls&\sum_{\substack{\ffa<\ffb-\f{L}{2|\bbp|}\\\ffa\in\N}}\f{1}{L}\nr{\bbp\f\ffa L}^{-\f12-\f{3\beta}{1-\beta}}\nr{\bbp\f\ffb L}^{\f12+\f{3\beta}{1-\beta}}\<|\bbp|\f{\ffb-\ffa}{L}\>^{-\sigma+1}\\
		\ls&\sum_{\substack{\ffa\leq\f{L}{|\bbp|}\\\ffa\in\N}}\f{1}{L}\nr{\bbp\f\ffa L}^{-\f12-\f{3\beta}{1-\beta}}+\sum_{\substack{\f{L}{|\bbp|}<\ffa<\ffb-\f{L}{2|\bbp|}\\\ffa\in\N}}\f{1}{L}\nr{\bbp\f\ffa L}^{-\f12-\f{3\beta}{1-\beta}}\nr{\bbp\f\ffb L}^{\f12+\f{3\beta}{1-\beta}}\<|\bbp|\f{\ffb-\ffa}{L}\>^{-\sigma+1}\ls1.
		\ea\]
		To estimate the second term in the last line we use $|\bbp|\f\ffb L\leq\<|\bbp|\f{\ffb-\ffa}{L}\>\<|\bbp|\f\ffa L\>$.
	\end{itemize}

	$\bullet$ Estimate of $\mathscr{C}^1_{2}$. We note that $\ffa\sim\ffb$ on $\{\ffa:\ffb-\f{L}{2|\bbp|}\leq\ffa<\ffb\}$. Consequently,
	\[\ba
	\mathscr{C}^1_{2}&\ls\sum_{\ffb-\f{L}{2|\bbp|}\leq\ffa<\ffb}\f{1}{L^d}\nr{\bbp\f\ffb L}^{-1}\tau|\bbp|\f{\ffb-\ffa}{L}\int_\tau^\infty\<|\bbp|\f{\ffa t-\ffb\tau}{L}\>^{-\sigma}e^{\lambda^L(t,|\bbp\f{\ffa}L,\bbp\f{\ffa}Lt|)-\lambda^L(\tau,|\bbp\f{\ffa}L,\bbp\f{\ffa}Lt|)}dt\\
	&=\sum_{\ffb-\f{L}{2|\bbp|}\leq\ffa<\ffb}\int_{|\f{\ffa t-\ffb\tau}{L}|\geq\f{|\ffb-\ffa|}{2L}\tau}+\sum_{\ffb-\f{L}{2|\bbp|}\leq\ffa<\ffb}\int_{|\f{\ffa t-\ffb\tau}{L}|\leq\f{|\ffb-\ffa|}{2L}\tau}:=\mathscr{C}^1_{2,1}+\mathscr{C}^1_{2,2}.
	\ea\]
	For $\mathscr{C}^1_{2,1}$,
	\[\mathscr{C}^1_{2,1}\ls\sum_{\ffb-\f{L}{2|\bbp|}\leq\ffa<\ffb}\f{1}{L^d}\nr{\bbp\f\ffb L}^{-1}\nr{\bbp\f\ffa L}^{-1}\ls1.\]
	For $\mathscr{C}^1_{2,2}$, we note that $\f{\ffa+\ffb}{2\ffa}\tau\leq t\leq\f{3\ffb-\ffa}{2\ffa}\tau$ on $\{t:|\f{\ffa t-\ffb\tau}{L}|\leq\f{|\ffb-\ffa|}{2L}\tau\}$, which implies that $t\sim\tau$ and $\f{t-\tau}{t}\gs\f{\ffb-\ffa}{\ffb}$. We consider the following three different cases:
	\begin{itemize}
		\item[(i)] $\tau\nr{\bbp\f\ffb L}^{-2}L^{1-d}\leq1$.
		\[\mathscr{C}^1_{2,2}\ls\sum_{\ffb-\f{L}{2|\bbp|}\leq\ffa<\ffb}\f{1}{L^d}\nr{\bbp\f\ffb L}^{-2}\tau|\bbp|\f{\ffb-\ffa}{L}\ls\sum_{\ffb-\f{L}{2|\bbp|}\leq\ffa<\ffb}\f1 L|\bbp|\f{\ffb-\ffa}{L}\ls1.\]
		\item[(ii)] $\tau\nr{\bbp\f\ffb L}^{-2}L^{1-d}\geq1$, $\nr{\bbp\f\ffb L\tau}\leq L^{d+1-\kappaL}$. Since $\ffa\sim\ffb$, $t\sim\tau$, we have $\tilde{a}_L(\f1{L^{d-2-\kappaL}}|\bbp\f\ffa L,\bbp\f\ffa Lt|)\sim\rr{\f{|\bbp|\ffb \tau}{L^{d-\kappaL}}}^\f12$. Hence
		\[\ba
		&\lambda^L(\tau,|\bbp\f\ffa L,\bbp\f\ffa Lt|)-\lambda^L(t,|\bbp\f\ffa L,\bbp\f\ffa Lt|)\gs\f{t-\tau}{t}\rr{\tau|\bbp\f{\ffb}{L}|^{-2}L^{1-d}}^{-\f \beta3}\rr{\f{|\bbp|\ffb \tau}{L^{d-\kappaL}}}^\f12\\
		\gs&|\bbp|(\ffb-\ffa)\rr{\tau|\bbp\f\ffb L|^{-1}L^{-d-1}}^{\f12-\f \beta3}|\bbp\f\ffb L|^{\f \beta3}L^{\f\kappaL2-\f{2\beta}{3}}\geq|\bbp|(\ffb-\ffa)\rr{\tau|\bbp\f\ffb L|^{-1}L^{-d-1}}^{\f12-\f \beta3}.
		\ea\]
		Thus
		\[\ba
		&\mathscr{C}^1_{2,2}\ls\sum_{\ffb-\f{L}{2|\bbp|}\leq\ffa<\ffb}\f{1}{L^d}\nr{\bbp\f\ffb L}^{-2}\tau|\bbp|\f{\ffb-\ffa}{L}e^{-c|\bbp|(\ffb-\ffa)\rr{\tau|\bbp\f\ffb L|^{-1}L^{-d-1}}^{\f12-\f \beta3}}\\
		\ls&\sum_{\ffb-\f{L}{2|\bbp|}\leq\ffa<\ffb}\f{1}{L^d}\nr{\bbp\f\ffb L}^{-2}\tau|\bbp|\f{\ffb-\ffa}{L}\tau^{-1}\nr{\bbp\f\ffb L}L^{d+1}\rr{|\bbp|(\ffb-\ffa)}^{-\f{1}{\f12-\f \beta3}}\ls\sum_{\ffb-\f{L}{2|\bbp|}\leq\ffa<\ffb}\rr{|\bbp|(\ffb-\ffa)}^{1-\f{1}{\f12-\f \beta3}}\ls1.
		\ea\]
		\item[(iii)] $\tau\nr{\bbp\f\ffb L}^{-2}L^{1-d}\geq1$, $\nr{\bbp\f\ffb L\tau}\geq L^{d+1-\kappaL}$. In this case we have $\tilde{a}_L(\f1{L^{d-2-\kappaL}}|\bbp\f\ffa L,\bbp\f\ffa Lt|)\sim\rr{\f{|\bbp|\ffb \tau}{L^{d-1-\kappaL}}}^\f13$. Hence
		\[\ba
		&\lambda^L(\tau,|\bbp\f\ffa L,\bbp\f\ffa Lt|)-\lambda^L(t,|\bbp\f\ffa L,\bbp\f\ffa Lt|)\gs\f{t-\tau}{t}\rr{\tau|\bbp\f{\ffb}{L}|^{-2}L^{1-d}}^{-\f \beta3}\rr{\f{|\bbp|\ffb \tau}{L^{d-1-\kappaL}}}^\f13\\
		\gs&|\bbp|(\ffb-\ffa)\rr{\tau|\bbp\f\ffb L|^{-2}L^{-d-1}}^{\f{1-\beta}{3}}L^{\f{\kappaL-2\beta}{3}}\geq|\bbp|(\ffb-\ffa)\rr{\tau|\bbp\f\ffb L|^{-2}L^{-d-1}}^{\f{1-\beta}{3}}.
		\ea\]
		Thus
		\[\ba
		&\mathscr{C}^1_{2,2}\ls\sum_{\ffb-\f{L}{2|\bbp|}\leq\ffa<\ffb}\f{1}{L^d}\nr{\bbp\f\ffb L}^{-2}\tau|\bbp|\f{\ffb-\ffa}{L}e^{-c|\bbp|(\ffb-\ffa)\rr{\tau|\bbp\f\ffb L|^{-2}L^{-d-1}}^{\f{1-\beta}{3}}}\\
		\ls&\sum_{\ffb-\f{L}{2|\bbp|}\leq\ffa<\ffb}\f{1}{L^d}\nr{\bbp\f\ffb L}^{-2}\tau|\bbp|\f{\ffb-\ffa}{L}\tau^{-1}\nr{\bbp\f\ffb L}^2L^{d+1}\rr{|\bbp|(\ffb-\ffa)}^{-\f{3}{1-\beta}}\ls\sum_{\ffb-\f{L}{2|\bbp|}\leq\ffa<\ffb}\rr{|\bbp|(\ffb-\ffa)}^{1-\f{3}{1-\beta}}\ls1.
		\ea\]
	\end{itemize}

	\underline{\emph{Estimate of $\mathscr{C}^{2}$}.} Let $\theta=\angle(\ell,k)$.
	\[\mathscr{C}^{2}=\sum_{\substack{k\in\ZLs\\|k|\cos\theta\geq|\ell|\ \text{or}\\\theta\geq\f\pi4}}+\sum_{\substack{k\in\ZLs\\0<|k|\cos\theta<|\ell|\ \text{and}\\0<\theta<\f\pi4}}:=\mathscr{C}^{2}_{1}+\mathscr{C}^{2}_{2}.\]
	For $\mathscr{C}^{2}_{1}$, since $t\geq\tau$, we have $|kt-\ell\tau|\geq|k-\ell|\tau$ when $|k|\cos\theta\geq|\ell|$, whereas for $\theta\geq\f\pi4, |\ell|\geq1$ we have $|kt-\ell\tau|\geq\f{\sqrt{2}}{2}\tau$. Hence
	\[\mathscr{C}^{2}_{1}\ls\sum_{\substack{|k|\cos\theta\geq|\ell|\ \text{or}\\\theta\geq\f\pi4}}\f{1}{L^d}\f{|k|^\f12}{|\ell|^\f32}\<k-\ell\>^{-\sigma+1}\int_\tau^\infty\<kt-\ell\tau\>^{-\sigma+1}dt\ls\sum_{\substack{|k|\cos\theta\geq|\ell|\ \text{or}\\\theta\geq\f\pi4}}\f{1}{L^d}|k|^{-\f12}\<k-\ell\>^{-\sigma+1}\ls1.\]
	Next we consider $\mathscr{C}^{2}_{2}$.	On the region $\{k: 0<\theta<\pi/4,\ 0<|k|\cos\theta<|\ell|\}$, each $k$ can be parametrized as
	\[
	k = \frac{\mathfrak{a}}{L}\mathbf{p} + \frac{\mathbf{c}}{L},
	\qquad \mathbf{c}\in\Pi(\mathbf{p})(\mathbb{Z}^d)^*,\ \mathfrak{a}\in\mathfrak{a}_0(\mathbf{c})+\mathbb{Z},\ 0<\ffa<\ffb.
	\] 
	Here $\mathfrak{a}_0(\mathbf{c})\in\mathbb{R}$ is chosen so that $\mathfrak{a}_0(\mathbf{c})\mathbf{p}+\mathbf{c}\in\mathbb{Z}^d$. Using this parametrization, from the geometric configuration we have 
	\[|kt-\ell\tau|\geq|\ell|\tau\sin\theta,\quad|k|\sim|k|\cos\theta=|\bbp|\f{\ffa}{L},\quad|k-\ell|\geq|\bbp|\f{\ffb-\ffa}{L},\quad \sin\theta\sim\tan\theta=\f{|\bbc|}{|\bbp|\ffa}.\]
	Hence
		Define
		\[
		P_{\bbc,\ffa}:=\f{1}{L^d}\nr{\bbp\f\ffa L}^{-\f12}
		\nr{\bbp\f\ffb L}^{-\f32}\tau
		\<|\bbp|\f{\ffb-\ffa}{L}\>^{-\sigma+1}
		\<\f{\f{|\bbp|\ffb\tau}{L}\f{|\bbc|}{L}}{\f{|\bbp|\ffa}{L}}\>^{-\sigma+2}.
		\]
		The preceding geometric bounds and the parametrization give
		\[\ba
		\mathscr{C}^{2}_{2}
		&\ls\sum_{\bbc\in{\Pi(\bbp)(\Z^d)}^*}
		\sum_{\substack{0<\ffa<\ffb\\\ffa\in \ffa_{0}(\bbc)+\Z}}P_{\bbc,\ffa}\\
		&=\sum_{\bbc\in{\Pi(\bbp)(\Z^d)}^*}
		\sum_{\substack{\f{L}{|\bbp|}\leq\ffa<\ffb\\\ffa\in \ffa_{0}(\bbc)+\Z}}P_{\bbc,\ffa}
		+\sum_{\bbc\in{\Pi(\bbp)(\Z^d)}^*}
		\sum_{\substack{0<\ffa<\min\{\ffb,\f{L}{|\bbp|}\}\\\ffa\in \ffa_{0}(\bbc)+\Z}}P_{\bbc,\ffa}\\
		&=: \mathscr{C}^{2}_{2,1}+\mathscr{C}^{2}_{2,2}.
		\ea\]
		For $\mathscr{C}^{2}_{2,1}$, since $\f{L}{|\bbp|}\leq\ffa<\ffb$ implies $\nr{\bbp\f\ffa L}^{-\f12}\<\f{\f{|\bbp|\ffb\tau}{L}\f{|\bbc|}{L}}{\f{|\bbp|\ffa}{L}}\>^{-\sigma+2}\ls\rr{1+\f{\tau}{L}|\bbc|}^{-d}$, we have
	\[\ba
	&\mathscr{C}^{2}_{2,1}\ls\sum_{\bbc\in{\Pi(\bbp)(\Z^d)}^*}\sum_{\substack{\f{L}{|\bbp|}\leq\ffa<\ffb\\\ffa\in \ffa_{0}(\bbc)+\Z}}\f{\tau}{L^d}\nr{\bbp\f\ffb L}^{-\f32}\rr{1+|\bbp|\f{\ffb-\ffa}{L}}^{-2}\rr{1+\f{\tau}{L}|\bbc|}^{-d}\\
	\ls&\sum_{\bbc\in{\Pi(\bbp)(\Z^d)}^*}\f{\tau}{L^d}\nr{\bbp\f\ffb L}^{-\f32}\max\{1,\f{L}{|\bbp|}\}\rr{1+\f{\tau}{L}|\bbc|}^{-d}\ls\sum_{\bbc\in{\Pi(\bbp)(\Z^d)}^*}\f{\tau}{L^{d-1}|\bbp|}\rr{1+\f{\tau}{L}|\bbc|}^{-d}\ls1.
	\ea\]
	For the case $|\bbp|\geq L$, we use $\nr{\bbp\f\ffb L}^{-\f32}\leq\f{L}{|\bbp|}$. For the last inequality we use \eqref{lattice1}. For $\mathscr{C}^{2}_{2,2}$, since $0<\ffa<\f{L}{|\bbp|}$ implies $\<|\bbp|\f{\ffb-\ffa}{L}\>\sim |\bbp|\f{\ffb}{L}$, we have
		\[\ba
		\mathscr{C}^{2}_{2,2}
		&\ls\sum_{\bbc\in{\Pi(\bbp)(\Z^d)}^*}
		\sum_{\substack{0<\ffa<\f{L}{|\bbp|}\\\ffa\in \ffa_{0}(\bbc)+\Z}}
		\f{\tau}{L^d}\nr{\bbp\f\ffa L}^{-\f12}
		\nr{\bbp\f\ffb L}^{-\sigma-\f12}
		\rr{1+\f{\f{|\bbc|\tau}{L}}{\f{|\bbp|\ffa}{L}}}^{-\sigma+2}\\
		&=: \mathscr{C}^{2}_{2,2,1}+\mathscr{C}^{2}_{2,2,2},
		\ea\]
		where the two terms on the last line are the restrictions to
		$|\bbc|\geq L/\tau$ and $|\bbc|<L/\tau$, respectively.
	For $\mathscr{C}^{2}_{2,2,1}$,
	\[\ba
	\mathscr{C}^{2}_{2,2,1}&\ls\sum_{\substack{|\bbc|\geq\f{L}{\tau}\\\bbc\in{\Pi(\bbp)(\Z^d)}^*}}\sum_{\substack{0<\ffa<\f{L}{|\bbp|}\\\ffa\in \ffa_{0}(\bbc)+\Z}}\f{\tau}{L^d}\nr{\bbp\f\ffa L}^{\sigma-\f52}\nr{\f{\bbc\tau}{L}}^{-\sigma+2}\nr{\bbp\f\ffb L}^{-\sigma-\f12}\\
	&\ls\sum_{\substack{|\bbc|\geq\f{L}{\tau}\\\bbc\in{\Pi(\bbp)(\Z^d)}^*}}\f{\tau}{L^d}\nr{\f{\bbc\tau}{L}}^{-\sigma+2}\nr{\bbp\f\ffb L}^{-\sigma-\f12}\max\{\f{L}{|\bbp|},1\}\ls\sum_{\substack{|\bbc|\geq\f{L}{\tau}\\\bbc\in{\Pi(\bbp)(\Z^d)}^*}}\f{\tau}{L^{d-1}|\bbp|}\nr{\f{\bbc\tau}{L}}^{-d}\ls1.
	\ea\]
	The last inequality follows from \eqref{lattice3}.
	For $\mathscr{C}^{2}_{2,2,2}$, by comparing the Riemann sum with the corresponding integral we have
	\[\ba
	&\mathscr{C}^{2}_{2,2,2}\ls\sum_{\substack{|\bbc|<\f{L}{\tau}\\\bbc\in{\Pi(\bbp)(\Z^d)}^*}}\rr{\sum_{\substack{0<|\bbp|\f\ffa L\leq\f{|\bbc|\tau}{L}\\\ffa\in \ffa_{0}(\bbc)+\Z}}\f{\tau}{L^d}\nr{\bbp\f\ffa L}^{\sigma-\f52}\nr{\f{\bbc}{L}\tau}^{-\sigma+2}\nr{\bbp\f\ffb L}^{-\sigma-\f12}+\sum_{\substack{\f{|\bbc|\tau}{L}<|\bbp|\f\ffa L<1\\\ffa\in \ffa_{0}(\bbc)+\Z}}\f{\tau}{L^d}\nr{\bbp\f\ffa L}^{-\f12}\nr{\bbp\f\ffb L}^{-\sigma-\f12}}\\
	&\ls\sum_{\substack{|\bbc|<\f{L}{\tau}\\\bbc\in{\Pi(\bbp)(\Z^d)}^*}}\f{\tau}{L^d}\rr{\nr{\f{\bbc}{L}\tau}^{\sigma-\f52}+\f{L}{|\bbp|}\int_0^{\f{|\bbc|\tau}{L}}x^{\sigma-\f52}dx}\nr{\f{\bbc}{L}\tau}^{-\sigma+2}\nr{\bbp\f\ffb L}^{-\sigma-\f12}	+\sum_{\substack{|\bbc|<\f{L}{\tau}\\\bbc\in{\Pi(\bbp)(\Z^d)}^*}}\f{\tau}{L^d}\nr{\bbp\f\ffb L}^{-\sigma-\f12}\\
	&\times\rr{\nr{\f{\bbc}{L}\tau}^{-\f12}+\f{L}{|\bbp|}\int^1_{\tau\f{|\bbc|}L}x^{-\f12}dx}\ls\sum_{\substack{|\bbc|<\f{L}{\tau}\\\bbc\in{\Pi(\bbp)(\Z^d)}^*}}\f{\tau}{L^{d-1}|\bbp|}\nr{\f{\bbc}{L}\tau}^{-\f12}\ls L^{2-d}\vv{1}_{\tau\geq L}+\tau^{2-d}\vv{1}_{\tau\leq L}\ls1.
	\ea\]
	In the last line we use \eqref{lattice2}.
	
	Next we consider the case $|\ell|<1$, namely $\ffb<\f{L}{|\bbp|}$.
	\[\sum_{k\in\ZLs}\f{1}{L^d}\int_\tau^\infty |k|^\f12|\ell|^\f12|k-\ell|\tau\<k\>^{-\sigma}\<kt-\ell\tau\>^{-\sigma}e^{\lambda^L(t,|k,kt|)-\lambda^L(\tau,|k,kt|)}dt=\sum_{\substack{k\in\ZLs\\\angle(\ell,k)=0\\\text{and}\ |k|<|\ell|}}+\sum_{\substack{k\in\ZLs\\\angle(\ell,k)\neq0\\\text{or}\ |k|>|\ell|}}:=\mathscr{D}^{1}+\mathscr{D}^{2}.\]
	
	\underline{\emph{Estimate of $\mathscr{D}^{1}$}.} Let $k=\bbp\f\ffa L, \ffa\in\N, 0<\ffa<\ffb$.
	\[\ba
	\mathscr{D}^{1}&\ls\sum_{0<\ffa<\ffb}\f{1}{L^d}\nr{\bbp\f\ffa L}^\f12\nr{\bbp\f\ffb L}^\f12|\bbp|\f{\ffb-\ffa}{L}\tau\int_\tau^\infty\<|\bbp|\f{\ffa t-\ffb\tau}{L}\>^{-\sigma}e^{\lambda^L(t,|\bbp\f{\ffa}L,\bbp\f{\ffa}Lt|)-\lambda^L(\tau,|\bbp\f{\ffa}L,\bbp\f{\ffa}Lt|)}dt\\
	&=\sum_{0<\ffa<\ffb}\int_{|\f{\ffa t-\ffb\tau}{L}|\geq\f{\ffb-\ffa}{2L}\tau}+\sum_{0<\ffa<\ffb}\int_{|\f{\ffa t-\ffb\tau}{L}|\leq\f{\ffb-\ffa}{2L}\tau}:=\mathscr{D}^{1}_{1}+\mathscr{D}^{1}_2.
	\ea\]
	For $\mathscr{D}^{1}_{1}$,
	\[\mathscr{D}^{1}_{1}\ls\sum_{0<\ffa<\ffb}\f{1}{L^d}\nr{\bbp\f\ffa L}^{-\f12}\nr{\bbp\f\ffb L}^\f12\ls1.\]
	For $\mathscr{D}^{1}_{2}$, we note that $\f{\ffa+\ffb}{2\ffa}\tau\leq t\leq\f{3\ffb-\ffa}{2\ffa}\tau$ on $\{t:|\f{\ffa t-\ffb\tau}{L}|\leq\f{|\ffb-\ffa|}{2L}\tau\}$, which implies that $\ffa t\sim\ffb\tau$ and $\f{t-\tau}{t}\gs\f{\ffb-\ffa}{\ffb}$. We consider the following three different cases:
	\begin{itemize}
		\item[(i)] $\tau\nr{\bbp\f\ffb L}^{-2}L^{1-d}\leq1$ or $\nr{\bbp\f\ffb L}\tau\leq1$.
		\[\ba
		\mathscr{D}^{1}_{2}&\ls\sum_{0<\ffa<\ffb}\f{1}{L^d}\nr{\bbp\f\ffa L}^{-\f12}\nr{\bbp\f\ffb L}^\f12|\bbp|\f{\ffb-\ffa}{L}\tau\ls\sum_{0<\ffa<\ffb}\f{1}{L^d}\nr{\bbp\f\ffa L}^{-\f12}\nr{\bbp\f\ffb L}^\f12|\bbp|\f{\ffb-\ffa}{L}\rr{\nr{\bbp\f\ffb L}^{-1}+\nr{\bbp\f\ffb L}^2L^{d-1}}\\
		&\ls\sum_{0<\ffa<\ffb}\f1 L\nr{\bbp\f\ffa L}^{-\f12}\ls1.
		\ea\]
		\item[(ii)] $\tau\nr{\bbp\f\ffb L}^{-2}L^{1-d}\geq1$ and $1\leq\nr{\bbp\f\ffb L}\tau\leq L^{d+1-\kappaL}$. Since $\ffa t\sim\ffb\tau$, we have $t|\bbp\f{\ffa}{L}|^{-2}L^{1-d}\sim\rr{\f\ffb\ffa}^3\tau|\bbp\f{\ffb}{L}|^{-2}L^{1-d}\gs1$. Hence
		\[\ba
		&\lambda^L(\tau,|\bbp\f\ffa L,\bbp\f\ffa Lt|)-\lambda^L(t,|\bbp\f\ffa L,\bbp\f\ffa Lt|)\gs\f{t-\tau}{t}\rr{\rr{\f\ffb\ffa}^3\tau|\bbp\f{\ffb}{L}|^{-2}L^{1-d}}^{-\f \beta3}\rr{\f{|\bbp|\ffb \tau}{L^{d-\kappaL}}}^\f12\\
		\gs&|\bbp|(\ffb-\ffa)\rr{\tau|\bbp\f\ffb L|^{-1}L^{-d-1}}^{\f12-\f \beta3}|\bbp\f\ffb L|^{\f \beta3}L^{\f\kappaL2-\f{2\beta}{3}}\rr{\f\ffa\ffb}^{\beta}\geq|\bbp|(\ffb-\ffa)\rr{\tau|\bbp\f\ffb L|^{-1}L^{-d-1}}^{\f12-\f \beta3}\rr{\f\ffa\ffb}^{\beta}.
		\ea\]
		Thus
		\[\ba
		&\mathscr{D}^{1}_{2}\ls\sum_{0<\ffa<\ffb}\f{1}{L^d}\nr{\bbp\f\ffa L}^{-\f12}\nr{\bbp\f\ffb L}^\f12|\bbp|\f{\ffb-\ffa}{L}\tau e^{-c|\bbp|(\ffb-\ffa)\rr{\tau|\bbp\f\ffb L|^{-1}L^{-d-1}}^{\f12-\f \beta3}\rr{\f\ffa\ffb}^{\beta}}\\
		\ls&\sum_{0<\ffa<\ffb}\f{1}{L^d}\nr{\bbp\f\ffa L}^{-\f12}\nr{\bbp\f\ffb L}^\f12|\bbp|\f{\ffb-\ffa}{L}\tau\tau^{-1}\nr{\bbp\f\ffb L}L^{d+1}\rr{|\bbp|(\ffb-\ffa)}^{-\f{1}{\f12-\f \beta3}}\rr{\f\ffb\ffa}^{\f{\beta}{\f12-\f \beta3}}\\
		\ls&\sum_{0<\ffa<\ffb}\ffa^{-\f12-\f{\beta}{\f12-\f \beta3}}(\ffb-\ffa)^{1-\f{1}{\f12-\f \beta3}}L^{\f12+\f{\beta}{\f12-\f \beta3}}|\bbp|^{-\f12-\f{\beta}{\f12-\f \beta3}}\nr{\bbp\f\ffb L}^{\f32+\f \beta{\f12-\f \beta3}}\\
		\ls&\rr{\sum_{0<\ffa<\f\ffb2}\ffa^{-\f12-\f{\beta}{\f12-\f \beta3}}(\ffb-\ffa)^{1-\f{1}{\f12-\f \beta3}}+\sum_{\f\ffb2\leq\ffa<\ffb}\ffa^{-\f12-\f{\beta}{\f12-\f \beta3}}(\ffb-\ffa)^{1-\f{1}{\f12-\f \beta3}}}L^{\f12+\f{\beta}{\f12-\f \beta3}}|\bbp|^{-\f12-\f{\beta}{\f12-\f \beta3}}\nr{\bbp\f\ffb L}^{\f32+\f \beta{\f12-\f \beta3}}\\
		\ls&\rr{\ffb^{\f32-\f{1+\beta}{\f12-\f \beta3}}+\ffb^{-\f12-\f{\beta}{\f12-\f \beta3}}}L^{\f12+\f{\beta}{\f12-\f \beta3}}|\bbp|^{-\f12-\f{\beta}{\f12-\f \beta3}}\nr{\bbp\f\ffb L}^{\f32+\f \beta{\f12-\f \beta3}}\ls\nr{\bbp\f\ffb L}^{\f32+\f \beta{\f12-\f \beta3}-\f12-\f \beta{\f12-\f \beta3}}\ls1.
		\ea\]
		\item[(iii)] $\tau\nr{\bbp\f\ffb L}^{-2}L^{1-d}\geq1$ and $\nr{\bbp\f\ffb L}\tau\geq L^{d+1-\kappaL}$. A similar calculation gives
		\[\ba
		&\lambda^L(\tau,|\bbp\f\ffa L,\bbp\f\ffa Lt|)-\lambda^L(t,|\bbp\f\ffa L,\bbp\f\ffa Lt|)\gs\f{t-\tau}{t}\rr{\rr{\f\ffb\ffa}^3\tau|\bbp\f{\ffb}{L}|^{-2}L^{1-d}}^{-\f \beta3}\rr{\f{|\bbp|\ffb \tau}{L^{d-1-\kappaL}}}^\f13\\
		\gs&|\bbp|(\ffb-\ffa)\rr{\tau|\bbp\f\ffb L|^{-2}L^{-d-1}}^{\f{1-\beta}{3}}L^{\f{\kappaL-2\beta}{3}}\rr{\f\ffa\ffb}^\beta\geq|\bbp|(\ffb-\ffa)\rr{\tau|\bbp\f\ffb L|^{-2}L^{-d-1}}^{\f{1-\beta}{3}}\rr{\f\ffa\ffb}^\beta.
		\ea\]
		Thus
        \[\ba
        &\mathscr{D}^{1}_{2}\ls\sum_{0<\ffa<\ffb}\f{1}{L^d}\nr{\bbp\f\ffa L}^{-\f12}\nr{\bbp\f\ffb L}^\f12|\bbp|\f{\ffb-\ffa}{L}\tau e^{-c|\bbp|(\ffb-\ffa)\rr{\tau|\bbp\f\ffb L|^{-2}L^{-d-1}}^{\f{1-\beta}{3}}\rr{\f\ffa\ffb}^{\beta}}\\
        \ls&\sum_{0<\ffa<\ffb}\f{1}{L^d}\nr{\bbp\f\ffa L}^{-\f12}\nr{\bbp\f\ffb L}^\f12|\bbp|\f{\ffb-\ffa}{L}\tau\tau^{-1}\nr{\bbp\f\ffb L}^2L^{d+1}\rr{|\bbp|(\ffb-\ffa)}^{-\f3{1-\beta}}\rr{\f\ffb\ffa}^{\f{3\beta}{1-\beta}}\\
        \ls&\sum_{0<\ffa<\ffb}\ffa^{-\f12-\f{3\beta}{1-\beta}}(\ffb-\ffa)^{1-\f{3}{1-\beta}}\rr{\f{|\bbp|}{L}}^{-\f12-\f{3\beta}{1-\beta}}\nr{\bbp\f\ffb L}^{\f52+\f{3\beta}{1-\beta}}\\
        \ls&\rr{\sum_{0<\ffa<\f\ffb2}\ffa^{-\f12-\f{3\beta}{1-\beta}}(\ffb-\ffa)^{1-\f{3}{1-\beta}}+\sum_{\f\ffb2\leq\ffa<\ffb}\ffa^{-\f12-\f{3\beta}{1-\beta}}(\ffb-\ffa)^{1-\f{3}{1-\beta}} }\rr{\f{|\bbp|}{L}}^{-\f12-\f{3\beta}{1-\beta}}\nr{\bbp\f\ffb L}^{\f52+\f{3\beta}{1-\beta}}\\
        \ls&\rr{\ffb^{\f32-\f{3(1+\beta)}{1-\beta}}+\ffb^{-\f12-\f{3\beta}{1-\beta}}}\rr{\f{|\bbp|}{L}}^{-\f12-\f{3\beta}{1-\beta}}\nr{\bbp\f\ffb L}^{\f52+\f{3\beta}{1-\beta}}\ls\nr{\bbp\f\ffb L}^{2}\ls1.
        \ea\]
	\end{itemize}
	
	\underline{\emph{Estimate of $\mathscr{D}^{2}$}.} Let $\theta=\angle(\ell,k)$.
	\[\mathscr{D}^{2}=\sum_{\substack{k\in\ZLs\\|k|\geq|\ell|\cos\theta\ \text{or}\\\theta\geq\f\pi4}}+\sum_{\substack{k\in\ZLs\\0<|k|<|\ell|\cos\theta\ \text{and}\\0<\theta<\f\pi4}}:=\mathscr{D}^{2}_{1}+\mathscr{D}^{2}_{2}.\]
	For $\mathscr{D}^{2}_{1}$, since $t\geq\tau$, we have $|kt-\ell\tau|\geq|k-\ell|\tau$ when $|k|\geq|\ell|\cos\theta$, whereas for $\theta\geq\f\pi4, |k|<|\ell|\cos\theta$, from the geometric configuration we have $|kt-\ell\tau|\geq\f{\sqrt{2}}{2}|k-\ell|\tau$. Hence
		\[\ba
		\mathscr{D}^{2}_{1}
		&\ls\sum_{\substack{k\in\ZLs\\|k|\geq|\ell|\cos\theta\ \text{or}\\\theta\geq\f\pi4}}
		\f{1}{L^d}|k|^\f12|\ell|^\f12|k-\ell|\tau
		\<|k-\ell|\tau\>^{-1}\<k\>^{-\sigma}
		\int_\tau^\infty\<kt-\ell\tau\>^{-\sigma+1}dt\\
		&\ls\sum_{\substack{k\in\ZLs\\|k|\geq|\ell|\cos\theta\ \text{or}\\\theta\geq\f\pi4}}
		\f{1}{L^d}|k|^{-\f12}|\ell|^\f12\<k\>^{-\sigma}\ls1.
		\ea\]
	For $\mathscr{D}^{2}_{2}$, since $0<|k|<|\ell|\cos\theta, 0<\theta<\f\pi4$ implies $|k-\ell|\ls|\ell|$, we have
	\[\mathscr{D}^{2}_{2}\ls\sum_{\substack{k\in\ZLs\\0<|k|<|\ell|\cos\theta\ \text{and}\\0<\theta<\f\pi4}}\f{1}{L^d}|k|^{-\f12}|\ell|^\f32\tau\<|\ell|\tau\sin\theta\>^{-\sigma+2}.\]
	We focus on $|\ell|\tau\geq1$, since the case $|\ell|\tau\leq1$ follows by a simpler variant of the same argument. On the region $\{k: 0<\theta<\pi/4,\ 0<|k|<|\ell|\cos\theta\}$, each $k$ can be parametrized as
	\[
	k = \frac{\mathfrak{a}}{L}\mathbf{p} + \frac{\mathbf{c}}{L},
	\qquad \mathbf{c}\in\Pi(\mathbf{p})(\mathbb{Z}^d)^*,\ \mathfrak{a}\in\mathfrak{a}_0(\mathbf{c})+\mathbb{Z},\ 0<\ffa<\ffb.
	\] 
	Here $\mathfrak{a}_0(\mathbf{c})\in\mathbb{R}$ is chosen so that $\mathfrak{a}_0(\mathbf{c})\mathbf{p}+\mathbf{c}\in\mathbb{Z}^d$. Using this parametrization, from the geometric configuration we have 
	\[|kt-\ell\tau|\geq|\ell|\tau\sin\theta,\quad|k|\sim|k|\cos\theta=|\bbp|\f{\ffa}{L},\quad \sin\theta\sim\tan\theta=\f{|\bbc|}{|\bbp|\ffa}.\]
	Hence
	\[\ba
	\mathscr{D}^{2}_{2}&\ls\sum_{\bbc\in{\Pi(\bbp)(\Z^d)}^*}\sum_{\substack{0<\ffa<\ffb\\\ffa\in \ffa_{0}(\bbc)+\Z}}\f{1}{L^d}\nr{\bbp\f\ffa L}^{-\f12}\nr{\bbp\f\ffb L}^\f32\tau\rr{1+\f{\f{|\bbp|\ffb\tau}{L}\f{|\bbc|}{L}}{\f{|\bbp|\ffa}{L}}}^{-\sigma+2}\\
	&\ls\sum_{\substack{\f{|\bbc|}L\leq1/\tau\\\bbc\in{\Pi(\bbp)(\Z^d)}^*}}+\sum_{\substack{\f{|\bbc|}L>1/\tau\\\bbc\in{\Pi(\bbp)(\Z^d)}^*}}:=\mathscr{D}^{2}_{2,1}+\mathscr{D}^{2}_{2,2}.
	\ea\]
	For $\mathscr{D}^{2}_{2,1}$, by comparing the Riemann sum with the corresponding integral we have
	\[\ba
		\mathscr{D}^{2}_{2,1}\ls&\sum_{\substack{\f{|\bbc|}L\leq1/\tau\\\bbc\in{\Pi(\bbp)(\Z^d)}^*}}
		\Biggl[\sum_{|\bbp|\f\ffa L<|\bbp|\f{\ffb\tau}{L}\f{|\bbc|}{L}}
		\f{\tau}{L^d}\nr{\bbp\f\ffa L}^{\sigma-\f52}
		\nr{\bbp\f\ffb L}^\f32
		\rr{\f{|\bbp|\ffb\tau}{L}\f{|\bbc|}{L}}^{-\sigma+2}\\
		&\qquad+\sum_{|\bbp|\f{\ffb\tau}{L}\f{|\bbc|}{L}\le|\bbp|\f\ffa L<|\bbp|\f\ffb L}
		\f{\tau}{L^d}\nr{\bbp\f\ffa L}^{-\f12}
		\nr{\bbp\f\ffb L}^\f32\Biggr]\\
		\ls&\sum_{\substack{\f{|\bbc|}L\leq1/\tau\\\bbc\in{\Pi(\bbp)(\Z^d)}^*}}
		\f{\tau}{L^d}\nr{\bbp\f\ffb L}^\f32
		\rr{\f{|\bbp|\ffb\tau}{L}\f{|\bbc|}{L}}^{-\sigma+2}\left\{\rr{\f{|\bbp|\ffb\tau}{L}\f{|\bbc|}{L}}^{\sigma-\f52}
		+\f{L}{|\bbp|}\int_0^{\f{|\bbp|\ffb\tau}{L}\f{|\bbc|}{L}}
		x^{\sigma-\f52}dx\right\}\\
		&+\sum_{\substack{\f{|\bbc|}L\leq1/\tau\\\bbc\in{\Pi(\bbp)(\Z^d)}^*}}
		\f{\tau}{L^d}\nr{\bbp\f\ffb L}^\f32\left\{\rr{\f{|\bbp|\ffb\tau}{L}\f{|\bbc|}{L}}^{-\f12}
		+\f{L}{|\bbp|}\int^{|\bbp|\f\ffb L}_{\f{|\bbp|\ffb\tau}{L}\f{|\bbc|}{L}}
		x^{-\f12}dx\right\}\\
		\ls&\sum_{\substack{\f{|\bbc|}L\leq1/\tau\\\bbc\in{\Pi(\bbp)(\Z^d)}^*}}
		\f{\tau}{L^{d-1}}\nr{\bbp\f{\ffb}{L}}^\f32
		\rr{\f{|\bbp|\ffb\tau}{L}\f{|\bbc|}{L}}^{-\f12}\rr{1+\f{|\bbc|\tau\ffb}{L}}\ls\sum_{\substack{\f{|\bbc|}L\leq1/\tau\\\bbc\in{\Pi(\bbp)(\Z^d)}^*}}
		\f{\tau}{L^{d-1}|\bbp|}\nr{\bbp\f{\ffb}{L}}
		\rr{\f{|\bbc|\tau}{L}}^{-\f12}\\
		\ls& \nr{\bbp\f\ffb L}\left\{L^{2-d}\vv{1}_{L\leq\tau}
		+\tau^{2-d}
		\vv{1}_{\tau\leq L}\right\}\ls1,
	\ea\]
		where we use \eqref{lattice2} and the restriction $|\ell|\tau\geq1$. Similarly for $\mathscr{D}^{2}_{2,2}$,
	\[\ba
	&\mathscr{D}^{2}_{2,2}\ls\sum_{\substack{\f{|\bbc|}L>1/\tau\\\bbc\in{\Pi(\bbp)(\Z^d)}^*}}\sum_{\substack{0<\ffa<\ffb\\\ffa\in \ffa_{0}(\bbc)+\Z}}\f{\tau}{L^d}\nr{\bbp\f\ffa L}^{\sigma-\f52}\nr{\bbp\f\ffb L}^\f32\rr{\f{|\bbp|\ffb\tau}{L}\f{|\bbc|}{L}}^{-\sigma+2}\\
	&\ls\sum_{\substack{\f{|\bbc|}L>1/\tau\\\bbc\in{\Pi(\bbp)(\Z^d)}^*}}\f{\tau}{L^d}\nr{\bbp\f\ffb L}^\f32\rr{\f{|\bbp|\ffb\tau}{L}\f{|\bbc|}{L}}^{-\sigma+2}\rr{\nr{\bbp\f\ffb L}^{\sigma-\f52}+\f{L}{|\bbp|}\int_0^{\f{|\bbp|\ffb}{L}}x^{\sigma-\f52}dx}\\
	&\ls\sum_{\substack{\f{|\bbc|}L>1/\tau\\\bbc\in{\Pi(\bbp)(\Z^d)}^*}}\f{\tau}{L^{d-1}|\bbp|}\nr{\f{\bbp\ffb}{L}}^2\rr{\f{|\bbc|\tau}{L}}^{-\sigma+2}\le\sum_{\substack{\f{|\bbc|}L>1/\tau\\\bbc\in{\Pi(\bbp)(\Z^d)}^*}}\f{\tau^{-d}L^2}{|\bbp|}\nr{\f{\bbp\ffb}{L}}^2|\bbc|^{-d-1}\\
	&\ls\nr{\bbp\f\ffb L}^2\left\{L^{2-d}\vv{1}_{L\leq\tau}
	+\tau^{2-d}
	\vv{1}_{\tau\leq L}\right\}\ls1,
	\ea\]
	where we use \eqref{lattice3}.
\end{proof}

\subsection{\texorpdfstring{Estimate of $\LLTXT{|\na_x|^\f12A^{\sigma_1}_L\rho}$}{Estimate of the high-order density norm}}

We first establish the improved high-order density estimate corresponding to
\eqref{rho1}.
\begin{prop}\label{Rho1}
    There exist a universal constant $K'_{\rho1}$ and a constant
    $K''_{\rho1}(K_{gi},K_{\rho j})$, depending only on $K_{gi}$ and
    $K_{\rho j}$ for $1\leq i\leq3$ and $1\leq j\leq2$, such that whenever
    \eqref{g1}--\eqref{g3} hold on $[0,T]$, one has
    \beq\label{imp1}\LLTXT{|\na_x|^\f12A^{\sigma_1}_L\rho}\leq K'_{\rho1}\eps+K''_{\rho1}(K_{gi},K_{\rho j})\eps^2.\eeq
\end{prop}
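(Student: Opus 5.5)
We would start from the resolvent representation \eqref{newforumladens}, $\wh\rho=H-R*_tH$ with $H=\wh{\mathscr I}+\wh{\mathscr N}$. Multiply by $|k|^{1/2}A^{\sigma_1}_L(t,k,kt)$ and take the $L^2_t$ norm for each fixed $k$, then the normalized $\ell^2_k$ sum. The resolvent term is handled by Proposition \ref{resolventestimate}. We use $|R(t-\tau,k)|\ls|k|\<k\>^{-2}e^{-\frac{\lambda_0}{2}(|k|(t-\tau))^{1/3}}$, together with $\<k,kt\>^{\sigma_1}\ls\<k,k\tau\>^{\sigma_1}\<k(t-\tau)\>^{\sigma_1}$ and the subadditivity of Lemma \ref{multiplier}(2). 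Since $\lambda^L\le\frac{\lambda_0}{50}\tilde a_L(\cdot)$ is much smaller than $\frac{\lambda_0}{2}(\cdot)^{1/3}$, and $\lambda^L$ is nonincreasing in time, the weight can be moved from $t$ to $\tau$. This leaves a kernel in $t-\tau$ whose $L^1_t$ norm is $\ls\<k\>^{-2}$ after the change of variables $s=|k|(t-\tau)$. Young's inequality then gives $\||k|^{1/2}A^{\sigma_1}_L\rho\|_{L^2_t}\ls\||k|^{1/2}A^{\sigma_1}_LH\|_{L^2_t}$ uniformly in $k$ and $L$. It therefore suffices to bound $H$.

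\emph{Linear term.} For $\wh{\mathscr I}(t,k)=\wh h(0,k,kt)$, use the trace identity $\int_0^\infty|k||\wh f(k,kt)|^2dt=\int_0^\infty|\wh f(k,sk/|k|)|^2ds$ along the ray. The $v$-weights $\<v\>^m$ with $m>d/2$ (via $\|v^\alpha h(0)\|$, $|\alpha|\le m$) give a Sobolev trace in $\eta$ onto a line. Bound $e^{\lambda^L(t,\cdot)}$ by $e^{\lambda^L(0,\cdot)}$, which is dominated by the data weight in \eqref{Initial} via property (2) of $\tilde a_L$ (the factor $\frac{\lambda_0}{50}$ against $\lambda_0$). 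Summing over $k$ with Plancherel \eqref{normalizedplancherel} gives $K'_{\rho1}\eps$, with $K'_{\rho1}$ universal. The extra derivative $\<\na\>^{\sigma_0}$ versus $\sigma_1=\sigma_0-1$ absorbs the trace loss.

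\emph{Nonlinear term.} Split $\wh{\mathscr N}$ by the usual paraproduct into the reaction part ($|\ell|\gtrsim|k-\ell,kt-\ell\tau|$, weight on $\rho$) and the transport part (weight on $g$).

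For the transport part, put $A^{\sigma_1}_L$ on $\wh g(\tau,k-\ell,kt-\ell\tau)$ using Lemma \ref{multiplier}(2). The low-frequency density is bounded pointwise by \eqref{rho2}/\eqref{g3} together with the dispersive sum of Proposition \ref{dispersivepm}. The factor $|k(t-\tau)|$ is controlled by $|k-\ell|\tau+|kt-\ell\tau|$, and is then absorbed by \eqref{g1} (the $\<\na\>$ and $\<t\>\na_x$ parts), using the time integrability $\int\<\tau\>^{-d/2+\delta}$ with $d\ge3$. Cauchy--Schwarz in $\tau$ and $\ell$ and the $L^2_t$ trace estimate then close this part with bound $C(K)\eps^2$.

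For the reaction part, put the full weight on $\rho(\tau,\ell)$ and use \eqref{g3} on $\wh g$, yielding the majorant
\[
\frac{|k|^{1/2}|\ell|}{\<\ell\>^2}\,|k(t-\tau)|\,\<k-\ell\>^{-\sigma_4}\<kt-\ell\tau\>^{-\sigma_4}\,
e^{\lambda^L(t,|k,kt|)-\lambda^L(\tau,|k,kt|)}\,
\bigl|A^{\sigma_1}_L\rho(\tau,\ell)\bigr|\,K_{g3}\eps .
\]
Write $|\ell|=|\ell|^{1/2}\cdot|\ell|^{1/2}$ as in the remark following Definition \ref{def:echoVolterra}. The weight exchange $A(t,k,kt)\le A(\tau,\ell,\ell\tau)\,\<k-\ell,kt-\ell\tau\>^{\sigma_1}e^{\lambda^L(t,\cdot)-\lambda^L(\tau,\cdot)}$ uses $\sigma_4$ large enough to retain $\sigma\ge d+3$ decay. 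This gives exactly the Gevrey-compensated echo kernel. Schur's test with Propositions \ref{resonancek} and \ref{resonancel} bounds the operator on $L^2_t\ell^2_k$ by $O(1)$, yielding $C K_{g3}K_{\rho1}\eps^2$.

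The main obstacle is this weight transfer in the reaction term. Lemma \ref{multiplier}(2) costs only an additive $\lambda_0$, but the Gevrey factor must be evaluated at the output frequency $|k,kt|$ at both times $t$ and $\tau$, which is the form the two Schur propositions require. Matching this to the weight $\lambda^L(\tau,|\ell,\ell\tau|)$ carried by $\rho$ will need the monotonicity of $\lambda^L$ in $r$ near the echo, where $|k,kt|\approx|\ell,\ell\tau|$, together with \eqref{lossd}. Away from the echo, the excess is absorbed by the $\<kt-\ell\tau\>$ decay.

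Collecting the terms gives \eqref{imp1}.
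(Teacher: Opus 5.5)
Your overall plan matches the paper's proof. You reduce to $H$ through the resolvent bound; your Young's inequality in $t-\tau$ is the paper's Schur step \eqref{L2t}. You bound the data term by the ray trace. You treat the transport part with \eqref{g3} on the density and \eqref{g1}; your split $|k(t-\tau)|\le|k-\ell|\tau+|kt-\ell\tau|$ is exactly the symbol of $\langle\nabla_v,\tau\nabla_x\rangle$ acting on $\partial_v^\tau g$. You treat the reaction part with \eqref{g3} on $g$ and Schur's test via Propositions \ref{resonancek}--\ref{resonancel}.

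\textbf{The flawed step.} The step you single out as the main obstacle is set up incorrectly, and your proposed remedy would not work. The exchange $A(t,k,kt)\le A(\tau,\ell,\ell\tau)\langle k-\ell,kt-\ell\tau\rangle^{\sigma_1}e^{\lambda^L(t,|k,kt|)-\lambda^L(\tau,|k,kt|)}$ is false.
\begin{itemize}
\item After cancelling $e^{\lambda^L(t,|k,kt|)}$, it requires $e^{\lambda^L(\tau,|k,kt|)-\lambda^L(\tau,|\ell,\ell\tau|)}$ to be polynomially bounded. In general this excess is of the order of $e^{\lambda^L(\tau,|k-\ell,kt-\ell\tau|)}$, for example when $k=(1+c)\ell$ with fixed $0<c<1$, $t=\tau+1$, and $\tau\to\infty$.
\item No power of $\langle kt-\ell\tau\rangle$ or $\langle k-\ell\rangle$ absorbs such an exponential, so the ``away from the echo'' half of your plan fails.
\item Near the echo, \eqref{lossd} is not the right tool either. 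It requires $x\ge y\ge2\langle x-y\rangle$ and costs a factor $\tilde a_L(x)|x-y|/x$, which is not small when $|k-\ell,kt-\ell\tau|$ is comparable to $|\ell,\ell\tau|$.
\item Separately, the polynomial factor $\langle k-\ell,kt-\ell\tau\rangle^{\sigma_1}$ cannot be paid for by \eqref{g3}, because $\sigma_4=\sigma_1-\frac d2-4<\sigma_1$.
\end{itemize}

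\textbf{The fix.} No echo localization is needed. Split the Gevrey factor at the output frequency between the two times. Then, at the single time $\tau$, use three facts: $\lambda^L(\tau,\cdot)$ is increasing in $r$; $|k,kt|\le|\ell,\ell\tau|+|k-\ell,kt-\ell\tau|$; and Lemma \ref{multiplier}(2). Together these give
\[
e^{\lambda^L(t,|k,kt|)}\le e^{\lambda_0}\,e^{\lambda^L(t,|k,kt|)-\lambda^L(\tau,|k,kt|)}\,e^{\lambda^L(\tau,|\ell,\ell\tau|)}\,e^{\lambda^L(\tau,|k-\ell,kt-\ell\tau|)}.
\]
The last factor is part of $A^{\sigma_4}_L(\tau,k-\ell,kt-\ell\tau)$ and is absorbed by \eqref{g3}.

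For the polynomial weight, state the paraproduct condition as $\langle\ell,\ell\tau\rangle>\langle k-\ell,kt-\ell\tau\rangle$, not $|\ell|\gtrsim|k-\ell,kt-\ell\tau|$. The latter would also break your transport estimate when $|\ell\tau|$ dominates. The correct condition gives $\langle k,kt\rangle^{\sigma_1}\le2^{\sigma_1}\langle\ell,\ell\tau\rangle^{\sigma_1}$.

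What \eqref{g3} then leaves is $\langle k-\ell,kt-\ell\tau\rangle^{-\sigma_4}\le\langle k-\ell\rangle^{-\sigma_4/2}\langle kt-\ell\tau\rangle^{-\sigma_4/2}$, not $\sigma_4$ on each factor as you wrote. The condition $\sigma_4/2\ge d+3$ is exactly what Propositions \ref{resonancek}--\ref{resonancel} require. This is precisely the paper's kernel: the whole gliding gain sits in $e^{\lambda^L(t,|k,kt|)-\lambda^L(\tau,|k,kt|)}$, which those two Schur bounds handle.
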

\begin{proof}
Recall \eqref{volt} and \eqref{newforumladens}.  The Schur test and
Proposition \ref{resolventestimate} give
\beq\label{L2t}\ba
&\nnr{|k|^\f12A_L^\sigma(t,k,kt)\int_0^tH(\tau,k)R(t-\tau,k)d\tau}^2_{L^2_t}\\
\leq&\int_0^T\left(\int_0^t|k|^\f12A_L^\sigma(\tau,k,k\tau)H(\tau,k)A_L^\sigma(\tau,0,k(t-\tau))R(t-\tau,k)d\tau\right)^2dt\\
\ls&\int_0^T\int_0^t|k|A_L^\sigma(\tau,k,k\tau)^2H(\tau,k)^2\f{|k|}{\<k\>^2}e^{-\f{\lambda_0}{20}\rr{|k|(t-\tau)}^\f13}d\tau\int_0^t\f{|k|}{\<k\>^2}e^{-\f{\lambda_0}{20}\rr{|k|(t-\tau)}^\f13}d\tau dt\\
\ls&\int_0^T\int_0^t|k|A_L^\sigma(\tau,k,k\tau)^2H(\tau,k)^2\f{|k|}{\<k\>^2}e^{-\f{\lambda_0}{20}\rr{|k|(t-\tau)}^\f13}d\tau dt\ls\nnr{|k|^\f12A_L^\sigma(t,k,kt)H(t,k)}^2_{L^2_t}.
\ea\eeq
Consequently,
$\LLTXT{|\na_x|^\f12A^{\sigma_1}_L\rho}\ls
\nnr{|k|^\f12A_L^{\sigma_1}(t,k,kt)H(t,k)}_{L^2_tL^2_k}$.
It remains to estimate the norm on the right.  For the initial-data term,
\[\sum_{k\in\ZLs}\f{1}{L^d}\int_0^T|k|\nr{A_L^{\sigma_1}(t,k,kt)\wh{\mathscr{I}}(t,k)}^2dt\ls\LLT{\<v\>^m\rr{e^{\f{\lambda_0}{2}\tilde{a}_L(\f\na{L^{d-2-\kappaL}})}\<\na\>^{\sigma_1}h(0)}}^2\ls\eps^2.\]
For the nonlinear term, we use the complementary frequency partition to separate the
low--high and high--low interactions.
\beq\label{para1}
\ba
\wh{\mathscr{N}}
&=\sum_{\ell\in\ZL}\f{1}{L^d}\int_0^t\wh{\rho}(\tau,\ell)
\f{\ell\cdot k(t-\tau)}{\<\ell\>^2}
\wh{g}(\tau,k-\ell,kt-\ell\tau)\\
&\qquad\times\rr{
\vv{1}_{\<\ell,\ell\tau\>\leq\<k-\ell,kt-\ell\tau\>}
+\vv{1}_{\<\ell,\ell\tau\> > \<k-\ell,kt-\ell\tau\>}}d\tau\\
&=: \wh{\mathscr{N}_{LH}}+\wh{\mathscr{N}_{HL}}.
\ea
\eeq
For $\mathscr{N}_{HL}$, by the bootstrap hypothesis \eqref{g3} we have
\[\ba
\bigl\||k|^\f12 &A_L^{\sigma_1}(t,k,kt)\wh{\mathscr{N}_{HL}}\bigr\|_{L^2_tL^2_k}^2\\
\ls&\sum_{k\in\ZL}\f{1}{L^d}\int_0^T
	\Bigl(\sum_{\ell\in\ZL}\f{1}{L^d}\int_0^t
		|\wh{A_L^{\sigma_1}\rho}(\tau,\ell)|\,|\ell|^\f12
		\f{|\ell|^\f12|k|^\f12}{\<\ell\>^2}
		|k(t-\tau)|\\
	&\qquad\times|\wh{g}(\tau,k-\ell,kt-\ell\tau)|
		e^{\lambda^L(\tau,|k-\ell,kt-\ell\tau|)}
		e^{\lambda^L(t,|k,kt|)-\lambda^L(\tau,|k,kt|)}\,d\tau\Bigr)^{\!2}dt\\
\ls&\ \eps^2\sum_{k\in\ZL}\f{1}{L^d}\int_0^T
	\Bigl(\sum_{\ell\in\ZL}\f{1}{L^d}\int_0^t
		|\wh{A_L^{\sigma_1}\rho}(\tau,\ell)|\,|\ell|^\f12\\
	&\qquad\times\f{|\ell|^\f12|k|^\f12}{\<\ell\>^2}
		|k(t-\tau)|
		\<k-\ell\>^{-\f{\sigma_4}{2}}\<kt-\ell\tau\>^{-\f{\sigma_4}{2}}
		e^{\lambda^L(t,|k,kt|)-\lambda^L(\tau,|k,kt|)}\,d\tau\Bigr)^{\!2}dt.
\ea\]
We set
\[K(k,\ell,t,\tau)=\f{|\ell|^\f12|k|^\f12}{\<\ell\>^2}|k(t-\tau)|\<k-\ell\>^{-\f{\sigma_4}2}\<kt-\ell\tau\>^{-\f{\sigma_4}2}e^{\lambda^L(t,|k,kt|)-\lambda^L(\tau,|k,kt|)}.\]
The Schur test and Propositions \ref{resonancek}--\ref{resonancel} give
\[\ba
&\nnr{|k|^\f12A_L^{\sigma_1}(t,k,kt)\wh{\mathscr{N}_{HL}}}_{L^2_tL^2_k}^2\ls\eps^2\sum_{k\in\ZL}\f{1}{L^d}\int_0^T\left(\sum_{\ell\in\ZL}\f{1}{L^d}\int_0^t|\wh{A_L^{\sigma_1}\rho}(\tau,\ell)||\ell|^\f12K(k,\ell,t,\tau)d\tau\right)^2dt\\
\ls&\eps^2\LLTXT{|\na_x|^\f12A_L^{\sigma_1}\rho(t)}^2\rr{\sup_{k,t}\sum_{\ell\in\ZL}\f{1}{L^d}\int_0^tK(k,\ell,t,\tau)d\tau}\rr{\sup_{\ell,\tau}\sum_{k\in\ZL}\f{1}{L^d}\int_\tau^\infty K(k,\ell,t,\tau)dt}\ls\eps^4.
\ea\]
For $\mathscr{N}_{h,LH}$, we estimate
\[\ba
&\LLTXT{|\na_x|^\f12A^{\sigma_1}_L\mathscr{N}_{LH}}^2\\
&\quad\ls\sum_{k\in\ZL}\f{1}{L^d}\int_0^T
	\Bigl(\sum_{\ell\in\ZL}\f{1}{L^d}\int_0^t
		e^{\lambda^L(\tau,|\ell,\ell\tau|)}|\wh{\rho}(\tau,\ell)|
		\f{|\ell|}{\<\ell\>^2}
		\bigl|\wh{\pa_v^tA_L^{\sigma_1}g}(\tau,k-\ell,kt-\ell\tau)\bigr|
		|k|^\f12\,d\tau\Bigr)^{\!2}dt\\
&\quad\ls\eps^2\sum_{k\in\ZL}\f{1}{L^d}\int_0^T
	\Bigl(\sum_{\ell\in\ZL}\f{1}{L^d}\int_0^t
		\<\ell,\ell\tau\>^{-\sigma_4}\f{|\ell|}{\<\ell\>^2}
		\bigl|\wh{\pa_v^tA_L^{\sigma_1}g}(\tau,k-\ell,kt-\ell\tau)\bigr|
		|k|^\f12\,d\tau\Bigr)^{\!2}dt\\
&\quad\ls\eps^2\sum_{k\in\ZL}\f{1}{L^d}\int_0^T
	\sum_{\ell\in\ZL}\f{1}{L^d}\int_0^t
		\<\ell,\ell\tau\>^{-\sigma_4}\<\tau\>^{\f52+\delta}
		\f{|\ell|}{\<\ell\>^2}\<\tau\>^{-5-2\delta}\\
	&\qquad\times\bigl|\wh{\pa_v^tA_L^{\sigma_1}g}(\tau,k-\ell,kt-\ell\tau)\bigr|^2
		|k|\,d\tau
	\sum_{\ell\in\ZL}\f{1}{L^d}\int_0^t
		\<\ell,\ell\tau\>^{-\sigma_4}\<\tau\>^{\f52+\delta}
		\f{|\ell|}{\<\ell\>^2}\,d\tau\,dt\\
&\quad\ls\eps^2\sum_{\ell\in\ZL}\f{1}{L^d}\int_0^T
	\sum_{k\in\ZL}\f{1}{L^d}\int_\tau^T
		\<\tau\>^{-5-2\delta}
		\bigl|\wh{\pa_v^tA_L^{\sigma_1}g}(\tau,k-\ell,kt-\ell\tau)\bigr|^2
		|k|\,dt\\
	&\qquad\times\<\ell,\ell\tau\>^{-\sigma_4}\<\tau\>^{\f52+\delta}
		\f{|\ell|}{\<\ell\>^2}\,d\tau\\
&\quad\ls\eps^2\sup_{\tau\in[0,T]}\<\tau\>^{-5-2\delta}
	\LLT{\<v\>^m\<\na_v,\na_x\tau\>A^{\sigma_1}_Lg}^2
	\sum_{\ell\in\ZL}\f{1}{L^d}\int_0^T
		\<\ell,\ell\tau\>^{-\sigma_4}\<\tau\>^{\f52+\delta}
		\f{|\ell|}{\<\ell\>^2}\,d\tau
	\ls\eps^4.
\ea\]
Here the second inequality follows from \eqref{g3}, the third from the
Cauchy--Schwarz inequality, the fourth from Proposition
\ref{dispersivepm}, the fifth from the Sobolev trace inequality, and the last
from \eqref{g1}.  Combining these estimates and tracking the constants proves
\eqref{imp1}.
\end{proof}

\subsection{\texorpdfstring{Estimate of $\left\||k|^{\f12}\wh{A^{\sigma_3}_L\rho}(t,k)\right\|_{L^\infty_kL^2_t}$}{Estimate of the fixed-frequency density norm}}
We next establish the improved fixed-frequency estimate corresponding to
\eqref{rho2}.

\begin{prop}\label{Rho2}
	There exist a universal constant $K'_{\rho2}$ and a constant
	$K''_{\rho2}(K_{gi},K_{\rho j})$, depending only on $K_{gi}$ and
	$K_{\rho j}$ for $1\leq i\leq3$ and $1\leq j\leq2$, such that whenever
	\eqref{g1}--\eqref{g3} hold on $[0,T]$, one has
	\beq\label{imp2}\left\||k|^{\f12}\wh{A^{\sigma_3}_L\rho}(t,k)\right\|_{L^\infty_kL^2_t}\leq K'_{\rho2}\eps+K''_{\rho2}(K_{gi},K_{\rho j})\eps^2.\eeq
\end{prop}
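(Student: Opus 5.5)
We follow the scheme of Proposition \ref{Rho1}, now working at a fixed frequency $k$ instead of summing over $k$. Starting from the resolvent representation \eqref{newforumladens}, we use Proposition \ref{resolventestimate} and the inequality $A^{\sigma_3}_L(t,k,kt)\ls A^{\sigma_3}_L(\tau,k,k\tau)A^{\sigma_3}_L(\tau,0,k(t-\tau))$, which follows from Lemma \ref{multiplier}(2) and the monotonicity of $\lambda^L$ in $t$. Young's inequality in $t$, applied for each fixed $k$ exactly as in \eqref{L2t}, then gives
\[
\bigl\||k|^{\f12}\wh{A^{\sigma_3}_L\rho}(\cdot,k)\bigr\|_{L^2_t}\ls\bigl\||k|^{\f12}A^{\sigma_3}_L(t,k,kt)H(t,k)\bigr\|_{L^2_t},
\]
uniformly in $k$. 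The reason is that $\int_0^\infty\f{|k|}{\<k\>^2}e^{-c(|k|s)^{1/3}}ds\ls\<k\>^{-2}\ls1$. For the initial-data term $\wh{\mathscr I}(t,k)=\wh h(0,k,kt)$, we use the change of variables $s=|k|t$ recorded in the remark after Definition \ref{def:echoVolterra}. This bounds the $L^2_t$ norm by a one-dimensional trace of $\wh h(0,k,\cdot)$ along the ray $\R_+k/|k|$. We control that trace by $\sup_k\|\<\eta\>^{\sigma_3+d}\wh{h}(0,k,\eta)\|_{L^\infty_\eta}$, which the $L^1_xL^2_v$ part of \eqref{Initial}, combined with $\<v\>^m$ weights and Sobolev embedding in $\eta$ (using $m>d/2$), bounds by $\eps$.

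For the nonlinear part we again split $\wh{\mathscr N}=\wh{\mathscr N}_{LH}+\wh{\mathscr N}_{HL}$ as in \eqref{para1}.

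For the high--low piece, we place the profile in $L^\infty$ using \eqref{g3}, which yields the kernel $K(k,\ell,t,\tau)$ of Proposition \ref{Rho1} (with $\sigma_4/2$ in place of $\sigma$). We then apply a Schur-type bound in the time variable only, with $k$ fixed:
\[
\Bigl\|\sum_\ell\f1{L^d}\int_0^tK\,F(\tau,\ell)\,d\tau\Bigr\|_{L^2_t}\le\Bigl(\sup_t\sum_\ell\f1{L^d}\int_0^tK\,d\tau\Bigr)^{\f12}\Bigl(\sup_{\ell,\tau}\int_\tau^\infty K\,dt\Bigr)^{\f12}\Bigl(\sum_\ell\f1{L^d}\|F(\cdot,\ell)\|_{L^2_t}^2\Bigr)^{\f12}.
\]
Here $F=|\ell|^{\f12}|\wh{A^{\sigma_1}_L\rho}|$. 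The first factor is bounded by Proposition \ref{resonancek}. The second factor is bounded by Proposition \ref{resonancel}, since a single $k$ term is dominated by the full $k$ sum. The last factor is controlled by \eqref{rho1}. This gives a contribution $\ls\eps^2$. We have to be careful that passing from $\sigma_3$ to $\sigma_1$ on $\rho$ is harmless: on the high--low region $\<k,kt\>\ls\<\ell,\ell\tau\>$, so the weight transfers to $\rho$ at the index $\sigma_3\le\sigma_1$.

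For the low--high piece, we put $\rho(\tau,\ell)$ in the pointwise norm. The bound $|\wh{A^{\sigma_4}\rho}(\tau,\ell)|\le|\wh{A^{\sigma_4}g}(\tau,\ell,\ell\tau)|\ls\eps$ from \eqref{g3}, combined with Proposition \ref{dispersivepm}, shows that $\sum_\ell\f1{L^d}\<\ell,\ell\tau\>^{-\sigma_4+\sigma_3}|\ell|\<\ell\>^{-2}\ls\<\tau\>^{-d-1}$. The profile factor $|\wh{\pa^t_vA^{\sigma_3}_Lg}(\tau,k-\ell,kt-\ell\tau)|$ at fixed $k$ has to be measured in $L^\infty_{k,\eta}$ rather than in $L^2$. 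We bound it by a Sobolev embedding in $(x,\eta)$ from \eqref{g2}, which is precisely why the index drops from $\sigma_2$ to $\sigma_3$ and $\sigma_4$. This gives $\ls\eps\<\tau\>^{1+\delta}$. The $\tau$-integral therefore converges against $\<\tau\>^{-d-1}$, leaving $|k|^{1/2}|k(t-\tau)|$-type growth. To handle that growth, we use $|k(t-\tau)|\le|k-\ell|\tau+|kt-\ell\tau|$ and absorb both terms into the gradient weight $\pa_v^t$. What remains is to take the $L^2_t$ norm at fixed $k$. For this we apply Cauchy--Schwarz in $(\ell,\tau)$ with weight $\<\tau\>^{-d-1+2+2\delta}$, which is integrable since $d\ge3$, and then use the trace identity in $t$ to integrate $|\wh{\pa_v^tA g}(\tau,k-\ell,kt-\ell\tau)|^2|k|\,dt$ along a line in $\eta$. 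That line integral is bounded by $\sup_{k'}\int|\wh{\cdot}(k',\eta)|^2d\eta$-type quantities, which the $L^1_xL^2_v$ component (propagated inside \eqref{g2} through the $\<v\>^m$ weights) controls.

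The main obstacle is this last step: obtaining the fixed-$k$ trace bound for the low--high term without losing the $\<t\>^{5/2+\delta}$ growth of \eqref{g1}. We will first try to use only \eqref{g2}/\eqref{g3} at the lower index $\sigma_3$, keeping the derivative loss inside the gap $\sigma_2-\sigma_3=1$ and $\sigma_3-\sigma_4=2$. Tracking constants at the end gives \eqref{imp2}.
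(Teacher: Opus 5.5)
Your reduction via \eqref{L2t} at fixed $k$ and your treatment of the initial-data term are fine; there $\<k,\eta\>^{\sigma_3+1}$ already suffices, and $\sigma_3+d=\sigma_0+\f d2-3$ would exceed $\sigma_0$ for $d\geq7$. The high--low step, however, does not close. Your time-only Schur test needs $\sup_{\ell,\tau}\int_\tau^\infty K(k,\ell,t,\tau)\,dt\ls1$ at a fixed $k$. Proposition \ref{resonancel} does not give this: it controls $\sum_k\f1{L^d}\int_\tau^\infty K\,dt$, so a single $k$-term is only bounded by $L^d$. The uniform bound is in fact false. Take $k=\f{L-1}L\bbe_1$, $\ell=\bbe_1$, $\tau=L^{d-1}$, so the echo time is $t^*=\f L{L-1}\tau$. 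On $|t-t^*|\leq1$ one has $|kt-\ell\tau|\leq1$ and $|k(t-\tau)|\sim L^{d-2}$, and the remaining factors are of order one. The Gevrey factor stays bounded below because $\lambda^L(\tau,r)-\lambda^L(t,r)\ls\lambda_0L^{-1+\kappaL/2}$ for $r=|k,kt|$. Hence $\int_\tau^\infty K\,dt\gs L^{d-2}$. The echo Schur bounds are tailored to the $L^2_{t,k}$ norm of \eqref{rho1}, not to $L^\infty_kL^2_t$. The missing idea is that the gap $\sigma_1-\sigma_3=\f d2+2$ is not merely harmless: it replaces the echo analysis altogether. Write $|k(t-\tau)|\leq|kt-\ell\tau|+|k-\ell|\tau\ls\<\tau\>\<k-\ell,kt-\ell\tau\>$. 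Then the integrand is dominated by $|\wh{A^{\sigma_1}_L\rho}(\tau,\ell)|\<\ell,\ell\tau\>^{\sigma_3-\sigma_1}|\ell|\<\tau\>\<k-\ell,kt-\ell\tau\>^{-\sigma_4+1}|k|^{\f12}$. Apply Cauchy--Schwarz in $(\ell,\tau)$ against $|\ell|^{\f12}\wh{A^{\sigma_1}_L\rho}$, which is controlled by \eqref{rho1}. Integrate in $t$ first using $\int_\tau^\infty|k|\<kt-\ell\tau\>^{-2(\sigma_4-1)}dt\ls1$. Finally, Proposition \ref{dispersivepm} with $\alpha=1$, $\sigma=d+4$ gives
\[
\int_0^\infty\<\tau\>^2\sum_{\ell}\f1{L^d}\<\ell,\ell\tau\>^{-(d+4)}|\ell|\,d\tau\ls\int_0^\infty\<\tau\>^{1-d}d\tau<\infty\qquad(d\geq3),
\]
uniformly in $k$ and $L$.

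The low--high step has a second gap, which you flag yourself. \eqref{g2} is a pure $L^2_{x,v}$ bound, and the bootstrap propagates no $L^1_xL^2_v$ norm. With the normalization \eqref{normalizedplancherel}, a single spatial mode costs $L^{d/2}$ from $L^2_x$. So neither an $L^\infty_{k,\eta}$ \emph{Sobolev embedding in $(x,\eta)$} nor $\sup_{k'}$ bounds on line integrals are available uniformly in $L$. Replacing each line integral by a supremum would in any case leave the undamped sum $\sum_\ell L^{-d}$ after Cauchy--Schwarz. Also, on the low--high region all of the weight goes onto $g$, and $\rho$ carries $\<\ell,\ell\tau\>^{-\sigma_4}$, not the growing factor $\<\ell,\ell\tau\>^{\sigma_3-\sigma_4}$ you wrote. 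The point you are missing is that, for fixed $k$, the frequency $k-\ell$ runs over all of $\ZL$ as $\ell$ does. Apply Cauchy--Schwarz in $(\ell,\tau)$ with weights $\<\tau\>^{\pm\f32}$, use $|\wh{\pa_v^tA^{\sigma_3}_Lg}|\ls\<\tau\>|\wh{A^{\sigma_2}_Lg}|$, and then Fubini. The trace inequality along lines in $\eta$, which needs only $m>\f{d-1}2$, then yields
\[
\sum_{\ell}\f1{L^d}\int_\tau^T\bigl|\wh{A^{\sigma_2}_Lg}(\tau,k-\ell,kt-\ell\tau)\bigr|^2|k|\,dt\ls\LLT{\<v\>^mA^{\sigma_2}_Lg(\tau)}^2\ls\eps^2\<\tau\>^{2\delta},
\]
which is exactly \eqref{g2}. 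The other factor is $\int_0^\infty\tau^2\<\tau\>^{\f32}\sum_\ell\f1{L^d}\<\ell,\ell\tau\>^{-2\sigma_4}|\ell|^2d\tau\ls\int_0^\infty\<\tau\>^{\f32-d}d\tau<\infty$. Since $\delta<\f14$, $\int_0^\infty\<\tau\>^{-\f32+2\delta}d\tau$ is finite. So this piece is $\ls\eps^4$ without any $L^\infty_k$ control of the profile.
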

\begin{proof}
Recall \eqref{volt} and \eqref{newforumladens}.  By \eqref{L2t},
$\left\||k|^{\f12}\wh{A^{\sigma_3}_L\rho}(t,k)\right\|_{L^\infty_kL^2_t}
\ls\left\||k|^{\f12}A^{\sigma_3}_L(t,k,kt)H(t,k)\right\|_{L^\infty_kL^2_t}$.
It therefore suffices to estimate the norm on the right.  For the initial
term, the Sobolev trace inequality gives
\[\int_0^T|k|A^{\sigma_3}_L(t,k,kt)^2\nr{\wh{h}(0,k,kt)}^2dt\ls\left\|\<v\>^m\rr{e^{\f{\lambda_0}{2}\tilde{a}_L(\f\na{L^{d-2-\kappaL}})}\<\na\>^{\sigma_3}h(0)}\right\|_{L^1_xL^2_v}^2.\]
For the nonlinear term, we apply \eqref{para1}.  For $\mathscr{N}_{HL}$, we estimate
\[\ba
&\bigl\||k|^{\f12}A^{\sigma_3}_L(t,k,kt)\wh{\mathscr{N}_{HL}}\bigr\|_{L^\infty_kL^2_t}^2\\
&\quad\ls\int_0^T\Bigl(\sum_{\ell\in\ZL}\f{1}{L^d}\int_0^t
		|\wh{A^{\sigma_1}_L\rho}(\tau,\ell)|
		\<\ell,\ell\tau\>^{\sigma_3-\sigma_1}|\ell|\<\tau\>
		\<k-\ell,kt-\ell\tau\>^{-\sigma_4+1}|k|^\f12\,d\tau\Bigr)^{\!2}dt\\
&\qquad\times\bigl\|\wh{A_L^{\sigma_4}g}\bigr\|_{L^\infty_tL^\infty_{k,\eta}}^2\\
&\quad\ls\eps^2\int_0^T\LLTXT{|\na_x|^\f12A_L^{\sigma_1}\rho}^2
	\sum_{\ell\in\ZL}\f{1}{L^d}\int_0^t
		\<\ell,\ell\tau\>^{2(\sigma_3-\sigma_1)}|\ell|\<\tau\>^2
		\<kt-\ell\tau\>^{-2(\sigma_4-1)}|k|\,d\tau\,dt\\
&\quad\ls\eps^4\int_0^T\sum_{\ell\in\ZL}\f{1}{L^d}
	\int_\tau^T\<kt-\ell\tau\>^{-2(\sigma_4-1)}|k|\,dt\;
	\<\ell,\ell\tau\>^{2(\sigma_3-\sigma_1)}|\ell|\<\tau\>^2\,d\tau\\
&\quad\ls\eps^4\int_0^T\sum_{\ell\in\ZL}\f{1}{L^d}
	\<\ell,\ell\tau\>^{2(\sigma_3-\sigma_1)}|\ell|\<\tau\>^2\,d\tau
	\ls\eps^4.
\ea\]
Here the first inequality follows from
$|k(t-\tau)|\leq|kt-\ell\tau|+|k-\ell|\tau$, the second from the
Cauchy--Schwarz inequality and \eqref{g3}, the third from \eqref{rho1}, and
the last from \eqref{dispersivepm}.  For $\mathscr{N}_{LH}$, we estimate
\[\ba
&\bigl\||k|^{\f12}A^{\sigma_3}_L(t,k,kt)\wh{\mathscr{N}_{LH}}\bigr\|_{L^\infty_kL^2_t}^2\\
&\quad\ls\int_0^T\Bigl(\sum_{\ell\in\ZL}\f{1}{L^d}\int_0^t
		\<\ell,\ell\tau\>^{-\sigma_4}|\ell|
		\bigl|\wh{\pa_v^tA_L^{\sigma_3}g}(\tau,k-\ell,kt-\ell\tau)\bigr|
		|k|^\f12\,d\tau\Bigr)^{\!2}dt\\
&\qquad\times\bigl\|\wh{A_L^{\sigma_4}g}\bigr\|_{L^\infty_tL^\infty_{k,\eta}}^2\\
&\quad\ls\eps^2\int_0^T\sum_{\ell\in\ZL}\f{1}{L^d}\int_0^t
		\<\ell,\ell\tau\>^{-2\sigma_4}|\ell|^2\tau^2\<\tau\>^\f32\,d\tau\\
&\qquad\times\sum_{\ell\in\ZL}\f{1}{L^d}\int_0^t
		\bigl|\wh{A_L^{\sigma_2}g}(\tau,k-\ell,kt-\ell\tau)\bigr|^2
		|k|\<\tau\>^{-\f32}\,d\tau\,dt\\
&\quad\ls\eps^2\int_0^T\sum_{\ell\in\ZL}\f{1}{L^d}\int_\tau^T
		\bigl|\wh{A_L^{\sigma_2}g}(\tau,k-\ell,kt-\ell\tau)\bigr|^2
		|k|\<\tau\>^{-\f32}\,dt\,d\tau\\
&\quad\ls\eps^2\sup_{\tau\in[0,T]}\<\tau\>^{-2\delta}\,
		\LLT{\<v\>^mA_L^{\sigma_2}g}^2
	\ls\eps^4.
\ea\]
Here the second inequality follows from \eqref{g3} and the Cauchy--Schwarz
inequality, the third from \eqref{dispersivepm}, the fourth from the Sobolev
trace inequality, and the last from \eqref{g2}.  Combining these estimates
and tracking the constants proves \eqref{imp2}.
\end{proof}

\section{Estimates for the distribution function}\label{sec:g}
In this section we establish the improved profile bounds corresponding to
\eqref{g1}--\eqref{g3}.

\subsection{\texorpdfstring{Estimate of $\LLT{\<v\>^m\<\na\>A^{\sigma_1}_Lg}$}{Estimate of the high-order profile norm}}
We first improve the bootstrap estimate \eqref{g1}.
\begin{prop}\label{G1}
	There exist a constant $K'_{g1}(K_{\rho1})$, depending only on
	$K_{\rho1}$, and a constant $K''_{g1}(K_{gi},K_{\rho j})$, depending
	only on $K_{gi}$ and $K_{\rho j}$ for $1\leq i\leq3$ and
	$1\leq j\leq2$, such that whenever \eqref{g1}--\eqref{g3} hold on
	$[0,T]$, one has
	\beq\label{imp3}\LLT{\<v\>^m\<\na\>A^{\sigma_1}_Lg}\<t\>^{-\f52-\delta}+\LLT{\<v\>^m\na_xA^{\sigma_1}_Lg}\<t\>^{-\f32-\delta}\leq K'_{g1}(K_{\rho1})\eps+K''_{g1}(K_{gi},K_{\rho j})\eps^2.\eeq
\end{prop}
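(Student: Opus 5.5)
We will run a weighted energy estimate for $A^{\sigma_1}_Lg$ directly on the equation \eqref{gdifferential}. Apply $\<v\>^m\<\na\>A^{\sigma_1}_L$ (and separately $\<v\>^m\na_xA^{\sigma_1}_L$), pair with the same quantity in $L^2_{x,v}$, and differentiate in time. The time derivative of the multiplier produces the good term $\dot\lambda^L\le0$, a Cauchy--Kovalevskaya-type damping term $-\|\sqrt{-\pa_t\lambda^L}\,\<v\>^m\<\na\>A^{\sigma_1}_Lg\|^2$. We keep this term on the left to absorb the Gevrey loss coming from commutators. The remaining contributions are the linear term $E(t,x+vt)\cdot\na_v\mu$ and the nonlinear term $E(t,x+vt)\cdot\pa_v^tg$.

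\textbf{Linear term.} On the Fourier side the linear term equals $\wh\rho(t,k)\wh W(k)\,k\cdot(\eta-kt)\wh\mu(\eta-kt)$. We split
\[
A^{\sigma_1}_L(t,k,\eta)\ls A^{\sigma_1}_L(t,k,kt)\,\<\eta-kt\>^{\sigma_1}e^{\lambda^L(t,|\eta-kt|)+\lambda_0},
\]
using Lemma \ref{multiplier}(2). The $\mu$ factor is then bounded in $L^2_\eta$ by \eqref{mudecay}, since $\lambda^L\le\lambda_0/50\cdot\tilde a_L(\cdot)\ls\lambda_0\<\cdot\>^{1/3}$ and $\lambda_0<\lambda_1/4$; the velocity weight $\<v\>^m$ is handled the same way through $\<v\>^m\na_v\mu$. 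After Cauchy--Schwarz in time this gives
\[
\f{d}{dt}\mathcal E\ls \|\<\na\>\wh{A^{\sigma_1}_L\rho}\|_{L^2_k}\,\mathcal E^{1/2}.
\]
The extra $\<\na\>$, and the factor $|k|\,|\eta-kt|$ split as $|k|^{1/2}\cdot|k|^{1/2}|\eta-kt|$, cost a power $\<t\>$. Integrating in time and using \eqref{rho1}, we get $\mathcal E(t)^{1/2}\ls \eps+K_{\rho1}\eps\<t\>^{1/2+\dots}$, which is well within $\<t\>^{5/2+\delta}$. This explains why $K'_{g1}$ depends on $K_{\rho1}$. The $\na_x$ norm works the same way: it gains one factor $|k|$, so its allowed growth $\<t\>^{3/2+\delta}$ suffices.

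\textbf{Nonlinear term.} For the nonlinear term we paraproduct-split as in \eqref{para1}.

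In the transport part, where $\rho$ is at low frequency and $g$ at high frequency, we use antisymmetry of $E\cdot\pa_v^t$ to move the derivative onto the commutator. By \eqref{lossd} the commutator is bounded by $|\ell,\ell\tau|\,\<k,\eta\>^{-1}\tilde a_L\cdot|k,\eta|^{\dots}$. The part with the Gevrey loss $\f{\tilde a_L}{x}$ is absorbed by the $\dot\lambda^L$ damping term together with $\sum_\ell\<\ell,\ell t\>^{-\sigma_4}|\ell|\,t$. That sum is integrable in time by Proposition \ref{dispersivepm} with \eqref{g3}, giving $\ls\eps\<t\>^{-d}$, with $d\ge3$. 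The polynomial part yields $\f{d}{dt}\mathcal E\ls\eps\<t\>^{-d+1}\mathcal E$, which closes by Gronwall.

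In the reaction part, where $\rho$ is at high frequency and $g$ at low frequency, we bound
\[
\|\<v\>^m A^{\sigma_1}_L(\rho\,\pa_v^tg)\|\ls\|\wh{A^{\sigma_1}_L\rho}\,|\ell|\<\ell\>^{-2}\|_{L^2_\ell}\,\<t\>\,\|A^{\sigma_4}g\|_{L^\infty},
\]
with the velocity weights carried on the low-frequency factor through $|\alpha|\le m$ moments and \eqref{g2}. Integrating in time and using \eqref{rho1} with Cauchy--Schwarz gives $\eps^2\<t\>^{3/2}$, which is compatible with the allowed growth.

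\textbf{Main obstacle.} We expect the hardest step to be the commutator in the transport term. We must check that the Gevrey loss from \eqref{lossd}, with the $L$-dependent $\tilde a_L$ and its two branches, is genuinely dominated by $-\pa_t\lambda^L$ times the density decay, uniformly in $L$. The plan is to compare $\f1x\tilde a_L(x/L^{d-2-\kappaL})$ with $\beta(1+t)^{-\beta-1}\tilde a_L$ using $\<t\>^{-d+1}\cdot t^{\beta+1}$ integrability. Any failure there would be covered by the allowed $\<t\>^{\delta}$ loss.
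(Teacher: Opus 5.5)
Your plan follows the paper's proof of Proposition \ref{G1}. The paper runs the energy estimate on the moments $v^\alpha g$ and keeps the damping term from $-\pa_t\lambda^L$. It bounds the linear term by $\|\,|\na_x|^{1/2}A^{\sigma_1}_L\rho\|_{L^2_x}\<t\>$ and integrates against \eqref{rho1}, giving $K_{\rho1}\eps\<t\>^{3/2}$. The low--high commutator from \eqref{lossd} carries $\eps\<t\>^{-d}$, which is $\le c\,\beta(1+t)^{-\beta-1}$ once $\eps$ is small and $d\geq3$, so the damping absorbs it; this is a pointwise comparison, so no $\<t\>^{\delta}$ fallback is needed (and none could absorb a derivative loss). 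The high--low term is handled with \eqref{rho1} and \eqref{g2}.

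One correction: in the high--low term the $\<\na\>$ weight costs an extra factor $\<k,\eta\>\lesssim\<\ell,\ell t\>\lesssim\<\ell\>\<t\>$. The term is therefore $\eps\|\,|\na_x|^{1/2}A^{\sigma_1}_L\rho\|_{L^2_x}\<t\>^{2+\delta}$ and integrates to $\eps^2\<t\>^{5/2+\delta}$, not $\eps^2\<t\>^{3/2}$. This term is what fixes the rate in \eqref{g1}, and the estimate still closes.
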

\begin{proof}
We first estimate $\LLT{\<v\>^m\<\na\>A^{\sigma_1}_Lg}$.  Observe that
\begin{align}\label{equinorm}\LLT{\<v\>^m\<\na\>A^{\sigma_1}_Lg}\sim\sum_{|\alpha|\leq m}\LLT{\<\na\>A^{\sigma_1}_L(v^\alpha g)}.\end{align}
The basic energy estimate gives
\[\ba
\f12\f{d}{dt}&\LLT{\<\na\>A^{\sigma_1}_L(v^\alpha g)}^2
	+\LLT{\sqrt{-\lambda^L_t(t,\na)}\<\na\>A^{\sigma_1}_L(v^\alpha g)}^2\\
=&\sum_{k\in\ZL}\f{1}{L^d}\int
	\<k,\eta\>A_L^{\sigma_1}(k,\eta)\wh{\rho}(t,k)
	\f{k\cdot}{\<k\>^2}D^\alpha_\eta\bigl(\eta\wh{\mu}(\eta)\bigr)(\eta-kt)\\
	&\qquad\times
	\<k,\eta\>A_L^{\sigma_1}(k,\eta)\ov{D^\alpha_\eta\wh{g}(k,\eta)}\,d\eta\\
&+\sum_{k,\ell\in\ZL}\f{1}{L^{2d}}\int
	\<k,\eta\>A_L^{\sigma_1}(k,\eta)\wh{\rho}(t,\ell)
	\f{\ell\cdot(\eta-kt)}{\<\ell\>^2}
	D^\alpha_\eta\wh{g}(t,k-\ell,\eta-\ell t)\\
	&\qquad\times
	\<k,\eta\>A_L^{\sigma_1}(k,\eta)\ov{D^\alpha_\eta\wh{g}(k,\eta)}\,d\eta\\
&+\sum_{\substack{\alpha'<\alpha\\|\alpha'|=|\alpha|-1}}C_{\alpha',\alpha}
	\sum_{k,\ell\in\ZL}\f{1}{L^{2d}}\int
	\<k,\eta\>A_L^{\sigma_1}(k,\eta)\wh{\rho}(t,\ell)
	\f{\ell_{\alpha-\alpha'}}{\<\ell\>^2}
	D^{\alpha'}_\eta\wh{g}(t,k-\ell,\eta-\ell t)\\
	&\qquad\times
	\<k,\eta\>A_L^{\sigma_1}(k,\eta)\ov{D^\alpha_\eta\wh{g}(k,\eta)}\,d\eta\\
:=&\ \mathcal{L}^h+\mathcal{N}^h+\mathcal{N}^h_{r}.
\ea\]
For $\mathcal{L}^h$, we have
\[\ba
|\mathcal{L}^h|
&\ls\sum_{k\in\ZL}\f{1}{L^d}\int
|\wh{A_L^{\sigma_1}\rho}(t,k)||k|^\f12\f{\<k,kt\>}{\<k\>}
\nr{D^\alpha_\eta\rr{\eta\wh{\mu}(\eta)}(\eta-kt)}\\
&\qquad\times A^{\sigma_1+1}_L(0,\eta-kt)
\nr{\wh{\<\na\>A^{\sigma_1}_L(v^\alpha g)}(t,k,\eta)}d\eta\\
&\ls\sum_{k\in\ZL}\f{1}{L^d}
\nr{|k|^\f12\wh{A^{\sigma_1}_L\rho}(t,k)}
\left\|\wh{\<\na\>A^{\sigma_1}_L(v^\alpha g)}(t,k,\cdot)\right\|_{L^2_\eta}\<t\>\\
&\ls\LLTX{|\na_x|^\f12A^{\sigma_1}_L\rho}
\LLT{\<\na\>A^{\sigma_1}_L(v^\alpha g)}\<t\>.
\ea\]
For $\mathcal{N}^h$, we use the exact multiplier commutator identity.
\[\ba
\mathcal{N}^h={}&\sum_{k,\ell\in\ZL}\f{1}{L^{2d}}\int
	\wh{\rho}(t,\ell)\f{\ell\cdot(\eta-kt)}{\<\ell\>^2}
	D^\alpha_\eta\wh{g}(t,k-\ell,\eta-\ell t)\\[2pt]
	&\times\Bigl\{\<k,\eta\>A_L^{\sigma_1}(k,\eta)
		-\<k-\ell,\eta-\ell t\>A^{\sigma_1}_L(k-\ell,\eta-\ell t)\Bigr\}\\
	&\times\Bigl(\vv{1}_{\<\ell,\ell t\>\leq\f18\<k-\ell,\eta-\ell t\>}
		+\vv{1}_{\<\ell,\ell t\>\geq\f18\<k-\ell,\eta-\ell t\>}\Bigr)
	\<k,\eta\>A_L^{\sigma_1}(k,\eta)\ov{D^\alpha_\eta\wh{g}(k,\eta)}\,d\eta\\
={}&\ \mathcal{N}^h_{LH}+\mathcal{N}^h_{HL}.
\ea\]
For $\mathcal{N}^h_{LH}$, the condition
$\<\ell,\ell t\>\leq\f18\<k-\ell,\eta-\ell t\>$ implies
$|k,\eta|, |k-\ell,\eta-\ell t|\geq2\<\ell,\ell t\>$ and
$|k,\eta|\sim|k-\ell,\eta-\ell t|$.  Hence \eqref{lossd} gives
\beq\label{commutator}\ba
&\nr{\<k,\eta\>^{\sigma_1+1}e^{\lambda^L(t,|k,\eta|)}-\<k-\ell,\eta-\ell t\>^{\sigma_1+1}e^{\lambda^L(t,|k-\ell,\eta-\ell t|)}}\\
\leq&\nr{e^{\lambda^L(t,|k,\eta|)}-e^{\lambda^L(t,|k-\ell,\eta-\ell t|)}}\<k,\eta\>^{\sigma_1+1}+\nr{\<k,\eta\>^{\sigma_1+1}-\<k-\ell,\eta-\ell t\>^{\sigma_1+1}}e^{\lambda^L(t,|k-\ell,\eta-\ell t|)}\\
\ls&\<k,\eta\>^{\sigma_1}\tilde{a}_L(\f{|k-\ell,\eta-\ell t|}{L^{d-2-\kappaL} })^\f12\tilde{a}_L(\f{|k,\eta|}{L^{d-2-\kappaL}})^\f12|\ell,\ell t|\rr{e^{\lambda^L(t,|k,\eta|)}+e^{\lambda^L(t,|k-\ell,\eta-\ell t|)}}.
\ea\eeq
Hence
\[\ba
|\mathcal{N}^h_{LH}|
\ls&\ \eps\sum_{k,\ell\in\ZL}\f{1}{L^{2d}}\int
	\<\ell,\ell t\>^{-\sigma_4+1}\,
	\<k-\ell,\eta-\ell t\>
	\bigl|D^\alpha_\eta\wh{g}(k-\ell,\eta-\ell t)\bigr|\\
	&\qquad\times\tilde{a}_L\Bigl(\f{|k-\ell,\eta-\ell t|}{L^{d-2-\kappaL}}\Bigr)^{\!\f12}
		\tilde{a}_L\Bigl(\f{|k,\eta|}{L^{d-2-\kappaL}}\Bigr)^{\!\f12}
		|\ell,\ell t|\,
		A^{\sigma_1}_L(k-\ell,\eta-\ell t)
		\bigl|\wh{\<\na\>A^{\sigma_1}_L(v^\alpha g)}(t,k,\eta)\bigr|\,d\eta\\
\ls&\ \eps\sum_{\ell\in\ZL}\f{1}{L^d}
	\<\ell,\ell t\>^{-\sigma_4+2}\,
	\LLT{\tilde{a}_L\bigl(\f\na{L^{d-2-\kappaL}}\bigr)^{\!\f12}
		\<\na\>A^{\sigma_1}_L(v^\alpha g)}^2\\
\ls&\ \eps\<t\>^{-d}\,
	\LLT{\tilde{a}_L\bigl(\f\na{L^{d-2-\kappaL}}\bigr)^{\!\f12}
		\<\na\>A^{\sigma_1}_L(v^\alpha g)}^2.
\ea\]
The first inequality follows from \eqref{g3} and
$|\eta-kt|\ls\<t\>\<k-\ell,\eta-\ell t\>$, while the third follows from
Proposition \ref{dispersivepm}.  For sufficiently small $\eps$, this term is
absorbed by the second term on the left-hand side because
\[\LLT{\sqrt{-\lambda^L_t(t,\na)}\<\na\>A^{\sigma_1}_L(v^\alpha g)}^2\gs(1+t)^{-\beta-1}\LLT{\tilde{a}_L(\f\na{L^{d-2-\kappaL}})^\f12\<\na\>A^{\sigma_1}_L(v^\alpha g)}^2.\]
For $\mathcal{N}^h_{HL}$, the condition
$\<\ell,\ell t\>\geq\f18\<k-\ell,\eta-\ell t\>$ implies
$\<k,\eta\>\ls\<\ell,\ell t\>$.  Hence \eqref{g2} gives
\[\ba
|\mathcal{N}^h_{HL}|
\ls&\sum_{k,\ell\in\ZL}\f{1}{L^{2d}}\int
	|\wh{A^{\sigma_1}_L\rho}(t,\ell)|\,|\ell|^\f12
	\f{\<\ell,\ell t\>}{\<\ell\>}\<t\>\,
	\<k-\ell,\eta-\ell t\>
	\bigl|D^\alpha_\eta\wh{g}(t,k-\ell,\eta-\ell t)\bigr|
	e^{\lambda^L(t,|k-\ell,\eta-\ell t|)}\\
	&\qquad\times\bigl|\wh{\<\na\>A^{\sigma_1}_L(v^\alpha g)}(t,k,\eta)\bigr|\,d\eta\\
\ls&\sum_{k,\ell\in\ZL}\f{1}{L^{2d}}\int
	|\wh{A^{\sigma_1}_L\rho}(t,\ell)|\,|\ell|^\f12\<t\>^2\,
	\bigl|\wh{A_L^{\sigma_2}(v^\alpha g)}(t,k-\ell,\eta-\ell t)\bigr|
	\<k-\ell\>^{-\sigma_2+1}
	\bigl|\wh{\<\na\>A^{\sigma_1}_L(v^\alpha g)}(t,k,\eta)\bigr|\,d\eta\\
\ls&\ \LLTX{|\na_x|^\f12A^{\sigma_1}_L\rho}\,
	\LLT{\<\na\>A^{\sigma_1}_L(v^\alpha g)}\<t\>^2\,
	\LLT{A^{\sigma_2}_L(v^\alpha g)}\\
\ls&\ \eps\,\LLTX{|\na_x|^\f12A^{\sigma_1}_L\rho}\,
	\LLT{\<\na\>A^{\sigma_1}_L(v^\alpha g)}\<t\>^{2+\delta}.
\ea\]
Similarly,
\[\ba
|\mathcal{N}^h_r|
\ls\sum_{\substack{\alpha'<\alpha\\|\alpha'|=|\alpha|-1}}C_{\alpha',\alpha}
	&\Bigl(\LLTX{|\na_x|^\f12A^{\sigma_1}_L\rho}\<t\>\,
		\LLT{A^{\sigma_2}_L(v^{\alpha'} g)}\,
		\LLT{\<\na\>A^{\sigma_1}_L(v^\alpha g)}\\
	&\quad+\<t\>^{-d-1}\bigl\|\wh{A^{\sigma_4}_Lg}\bigr\|_{L^\infty_{k,\eta}}
		\LLT{\<\na\>A^{\sigma_1}_L(v^{\alpha'} g)}\,
		\LLT{\<\na\>A^{\sigma_1}_L(v^\alpha g)}\Bigr)\\
\ls&\ \eps\Bigl(\LLTX{|\na_x|^\f12A^{\sigma_1}_L\rho}\<t\>^{1+2\delta}
		+\eps\<t\>^{-d+\f32+\delta}\Bigr)\,
	\LLT{\<\na\>A^{\sigma_1}_L(v^\alpha g)}.
\ea\]
Combining the preceding estimates and applying a standard ODE argument, we obtain
\[\ba
&\LLT{\<v\>^m\<\na\>A^{\sigma_1}_Lg(t)}-\LLT{\<v\>^m\<\na\>A^{\sigma_1}_Lg(0)}\\
&\quad\ls\int_0^t\LLTX{|\na_x|^\f12A^{\sigma_1}_L\rho(\tau)}\<\tau\>\,d\tau
	+\eps\int_0^t\LLTX{|\na_x|^\f12A^{\sigma_1}_L\rho(\tau)}\<\tau\>^{2+\delta}\,d\tau
	+\eps^2\\
&\quad\ls\LLTXT{|\na_x|^\f12A^{\sigma_1}_L\rho(\tau)}\<t\>^{\f32}
	+\eps\,\LLTXT{|\na_x|^\f12A^{\sigma_1}_L\rho(\tau)}\<t\>^{\f52+\delta}
	+\eps^2\\
&\quad\ls\eps\<t\>^{\f32}+\eps^2\<t\>^{\f52+\delta}.
\ea\]
We note that the constant in front of the linear term $\eps\<t\>^{\f32}$ depends only on $K_{\rho1}$.

We next estimate $\LLT{\<v\>^m\na_xA^{\sigma_1}_Lg}$.  The basic energy
estimate gives
\[\ba
\f12\f{d}{dt}&\LLT{\na_xA^{\sigma_1}_L(v^\alpha g)}^2
	+\LLT{\sqrt{-\lambda^L_t(t,\na)}\na_xA^{\sigma_1}_L(v^\alpha g)}^2\\
=&\sum_{k\in\ZL}\f{1}{L^d}\int
	A_L^{\sigma_1}(k,\eta)\wh{\rho}(t,k)
	\f{k\cdot}{\<k\>^2}D^\alpha_\eta\bigl(\eta\wh{\mu}(\eta)\bigr)(\eta-kt)
	|k|^2 A_L^{\sigma_1}(k,\eta)\ov{D^\alpha_\eta\wh{g}(k,\eta)}\,d\eta\\
&+\sum_{k,\ell\in\ZL}\f{1}{L^{2d}}\int
	A_L^{\sigma_1}(k,\eta)\wh{\rho}(t,\ell)
	\f{\ell\cdot(\eta-kt)}{\<\ell\>^2}
	D^\alpha_\eta\wh{g}(t,k-\ell,\eta-\ell t)
	\ell\cdot k\,A_L^{\sigma_1}(k,\eta)\ov{D^\alpha_\eta\wh{g}(k,\eta)}\,d\eta\\
&+\sum_{k,\ell\in\ZL}\f{1}{L^{2d}}\int
	A_L^{\sigma_1}(k,\eta)\wh{\rho}(t,\ell)
	\f{\ell\cdot(\eta-kt)}{\<\ell\>^2}
	D^\alpha_\eta\wh{g}(t,k-\ell,\eta-\ell t)
	(k-\ell)\cdot k\,A_L^{\sigma_1}(k,\eta)\ov{D^\alpha_\eta\wh{g}(k,\eta)}\,d\eta\\
&+\sum_{\substack{\alpha'<\alpha\\|\alpha'|=|\alpha|-1}}C_{\alpha',\alpha}
	\sum_{k,\ell\in\ZL}\f{1}{L^{2d}}\int
	A_L^{\sigma_1}(k,\eta)\wh{\rho}(t,\ell)
	\f{\ell_{\alpha-\alpha'}}{\<\ell\>^2}
	D^{\alpha'}_\eta\wh{g}(t,k-\ell,\eta-\ell t)
	|k|^2 A_L^{\sigma_1}(k,\eta)\ov{D^\alpha_\eta\wh{g}(k,\eta)}\,d\eta\\
:=&\ \mathcal{L}^x+\mathcal{N}^x_1+\mathcal{N}^x_2+\mathcal{N}^x_{r}.
\ea\]
The remaining terms are estimated analogously.
\[|\mathcal{L}^x|\ls\LLTX{|\na_x|^\f12A^{\sigma_1}_L\rho}\LLT{\na_xA^{\sigma_1}_L(v^\alpha g)}.\]
For $\mathcal{N}^x_1$, equations \eqref{g1} and \eqref{g2} give
\[\ba
|\mathcal{N}^x_1|
\ls&\ \LLTX{|\na_x|^\f12A^{\sigma_1}_L\rho}\<t\>\,
	\LLT{\na_xA^{\sigma_1}_L(v^\alpha g)}\,
	\LLT{A^{\sigma_2}_L(v^\alpha g)}\\
	&+\<t\>^{-d-1}\bigl\|\wh{A^{\sigma_4}_Lg}\bigr\|_{L^\infty_{k,\eta}}
	\LLT{\<\na\>A^{\sigma_1}_L(v^\alpha g)}\,
	\LLT{\na_xA^{\sigma_1}_L(v^\alpha g)}\\
\ls&\ \eps\,\LLTX{|\na_x|^\f12A^{\sigma_1}_L\rho}\,
	\LLT{\na_xA^{\sigma_1}_L(v^\alpha g)}\<t\>^{1+\delta}
	+\eps^2\,\LLT{\na_xA^{\sigma_1}_L(v^\alpha g)}\<t\>^{-d+\f32+\delta}.
\ea\]
For $\mathcal{N}^x_2$, the commutator argument followed by the paraproduct
decomposition gives
\[\ba
\mathcal{N}^x_2={}&\sum_{k,\ell\in\ZL}\f{1}{L^{2d}}\int
	\wh{\rho}(t,\ell)\f{\ell\cdot(\eta-kt)}{\<\ell\>^2}
	D^\alpha_\eta\wh{g}(t,k-\ell,\eta-\ell t)\\
	&\qquad\times\bigl(A_L^{\sigma_1}(k,\eta)-A_L^{\sigma_1}(k-\ell,\eta-\ell t)\bigr)
	(k-\ell)\cdot k\,
	A_L^{\sigma_1}(k,\eta)\ov{D^\alpha_\eta\wh{g}(k,\eta)}\,d\eta\\
\ls&\ \<t\>^{-d}\bigl\|\wh{A^{\sigma_4}_Lg}\bigr\|_{L^\infty_{k,\eta}}
	\LLT{\tilde{a}_L\bigl(\f{\na}{L^{d-2-\kappaL}}\bigr)^{\!\f12}
	\na_xA^{\sigma_1}_L(v^\alpha g)}^2\\
	&+\LLTX{|\na_x|^\f12A^{\sigma_1}_L\rho}\<t\>\,
	\LLT{\na_xA^{\sigma_1}_L(v^\alpha g)}\,
	\LLT{A^{\sigma_2}_L(v^\alpha g)}.
\ea\]
The first term is absorbed by the second term on the left-hand side, while
the second is treated as above.  For $\mathcal{N}^x_r$,
\[\ba
|\mathcal{N}^x_r|
\ls\sum_{\substack{\alpha'<\alpha\\|\alpha'|=|\alpha|-1}}C_{\alpha',\alpha}
	&\Bigl(\LLTX{|\na_x|^\f12A^{\sigma_1}_L\rho}\,
		\LLT{A^{\sigma_2}_L(v^{\alpha'} g)}\,
		\LLT{\na_xA^{\sigma_1}_L(v^\alpha g)}\\
	&\quad+\<t\>^{-d-2}\bigl\|\wh{A^{\sigma_4}_Lg}\bigr\|_{L^\infty_{k,\eta}}
		\LLT{A^{\sigma_1}_L(v^{\alpha'} g)}\,
		\LLT{\na_xA^{\sigma_1}_L(v^\alpha g)}\\
	&\quad+\<t\>^{-d-1}\bigl\|\wh{A^{\sigma_4}_Lg}\bigr\|_{L^\infty_{k,\eta}}
		\LLT{\na_xA^{\sigma_1}_L(v^{\alpha'} g)}\,
		\LLT{\na_xA^{\sigma_1}_L(v^\alpha g)}\Bigr)\\
\ls&\ \eps\,\LLTX{|\na_x|^\f12A^{\sigma_1}_L\rho}\,
	\LLT{\na_xA^{\sigma_1}_L(v^\alpha g)}\<t\>^{\delta}
	+\eps^2\<t\>^{-d+\f12+\delta}\,
	\LLT{\na_xA^{\sigma_1}_L(v^\alpha g)}.
\ea\]
Combining the preceding estimates and applying a standard ODE argument, we obtain
\[\ba
&\LLT{\<v\>^m\na_xA^{\sigma_1}_Lg(t)}-\LLT{\<v\>^m\na_xA^{\sigma_1}_Lg(0)}\\
&\quad\ls\int_0^t\LLTX{|\na_x|^\f12A^{\sigma_1}_L\rho(\tau)}\,d\tau
	+\eps\int_0^t\LLTX{|\na_x|^\f12A^{\sigma_1}_L\rho(\tau)}\<\tau\>^{1+\delta}\,d\tau
	+\eps^2\\
&\quad\ls\LLTXT{|\na_x|^\f12A^{\sigma_1}_L\rho(\tau)}\<t\>^{\f12}
	+\eps\,\LLTXT{|\na_x|^\f12A^{\sigma_1}_L\rho(\tau)}\<t\>^{\f32+\delta}
	+\eps^2\\
&\quad\ls\eps\<t\>^{\f12}+\eps^2\<t\>^{\f32+\delta}.
\ea\]
The constant multiplying the linear term $\eps\<t\>^{\f32}$ depends only on
$K_{\rho1}$.  This proves \eqref{imp3}.
\end{proof}
\subsection{\texorpdfstring{Estimate of $\LLT{\<v\>^mA^{\sigma_2}_Lg}$}{Estimate of the lower-order profile norm}}
We next establish the improved lower-order profile estimate corresponding to
\eqref{g2}.
\begin{prop}\label{G2}
	There exist a constant $K'_{g2}(K_{\rho1})$, depending only on
	$K_{\rho1}$, and a constant $K''_{g2}(K_{gi},K_{\rho j})$, depending
	only on $K_{gi}$ and $K_{\rho j}$ for $1\leq i\leq3$ and
	$1\leq j\leq2$, such that whenever \eqref{g1}--\eqref{g3} hold on
	$[0,T]$, one has
	\beq\label{imp4}\LLT{\<v\>^mA^{\sigma_2}_Lg}\<t\>^{-\delta}\leq K'_{g2}(K_{\rho1})\eps+K''_{g2}(K_{gi},K_{\rho j})\eps^2.\eeq
\end{prop}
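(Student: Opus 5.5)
We propose to prove \eqref{imp4} by the same weighted energy method as in Proposition \ref{G1}, now at regularity $\sigma_2$ and without the extra $\<\na\>$. By \eqref{equinorm} (with $\sigma_2$ in place of $\sigma_1$) it suffices to bound $\LLT{A^{\sigma_2}_L(v^\alpha g)}$ for $|\alpha|\le m$. Differentiating, we get
\[
\f12\f{d}{dt}\LLT{A^{\sigma_2}_L(v^\alpha g)}^2+\LLT{\sqrt{-\lambda^L_t(t,\na)}A^{\sigma_2}_L(v^\alpha g)}^2=\mathcal L+\mathcal N+\mathcal N_r,
\]
with the linear, transport-nonlinear and velocity-commutator terms structured exactly as in $\mathcal L^h,\mathcal N^h,\mathcal N^h_r$. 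The target is
\[
\f{d}{dt}\LLT{A^{\sigma_2}_L(v^\alpha g)}\ls \LLTX{|\na_x|^\f12A^{\sigma_2}_L\rho}\<t\>^{-1/2-\delta'}+\eps\,(\text{integrable or }\<t\>^{-1+\delta}\text{-type})
\]
after absorption.

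For the linear term $\mathcal L$, we do not pay the factor $\<t\>$ as in $\mathcal L^h$. Since $\sigma_2=\sigma_1-\f d2-1$, we have room to trade regularity for time. We use $\<k,kt\>^{-(\sigma_1-\sigma_2)}$ together with $\<k,\eta\>\ls\<k,kt\>\<\eta-kt\>$ and the Gevrey-3 decay of $\wh\mu$ in \eqref{mudecay}. This bounds the multiplier by $\<k,kt\>^{\sigma_2}$ times a decaying profile in $\eta-kt$, so that
\[
|\mathcal L|\ls\LLTX{|\na_x|^\f12\<\na\>^{-1}A^{\sigma_1}_L\rho}\,\LLT{A^{\sigma_2}_L(v^\alpha g)}.
\]
Integrating in time and using Cauchy--Schwarz against \eqref{rho1} gives at best a $\<t\>^{1/2}$ loss, which is too much. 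The remedy is to use instead the pointwise-in-$k$ bound \eqref{rho2} at level $\sigma_3\ge\sigma_2+1$, combined with the decay gained from $\<k,kt\>^{\sigma_2-\sigma_3}$ and Proposition \ref{dispersivepm}:
\[
\sum_k L^{-d}|k|^{1/2}|\wh{A^{\sigma_3}_L\rho}|\<k,kt\>^{-1}\cdot\|\wh{\cdot}(k)\|_{L^2_\eta}.
\]
Applying Cauchy--Schwarz first in $k$ and then in $t$ yields $\int_0^t|\mathcal L|/\|\cdot\|\ls\eps\<t\>^{\delta}$. This is where the loss $\<t\>^{\delta}$ and the constant depending only on $K_{\rho 1}$ (together with $\rho$ bounds) come from. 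We expect this to be the main obstacle: we must pick the split between \eqref{rho1} and \eqref{rho2} so that the time integral grows at most like $\<t\>^\delta$, possibly interpolating $\<t\>^{-d/2+\delta}$-integrability via Proposition \ref{dispersivepm}.

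For $\mathcal N$, we split as in Proposition \ref{G1} into $\mathcal N_{LH}$ and $\mathcal N_{HL}$. For $\mathcal N_{LH}$, the commutator estimate \eqref{commutator} together with \eqref{g3} and Proposition \ref{dispersivepm} gives
\[
\eps\<t\>^{-d}\LLT{\tilde a_L^{1/2}A^{\sigma_2}_L(v^\alpha g)}^2,
\]
which is absorbed by the $-\lambda^L_t$ term for $\eps$ small. For $\mathcal N_{HL}$, the density carries the high frequency, and the factor $|\eta-kt|\ls\<t\>\<k-\ell,\eta-\ell t\>$ is paid by the regularity gap $\sigma_1-\sigma_2$. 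The gap $\f d2+1$ suffices to place $g$ in $L^\infty$ via \eqref{g3} or $H^{\sigma_4}$, giving
\[
\eps\LLTX{|\na_x|^\f12A^{\sigma_1}_L\rho}\<t\>^{-1/2}\cdots\LLT{A^{\sigma_2}_L(v^\alpha g)}.
\]
Its time integral is bounded by $\eps\cdot\LLTXT{\cdot}\cdot\<t\>^{\delta}\ls\eps^2\<t\>^\delta$ through \eqref{rho1}. The term $\mathcal N_r$ is lower order in $v$-weights and is handled identically, using \eqref{g2} for $v^{\alpha'}g$.

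Summing over $|\alpha|\le m$ and running the ODE argument then gives \eqref{imp4}.
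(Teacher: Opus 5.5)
Your overall structure matches the paper: an energy identity at level $\sigma_2$, absorbing $\mathcal N_{LH}$ into the $-\partial_t\lambda^L$ term, and treating $\mathcal N_{HL}$ and $\mathcal N_r$ through \eqref{rho1}. However, your treatment of the linear term, which you rightly flag as the crux, has a genuine gap.

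Your remedy relies on $\sigma_3\geq\sigma_2+1$, but by definition $\sigma_3=\sigma_2-1$. The bound $A^{\sigma_2}_L(k,\eta)\lesssim A^{\sigma_2}_L(k,kt)A^{\sigma_2}_L(0,\eta-kt)$ puts the full weight $\langle k,kt\rangle^{\sigma_2}e^{\lambda^L}$ on $\hat\rho(t,k)$. Passing to \eqref{rho2} therefore \emph{costs} a factor $\langle k,kt\rangle^{\sigma_2-\sigma_3}=\langle k,kt\rangle$ rather than gaining one. There are two further problems. First, \eqref{rho2} is an $L^\infty_kL^2_t$ bound, so ``Cauchy--Schwarz in $k$, then in $t$'' needs $k$-summability that could only come from a negative power of $\langle k,kt\rangle$, which you do not have. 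Second, the resulting constant would depend on $K_{\rho2}$, not on $K_{\rho1}$ alone as the statement requires.

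The route you discarded is the one that works; you lost the time decay by keeping only a $\langle\nabla\rangle^{-1}$ portion of the regularity gap. Proceed as follows:
\begin{itemize}
\item Split $|k|\langle k\rangle^{-2}\leq|k|^{1/2}\cdot|k|^{1/2}$.
\item Assign one $|k|^{1/2}$ and the weight $A^{\sigma_1}_L(t,k,kt)$ to $\rho$.
\item Keep the entire gap $\langle k,kt\rangle^{\sigma_2-\sigma_1}=\langle k,kt\rangle^{-d/2-1}$ together with the other $|k|^{1/2}$.
\end{itemize}
This gives
\[
\sup_{k}|k|^{1/2}\langle k,kt\rangle^{-\frac d2-1}\lesssim\langle t\rangle^{-1/2},\qquad
|\mathcal L|\lesssim\bigl\||\nabla_x|^{1/2}A^{\sigma_1}_L\rho(t)\bigr\|_{L^2_x}\langle t\rangle^{-1/2}\bigl\|A^{\sigma_2}_L(v^\alpha g)\bigr\|_{L^2_{x,v}}.
\]
Cauchy--Schwarz in time against \eqref{rho1} then yields
\[
\int_0^t\bigl\||\nabla_x|^{1/2}A^{\sigma_1}_L\rho(\tau)\bigr\|_{L^2_x}\langle\tau\rangle^{-1/2}d\tau\lesssim K_{\rho1}\eps\,(\log(2+t))^{1/2}\lesssim\eps\langle t\rangle^{\delta}.
\]
This logarithm is exactly the source of the $\langle t\rangle^{\delta}$ loss in \eqref{g2}, and of the fact that $K'_{g2}$ depends only on $K_{\rho1}$.

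A smaller point concerns $\mathcal N_{HL}$. There the low-frequency factor is $D^\alpha_\eta\hat g$, which \eqref{g3} does not control when $|\alpha|\geq1$. The paper instead places it in $L^2$ via \eqref{g2}. It then sums in $\ell$ using Proposition \ref{dispersivepm}, obtaining decay $\langle t\rangle^{(1-d)/2+\delta}$. A pointwise supremum in $\ell$ would not suffice, since $\sup_\ell|\ell|^{1/2}\langle t\rangle\langle\ell,\ell t\rangle^{-d/2-1}\sim\langle t\rangle^{1/2}$.
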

\begin{proof}
By \eqref{equinorm}, it suffices to estimate
$\LLT{A^{\sigma_2}_L(v^\alpha g)}$.  The basic energy estimate gives
\[\ba
\f12\f{d}{dt}&\LLT{A^{\sigma_2}_L(v^\alpha g)}^2
	+\LLT{\sqrt{-\lambda_t^L(t,\na)}A^{\sigma_2}_L(v^\alpha g)}^2\\
=&\sum_{k\in\ZL}\f{1}{L^d}\int
	A_L^{\sigma_2}(k,\eta)\wh{\rho}(t,k)
	\f{k\cdot}{\<k\>^2}D^\alpha_\eta\bigl(\eta\wh{\mu}(\eta)\bigr)(\eta-kt)
	A_L^{\sigma_2}(k,\eta)\ov{D^\alpha_\eta\wh{g}(k,\eta)}\,d\eta\\
&+\sum_{k,\ell\in\ZL}\f{1}{L^{2d}}\int
	A_L^{\sigma_2}(k,\eta)\wh{\rho}(t,\ell)
	\f{\ell\cdot(\eta-kt)}{\<\ell\>^2}
	D^\alpha_\eta\wh{g}(t,k-\ell,\eta-\ell t)
	A_L^{\sigma_2}(k,\eta)\ov{D^\alpha_\eta\wh{g}(k,\eta)}\,d\eta\\
&+\sum_{\substack{\alpha'<\alpha\\|\alpha'|=|\alpha|-1}}C_{\alpha',\alpha}
	\sum_{k,\ell\in\ZL}\f{1}{L^{2d}}\int
	A_L^{\sigma_2}(k,\eta)\wh{\rho}(t,\ell)
	\f{\ell_{\alpha-\alpha'}}{\<\ell\>^2}
	D^{\alpha'}_\eta\wh{g}(t,k-\ell,\eta-\ell t)
	A_L^{\sigma_2}(k,\eta)\ov{D^\alpha_\eta\wh{g}(k,\eta)}\,d\eta\\
:=&\ \mathcal{L}^l+\mathcal{N}^l+\mathcal{N}^l_{r}.
\ea\]
For $\mathcal{L}^l$,
\[\ba
|\mathcal{L}^l|
&\ls\sum_{k\in\ZL}\f{1}{L^d}\int
	\bigl|\wh{A_L^{\sigma_1}\rho}(t,k)\bigr|\,|k|^\f12
	\<k,kt\>^{\sigma_2-\sigma_1}|k|^\f12
	\bigl|D^\alpha_\eta\bigl(\eta\wh{\mu}(\eta)\bigr)(\eta-kt)\bigr|
	A^{\sigma_2}_L(0,\eta-kt)
	\bigl|\wh{A_L^{\sigma_2}(v^\alpha g)}(k,\eta)\bigr|\,d\eta\\
&\ls\LLTX{|\na_x|^\f12A^{\sigma_1}_L\rho}\,
	\sup_{k}\<k,kt\>^{\sigma_2-\sigma_1}|k|^\f12\,
	\LLT{A_L^{\sigma_2}(v^\alpha g)}\\
&\ls\LLTX{|\na_x|^\f12A^{\sigma_1}_L\rho}\<t\>^{-\f12}\,
	\LLT{A_L^{\sigma_2}(v^\alpha g)}.
\ea\]
For $\mathcal{N}^l$, we combine the commutator argument with the
paraproduct decomposition.
\[\ba
\mathcal{N}^l={}&\sum_{k,\ell\in\ZL}\f{1}{L^{2d}}\int
	\wh{\rho}(t,\ell)\f{\ell\cdot(\eta-kt)}{\<\ell\>^2}
	D^\alpha_\eta\wh{g}(t,k-\ell,\eta-\ell t)\\
	&\qquad\times\bigl\{A_L^{\sigma_2}(k,\eta)-A_L^{\sigma_2}(k-\ell,\eta-\ell t)\bigr\}\\
	&\qquad\times\bigl(\vv{1}_{\<\ell,\ell t\>\leq\f18\<k-\ell,\eta-\ell t\>}
		+\vv{1}_{\<\ell,\ell t\>\geq\f18\<k-\ell,\eta-\ell t\>}\bigr)
	A_L^{\sigma_2}(k,\eta)\ov{D^\alpha_\eta\wh{g}(k,\eta)}\,d\eta\\
:={}&\ \mathcal{N}^l_{LH}+\mathcal{N}^l_{HL}.
\ea\]
For $\mathcal{N}^l_{LH}$, an analogue of the commutator estimate
\eqref{commutator} gives
\[\ba
|\mathcal{N}^l_{LH}|\ls\<t\>^{-d}\left\|\wh{A^{\sigma_4}_Lg}\right\|_{L^\infty_{k,\eta}}\LLT{\tilde{a}_L(\f{\na}{L^{d-2-\kappaL}})^\f12A^{\sigma_2}_L(v^\alpha g)}^2,
\ea\]
which is absorbed by the second term on the left-hand side.  For
$\mathcal{N}^l_{HL}$, Proposition \ref{dispersivepm} gives
\[\ba
\mathcal{N}^l_{HL}
\ls&\sum_{k,\ell\in\ZL}\f{1}{L^{2d}}\int
	\bigl|\wh{A^{\sigma_1}_L\rho}(t,\ell)\bigr|\,
	\<\ell,\ell t\>^{\sigma_2-\sigma_1}|\ell|\<t\>\,
	\bigl|\wh{A^{\sigma_2}_L(v^\alpha g)}(t,k-\ell,\eta-\ell t)\bigr|
	\bigl|\wh{A^{\sigma_2}_L(v^\alpha g)}(k,\eta)\bigr|\,d\eta\\
\ls&\ \LLTX{|\na_x|^\f12A^{\sigma_1}_L\rho}\,
	\Bigl(\sum_{\ell\in\ZL}\f{1}{L^d}
	\<\ell,\ell t\>^{2(\sigma_2-\sigma_1)}|\ell|\<t\>^2\Bigr)^{\!\f12}
	\LLT{A_L^{\sigma_2}(v^\alpha g)}^2\\
\ls&\ \eps\,\LLTX{|\na_x|^\f12A^{\sigma_1}_L\rho}\<t\>^{\f{-d+1}{2}+\delta}\,
	\LLT{A_L^{\sigma_2}(v^\alpha g)}.
\ea\]
$\mathcal{N}^l_{r}$ is estimated similarly.  Combining the preceding
estimates and applying a standard ODE argument, we obtain
\[\LLT{\<v\>^mA^{\sigma_2}_Lg(t)}-\LLT{\<v\>^mA^{\sigma_2}_Lg(0)}\ls \LLTXT{|\na_x|^\f12A^{\sigma_1}_L\rho}\<t\>^\delta+\eps\LLTXT{|\na_x|^\f12A^{\sigma_1}_L\rho}\ls\eps\<t\>^\delta+\eps^2.\]
The constant in front of $\eps\<t\>^\delta$ depends only on
$K_{\rho1}$.  This proves \eqref{imp4}.
\end{proof}

\subsection{\texorpdfstring{Estimate of $\left\|\wh{A^{\sigma_4}_Lg}\right\|_{L^\infty_{k,\eta}}$}{Estimate of the pointwise Fourier profile norm}}
Finally, we establish the improved pointwise Fourier estimate corresponding to
\eqref{g3}.
\begin{prop}\label{G3}
	There exist a constant $K'_{g3}(K_{\rho2})$, depending only on
	$K_{\rho2}$, and a constant $K''_{g3}(K_{gi},K_{\rho j})$, depending
	only on $K_{gi}$ and $K_{\rho j}$ for $1\leq i\leq3$ and
	$1\leq j\leq2$, such that whenever \eqref{g1}--\eqref{g3} hold on
	$[0,T]$, one has
	\beq\label{imp5}\left\|\wh{A^{\sigma_4}_Lg}\right\|_{L^\infty_{k,\eta}}\leq K'_{g3}(K_{\rho2})\eps+K''_{g3}(K_{gi},K_{\rho j})\eps^2.\eeq
\end{prop}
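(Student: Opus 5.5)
We estimate $\wh{A^{\sigma_4}_Lg}$ pointwise directly from the integral identity \eqref{gintegral}. Since $\lambda^L$ is decreasing in $t$, we have $A^{\sigma_4}_L(t,k,\eta)\le A^{\sigma_4}_L(\tau,k,\eta)$ for $\tau\le t$, so the weight can be moved inside each time integral. This gives three terms to bound at a fixed point $(k,\eta)$:
\begin{itemize}
\item the data term, bounded by $\|\<v\>^m e^{\lambda^L(0,\na)}\<\na\>^{\sigma_4}h(0)\|_{L^1_xL^2_v}\ls\eps$ via $\wh f(k,\eta)$ in $L^\infty$ controlled by $L^1_xL^1_v$ and the $\<v\>^m$ weight;
\item the linear term $\int_0^t \wh\rho(\tau,k)\wh W(k)k\cdot(\eta-k\tau)\wh\mu(\eta-k\tau)\,d\tau$;
\item the nonlinear term.
\end{itemize}

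For the linear term we use \eqref{lossd}-type subadditivity, namely Lemma \ref{multiplier}(2):
\[
A^{\sigma_4}_L(\tau,k,\eta)\ls A^{\sigma_4}_L(\tau,k,k\tau)\,\<\eta-k\tau\>^{\sigma_4}e^{\lambda^L(\tau,|\eta-k\tau|)}.
\]
The Gevrey-3 decay of $\wh\mu$ in \eqref{mudecay} absorbs the second factor, since $\lambda^L\le\lambda_0\tilde a_L\ls\frac{\lambda_1}{4}\<\cdot\>^{1/3}$. What remains is
\[
\int_0^t|k|\,|\wh{A^{\sigma_4}_L\rho}(\tau,k)|\,\<\eta-k\tau\>^{-N}\,d\tau .
\]
We apply Cauchy--Schwarz in $\tau$. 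The factor $|k|^{1/2}|\wh{A^{\sigma_3}\rho}|$ goes into $L^2_\tau$ by \eqref{rho2}, and the other factor satisfies $\int_0^\infty|k|\<\eta-k\tau\>^{-2N}d\tau\ls1$. This produces $K'_{g3}(K_{\rho2})\eps$, where $\sigma_4<\sigma_3$ gives room to spare.

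For the nonlinear term we split with the paraproduct \eqref{para1}, writing $\<\ell,\ell\tau\>\lessgtr\<k-\ell,\eta-\ell\tau\>$.

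\emph{Low--high piece.} Put all of $A^{\sigma_4}$ on $g$ via Lemma \ref{multiplier}(2). By \eqref{g3}, the low-frequency density is bounded by $\eps\<\ell,\ell\tau\>^{-\sigma_4}$, from $|\wh{A^{\sigma_4}\rho}|\le\|\wh{A^{\sigma_4}g}\|_\infty$. The factor $|\eta-k\tau|$ becomes $|\partial_v^\tau g|$ at a higher index. We avoid using the $L^2$ norms \eqref{g1}--\eqref{g2} here, since we need a pointwise bound. Instead we bound $|\ell||\eta-k\tau|\le|\ell|\<\tau\>\<k-\ell,\eta-\ell\tau\>$ and use the $L^\infty$ bound \eqref{g3} for the high factor with one derivative lost. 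To avoid that loss, we interpolate with \eqref{g2} through the Sobolev embedding $H^{d/2+1}_\eta\to L^\infty_\eta$ in $\eta$; this is exactly the gap $\sigma_2-\sigma_4=d/2+3$ built into the parameters. The time integral is then $\int_0^\infty\<\tau\>^{1+\delta}\sum_\ell L^{-d}|\ell|\<\ell,\ell\tau\>^{-\sigma_4+\cdots}d\tau$, which is finite by Proposition \ref{dispersivepm} because $\<\tau\>^{-d-1+1+\delta}$ is integrable for $d\ge3$. The result is $\eps^2$.

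\emph{High--low piece.} Put $A^{\sigma_4}$ on $\rho$ with $\<\ell,\ell\tau\>\gs\<k,\eta\>$. Then bound $|\wh g(\tau,k-\ell,\eta-\ell\tau)|\le\eps\<k-\ell,\eta-\ell\tau\>^{-\sigma_4}$ using \eqref{g3}, with the weight $e^{\lambda^L}$ split by subadditivity. The remaining expression is
\[
\sum_\ell L^{-d}\int_0^t|\wh{A^{\sigma_4}\rho}(\tau,\ell)|\,|\ell|\,|\eta-k\tau|\,\<k-\ell,\eta-\ell\tau\>^{-\sigma_4}\,d\tau .
\]
Write $|\eta-k\tau|\le|\eta-\ell\tau|+|k-\ell|\tau$. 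Trade $\tau$ against the surplus regularity $\<\ell,\ell\tau\>^{\sigma_4-\sigma_1}$, which gives decay $\<\ell\tau\>^{-(\sigma_1-\sigma_4)}$. Then apply Cauchy--Schwarz in $(\tau,\ell)$ against \eqref{rho1}, using $\sum_\ell L^{-d}\int|\ell|^{1}\<\ell,\ell\tau\>^{-2(\sigma_1-\sigma_4)}\tau^2\,d\tau\ls1$, which follows from Proposition \ref{dispersivepm} since $\sigma_1-\sigma_4\ge d/2+4$. This is again $\eps^2$.

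\emph{Main obstacle.} The low--high piece is the hard step, because a pointwise-in-$(k,\eta)$ bound must be extracted without losing the derivative $\partial_v^t$. We plan to handle it with the Sobolev-in-$\eta$ interpolation between \eqref{g3} and \eqref{g2}, accepting the $\<\tau\>^\delta$ growth of \eqref{g2}, which the $\<\tau\>^{-d}$ dispersion absorbs.
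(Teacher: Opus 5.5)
\textbf{There is a genuine gap in the low--high piece.}

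Your data term, linear term and high--low piece are sound. For the high--low piece you pair against \eqref{rho1} with a joint Cauchy--Schwarz in $(\tau,\ell)$. The paper instead pairs against \eqref{rho2} with Cauchy--Schwarz in $\tau$ alone, and sums in $\ell$ using $\<k-\ell\>^{-\sigma_4+1}$. Both routes close.

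The problem is the low--high piece, which you correctly flag as the crux. Your plan takes a supremum of the high factor, so it needs
\[
\sup_{j,\zeta}\<j,\zeta\>\,\bigl|\wh{A^{\sigma_4}_Lg}(\tau,j,\zeta)\bigr|\ls\eps\<\tau\>^{\delta},
\]
which you propose to extract from \eqref{g2} by Sobolev embedding in $\eta$. Two remarks on that step:
\begin{itemize}
\item The embedding is paid for by the weight $\<v\>^m$, not by the regularity gap. Also $\sigma_2-\sigma_4=3$, not $d/2+3$.
\item More seriously, it controls $\sup_\zeta$ only one spatial mode $j$ at a time, through the weighted $L^2_v$ norm of that single Fourier coefficient.
\end{itemize}
With the normalization \eqref{normalizedplancherel}, the $L^2_{x,v}$ norm controls only $L^{-d}\sum_j\|\wh F(j,\cdot)\|_{L^2_\eta}^2$, so isolating one mode costs a factor $L^{d/2}$. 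For example, take $F=e^{ij_0\cdot x}\phi(v)$ with $j_0\in\ZL$, $|j_0|\le1$. Then for every $s$ one has $\|\wh F\|_{L^\infty_{k,\eta}}\geq c_{\phi,s}L^{d/2}\LLT{\<v\>^m\<\na\>^sF}$. Interpolating with the uniform bound \eqref{g3} only lowers this to $L^{\theta d/2}$ for some $\theta>0$; it does not remove it. So your bound is not uniform in $L\geq5$, which is exactly what the proposition must deliver.

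\textbf{The fix} is to keep the high factor in $L^2$ and take the supremum in $(k,\eta)$ only after summing over $\ell$. Cauchy--Schwarz in $\ell$, with the measure $L^{-d}\sum_\ell$, gives
\[
\sum_{\ell}\f1{L^d}\<\ell,\ell\tau\>^{-\sigma_4}|\ell|\tau\,\bigl|\wh{A^{\sigma_4+1}_Lg}(\tau,k-\ell,\eta-\ell\tau)\bigr|
\leq\Bigl(\sum_{\ell}\f1{L^d}\<\ell,\ell\tau\>^{-2\sigma_4}|\ell|^2\tau^2\Bigr)^{\f12}
\Bigl(\sum_{j\in\ZL}\f1{L^d}\bigl|\wh{A^{\sigma_4+1}_Lg}(\tau,j,\eta-k\tau+j\tau)\bigr|^2\Bigr)^{\f12}.
\]
\begin{itemize}
\item The first factor is $\ls\<\tau\>^{-d/2}$ by Proposition \ref{dispersivepm}.
\item The second factor is the restriction of $\wh{A^{\sigma_4+1}_Lg}$ to the affine graph $\zeta=\zeta_0+j\tau$. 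Set $F=A^{\sigma_4+1}_Lg$ and substitute $y=x+v\tau$; the factor becomes comparable to the $L^2_y(\TL)$ norm of $\int F(y-v\tau,v)e^{-i\zeta_0\cdot v}\,dv$.
\item Since $m>d/2$, that norm is bounded by $\LLT{\<v\>^mA^{\sigma_4+1}_Lg}\ls\LLT{\<v\>^mA^{\sigma_2}_Lg}\ls\eps\<\tau\>^{\delta}$, uniformly in $L$.
\end{itemize}
Because $\delta<\f d2-1$, the integral $\int_0^\infty\<\tau\>^{-d/2+\delta}\,d\tau$ is finite, and the piece is $O(\eps^2)$.

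This is the paper's argument. You ruled it out by assuming that a pointwise output forbids $L^2$ inputs. In fact the sum over $\ell$ is precisely what turns the uniform $L^2$ trace bound into a pointwise bound in $(k,\eta)$.
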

\begin{proof}
Recall \eqref{gintegral}.  For the initial-data term, Sobolev embedding gives
\[\left\|A^{\sigma_4}_L\wh{h}(0,k,\eta)\right\|_{L^\infty_{k,\eta}}\ls\left\|\<v\>^m\rr{\<\na\>^{\sigma_4}e^{\f{\lambda_0}{2}\tilde{a}_L(\f\na{L^{d-2-\kappaL}})}h(0)}\right\|_{L^1_xL^2_v}.\]
For the linear term, the Cauchy--Schwarz inequality and \eqref{rho2} give
\[\ba
&\nr{\int_0^tA_L^{\sigma_4}(t,k,\eta)\wh{\rho}(\tau,k)\f{k\cdot(\eta-k\tau)}{\<k\>^2}\wh{\mu}(\eta-k\tau)d\tau}\ls\int_0^t\nr{\wh{A^{\sigma_4}_L\rho}(\tau,k)}|k|^\f12A^{\sigma_4}_L(\tau,0,\eta-k\tau)\\
&\times\nr{(\eta-k\tau)\wh{\mu}(\eta-k\tau)}|k|^\f12d\tau
\ls\left\||k|^\f12\wh{A^{\sigma_3}_L\rho}\right\|_{L^\infty_kL^2_t}\ls\eps.
\ea\]
For the nonlinear term, we apply the paraproduct decomposition.
\[\ba
&\sum_{\ell\in\ZL}\f{1}{L^d}\int_0^tA_L^{\sigma_4}(t,k,\eta)\wh{\rho}(\tau,\ell)\f{\ell\cdot(\eta-k\tau)}{\<\ell\>^2}\wh{g}(\tau,k-\ell,\eta-\ell\tau)\\
&\times\left\{\vv{1}_{\<\ell,\ell\tau\>\geq\<k-\ell,\eta-\ell\tau\>}+\vv{1}_{\<\ell,\ell\tau\>\leq\<k-\ell,\eta-\ell\tau\>}\right\}d\tau
=\mathcal{N}^b_{HL}+\mathcal{N}^b_{LH}.
\ea\]
For $\mathcal{N}^b_{HL}$, equations \eqref{rho2} and \eqref{g3}, together
with the Cauchy--Schwarz inequality, give
\[\ba
|\mathcal{N}^b_{HL}|
&\ls\sum_{\ell\in\ZL}\f{1}{L^d}\int_0^t
	\bigl|\wh{A^{\sigma_3}_L\rho}(\tau,\ell)\bigr|
	\<\ell,\ell\tau\>^{\sigma_4-\sigma_3}|\ell|\tau
	\<k-\ell,\eta-\ell\tau\>^{-\sigma_4+1}\,d\tau\;
	\bigl\|\wh{A^{\sigma_4}_Lg}\bigr\|_{L^\infty_tL^\infty_{k,\eta}}\\
&\ls\eps\,\bigl\||k|^\f12\wh{A^{\sigma_3}_L\rho}\bigr\|_{L^\infty_kL^2_t}
	\sum_{\ell\in\ZL}\f{1}{L^d}
	\Bigl(\int_0^t\<\ell,\ell\tau\>^{2(\sigma_4-\sigma_3)}|\ell|^3\tau^2\,d\tau\Bigr)^{\!\f12}
	\<k-\ell\>^{-\sigma_4+1}\f{1}{|\ell|}\\
&\ls\eps^2\sum_{\ell\in\ZLs}\f{1}{L^d}
	\<k-\ell\>^{-\sigma_4+1}\f{1}{|\ell|}
	\Bigl(\int_0^t\<\ell,\ell\tau\>^{2(\sigma_4-\sigma_3)+2}|\ell|\,d\tau\Bigr)^{\!\f12}\\
&\ls\eps^2\sum_{\ell\in\ZLs}\f{1}{L^d}\<k-\ell\>^{-\sigma_4+1}\f{1}{|\ell|}
	\ls\eps^2.
\ea\]
For $\mathcal{N}^b_{LH}$, equations \eqref{g3} and \eqref{g2}, the
Cauchy--Schwarz inequality, and Sobolev embedding in $v$ give
\[\ba
|\mathcal{N}^b_{LH}|&\ls\sum_{\ell\in\ZL}\f{1}{L^d}\int_0^t\<\ell,\ell\tau\>^{-\sigma_4}|\ell|\tau\<k-\ell,\eta-\ell\tau\>\nr{\wh{A^{\sigma_4}_Lg}(\tau,k-\ell,\eta-\ell\tau)}d\tau\left\|\wh{A^{\sigma_4}_Lg}\right\|_{L^\infty_tL^\infty_{k,\eta}}\\
&\ls\eps\int_0^t\rr{\sum_{\ell\in\ZL}\f{1}{L^d}\<\ell,\ell\tau\>^{-2\sigma_4}|\ell|^2\tau^2}^\f12\LLT{\<v\>^mA^{\sigma_3}_Lg}d\tau\ls\eps^2\int_0^t\<\tau\>^{-\f d2+\delta}d\tau\ls\eps^2.
\ea\]
Combining these estimates and tracking the constants proves \eqref{imp5}.
\end{proof}

\section{Proof of Theorem \ref{T1}}\label{sec:proof}
\begin{proof}[Proof of Theorem \ref{T1}]
	Set $K_{\rho1}=K'_{\rho1}$, $K_{\rho2}=K'_{\rho2}$,
	$K_{g1}=K'_{g1}(K'_{\rho1})$, $K_{g2}=K'_{g2}(K'_{\rho1})$, and
	$K_{g3}=K'_{g3}(K'_{\rho2})$, with the constants defined in Propositions
	\ref{Rho1}, \ref{Rho2}, and \ref{G1}--\ref{G3}.  Local well-posedness
	provides $T>0$ on which \eqref{g1}--\eqref{g3} hold.  Combining
	\eqref{imp1}--\eqref{imp5} and choosing $\eps$ sufficiently small
	improves the bootstrap factor $2$ to $3/2$.  The estimates therefore
	extend beyond $T$, and the standard continuity argument yields global
	stability.  It remains to prove \eqref{scatter} and \eqref{dispm}.
	
	For $|\alpha|\leq m$, equation \eqref{gintegral} gives
	\beq\label{93}\ba
	\<k,\eta\>^{\bar{\sigma}}D^\alpha_\eta\wh{g}(t,k,\eta)
	&=\<k,\eta\>^{\bar{\sigma}}D^\alpha_\eta\wh{g}(0,k,\eta)\\
	&\quad-\int_0^t\<k,\eta\>^{\bar{\sigma}}\wh{\rho}(\tau,k)
		\f{k\cdot}{\<k\>^2}D^\alpha_\eta\bigl((\eta-k\tau)\wh{\mu}(\eta-k\tau)\bigr)\,d\tau\\
	&\quad-\sum_{\ell\in\ZL}\f{1}{L^d}\int_0^t\<k,\eta\>^{\bar{\sigma}}
		\wh{\rho}(\tau,\ell)\f{\ell\cdot}{\<\ell\>^2}
		D^\alpha_\eta\bigl((\eta-k\tau)\wh{g}(\tau,k-\ell,\eta-\ell\tau)\bigr)\,d\tau.
	\ea\eeq
	Take the $L^2_{k,\eta}$ norm on both sides.  For the second term on the
	right-hand side,
	\beq\label{94}\ba
	&\Bigl\|\<k,\eta\>^{\bar{\sigma}}\wh{\rho}(\tau,k)
		\f{k\cdot}{\<k\>^2}D^\alpha_\eta\bigl((\eta-k\tau)\wh{\mu}(\eta-k\tau)\bigr)\Bigr\|_{L^2_{k,\eta}}\\
	&\qquad\leq\Bigl\|\<k,k\tau\>^{\bar{\sigma}}\wh{\rho}(\tau,k)
		\f{k\cdot}{\<k\>^2}\<\eta-k\tau\>^{\bar{\sigma}}
		D^\alpha_\eta\bigl((\eta-k\tau)\wh{\mu}(\eta-k\tau)\bigr)\Bigr\|_{L^2_{k,\eta}}\\
	&\qquad\ls\eps\Bigl(\sum_{k\in\ZL}\f{1}{L^d}
		\<k,k\tau\>^{2\bar{\sigma}-2\sigma_4}|k|^2\Bigr)^{\!\f12}
		\ls\eps\<\tau\>^{-\f d2-1}.
	\ea\eeq
	The second inequality uses \eqref{g3}, and the third uses Proposition
	\ref{dispersivepm}.  For the nonlinear term,
	\beq\label{95}\ba
	&\Bigl\|\sum_{\ell\in\ZL}\f{1}{L^d}
		\<k,\eta\>^{\bar{\sigma}}\wh{\rho}(\tau,\ell)
		\f{\ell\cdot}{\<\ell\>^2}
		D^\alpha_\eta(\wh{\pa_v^tg})(\tau,k-\ell,\eta-\ell\tau)\Bigr\|_{L^2_{k,\eta}}\\
	&\qquad\leq\Bigl\|\sum_{\ell\in\ZL}\f{1}{L^d}
		\<\ell,\ell\tau\>^{\bar{\sigma}}|\wh{\rho}(\tau,\ell)|\,|\ell|
		\<k-\ell,\eta-\ell\tau\>^{\bar{\sigma}}
		\bigl|D^\alpha_\eta(\wh{\pa_v^tg})(\tau,k-\ell,\eta-\ell\tau)\bigr|
		\Bigr\|_{L^2_{k,\eta}}\\
	&\qquad\ls\eps\sum_{\ell\in\ZL}\f{1}{L^d}
		\<\ell,\ell\tau\>^{\bar{\sigma}-\sigma_4}|\ell|\<\tau\>
		\LLT{\<v\>^mA^{\sigma_2}_Lg}
		\ls\eps^2\<\tau\>^{-d+\delta}.
	\ea\eeq
	Thus the time integrals converge absolutely in $L^2_{k,\eta}$.  Define
	\[\ba
	\wh{g^L_+}(k,\eta)&=\wh{g}(0,k,\eta)
		-\int_0^\infty\wh{\rho}(\tau,k)
		\f{k\cdot(\eta-k\tau)}{\<k\>^2}\wh{\mu}(\eta-k\tau)\,d\tau\\
		&-\sum_{\ell\in\ZL}\f{1}{L^d}\int_0^\infty\wh{\rho}(\tau,\ell)
		\f{\ell\cdot(\eta-k\tau)}{\<\ell\>^2}
		\wh{g}(\tau,k-\ell,\eta-\ell\tau)\,d\tau.
	\ea\]
	Then
	\[\LLT{\<v\>^m\<\na\>^{\bar{\sigma}}\rr{h(t,x+vt,v)-g^L_+}}\ls\eps\int_t^\infty\<\tau\>^{-\f d2-1}d\tau+\eps^2\int_t^\infty\<\tau\>^{-d+\delta}d\tau\ls\eps\<t\>^{-\f d2}.\]
	For \eqref{dispm}, by \eqref{g3} and Proposition \ref{dispersivepm},
	\[\left\|\rho(t)-\f{1}{(2\pi L)^d}\int_{\TL}\rho dx\right\|_{L^\infty}\ls\eps\sum_{\ell\in\ZLs}\f1{L^d}\<\ell,\ell t\>^{-\bar{\sigma}}\ls\eps
	\bigl(\<t\>^{-d}\vv1_{t\leq L}+\tfrac{1}{L^d}\<\tfrac tL\>^{-\bar{\sigma}}\vv1_{t\geq L}\bigr).\]
\end{proof}

\section{Quantitative large-box transition of the echo operator}\label{sec:transition}

The echo kernel, its Schur quantities, and the main time-resolved estimate have
already been stated in Theorem \ref{thm:volterra}.  This section proves that
theorem. 

\begin{proof}[Proof of Theorem \ref{thm:volterra}]
	Following the estimates of $\mathscr{A}^2$ and $\mathscr{B}^2$ in the proof of Proposition \ref{resonancek}, and of $\mathscr{C}^2$ and $\mathscr{D}^2$ in the proof of Proposition \ref{resonancel}, we obtain \eqref{ncbound}. (Note that $\mathscr{A}^2$, $\mathscr{B}^2$, $\mathscr{C}^2$, and $\mathscr{D}^2$ contain all the non-collinear contributions.) We note that in these (non-collinear) estimates the Gevrey multiplier plays no role: $e^{\lambda^L(t,|k,kt|)-\lambda^L(\tau,|k,kt|)}\leq1$, so the pure polynomial kernel $K^S_\sigma$ alone controls the non-collinear part.
	
	Then we prove \eqref{colbound}. We set $k=\bbp\f{\ffa}{L}$, $\ell=\bbp\f{\ffb}{L}$, where $\bbp\in{\Z^d}^*$ is primitive and $\ffa,\ffb\in\Z^*$. By $|k|^\f12\ls\<\ell\>^\f12\<k-\ell\>^{\f12}$ and $|k(t-\tau)|\leq|k-\ell|\tau+|kt-\ell\tau|$ we have
	\[\ba
	&\sum_{\ffb\in\Z^*}\f{1}{L^d}\int_0^tK^S_{\sigma,col}(t,\tau;\bbp\f{\ffa}{L},\bbp\f{\ffb}{L})d\tau\ls\sum_{\ffb\in\Z^*}\f{1}{L^d}\int_0^t\nr{\bbp\f{\ffb}{L}}^{\f12}\<\tau\>\<\bbp\f{\ffb-\ffa}{L}\>^{-\sigma+\f32}\<\bbp\f{\ffa t-\ffb\tau}{L}\>^{-\sigma+1}d\tau\\
	&\ls\f{1+t}{L^{d-1}}\sum_{\ffb\in\Z^*}\f{1}{L}\nr{\bbp\f{\ffb}{L}}^{\f12}\<\bbp\f{\ffb-\ffa}{L}\>^{-\sigma+\f32}\int_0^t\<\bbp\f{\ffa t-\ffb\tau}{L}\>^{-\sigma+1}d\tau\\
	&\ls\f{1+t}{L^{d-1}}\sum_{\ffb\in\Z^*}\f{1}{L}\nr{\bbp\f{\ffb}{L}}^{-\f12}\<\bbp\f{\ffb-\ffa}{L}\>^{-\sigma+\f32}\ls\f{1+t}{L^{d-1}}.
	\ea\]
	Similarly we have
	\[\ba
	&\sum_{\ffa\in\Z^*}\f{1}{L^d}\int_\tau^TK^S_{\sigma,col}(t,\tau;\bbp\f{\ffa}{L},\bbp\f{\ffb}{L})dt\ls\sum_{\ffa\in\Z^*}\f{1}{L^d}\int_\tau^T\nr{\bbp\f{\ffa}{L}}^{\f12}\<\tau\>\<\bbp\f{\ffb-\ffa}{L}\>^{-\sigma+1}\<\bbp\f{\ffa t-\ffb\tau}{L}\>^{-\sigma+1}dt\\
	&\ls\f{1+T}{L^{d-1}}\sum_{\ffa\in\Z^*}\f{1}{L}\nr{\bbp\f{\ffa}{L}}^{-\f12}\<\bbp\f{\ffb-\ffa}{L}\>^{-\sigma+1}\ls\f{1+T}{L^{d-1}}.
	\ea\]
	Then \eqref{colbound} follows.
	
	In the following we prove \eqref{sharpcol}. Let $\bbe_1=(1,0,\ldots,0)$ and introduce the integer intervals
	\[
	A_L:=\{\ffa\in\mathbb Z:L\leq \ffa\leq5L/4\},\qquad
	B_L:=\{\ffb\in\mathbb Z:2L\leq \ffb\leq9L/4\}.
	\]
	For $L\geq L_0$, both sets have cardinality comparable to $L$.  Take the
	nonnegative test function
	\[
	F(\tau,\ell):=\mathbf 1_{[T/4,\,2T/3]}(\tau)
	\mathbf 1_{\{\ell=(\ffb/L)\bbe_1:\ffb\in B_L\}}.
	\]
	For every $\ffa\in A_L$, $\ffb\in B_L$, and $t\in[3T/4,T]$, set
	\[
	k=(\ffa/L)\bbe_1,\qquad \ell=(\ffb/L)\bbe_1,
	\qquad \tau_{\ffa,\ffb}(t)=\frac \ffa\ffb t.
	\]
	Since $4/9\leq \ffa/\ffb\leq5/8$, one has
	\[
	T/3\leq\tau_{\ffa,\ffb}(t)\leq5T/8.
	\]
	Choose a fixed $\delta_0>0$ sufficiently small.  After increasing $T_0$ if
	necessary, the interval
	$I_{\ffa,\ffb,t}:=[\tau_{\ffa,\ffb}(t)-\delta_0,
	\tau_{\ffa,\ffb}(t)+\delta_0]$ is contained in $[T/4,2T/3]$.
	On this interval,
	\[
	|kt-\ell\tau|=\frac \ffb L|\tau-\tau_{\ffa,\ffb}(t)|\leq\frac94\delta_0,
	\qquad |k(t-\tau)|\geq cT.
	\]
	Furthermore, $1\leq|k|\leq5/4$, $2\leq|\ell|\leq9/4$, and
	$3/4\leq|k-\ell|\leq5/4$.  All the remaining factors in
	\eqref{Vkernel} are therefore bounded below by positive constants depending
	only on $\sigma$, and hence
	\beq\label{singleecholower}
	\int_{I_{\ffa,\ffb,t}}K^S_\sigma(t,\tau;k,\ell)\,d\tau\geq c_\sigma T.
	\eeq
	Positivity of the kernel and summation over $b\in B_L$ give, uniformly for
	$a\in A_L$ and $t\in[3T/4,T]$,
	\beq\label{rowlower}
	(\mathcal V^S_{L,T,\mathrm{col}}F)(t,(\ffa/L)\bbe_1)
	\geq c_\sigma L^{-d}\#B_L\,T
	\geq c_\sigma T L^{1-d}.
	\eeq
	The normalized spacetime measures of the input and output rectangles are
	comparable:
	\[
	\|F\|_{Y_L(T)}^2\asymp T L^{-d}\#B_L\asymp T L^{1-d},
	\qquad
	|[3T/4,T]|L^{-d}\#A_L\asymp T L^{1-d}.
	\]
	Integrating \eqref{rowlower} over the output rectangle yields
	\[
	\|\mathcal V^S_{L,T,\mathrm{col}}F\|_{Y_L(T)}^2
	\geq c_{d,\sigma}(T L^{1-d})^3.
	\]
	Dividing by $\|F\|_{Y_L(T)}^2\leq C T L^{1-d}$ and taking square roots
	proves the claim.
		
	Next we prove \eqref{sharpfull}. We first claim that there are constants $c_{d,\sigma}>0$, $T_0\geq1$, and $L_0\geq5$ such that
	\beq\label{347}
	\|\mathcal V^S_{L,T,\mathrm{nc}}\|_{Y_L(T)\to Y_L(T)}
	\geq c_{d,\sigma},
	\qquad T\geq T_0,\quad L\geq L_0.
	\eeq
	Let $\bbe_1,\bbe_2$ be the first two coordinate vectors and choose
	$\delta>0$ small enough that every pair
	\[
	k\in Q_1:=\{\xi:|\xi-\bbe_1|_\infty\leq\delta\},
	\qquad
	\ell\in Q_2:=\{\xi:|\xi-\bbe_2|_\infty\leq\delta\}
	\]
	is non-collinear.  Put $Q_{j,L}=Q_j\cap\Lambda_L$.  Once $L\geq L_0$,
	\[
	L^{-d}\#Q_{1,L}\asymp1,\qquad L^{-d}\#Q_{2,L}\asymp1.
	\]
	For $T\geq T_0$, with $T_0\geq2$, take
	\[
	F(\tau,\ell):=\mathbf 1_{[1/4,\,1/2]}(\tau)
	\mathbf 1_{Q_{2,L}}(\ell).
	\]
	If $t\in[1,5/4]$, $k\in Q_{1,L}$, $\tau\in[1/4,1/2]$, and
	$\ell\in Q_{2,L}$, then $t-\tau\geq1/2$.  All frequencies and times lie in
	fixed compact sets, $|k|$ and $|\ell|$ are bounded away from zero, and
	$k\wedge\ell\neq0$.  Consequently every factor in \eqref{Vkernel} has a
	uniform positive lower bound, and
	\[
	K^S_{\sigma,\mathrm{nc}}(t,\tau;k,\ell)\geq c_{d,\sigma}.
	\]
	It follows that
	\[
	(\mathcal V^S_{L,T,\mathrm{nc}}F)(t,k)
	\geq c_{d,\sigma}L^{-d}\#Q_{2,L}\geq c_{d,\sigma}
	\]
	on $[1,5/4]\times Q_{1,L}$.  The normalized spacetime measures of both the
	input and output rectangles are comparable to one, so
	$\|F\|_{Y_L(T)}\asymp1$ and
	$\|\mathcal V^S_{L,T,\mathrm{nc}}F\|_{Y_L(T)}\gs1$.  Dividing proves the
	claim.
	
	Then we prove \eqref{sharpfull}. We note that positivity of the kernels implies the monotonicity of their $L^2$ operator norms:
	\[
	\|\mathcal V^S_{L,T}\|
	\geq\max\bigl\{\|\mathcal V^S_{L,T,\mathrm{nc}}\|,
	\|\mathcal V^S_{L,T,\mathrm{col}}\|\bigr\}.
	\]
	Combining \eqref{sharpcol} with \eqref{347} we arrive at \eqref{sharpfull}.
\end{proof}

\section{Polynomial Sobolev propagation and the whole-space limit}\label{sec:limit}

We now use the time-resolved estimate of Section \ref{sec:transition} to prove Theorem \ref{thm:limit}.
\subsection{Compatible periodic data}

We first show that the compatibility in Theorem
\ref{thm:limit} arise from a natural class of whole-space data.

\begin{lem}[Truncation and periodization of whole-space data]\label{dataconstruction}
Let $h_\infty\in L^2(\R^d\times\R^d)$, and suppose that
\[\ba
\sum_{|\alpha|+|\beta|\leq\sigma_0}\nnr{\<v\>^m\pa_x^\alpha\pa_v^\beta h_\infty}_{L^1_xL^2_v\cap L^2_{x,v}}<\infty.
\ea\]
Then there is a family of periodic data $h_{L}$ with zero spatial
mean such that
\beq\label{101}
\sum_{|\alpha|+|\beta|\leq\sigma_0}\nnr{\<v\>^m\pa_x^\alpha\pa_v^\beta h_L}_{L^1_xL^2_v\cap L^2_{x,v}}\ls\sum_{|\alpha|+|\beta|\leq\sigma_0}\nnr{\<v\>^m\pa_x^\alpha\pa_v^\beta h_\infty}_{L^1_xL^2_v\cap L^2_{x,v}},
\eeq
and $h_{L}$ is compatible with $h_\infty$ in the sense of \eqref{datacompat}.
In particular, sufficiently small whole-space data in this class generate
families satisfying the hypotheses of Theorem \ref{thm:limit}.
\end{lem}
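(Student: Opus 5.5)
I would construct $h_L$ by a smooth spatial cutoff followed by periodization and mean subtraction. Fix $\chi\in C_c^\infty(\R^d)$ with $0\le\chi\le1$, $\chi=1$ on $\{|x|\le 1/2\}$ and $\operatorname{supp}\chi\subset\{|x|_\infty<1\}$. Set $\chi_L(x)=\chi(x/(\pi L))$, so that $\operatorname{supp}\chi_L\subset Q_L^\circ$. Then put
\[
\tilde h_L(x,v):=\sum_{n\in\Z^d}(\chi_Lh_\infty)(x+2\pi Ln,v),
\]
which is a single-term sum on each cell, so no overlap issues arise. Finally define
\[
h_L:=\tilde h_L-\f{1}{(2\pi L)^d}\int_{\TL}\tilde h_L\,dx,
\]
where the subtracted term is a function of $v$ only. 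The construction visibly gives periodic data with zero spatial mean.

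For the bound \eqref{101}, derivatives in $v$ commute with everything. Each $x$-derivative falling on $\chi_L$ produces a factor $L^{-|\gamma|}\|\pa^\gamma\chi\|_\infty\le C$, so Leibniz gives
\[
\nnr{\<v\>^m\pa_x^\alpha\pa_v^\beta\tilde h_L}_{L^1_xL^2_v\cap L^2_{x,v}(\TL\times\R^d)}\ls\sum_{\alpha'\le\alpha}\nnr{\<v\>^m\pa_x^{\alpha'}\pa_v^\beta h_\infty}_{L^1_xL^2_v\cap L^2_{x,v}(\R^{2d})},
\]
uniformly in $L\ge5$. For the mean term $M_L(v)=(2\pi L)^{-d}\int\chi_Lh_\infty\,dx$, only $\alpha=0$ contributes, since it is constant in $x$. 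By Minkowski, $\|\<v\>^m\pa_v^\beta M_L\|_{L^2_v}\le(2\pi L)^{-d}\|\<v\>^m\pa_v^\beta h_\infty\|_{L^1_xL^2_v}$. Its $L^1_xL^2_v$ norm over $\TL$ multiplies this by $(2\pi L)^d$, giving a bound of $\|\<v\>^m\pa^\beta_vh_\infty\|_{L^1_xL^2_v}$. Its $L^2_{x,v}$ norm multiplies by $(2\pi L)^{d/2}$, giving a bound of $L^{-d/2}\|\cdot\|_{L^1_xL^2_v}$. Both are controlled by the right side of \eqref{101}. This is the one place where the $L^1_x$ part of the hypothesis is genuinely needed, and I expect it to be the only mildly delicate point.

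For compatibility, fix a compact $K\Subset\R^d$. Once $L$ is so large that $K\subset\{|x|\le\pi L/2\}$, the restriction of $h_L$ to $K\times\R^d$ equals $h_\infty-M_L(v)$ there. Hence
\[
\nnr{h_L-h_\infty}_{H^{\sigma_0}(K\times\R^d;\<v\>^mdvdx)}\le|K|^{1/2}\sum_{|\beta|\le\sigma_0}\nnr{\<v\>^m\pa_v^\beta M_L}_{L^2_v}\ls|K|^{1/2}L^{-d}\to0,
\]
which is \eqref{datacompat}. The final sentence follows because \eqref{101} carries smallness $\eps<\eps_0/C$ of $h_\infty$ to smallness of $h_L$ below $\eps_0$ in the norm of Theorem \ref{thm:limit}(i), with a constant independent of $L$.
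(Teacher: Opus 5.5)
Your proposal is correct and follows the paper's proof essentially step for step. You use a smooth spatial cutoff and periodize with disjoint supports on the fundamental cell. You subtract the $v$-dependent zero mode, whose bound uses Minkowski and the $L^1_xL^2_v$ hypothesis to give an $O(L^{-d})$ correction. You then get the exact identity $\tilde h_L=h_\infty$ on fixed compacts for large $L$.

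The only difference is cosmetic. You truncate at scale $\pi L$, with support strictly inside $Q_L$, whereas the paper uses $R_L=L^{1/2}$. Either choice gives Leibniz constants uniform in $L$.
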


\begin{proof}
Choose $\chi\in C_c^\infty(\R^d)$ with $\chi=1$ on $B_1$ and
$\operatorname{supp}\chi\subset B_2$, and put $R_L=L^{1/2}$.  For $L$ large
enough that $2R_L<\pi L$, define the periodic truncation
\[
\widetilde h_{L}(x,v):=
\sum_{n\in\Z^d}\chi\left(\f{x+2\pi Ln}{R_L}\right)
h_\infty(x+2\pi Ln,v),
\]
and remove its spatial zero mode by setting
\[
\overline h_L(v):=\f1{(2\pi L)^d}\int_{Q_L}
\widetilde h_{L}(x,v)dx,\qquad
h_{L}(x,v):=\widetilde h_{L}(x,v)-\overline h_L(v).
\]
This construction is periodic, and
$\int_{Q_L}h_{L}(x,v)dx=0$ for every $v$.

The supports of the summands are disjoint on a fundamental cell.  Leibniz'
rule and $R_L\geq1$ therefore give
\[
\sum_{|\alpha|+|\beta|\leq \sigma_0}
\|\<v\>^m\pa_x^\alpha\pa_v^\beta
\widetilde h_{L}\|_{L^2(Q_L\times\R^d_v)\cap L^1(Q_L,L^2(\R^d))}
\ls\sum_{|\alpha|+|\beta|\leq\sigma_0}\nnr{\<v\>^m\pa_x^\alpha\pa_v^\beta h_\infty}_{L^1_xL^2_v\cap L^2_{x,v}}.
\]
Moreover,
\[
\|\<v\>^m\pa_v^\beta\overline h_L\|_{L^2_v}
\ls L^{-d}
\|\<v\>^m\pa_v^\beta h_\infty\|_{L^1_xL^2_v},
\]
so the constant correction has uniformly bounded periodic $L^2_{x,v}\cap L^1_xL^2_v$ norm
and in fact tends to zero on every fixed spatial compact set. This proves \eqref{101}.
On every fixed compact $K$, one has
$\chi(x/R_L)=1$ for all sufficiently large $L$, while the periodic copies do
not meet $K$.  Hence $\widetilde h_{L}=h_\infty$ on $K\times\R^d_v$, and the
preceding estimate on $\overline h_L$ proves \eqref{datacompat}.
\end{proof}

\subsection{Proof of Theorem \ref{thm:limit}}

In this subsection we prove Theorem \ref{thm:limit}.

\begin{proof}[Proof of Theorem \ref{thm:limit}]
	\underline{Proof of (i).}

We set
\[\ba
&\cE_{g1}(t)=\LLT{\<v\>^m\<\na\>^{\sigma_1+1}g_L(t)}\<t\>^{-\f52-\delta}+\LLT{\<v\>^m\na_x\<\na\>^{\sigma_1}g_L(t)}\<t\>^{-\f32-\delta},\\
&\cE_{\rho1}(t)=\nnr{|k|^\f12\<k,k\tau\>^{\sigma_1}\wh{\rho_L}(\tau,k)}_{L^2([0,t],L^2_k)},\quad
\cE_{g2}=\LLT{\<v\>^m\<\na\>^{\sigma_2}g_L(t)}\<t\>^{-\delta},\\
&\cE_{\rho2}(t)=\left\||k|^{\f12}\<k,k\tau\>^{\sigma_3}\wh{\rho_L}(\tau,k)\right\|_{L^\infty_kL^2_\tau([0,t])},\quad
\cE_{g3}=\left\|\wh{\<\na\>^{\sigma_4}g_L}(t,k,\eta)\right\|_{L^\infty_{k,\eta}},\\ &\cI=\sum_{|\alpha|+|\beta|\le\sigma_0}\nnr{\<v\>^m\pa_x^\alpha\pa_v^\beta g_L(0)}_{L^2_{x,v}\cap L^1_xL^2_v}
\ea\]
We define  
\[\cE^+_a(t)=\sup_{0\leq\tau\le t}\cE_a(\tau)\]
for $a\in\{g1,g2,g3,\rho1,\rho2\}$. By the local well-posedness theory, there exists a sufficiently small $t>0$ such that $\cE^+_a(t)\ls\eps$. Following the proof of Proposition \ref{Rho1}, Proposition \ref{Rho2}, Proposition \ref{G1}, Proposition \ref{G2}, Proposition \ref{G3}, we obtain
\beq\label{RHO1}\cE_{\rho1}(t)\ls\cI+\rr{1+\rr{1+t}L^{1-d}}\cE_{\rho1}(t)\cE_{g3}^+(t)+\cE^+_{g3}(t)\cE^+_{g1}(t).\eeq
\beq\label{RHO2}\cE_{\rho2}(t)\ls\cI+\cE_{\rho1}(t)\cE_{g3}^+(t)+\cE^+_{g3}(t)\cE^+_{g2}(t).\eeq
\beq\label{GG1}\cE_{g1}(t)\ls\cI+\cE_{\rho1}(t)+\cE_{\rho1}(t)\cE_{g2}^+(t)+\cE^+_{g3}(t)\cE^+_{g1}(t).\eeq	
\beq\label{GG2}\cE_{g2}(t)\ls\cI+\cE_{\rho1}(t)+\cE_{\rho1}(t)\cE_{g2}^+(t)+\cE^+_{g3}(t)\cE^+_{g2}(t).\eeq
\beq\label{GG3}\cE_{g3}(t)\ls\cI+\cE_{\rho2}(t)+\cE_{\rho2}(t)\cE_{g3}^+(t)+\cE^+_{g3}(t)\cE^+_{g2}(t).\eeq

The main differences of the proof are stated below:

$\bullet$ In the proof of Proposition \ref{Rho1}, for the high-low interaction of the nonlinear term we use the time-decreasing Gevrey multiplier to eliminate the possible time growth. In the proof of \eqref{RHO1}, we appeal to \eqref{thm:volterra} in place of Propositions \eqref{resonancek}--\eqref{resonancel}. This requires only Sobolev regularity, at the cost of a possibly large factor $\rr{1+\rr{1+t}L^{1-d}}$.

$\bullet$ In the proof of Proposition \ref{G1} and Proposition \ref{G2}, the Gevrey argument absorbs low-high commutators using the positive term generated by $-\pa_t\lambda^L$. For \eqref{GG1} and \eqref{GG2}, we use a direct bound for the low-high Sobolev commutators. For $\<\ell,\ell t\>\le\f18\<k-\ell,\eta-\ell t\>$ we have
\[\nr{\<k,\eta\>^{\sigma_1+1}-\<k-\ell,\eta-\ell t\>^{\sigma_1+1}}\ls\<\ell,\ell t\>\<k-\ell,\eta-\ell t\>^{\sigma_1}.\]
Thus
\[\ba&\sum_{k,\ell\in\ZL}\f{1}{L^{2d}}\int
\wh{\rho_L}(t,\ell)\f{\ell\cdot(\eta-kt)}{\<\ell\>^2}
D^\alpha_\eta\wh{g_L}(t,k-\ell,\eta-\ell t)
\Bigl\{\<k,\eta\>^{\sigma_1+1}
-\<k-\ell,\eta-\ell t\>^{\sigma_1+1}\Bigr\}\vv{1}_{\<\ell,\ell t\>\leq\f18\<k-\ell,\eta-\ell t\>}\\
&\times\<k,\eta\>^{\sigma_1+1}\ov{D^\alpha_\eta\wh{g_L}(k,\eta)}\,d\eta\ls\sum_{\ell\in{\ZL}^*}\f{1}{L^d}\<\ell,\ell t\>^{-\sigma_4+2}\left\|\wh{\<\na\>^{\sigma_4}g_L}(t)\right\|_{L^\infty_{k,\eta}}\LLT{\<\na\>^{\sigma_1+1}\rr{v^\alpha g_L(t)}}^2\\
&\ls\<t\>^{-d}\left\|\wh{\<\na\>^{\sigma_4}g_L}(t)\right\|_{L^\infty_{k,\eta}}\LLT{\<\na\>^{\sigma_1+1}\rr{v^\alpha g_L(t)}}^2
\ea\]
This gives the fourth term on the right-hand side of \eqref{GG1}. \eqref{GG2} is obtained in the same way.

The remaining steps are the same.

Choosing $c_*$ sufficiently small so that $\eps\rr{1+\rr{1+T_{Sob}(L,\eps)}L^{1-d}}\ll1$, and combining the above with a standard continuity argument, we obtain 
\[\sum_{a\in\{g1,g2,g3,\rho1,\rho2\}}\cE^+_a(t)\ls\eps\]
on $[0,T_{Sob}(L,\eps)]$. Following the same steps of \eqref{93}--\eqref{95} we get \eqref{finitewindowbound}.

\underline{Proof of (ii).}

Fix $T<\infty$ and take $L\geq T$.  On every compact
$K\Subset\R^d_x$, the change of variables between $h_L$ and $g_L$ and
\eqref{finitewindowbound} give
\beq\label{localHN}
\sup_{0\leq t\leq T}
\|\<v\>^m h_L(t)\|_{H^{\bar\sigma}(K\times\R^d_v)}\leq C_{K,T}\eps.
\eeq
The original Vlasov equation and \eqref{localHN} also give
\beq\label{timederivative}
\sup_{0\leq t\leq T}
\|\pa_t h_L(t)\|_{H^{\bar\sigma-1}(K\times\R^d_v)}\leq C_{K,T}.
\eeq
We use the following weighted form of local compactness.  If $f_n$ is bounded
in $L^\infty_tH^{\bar\sigma}_{x,v}(\<v\>^{m}dvdx)$ on a fixed spatial compact set and
$\pa_tf_n$ is bounded in $L^\infty_tH^{\bar\sigma-1}_{x,v}$, then $(f_n)$ is relatively
compact in $C_tH^{\bar\sigma-2}_{x,v,\mathrm{loc}}$.  Indeed, on $|v|\leq R$ this is
Rellich compactness and Arzel\`a--Ascoli.  The weighted bound makes the
$H^{\bar\sigma-2}$ velocity tails uniformly small as $R\to\infty$, while interpolation
between the time-derivative bound and the uniform $H^{\bar\sigma}$ bound gives
equicontinuity in $H^{\bar\sigma-2}$.  A diagonal argument in the spatial and velocity
radii proves the claim.

After multiplying by a cutoff supported in a slightly larger spatial compact
set, this compactness statement yields, along a subsequence,
\beq\label{compactconv}
\begin{aligned}
h_L&\rightharpoonup^* h_\infty
&&\text{in }L^\infty([0,T];H^{\bar\sigma}_{\rm loc}),\\
h_L&\longrightarrow h_\infty
&&\text{in }C([0,T];H^{\bar\sigma-2}_{\rm loc}).
\end{aligned}
\eeq

Because $m>d/2$, integration in $v$ is continuous in these weighted spaces;
hence $\rho_L\to\rho_\infty$ strongly in
$C([0,T];H^{\bar\sigma-2}_{x,\rm loc})$.  Let $G$ be the whole-space Yukawa kernel.
With the Fourier and convolution normalizations of Subsection 2.1, the
periodic kernel is
\[
G_L(x)=\sum_{n\in\Z^d}G(x+2\pi Ln).
\]
The zero Fourier mode contributes only a spatial constant to the potential and
is annihilated by $-\na$.
The kernels $G_L$ and $G$ are singular at the origin, so convergence is
understood after separating the common singular part.  More precisely, for
each fixed $R$ and $L>4R$,
\beq\label{kernelremainder}
H_L(z):=G_L(z)-G(z)=\sum_{n\neq0}G(z+2\pi Ln),\qquad |z|\leq2R,
\eeq
is smooth and
\[
\sup_{|z|\leq2R}|\pa_z^\alpha H_L(z)|\leq C_{R,\alpha}e^{-cL}
\quad\text{for every }\alpha.
\]

We now prove the field convergence.  Fix $K\Subset B_{R_0}$ and choose a
cutoff $\chi_R$ which equals one on $B_R$, where $R>2R_0$.  On the local
part, the whole-space Yukawa multiplier has order $-1$, and hence
\[
\|\na G*[\chi_R(\rho_L-\rho_\infty)]\|_{H^{\bar\sigma-1}(K)}
\leq C_R\|\rho_L-\rho_\infty\|_{H^{\bar\sigma-2}(B_{2R})}\longrightarrow0.
\]
The contribution of $H_L*(\chi_R\rho_L)$ tends to zero by
\eqref{kernelremainder} and the uniform $L^2$ bound.  For the far part, the
velocity weight gives
\[
\sup_{L\geq T}\sup_{0\leq t\leq T}
\|\rho_L(t)\|_{L^2(Q_L)}\leq C\eps,
\]
while the exponential decay of the Yukawa kernel away from its singularity
implies, uniformly for $x\in K$ and $L>4R$,
\[
\sum_{|\alpha|\leq \bar\sigma-1}
\|\pa_x^\alpha\na G_L(x-\cdot)\|_{L^2(Q_L\setminus B_R)}
\leq C_{K,\bar\sigma}e^{-cR}.
\]
Cauchy--Schwarz therefore makes the far-field contribution uniformly
$O(\eps e^{-cR})$.  First let $L\to\infty$ with $R$ fixed and then let
$R\to\infty$.  We obtain
\beq\label{fieldconv}
E_L=-\na G_L*\rho_L\longrightarrow
E_\infty=-\na G*\rho_\infty
\quad\text{in }C([0,T];H^{\bar\sigma-2}_{\rm loc}).
\eeq
Since $\bar\sigma-2>d/2+1$, the local strong convergences of $E_L$ and $h_L$ are
sufficient to pass the nonlinear product to the limit in the weak formulation, and
$h_\infty$ solves the whole-space Vlasov--Yukawa equation with initial datum
$h_\infty(0)$ by \eqref{datacompat}.

The limiting solution is unique in the high-Sobolev class supplied by the
local bounds above.  Indeed, if $h^{(1)}$ and $h^{(2)}$ have the same
initial datum and $w=h^{(1)}-h^{(2)}$, the Yukawa multiplier and $m>d/2$ give
\[
\|E^{(1)}-E^{(2)}\|_{H^{\bar\sigma}_x}
\leq C\|\<v\>^m w\|_{H^{\bar\sigma-1}_{x,v}}.
\]
The weighted $H^{\bar\sigma-1}$ energy estimate for the difference equation implies,
on every finite interval,
\[
\f d{dt}\|\<v\>^m w(t)\|_{H^{\bar\sigma-1}}
\leq C_T\|\<v\>^m w(t)\|_{H^{\bar\sigma-1}}.
\]
Gronwall's inequality yields $w=0$.  Hence all convergent subsequences have
the same limit, and relative compactness implies convergence of the full
family in the local topology of \eqref{compactconv}.

A diagonal extraction over $T=1,2,\ldots$ produces a global solution, and
the preceding uniqueness argument shows that it is independent of the chosen
subsequence.  It remains to pass the profile energy to the limit.  If
$K_x,K_v\Subset\R^d$, then for $0\leq t\leq T$ the map
$(x,v)\mapsto(x+vt,v)$ sends $K_x\times K_v$ into a fixed compact set depending
only on $K_x,K_v,T$.  Thus \eqref{compactconv} also gives
\[
g_L\to g_\infty\quad\text{in }
C([0,T];H^{\bar\sigma-2}(K_x\times K_v)),
\]
and the uniform $H^{\bar\sigma}$ bound gives weak convergence in $H^{\bar\sigma}$ there.  By the
weak Arzel\`a--Ascoli argument the subsequence may be chosen so that this weak
convergence holds at every $t\in[0,T]$.  Hence, for every $R>0$ and every
$t\in[0,T]$,
\[
\sum_{|\alpha|+|\beta|\leq \bar\sigma}
\|\<v\>^m\pa_x^\alpha\pa_v^\beta g_\infty(t)
\|_{L^2(B_R\times B_R)}^2
\ls\eps^2.
\]
Monotone convergence as $R\to\infty$ proves \eqref{Einfbound}.  The constants
are independent of $T$ because the longer Sobolev estimate was restricted to
$[0,L]$ before $L\to\infty$ was taken.
\end{proof}

\appendix\renewcommand{\theequation}{\thesection.\arabic{equation}}\setcounter{equation}{0}

\section{\texorpdfstring{The projected lattice $\Pi(\bbp)(\Z^d)$}{The projected lattice}}\label{sec:appA}

This appendix supplies the lattice geometry used in Sections
\ref{sec:density} and \ref{sec:transition}.  We begin with a standard completion lemma that constructs a basis adapted to a primitive
direction.

\begin{lem}[Unimodular completion]\label{basis}
Let $d\geq2$. If $\bbq^{(1)}=(\ffq_1^{(1)},\ffq_2^{(1)},\dots,\ffq^{(1)}_d)\in\Z^d$ satisfies that $\gcd(\ffq_1^{(1)},\ffq_2^{(1)},\dots,\ffq^{(1)}_d)=1$, then there exists $\bbq^{(2)},\bbq^{(3)},\dots,\bbq^{(d)}\in\Z^d$ such that $\det(\bbq^{(1)},\bbq^{(2)},\dots,\bbq^{(d)})=1$, i.e., $(\bbq^{(1)},\bbq^{(2)},\dots,\bbq^{(d)})\in \mathbf{SL}_d(\Z)$.
\end{lem}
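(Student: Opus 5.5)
We argue by induction on $d$, using the Euclidean algorithm to reduce the primitive vector to $\bbe_1$ by integer column operations. Concretely, we show that there is a matrix $U\in\mathbf{SL}_d(\Z)$ with $U\bbq^{(1)}=\bbe_1$. Once this is established, the matrix $U^{-1}\in\mathbf{SL}_d(\Z)$ has first column $U^{-1}\bbe_1=\bbq^{(1)}$. Taking $\bbq^{(2)},\dots,\bbq^{(d)}$ to be the remaining columns of $U^{-1}$ then gives $\det(\bbq^{(1)},\dots,\bbq^{(d)})=\det U^{-1}=1$, as required.

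To construct $U$, we act on $\bbq^{(1)}$ by elementary integer operations, each of which is given by a matrix in $\mathbf{SL}_d(\Z)$. The first type adds an integer multiple of one coordinate to another; this is an elementary matrix with determinant $1$. The second type is the signed swap $(x_i,x_j)\mapsto(x_j,-x_i)$, which has determinant $1$ and also lets us fix signs when combined with itself. We choose a coordinate of smallest nonzero absolute value and reduce every other coordinate modulo it. This strictly decreases the minimal nonzero absolute value unless all other coordinates become $0$. Since the gcd is invariant under these operations and equals $1$, the process terminates at $\pm\bbe_i$ for some $i$. Signed swaps then move this to $\pm\bbe_1$. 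If the sign is $-1$, we apply the swap $\mathrm{diag}$-type map sending $(x_1,x_2)\mapsto(-x_1,-x_2)$, which has determinant $1$ and uses $d\geq2$, to obtain $\bbe_1$. The product of all these matrices is $U$.

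The only point needing care is the final sign normalization, since a plain coordinate swap has determinant $-1$. This is exactly where the hypothesis $d\ge2$ enters: it guarantees room to apply the paired sign change in two coordinates. Alternatively, one can note that replacing $\bbq^{(2)}$ by $-\bbq^{(2)}$ flips the sign of the determinant, so any completion with determinant $\pm1$ suffices. Everything else is the routine Euclidean descent.
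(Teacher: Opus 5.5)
Your argument is correct, but it takes a different route from the paper's. The paper inducts on the dimension. It sets $\ffg=\gcd(\ffq^{(1)}_1,\dots,\ffq^{(1)}_n)$ and writes $\ffq^{(1)}_i=\ffg\ffa^{(1)}_i$. It then completes the primitive vector $\ffa^{(1)}\in\Z^n$ by the inductive hypothesis, chooses B\'ezout coefficients with $\ffs\ffg-\ffr\ffq^{(1)}_{n+1}=1$, and writes the completion explicitly as $\bbq^{(j)}=(\ffa^{(j)},0)$ for $2\le j\le n$ and $\bbq^{(n+1)}=(\ffr\ffa^{(1)},\ffs)$. The determinant is checked by a cofactor expansion along the last row.

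You instead show that $\mathbf{SL}_d(\Z)$ acts transitively on primitive vectors. You do this by a Euclidean descent using elementary matrices and the determinant-one signed swaps, and then read off the completion as the columns of $U^{-1}$. Despite your opening sentence, this is a descent on the smallest nonzero $|x_i|$, not an induction on $d$; no dimensional induction is used or needed.

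Your route is uniform in the data. The paper's inductive step needs $\ffg\geq1$ and handles the case where the first $n$ coordinates all vanish with a ``without loss of generality''; your argument needs no such case distinction. It also isolates exactly where $d\geq2$ enters, namely the final sign normalization. The paper's route, in exchange, gives an explicit completion in terms of B\'ezout coefficients with a one-line determinant check. Yours produces the completing vectors only implicitly, through the product of elementary matrices.
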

\begin{proof}
	We prove the desired result by induction. For $d=2$, since $\gcd(\ffq^{(1)}_1,\ffq^{(1)}_2)=1$, there exists $(\ffq^{(2)}_1,\ffq^{(2)}_2)\in\Z^2$ such that $\ffq^{(1)}_1\ffq_2^{(2)}-\ffq_2^{(1)}\ffq_1^{(2)}=1$. Thus if $\bbq^{(2)}=(\ffq^{(2)}_1,\ffq^{(2)}_2)$, then we get that $(\bbq^{(1)},\bbq^{(2)})\in\mathbf{SL}_2(\Z)$.
	
	Now we assume that the desired result holds for $d=n$ and $\bbq^{(1)}=(\ffq_1^{(1)},\ffq_2^{(1)},\dots,\ffq^{(1)}_{n+1})\in\Z^{n+1}$ satisfies the condition $\gcd(\ffq_1^{(1)},\ffq_2^{(1)},\dots,\ffq^{(1)}_{n+1})=1$.
	Let $\ffg=\gcd(\ffq_1^{(1)},\ffq_2^{(1)},\dots,\ffq^{(1)}_n)$.
	Without loss of generality we assume $\ffg\geq1$ and write $\ffq_i^{(1)}=\ffg\ffa^{(1)}_i$, where $\ffa^{(1)}_i\in\Z$ for $1\le i\le n$.
	If we set $\ffa^{(1)}:=(\ffa_{1}^{(1)},\cdots,\ffa_{n}^{(1)})$,  by induction, there  exist $\ffa^{(2)}, \cdots, \ffa^{(n)}\in \Z^n$ such that $ \det(\ffa^{(1)}, \cdots, \ffa^{(n)})=1$. On the other hand, since $\gcd(\ffq^1_{n+1},\ffg)=1$, there exists $\ffr,\ffs\in\Z$ such that $\ffs\ffg-\ffr\ffq_{n+1}^{(1)}=1$. Now we set $\bbq^{(j)}=(\ffa^{(j)}, 0)$ for $2\le j\le n$ and $\bbq^{(n+1)}=(\ffr\ffa^{(1)},\ffs)$. One may easily check that $(\bbq^{(1)},\bbq^{(2)},\dots,\bbq^{(n+1)})\in \mathbf{SL}_{n+1}(\Z)$ and then we complete the induction. This ends the proof.
\end{proof}

We next record a quantitative lower bound for linear combinations of
almost-orthogonal vectors.

\begin{lem}[Quantitative basis comparability]\label{linearal}
Let $\bbb_1,\bbb_2,\cdots, \bbb_k$ be linearly independent vectors in $\R^m$. If $\left\{\det\rr{\<\bbb_i,\bbb_j\>}_{1\leq i,j\leq k}\right\}^\f12\geq\kappa\prod_{i=1}^{k}|\bbb_i|$ for $0<\kappa\le1$, then for any $(c_1,c_2,\cdots,c_k)\in\R^k$, $\nr{\sum_{i=1}^{k}c_i\bbb_i}\geq\rr{\f{\kappa}{10}}^k\sum_{i=1}^{k}\nr{c_i\bbb_i}$.
\end{lem}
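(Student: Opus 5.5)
The plan is to reduce to an orthonormal-coordinate statement via the Gram matrix and then bound each coefficient separately. Write $\bbb_i=|\bbb_i|\,\bbe_i'$ with unit vectors $\bbe_i'$, and set $x_i:=c_i|\bbb_i|$. Then $\sum_i c_i\bbb_i=\sum_i x_i\bbe_i'$, and the hypothesis becomes $\det G\ge\kappa^2$, where $G=(\<\bbe_i',\bbe_j'\>)$ is the Gram matrix of the unit vectors. The claim becomes
\[
\Bigl|\sum_i x_i\bbe_i'\Bigr|\ge(\kappa/10)^k\sum_i|x_i|.
\]
Since $\sum_i|x_i|\le\sqrt k\,|x|$, it suffices to show $|\sum_i x_i\bbe_i'|^2=x^TGx\ge\lambda_{\min}(G)|x|^2$ with a good lower bound on $\lambda_{\min}(G)$.

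\textbf{Step 1 (smallest eigenvalue).} $G$ is positive definite with unit diagonal, so $\operatorname{tr}G=k$. Every other eigenvalue is at most $k$ (indeed their sum is at most $k$), hence
\[
\lambda_{\min}\ge\det G/\lambda_{\max}^{k-1}\ge\kappa^2k^{1-k}.
\]
This gives $|\sum x_i\bbe_i'|\ge\kappa k^{-(k-1)/2}|x|\ge\kappa k^{-k/2}\sum|x_i|$.

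\textbf{Step 2 (comparison with the target constant).} We need $\kappa k^{-k/2}\ge(\kappa/10)^k$, i.e.\ $10^k\ge\kappa^{k-1}k^{k/2}$. Since $\kappa\le1$, it suffices that $10\ge\sqrt k$, which holds for $k\le100$. For larger $k$ this crude bound fails, so a sharper eigenvalue estimate is needed, and this is the expected main obstacle.

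\textbf{Step 3 (uniform in $k$).} To get a bound independent of $k$, I would bound coordinates individually instead of using the eigenvalue. Let $V_i$ be the span of $\{\bbe_j'\}_{j\ne i}$. The distance from $\bbe_i'$ to $V_i$ equals $(\det G/\det G_{(i)})^{1/2}$, where $G_{(i)}$ is the Gram matrix with row and column $i$ removed. By Hadamard's inequality $\det G_{(i)}\le1$, so this distance is at least $\kappa$. Projecting $\sum_j x_j\bbe_j'$ onto the orthogonal complement of $V_i$ gives $|\sum_j x_j\bbe_j'|\ge\kappa|x_i|$ for each $i$. Summing over $i$ yields $|\sum_j x_j\bbe_j'|\ge(\kappa/k)\sum_i|x_i|$. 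Since $k\le 10^k\kappa^{1-k}$ for all $k\ge1$, this gives the stated bound $(\kappa/10)^k$ for every $k$, making Step 3 the argument I would actually write and Steps 1--2 unnecessary.
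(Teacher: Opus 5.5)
Your Step 3 is a complete and correct proof on its own, so Steps 1--2 can be dropped. It takes a different and shorter route than the paper.

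The paper argues by induction on $k$. It projects onto the orthogonal complement of $\mathrm{span}\{\bbb_1,\dots,\bbb_{k-1}\}$. Factoring the Gram determinant and applying Hadamard's bound to the first $k-1$ vectors gives two things: $|P\bbb_k|\ge\kappa|\bbb_k|$, and the hypothesis again for $\bbb_1,\dots,\bbb_{k-1}$. It then closes with a two-case comparison of $|c_k(I-P)\bbb_k|$ against $|\sum_{i<k}c_i\bbb_i|$. Each level of the induction loses a factor comparable to $\kappa$, which is where $(\kappa/10)^k$ comes from.

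You use the same two ingredients: the height identity $\det G=\det G_{(i)}\,\mathrm{dist}(\bbe_i',V_i)^2$ and Hadamard's inequality. The difference is that you note the Gram determinant is invariant under simultaneous permutation of rows and columns. Hence the height bound holds for every index $i$, not only the last one. One projection per coordinate then gives $|\sum_j c_j\bbb_j|\ge\kappa\max_i|c_i\bbb_i|$, with no recursion and no case split.

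What this buys is a sharper constant: $\kappa/k$, with a single power of $\kappa$ instead of $\kappa^k$. The stated bound follows because $10^k\ge k$ and $\kappa\le1$. Your Step 1 could also have been made uniform in $k$. For $k\ge2$, AM--GM bounds the product of the $k-1$ largest eigenvalues of $G$ by $(k/(k-1))^{k-1}\le e$, so $\lambda_{\min}\ge\kappa^2/e$.

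The paper uses the lemma only in Proposition~\ref{Lattice}, with $k=d-1$ and $\kappa=\kappa(d)$. There any constant depending on $d$ suffices, so its weaker constant costs nothing, but your argument is the cleaner one.
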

\begin{proof}
We prove by induction. For $k'=1$ the proposition apparently holds. We suppose the proposition holds for $k'\leq k-1$, $k\geq2$. In the following we prove that it holds for $k'=k$. Let $P:\mathrm{span}\{\bbb_i\}_{i=1}^{k}\rightarrow\mathrm{span}\{\bbb_i\}_{i=1}^{k}$ be the orthogonal projection map onto $\left\{\mathrm{span}\{\bbb_i\}_{i=1}^{k-1}\right\}^\perp$. By basic linear algebra we have
\[\kappa\prod_{i=1}^{k}|\bbb_i|\leq\left\{\det\rr{\<\bbb_i,\bbb_j\>}_{1\leq i,j\leq k}\right\}^\f12=|P\bbb_k|\left\{\det\rr{\<\bbb_i,\bbb_j\>}_{1\leq i,j\leq k-1}\right\}^\f12\leq|P\bbb_k|\prod_{i=1}^{k-1}|\bbb_i|,\]
which implies that
\[|P\bbb_k|\geq\kappa|\bbb_k|,\quad \text{and}\quad \left\{\det\rr{\<\bbb_i,\bbb_j\>}_{1\leq i,j\leq k-1}\right\}^\f12\geq\kappa\prod_{i=1}^{k-1}|\bbb_i|.\]
We consider the following two cases.

\underline{Case 1: $|c_k(I-P)\bbb_k|\geq\f12|\sum_{i=1}^{k-1}c_i\bbb_i|$.} In this case the chain of inequalities \[|c_kP\bbb_k|\geq\kappa|c_k\bbb_k|\geq\kappa|c_k(I-P)\bbb_k|\geq\f\kappa2|\sum_{i=1}^{k-1}c_i\bbb_i|\]
holds by the definition of $P$ and the assumption of Case 1. Consequently,
\[\ba&\nr{\sum_{i=1}^{k}c_i\bbb_i}\geq\f{1}{\sqrt{2}}\nr{\sum_{i=1}^{k-1}c_i\bbb_i+c_k(I-P)\bbb_k}+\f{1}{\sqrt{2}}\nr{c_kP\bbb_k}\geq\f{1}{\sqrt{2}}\nr{c_kP\bbb_k}\geq\f{\kappa}{4\sqrt{2}}\rr{\nr{\sum_{i=1}^{k-1}c_i\bbb_i}+\nr{c_k\bbb_k}}\\
&\geq\f{\kappa}{4\sqrt{2}}\rr{\rr{\f{\kappa}{10}}^{k-1}\sum_{i=1}^{k-1}\nr{c_i\bbb_i}+\nr{c_k\bbb_k}}\geq\rr{\f{\kappa}{10}}^{k}\sum_{i=1}^{k}\nr{c_i\bbb_i}.\ea\]

\underline{Case 2: $|c_k(I-P)\bbb_k|\leq\f12|\sum_{i=1}^{k-1}c_i\bbb_i|$.} In this case, we apply the triangle inequality and the induction hypothesis to obtain
\[\ba
&\nr{\sum_{i=1}^{k}c_i\bbb_i}\geq\f{1}{\sqrt{2}}\nr{\sum_{i=1}^{k-1}c_i\bbb_i+c_k(I-P)\bbb_k}+\f{1}{\sqrt{2}}\nr{c_kP\bbb_k}\geq\f{1}{\sqrt{2}}\rr{\f12\nr{\sum_{i=1}^{k-1}c_i\bbb_i}+\kappa\nr{c_k\bbb_k}}\\
&\geq\f{1}{\sqrt{2}}\rr{\f12\rr{\f{\kappa}{10}}^{k-1}\sum_{i=1}^{k-1}\nr{c_i\bbb_i}+\kappa\nr{c_k\bbb_k}}\geq\rr{\f{\kappa}{10}}^{k}\sum_{i=1}^{k}\nr{c_i\bbb_i}.
\ea\]
\end{proof}

We next isolate the elementary counting estimate needed for a reduced basis.
Its two regimes are important: below unit scale a very anisotropic lattice can
behave effectively one-dimensionally, whereas above unit scale the full
dimension is visible.

\begin{lem}[Anisotropic lattice sums]\label{lem:anisotropic}
Let $b_1,b_2,\cdots,b_d$ be positive numbers satisfying $0<b_j\leq K$ for any $j=1,2,\cdots,d$. For any $0\leq\gamma<1$, $R>0$ it holds that
\beq\label{0101}\sum_{\substack{(\ffc_1,\cdots,\ffc_d)\in{\Z^d}^*\\\sup_{i}b_i|\ffc_i|\leq R}}\rr{\sum_{i=1}^{d}\nr{b_i\ffc_i}^2}^{-\f\gamma2}\ls_{\gamma,d,K}\f{1}{\prod_{i=1}^{d}b_i}\left\{\ba&R^{1-\gamma},\quad R\leq1,\\&R^{d-\gamma},\quad R\geq1.\ea\right.\eeq
For any $\gamma>d$, $R>0$ it holds that
\beq\label{d1d1}\sum_{\substack{(\ffc_1,\cdots,\ffc_d)\in{\Z^d}^*\\\sup_{i}b_i|\ffc_i|\geq R}}\rr{\sum_{i=1}^{d}\nr{b_i\ffc_i}^2}^{-\f\gamma2}\ls_{\gamma,d,K}\f{1}{\prod_{i=1}^{d}b_i}\left\{\ba&R^{1-\gamma},\quad R\leq1,\\&R^{d-\gamma},\quad R\geq1.\ea\right.\eeq
\end{lem}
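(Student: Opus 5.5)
The plan is to reduce both sums to one-dimensional sums by a dyadic decomposition in the anisotropic norm, and then count lattice points in boxes. Throughout we use the sup-type quantity $N(\ffc):=\sup_i b_i|\ffc_i|$ in place of the Euclidean quantity. The two are comparable:
\[
N(\ffc)\le\Bigl(\sum_i|b_i\ffc_i|^2\Bigr)^{1/2}\le\sqrt d\,N(\ffc).
\]
So for $\gamma\geq0$ each summand is comparable to $N(\ffc)^{-\gamma}$, and it suffices to prove both estimates with this summand.

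The basic counting fact concerns the box $B(r):=\{\ffc\in\Z^d:N(\ffc)\le r\}$. We have
\[
\#B(r)=\prod_{i=1}^d\bigl(2\lfloor r/b_i\rfloor+1\bigr)\le\prod_{i=1}^d\Bigl(1+\f{2r}{b_i}\Bigr).
\]
Since $b_i\le K$, this gives $\#B(r)\ls_{d,K}\prod_i b_i^{-1}\,r^d$ for $r\ge1$. For $r\le1$, a nonzero $\ffc\in B(r)$ must have some $\ffc_i\neq0$, which forces $r\ge b_i$. The key estimate in this range is
\[
\#(B(r)\setminus\{0\})\ls_{d,K}\f{r}{\prod_i b_i}\qquad (r\le1).
\]
This is the step I expect to be the main obstacle. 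It is the anisotropic effect the statement emphasizes: the lattice can look one-dimensional below unit scale. To prove it, write $\prod_i(1+2r/b_i)-1$. Each term in the expansion is a product over a nonempty set $S$ of indices $\prod_{i\in S}(2r/b_i)$. This term only arises when some $b_i\le r$ with $i\in S$; otherwise $B(r)=\{0\}$ in those coordinates and the count is zero. More carefully, $\#(B(r)\setminus\{0\})\le\prod_i(1+2r/b_i)\mathbf 1_{\{\min_i b_i\le r\}}$. For $r\le1$, use $1+2r/b_i\le 3K/b_i$ for every $i$ (since $b_i\le K$ and $r\le1$, with $K\ge1$ without loss). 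For one index $i_0$ with $b_{i_0}\le r$, use instead $1+2r/b_{i_0}\le 3r/b_{i_0}$. This gives $\ls r\prod_i b_i^{-1}$, as required.

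Next I would carry out the dyadic summation for \eqref{0101}, where $0\le\gamma<1$. Decompose $N(\ffc)\in(2^{j-1}r_0,2^jr_0]$, or more simply sum over shells $2^{j-1}<N\le2^j$ with $2^j\le 2R$. The shell contributes at most $2^{-(j-1)\gamma}\#B(2^j)$. Dyadic scales $\le1$ contribute $\prod b_i^{-1}\sum 2^{j(1-\gamma)}$, a geometric sum that converges because $\gamma<1$ and is dominated by its top scale $\min(R,1)^{1-\gamma}$. Scales in $[1,R]$ contribute $\prod b_i^{-1}\sum 2^{j(d-\gamma)}\ls\prod b_i^{-1}R^{d-\gamma}$, which dominates when $R\ge1$. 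This yields both regimes of \eqref{0101}.

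For \eqref{d1d1}, where $\gamma>d$, we sum over shells with $2^j\ge R/2$. Shells at scales $\ge\max(R,1)$ give $\prod b_i^{-1}\sum 2^{j(d-\gamma)}\ls\prod b_i^{-1}\max(R,1)^{d-\gamma}$. If $R\le1$, shells at scales in $[R,1]$ give $\prod b_i^{-1}\sum 2^{j(1-\gamma)}\ls\prod b_i^{-1}R^{1-\gamma}$, since $\gamma>d\ge1$ makes this geometric sum dominated by its smallest scale. The tail $\ge1$ contributes $\ls\prod b_i^{-1}\le\prod b_i^{-1}R^{1-\gamma}$. All implicit constants depend only on $\gamma,d,K$, which completes both estimates.
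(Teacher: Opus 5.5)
Your argument is correct, and it takes a different route from the paper.

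\textbf{The paper's route.} The paper proves both estimates by induction on $d$. After ordering $b_1\le\dots\le b_d$, it peels off the last coordinate and splits according to whether $\ffc_d$ and $(\ffc_1,\dots,\ffc_{d-1})$ vanish. It then uses one-dimensional bounds in the last variable, namely $\sum_{0<|m|\le R}(m^2+a^2)^{-\gamma/2}\lesssim R^{1-\gamma}$ and $\sum_{|m|\ge R}(m^2+a^2)^{-\gamma/2}\lesssim\min\{R^{1-\gamma},a^{1-\gamma}\}$. The inductive hypothesis enters with $\gamma=0$ for counting and, for the tail estimate, after a union bound over which coordinate satisfies $b_j|\ffc_j|\ge R$, with the shifted exponent $\gamma-1$ in dimension $d-1$. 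The sub-unit regime appears only implicitly, through the case $R<b_d$, which forces $\ffc_d=0$.

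\textbf{Your route.} You replace the Euclidean weight by $N(\ffc)=\sup_i b_i|\ffc_i|$, which is legitimate since $\gamma\ge0$. You then decompose dyadically in $N$ and reduce everything to one box count: $\#(B(r)\setminus\{0\})\lesssim_{d,K} r/\prod_i b_i$ for $r\le1$, and $\#B(r)\lesssim_{d,K} r^d/\prod_i b_i$ for $r\ge1$. The first bound holds because a nonzero lattice point forces some $b_{i_0}\le r$. That turns the factor $1+2r/b_{i_0}$ into $3r/b_{i_0}$, while every other factor is $\lesssim_K 1/b_i$. Your indicator formulation is the one that actually proves this; the preceding ``expansion'' sentence is only valid if you keep the floors $2\lfloor r/b_i\rfloor$. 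The two regimes and the thresholds $\gamma<1$ and $\gamma>d$ then fall out of geometric sums dominated by their top or bottom scale, and the constants depend only on $\gamma,d,K$.

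\textbf{Comparison.} Your approach is shorter and treats both inequalities by the same mechanism. It avoids the ordering of the $b_i$, the union bound over coordinates, and the shifted-exponent induction. It also makes the ``effectively one-dimensional below unit scale'' phenomenon explicit in a single counting line. The paper's induction carries more coordinate-by-coordinate information, but nothing beyond what the lemma requires.
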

\begin{proof}
We prove by induction. For the $d=1$ case, we first prove \eqref{0101}.
\[\sum_{\substack{\ffc_1\in\Z^*,\\|\ffc_1|\leq R/b_1}}\nr{b_1\ffc_1}^{-\gamma}\ls_{\gamma}b_1^{-\gamma}\rr{\f{R}{b_1}}^{1-\gamma}=\f{R^{1-\gamma}}{b_1}.\]
Then we prove \eqref{d1d1}.
\[\sum_{\substack{\ffc_1\in\Z^*,\\|\ffc_1|\geq R/b_1}}\nr{b_1\ffc_1}^{-\gamma}\ls_{\gamma}b_1^{-\gamma}\rr{\f{R}{b_1}}^{1-\gamma}=\f{R^{1-\gamma}}{b_1}.\]
We suppose \eqref{0101} and \eqref{d1d1} hold for $d'\leq d-1$, $d\geq2$. In the following we prove that \eqref{0101} and \eqref{d1d1} hold for $d'= d$. 

\underline{Proof of \eqref{0101}.} Without loss of generality we set $b_1\leq b_2\leq\cdots\leq b_d$. We consider the following two cases.

\uwave{Case 1: $R\geq b_d$.} We decompose the sum according to whether  $(\ffc_1,\ffc_2,\cdots,\ffc_{d-1})$ vanishes and whether $\ffc_d$ vanishes.
\[\ba
&\sum_{\substack{(\ffc_1,\cdots,\ffc_d)\in{\Z^d}^*\\\sup_{i}b_i|\ffc_i|\leq R}}\rr{\sum_{i=1}^{d}\nr{b_i\ffc_i}^2}^{-\f\gamma2}=\sum_{\substack{(\ffc_1,\cdots,\ffc_{d-1})\in{\Z^{d-1}}^*\\\sup_{i}b_i|\ffc_i|\leq R}}\rr{\sum_{i=1}^{d-1}\nr{b_i\ffc_i}^2}^{-\f\gamma2}+\sum_{\substack{\ffc_d\in\Z^*,\\|\ffc_d|\leq R/b_d}}\nr{b_d\ffc_d}^{-\gamma}\\
&+\sum_{\substack{(\ffc_1,\cdots,\ffc_{d-1})\in{\Z^{d-1}}^*\\\sup_{i}b_i|\ffc_i|\leq R}}\sum_{\substack{\ffc_d\in\Z^*,\\|\ffc_d|\leq R/b_d}}b_d^{-\gamma}\rr{\nr{\ffc_d}^2+\f{\sum_{i=1}^{d-1}\nr{b_i\ffc_i}^2}{b_d^2}}^{-\f\gamma2}\ls_{\gamma,d,K}\f{1}{\prod_{i=1}^{d-1}b_i}\rr{R^{1-\gamma}\vv{1}_{R\leq1}+R^{d-1-\gamma}\vv{1}_{R\geq1}}\\
&+\f{R^{1-\gamma}}{b_d}+\f{R^{1-\gamma}}{b_d}\#\{(\ffc_1,\ffc_2,\cdots,\ffc_{d-1})\in{\Z^{d-1}}^*:\sup_{i=1,\cdots,d-1}b_i|\ffc_i|\leq R\}\\
&\ls_{d,K}\f{1}{\prod_{i=1}^{d-1}b_i}\rr{R^{1-\gamma}\vv{1}_{R\leq1}+R^{d-1-\gamma}\vv{1}_{R\geq1}}+\f{R^{1-\gamma}}{b_d}+\f{R^{1-\gamma}}{b_d}\f{1}{\prod_{i=1}^{d-1}b_i}\rr{R\vv{1}_{R\leq1}+R^{d-1}\vv{1}_{R\geq1}}\\
&\ls_{d,K}\f{1}{\prod_{i=1}^{d}b_i}\rr{R^{1-\gamma}\vv{1}_{R\leq1}+R^{d-\gamma}\vv{1}_{R\geq1}}.
\ea\]
For the case where $(\ffc_1,\ffc_2,\cdots,\ffc_{d-1})\neq0$ and $\ffc_d=0$, we apply the inductive hypothesis. When $\ffc_d\neq0$ as well, we use the bound \[\sum_{\substack{\ffm\in\Z^*\\|\ffm|\leq R}}\rr{\ffm^2+a^2}^{-\f\gamma2}\ls_\gamma R^{1-\gamma}\quad (0\leq a\leq R),\] together with the inductive hypothesis for $\gamma=0$. The last step relies on the uniform bound  $b_i\leq K$.

\uwave{Case 2: $R< b_d$.} By the inductive hypothesis we have
\[\ba
&\sum_{\substack{(\ffc_1,\ffc_2,\cdots,\ffc_d)\in{\Z^d}^*\\\sup_{i=1,\cdots,d}b_i|\ffc_i|\leq R}}\rr{\sum_{i=1}^{d}\nr{b_i\ffc_i}^2}^{-\f\gamma2}=\sum_{\substack{(\ffc_1,\ffc_2,\cdots,\ffc_{d-1})\in{\Z^{d-1}}^*\\\sup_{i=1,\cdots,d-1}b_i|\ffc_i|\leq R}}\rr{\sum_{i=1}^{d-1}\nr{b_i\ffc_i}^2}^{-\f\gamma2}\\
&\ls_{\gamma,d,K}\f{1}{\prod_{i=1}^{d-1}b_i}\rr{R^{1-\gamma}\vv{1}_{R\leq1}+R^{d-1-\gamma}\vv{1}_{R\geq1}}\ls_K\f{1}{\prod_{i=1}^{d}b_i}\rr{R^{1-\gamma}\vv{1}_{R\leq1}+R^{d-\gamma}\vv{1}_{R\geq1}}.
\ea\]
The final inequality uses $b_i\leq K$.

\underline{Proof of \eqref{d1d1}.} We have
\[\sum_{\substack{(\ffc_1,\ffc_2,\cdots,\ffc_d)\in{\Z^d}^*\\\sup_{i=1,\cdots,d}b_i|\ffc_i|\geq R}}\rr{\sum_{i=1}^{d}\nr{b_i\ffc_i}^2}^{-\f\gamma2}\leq\sum_{j=1}^{d}\sum_{\substack{(\ffc_1,\ffc_2,\cdots,\ffc_d)\in{\Z^d}^*\\b_j|\ffc_j|\geq R}}\rr{\sum_{i=1}^{d}\nr{b_i\ffc_i}^2}^{-\f\gamma2}.\]
Below we estimate the contribution from $j=d$, which is typical of the general case.  We consider the following two cases.

\uwave{Case 1: $R\geq b_d$.} By the elementary estimate
\[
\sum_{\substack{\ffm\in\Z^*\\\nr{\ffm}\ge R}}\rr{\ffm^2+a^2}^{-\f\gamma2}
\ls_\gamma \min\bigl\{R^{1-\gamma},a^{1-\gamma}\bigr\}\qquad(a\ge0,\ R\ge1),
\]
we obtain that
\[\ba
&\sum_{\substack{(\ffc_1,\ffc_2,\cdots,\ffc_d)\in{\Z^d}^*\\b_d|\ffc_d|\geq R}}\rr{\sum_{i=1}^{d}\nr{b_i\ffc_i}^2}^{-\f\gamma2}=\sum_{(\ffc_1,\ffc_2,\cdots,\ffc_{d-1})\in{\Z^{d-1}}}\sum_{b_d|\ffc_d|\geq R}b_d^{-\gamma}\rr{\nr{\ffc_d}^2+\f{\sum_{i=1}^{d-1}\nr{b_i\ffc_i}^2}{b_d^2}}^{-\f\gamma2}\\
&\ls_\gamma\sum_{\substack{(\ffc_1,\ffc_2,\cdots,\ffc_{d-1})\in{\Z^{d-1}},\\\rr{\sum_{i=1}^{d-1}\nr{b_i\ffc_i}^2}^\f12\leq R}}\f{R^{1-\gamma}}{b_d}+\sum_{\substack{(\ffc_1,\ffc_2,\cdots,\ffc_{d-1})\in{\Z^{d-1}},\\\rr{\sum_{i=1}^{d-1}\nr{b_i\ffc_i}^2}^\f12\geq R}}\f{1}{b_d}\rr{\sum_{i=1}^{d-1}\nr{b_i\ffc_i}^2}^{\f{1-\gamma}{2}}\\
&\leq\sum_{\substack{(\ffc_1,\ffc_2,\cdots,\ffc_{d-1})\in{\Z^{d-1}},\\\sup_{i=1,\cdots,d-1}\nr{b_i\ffc_i}\leq R}}\f{R^{1-\gamma}}{b_d}+\sum_{\substack{(\ffc_1,\ffc_2,\cdots,\ffc_{d-1})\in{\Z^{d-1}},\\\sup_{i=1,\cdots,d-1}\nr{b_i\ffc_i}\geq \f{R}{\sqrt{d}}}}\f{1}{b_d}\rr{\sum_{i=1}^{d-1}\nr{b_i\ffc_i}^2}^{\f{1-\gamma}{2}}+\f{R^{1-\gamma}}{b_d}\\
&\ls_{\gamma,d,K}\f{R^{1-\gamma}}{\prod_{i=1}^{d}b_i}\rr{R\vv{1}_{R\leq1}+R^{d-1}\vv{1}_{R\geq1}}+\f{1}{\prod_{i=1}^{d}b_i}\rr{R^{2-\gamma}\vv{1}_{R\leq1}+R^{d-\gamma}\vv{1}_{R\geq1}}+\f{R^{1-\gamma}}{b_d}\\
&\ls_{d,K}\f{1}{\prod_{i=1}^{d}b_i}\rr{R^{1-\gamma}\vv{1}_{R\leq1}+R^{d-\gamma}\vv{1}_{R\geq1}}.
\ea\]
The fourth line follows from \eqref{0101} for $\gamma=0$ and the inductive hypothesis. The last step relies on the uniform bound $b_i\le K$.

\uwave{Case 2: $R\leq b_d$.} Since $R\le b_d$ and $\ffc_d\in\Z^*$, the constraint $b_d|\ffc_d|\ge R$ is equivalent to $b_d|\ffc_d|\ge b_d$. By the estimate established in Case~1 we have
\[\ba
&\sum_{\substack{(\ffc_1,\cdots,\ffc_d)\in{\Z^d}^*\\b_d|\ffc_d|\geq R}}\rr{\sum_{i=1}^{d}\nr{b_i\ffc_i}^2}^{-\f\gamma2}=\sum_{\substack{(\ffc_1,\cdots,\ffc_d)\in{\Z^d}^*\\b_d|\ffc_d|\geq b_d}}\rr{\sum_{i=1}^{d}\nr{b_i\ffc_i}^2}^{-\f\gamma2}\\
&\ls_{\gamma,d,K}\f{1}{\prod_{i=1}^{d}b_i}\rr{b_d^{1-\gamma}\vv{1}_{b_d\leq1}+b_d^{d-\gamma}\vv{1}_{b_d\geq1}}\ls_{d,K}\f{1}{\prod_{i=1}^{d}b_i}\rr{R^{1-\gamma}\vv{1}_{R\leq1}+R^{d-\gamma}\vv{1}_{R\geq1}}.
\ea\]
\end{proof}

\begin{prop}[Projected-lattice counting]\label{Lattice}
Let $d\geq2$. For any $0\leq\gamma<1$, there exists $C_{d,\gamma}>0$ such that for any $\bbp=(\ffp_1,\ffp_2,\dots,\ffp_d)\in\N^d, \gcd(\ffp_1,\ffp_2,\dots,\ffp_d)=1$, for any $R\geq 0$, it holds that 
\beq\label{lattice2}\sum_{\substack{\bbc\in {\Pi(\bbp)(\Z^d)}^*\\|\bbc|\leq R}}|\bbc|^{-\gamma}\leq \left\{\ba&C_{d,\gamma}|\bbp|R^{1-\gamma}\quad&R\leq1\\&C_{d,\gamma}|\bbp|R^{d-1-\gamma}\quad&R\geq1.\ea\right.\eeq
For any $\gamma>d-1$, there exists $C_{d,\gamma}>0$ such that for any $\bbp=(\ffp_1,\ffp_2,\dots,\ffp_d)\in\N^d, \gcd(\ffp_1,\ffp_2,\dots,\ffp_d)=1$, for any $R\geq 0$, it holds that
\beq\label{lattice3}\sum_{\substack{\bbc\in {\Pi(\bbp)(\Z^d)}^*\\|\bbc|\geq R}}|\bbc|^{-\gamma}\leq \left\{\ba&C_{d,\gamma}|\bbp|R^{1-\gamma}\quad&R\leq1\\&C_{d,\gamma}|\bbp|R^{d-1-\gamma}\quad&R\geq1.\ea\right.\eeq
As a corollary, there exists $C_d>0$ such that for any $\bbp\in\N^d$,  
\beq\label{lattice1}\sum_{\bbc\in {\Pi(\bbp)(\Z^d)}^*}\rr{1+\f{|\bbc|}{R}}^{-d}\leq\left\{\ba&C_d|\bbp|R\quad&R\leq1\\&C_{d}|\bbp|R^{d-1}\quad&R\geq1.\ea\right.\eeq
\end{prop}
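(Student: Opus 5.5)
We want to prove Proposition \ref{Lattice}: bounds for sums over the projected lattice $\Pi(\bbp)(\Z^d)^*$. The plan is to reduce everything to Lemma \ref{lem:anisotropic} by choosing a good basis of this $(d-1)$-dimensional lattice.

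\emph{Step 1: identify the lattice and its covolume.} Complete $\bbp=\bbq^{(1)}$ to a unimodular basis $\bbq^{(1)},\dots,\bbq^{(d)}$ using Lemma \ref{basis}. Then $\Z^d=\bigoplus\Z\bbq^{(j)}$ and $\Pi(\bbp)\bbp=0$, so
\[
\Pi(\bbp)(\Z^d)=\bigoplus_{j=2}^d\Z\,\Pi(\bbp)\bbq^{(j)}.
\]
This is a rank $d-1$ lattice in $\bbp^\perp$. Its covolume is $\det(\bbq)/|\bbp|=1/|\bbp|$, since the volume of the full parallelepiped equals $|\bbp|$ times the $(d-1)$-volume of the projected one.

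\emph{Step 2: pass to a reduced basis.} Replace $\{\Pi(\bbp)\bbq^{(j)}\}$ by a Minkowski- or LLL-reduced basis $\bbb_1,\dots,\bbb_{d-1}$ of the same lattice. This basis satisfies two properties with constants depending only on $d$:
\[
\Big(\det\langle\bbb_i,\bbb_j\rangle\Big)^{1/2}\ge\kappa_d\prod_i|\bbb_i|,
\qquad
\prod_i|\bbb_i|\sim_d |\bbp|^{-1}.
\]
We also need a uniform upper bound $|\bbb_i|\le K$. The lattice $\Pi(\bbp)(\Z^d)$ contains the projections of the standard basis vectors, which have norm $\le1$ and span $\bbp^\perp$. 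The successive minima are therefore all $\le1$, and a reduced basis has $|\bbb_i|\le C_d\lambda_i\le C_d=:K$.

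Lemma \ref{linearal} then gives, for $\bbc=\sum c_i\bbb_i$,
\[
|\bbc|\sim_d\Big(\sum_i|c_i\bbb_i|^2\Big)^{1/2}\sim\sup_i|c_i|\,|\bbb_i|.
\]

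\emph{Step 3: apply Lemma \ref{lem:anisotropic}.} Work in dimension $d-1$ with $b_i=|\bbb_i|\le K$. The condition $|\bbc|\le R$ is comparable to $\sup_i b_i|c_i|\le C R$. Then \eqref{0101} with $\gamma<1$ gives the bound
\[
\frac{1}{\prod b_i}\,R^{1-\gamma}\ \text{for } R\le1,
\qquad
\frac{1}{\prod b_i}\,R^{d-1-\gamma}\ \text{for } R\ge1.
\]
Rescaling $R$ by constants is harmless, including across the threshold $R=1$, since the two branches differ only by bounded factors. Because $1/\prod b_i\sim|\bbp|$, this is \eqref{lattice2}. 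Likewise \eqref{d1d1} with $\gamma>d-1$ gives \eqref{lattice3}.

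\emph{Step 4: the corollary \eqref{lattice1}.} Split the sum at $|\bbc|\le R$ and $|\bbc|>R$. On $|\bbc|\le R$, bound the summand by $1$ and use \eqref{lattice2} with $\gamma=0$. On $|\bbc|>R$, bound the summand by $R^d|\bbc|^{-d}$ and use \eqref{lattice3} with $\gamma=d>d-1$. This yields $R^d|\bbp|R^{1-d}=|\bbp|R$ for $R\le1$, and $|\bbp|R^{d-1}$ for $R\ge1$. The statement also allows non-primitive $\bbp$; dividing by the gcd gives the same projection with a smaller $|\bbp|$, so the bound persists.

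\emph{Main obstacle.} The delicate point is Step 2: producing a basis that is simultaneously near-orthogonal with a $d$-dependent $\kappa$ and uniformly bounded by $K$, with $\prod|\bbb_i|\sim|\bbp|^{-1}$. I would invoke Minkowski's second theorem together with the standard bound on the orthogonality defect of a Minkowski-reduced basis, using the lattice covolume from Step 1. Everything else is bookkeeping.
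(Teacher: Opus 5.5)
Your proposal is correct and follows essentially the same route as the paper: unimodular completion to get covolume $|\bbp|^{-1}$, a reduced basis whose successive minima are at most $1$ because the projected standard basis vectors span $\bbp^\perp$, Lemma \ref{linearal} to compare $|\bbc|$ with $\sup_i|c_i\bbb_i|$, then Lemma \ref{lem:anisotropic} in dimension $d-1$, and the split at $|\bbc|=R$ for the corollary. The only differences are that the paper uses a Korkin--Zolotarev basis where you use a Minkowski- or LLL-reduced one; this gives the same $d$-dependent bounds on the orthogonality defect and on $|\bbb_i|/\lambda_i$. Your remark about non-primitive $\bbp$ in \eqref{lattice1} is a small point the paper leaves implicit.
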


\begin{proof}
\underline{Step 1.} We first show that $\Pi(\bbp)(\Z^d)$ is a rank-$(d-1)$ lattice with determinant $|\bbp|^{-1}$.

By Lemma \ref{basis} there exist $\bbp^{(2)},\bbp^{(3)}, \cdots\bbp^{(d)}\in\Z^d$ such that 
$\det(\bbp,\bbp^{(2)},\cdots, \bbp^{(d)})=1$, which implies that $\Z^d=\Z\bbp+\sum_{i=2}^{d}\Z\bbp^{(i)}$. Hence we get $\Pi(\bbp)(\Z^d)=\sum_{i=2}^{d}\Z\Pi(\bbp)\bbp^{(i)}$.  The matrix $\rr{\<\Pi(\bbp)\bbp^{(i)},\Pi(\bbp)\bbp^{(j)}\>}_{2\leq i,j\leq d}$ is the Gram matrix of a basis for $\Pi(\bbp)(\Z^d)$, so its determinant gives the squared covolume of the lattice.
We set $A=\rr{\bbp,\Pi(\bbp)\bbp^{(2)}, \cdots,\Pi(\bbp)\bbp^{(d)}}\in M_{d}(\R)$.
Since $\det\rr{\bbp,\bbp^{(2)},\cdots,\bbp^{(d)}}=1$, computing the Gram determinant yields 
\[1=\nr{\det\rr{\bbp,\bbp^{(2)},\cdots,\bbp^{(d)}}}^2=\det A^{T}A=|\bbp|^2\det\rr{\<\Pi(\bbp)\bbp^{(i)},\Pi(\bbp)\bbp^{(j)}\>}_{2\leq i,j\leq d}.\]
Hence the determinant of the lattice is $\left\{\det\rr{\<\Pi(\bbp)\bbp^{(i)},\Pi(\bbp)\bbp^{(j)}\>}_{2\leq i,j\leq d} \right\}^\f12=|\bbp|^{-1}$.

\underline{Step 2.} We claim that there exists a basis $\{\bbb_i\}_{i=1}^{d-1}$ of $\Pi(\bbp)(\Z^d)$ satisfying 
$\prod_{i=1}^{d-1}\nr{\bbb_i}\sim_d|\bbp|^{-1}$ and $\max_i\nr{\bbb_i}\lesssim_d 1$.

We choose $\bbb_1,\bbb_2,\cdots,\bbb_{d-1}$ to be a Korkin-Zolotarev basis of $\Pi(\bbp)(\Z^d)$. By \cite[Theorem~2.3]{kzbase} we have \beq\label{554}\prod_{i=1}^{d-1}\nr{\bbb_i}\sim_d\left\{\det\rr{\<\bbb_i,\bbb_j\>}_{1\leq i,j\leq d-1} \right\}^\f12=\nr{\bbp}^{-1},\eeq where the middle equality uses the fact that the lattice determinant is independent of the choice of basis. Moreover, \cite[Theorem~2.1]{kzbase} gives $\nr{\bbb_i}\sim_d\lambda_i(\Pi(\bbp)(\Z^d))$, 
where $\lambda_i(\Pi(\bbp)(\Z^d))$ denotes the $i$-th successive minimum. Hence it suffices to prove that $\lambda_i(\Pi(\bbp)(\Z^d))\le 1$ for all $1\le i\le d-1$.

Since $\bbp\neq0$, we may assume without loss of generality that $\ffp_1\neq0$. Let $\mathbf e_i=(0,\dots,0,1,0,\dots,0)$ with $1$ in the $i$-th position.  By elementary linear algebra, 
\[\nr{\det\rr{\bbp,\mathbf{e}_2,\cdots,\mathbf{e}_d}}=\nr{\det\rr{\bbp,\Pi(\bbp)\mathbf{e}_2,\cdots,\Pi(\bbp)\mathbf{e}_d}}=\nr{\bbp}\rr{\det\rr{\<\Pi(\bbp)\mathbf{e}_i,\Pi(\bbp)\mathbf{e}_j\>}_{2\leq i,j\leq d}}^\f12=\ffp_1,\]
so $\{\Pi(\bbp)\mathbf{e}_i:i=2,\cdots,d\}$ is linearly independent. As $\nr{\Pi(\bbp)\mathbf{e}_i}\leq\nr{\mathbf{e}_i}=1$, we have found $d-1$ $\R$-linearly independent vectors in $\Pi(\bbp)(\Z^d)$ of length at most $1$. By definition of the successive minima,  this yields $\lambda_i(\Pi(\bbp)(\Z^d))\le 1$ for all $i$.

\underline{Step 3.} We now prove \eqref{lattice2}, \eqref{lattice3} and \eqref{lattice1}.

By Lemma \ref{linearal} and \eqref{554}, there exists $K_d$ depending only on $d$ such that for any $(\ffc_1,\ffc_2,\cdots,\ffc_{d-1})\in\Z^{d-1}$, it holds that
\beq\label{Tre}\f{1}{K_d}\sup_{i=1,2,\cdots,d-1}|\ffc_i\bbb_i|\leq\nr{\sum_{i=1}^{d-1}\ffc_i\bbb_i}\leq K_d\sup_{i=1,2,\cdots,d-1}|\ffc_i\bbb_i|.\eeq
Combining \eqref{Tre} with \eqref{0101}, we obtain for $0\leq\gamma<1$,
\[\ba
&\sum_{\substack{\bbc\in {\Pi(\bbp)(\Z^d)}^*\\|\bbc|\leq R}}|\bbc|^{-\gamma}=\sum_{\substack{(\ffc_1,\cdots,\ffc_{d-1})\in{\Z^{d-1}}^*\\|\sum_{i=1}^{d-1}\ffc_i\bbb_i|\leq R}}\nr{\sum_{i=1}^{d-1}\ffc_i\bbb_i}^{-\gamma}\ls_{d}\sum_{\substack{(\ffc_1\cdots,\ffc_{d-1})\in{\Z^{d-1}}^*\\\sup_{i}|\ffc_i\bbb_i|\leq K_dR}}\rr{\sum_{i=1}^{d-1}\nr{\ffc_i\bbb_i}^2}^{-\f\gamma2}\\
&\ls_{\gamma,d}\f{1}{\prod_{i=1}^{d-1}|\bbb_i|}\rr{R^{1-\gamma}\vv{1}_{R\leq1}+R^{d-1-\gamma}\vv{1}_{R\geq1}}\ls_d|\bbp|\rr{R^{1-\gamma}\vv{1}_{R\leq1}+R^{d-1-\gamma}\vv{1}_{R\geq1}}.
\ea\]
Similarly for $\gamma>d-1$ we have 
\[\sum_{\substack{\bbc\in {\Pi(\bbp)(\Z^d)}^*\\|\bbc|\geq R}}|\bbc|^{-\gamma}\ls_{d}\sum_{\substack{(\ffc_1,\cdots,\ffc_{d-1})\in{\Z^{d-1}}^*\\\sup_{i}|\ffc_i\bbb_i|\geq R/K_d}}\rr{\sum_{i=1}^{d-1}\nr{\ffc_i\bbb_i}^2}^{-\f\gamma2}\ls_{\gamma,d}|\bbp|\rr{R^{1-\gamma}\vv{1}_{R\leq1}+R^{d-1-\gamma}\vv{1}_{R\geq1}}.\]
This proves \eqref{lattice2} and \eqref{lattice3}.	Then we prove \eqref{lattice1}. By \eqref{lattice2} with $\gamma=0$ and \eqref{lattice3} with $\gamma=d$,
\[\sum_{\bbc\in {\Pi(\bbp)(\Z^d)}^*}\rr{1+\f{|\bbc|}{R}}^{-d}\leq\sum_{\substack{\bbc\in {\Pi(\bbp)(\Z^d)}^*\\|\bbc|\leq R}}1+R^d\sum_{\substack{\bbc\in {\Pi(\bbp)(\Z^d)}^*\\|\bbc|\geq R}}|\bbc|^{-d}\ls_{d}|\bbp|\rr{R\vv{1}_{R\leq1}+R^{d-1}\vv{1}_{R\geq1}}.\]	
\end{proof}

\end{document}